\DeclareMathAlphabet{\mathcalligra}{T1}{calligra}{m}{n}
\newcommand{\Rmnum}[1]{\expandafter\@slowromancap\romannumeral #1@}
\newtheorem{theorem}{Theorem}[section]
\newtheorem{lemma}[theorem]{Lemma}
\newtheorem{proposition}[theorem]{Proposition}
\newtheorem{corollary}[theorem]{Corollary}
\newtheorem{example}[theorem]{Example}
\newtheorem{remark}[theorem]{Remark}
\newtheorem{definition}[theorem]{Definition}
\def\@tocline#1#2#3#4#5#6#7{\relax
  \ifnum #1>\c@tocdepth 
  \else
    \par \addpenalty\@secpenalty\addvspace{#2}%
    \begingroup \hyphenpenalty\@M
    \@ifempty{#4}{%
      \@tempdima\csname r@tocindent\number#1\endcsname\relax
    }{%
      \@tempdima#4\relax
    }%
    \parindent\z@ \leftskip#3\relax \advance\leftskip\@tempdima\relax
    \rightskip\@pnumwidth plus4em \parfillskip-\@pnumwidth
    #5\leavevmode\hskip-\@tempdima
      \ifcase #1
       \or\or \hskip 1em \or \hskip 2em \else \hskip 3em \fi%
      #6\nobreak\relax
    \dotfill\hbox to\@pnumwidth{\@tocpagenum{#7}}\par
    \nobreak
    \endgroup
  \fi}
\title[]{Models of $2$-nondegenerate CR hypersurface in $\mathbb{C}^N$}
\author{Jan Gregorovi\v{c}}
\address{Jan Gregorovi\v{c},
	 University of Ostrava, 701 03 Ostrava, Czech Republic, and Institute of Discrete Mathematics and Geometry, TU Vienna, Wiedner Hauptstrasse 8-10/104, 1040 Vienna, Austria}\email{ jan.gregorovic@seznam.cz}
\author{Martin Kol\'{a}\v{r}}
\address{Martin Kol\'{a}\v{r},
	Department of Mathematics and Statistics 
	Masaryk University,
	Kotl\'{a}\v{r}sk\'{a}~2,
	611 37 Brno,
	Czech Republic}\email{ mkolar@math.muni.cz}
\urladdr{\url{http://www.math.muni.cz/~mkolar}}
 \author{David Sykes}
\address{David Sykes,
	Department of Mathematics and Statistics, 
	Masaryk University,
	Kotl\'{a}\v{r}sk\'{a} 2,
	611 37 Brno,
	Czech Republic, and Institute of Discrete Mathematics and Geometry, TU Vienna, Wiedner Hauptstrasse 8-10/104, 1040 Vienna}\email{ sykes@math.muni.cz}
\subjclass[2020]{32V05, 32V40, 53C30}
\keywords{model CR hypersurfaces, defining equations, $2$-nondegenerate structures}
\thanks{The first and the third author were supported by Austrian Science Fund (FWF): P34369. The second author was supported by the GACR grant GC22-15012J}
\begin{document}
\begin{abstract}
We show that every point in a uniformly $2$-nondegenerate CR hypersurface is canonically associated with a model $2$-nondegenerate structure. The $2$-nondegenerate models are basic CR invariants playing the same fundamental role as quadrics do in the Levi nondegenerate case. We characterize all $2$-nondegenerate models and show that the moduli space of such hypersurfaces in $\mathbb{C}^N$ is infinite dimensional for each $N>3$. We derive a normal form for these models' defining equations that is unique up to an action of a finite dimensional Lie group. We generalize recently introduced CR invariants termed modified symbols, and show how to compute these intrinsically defined invariants from a model's defining equation. We show that these models automatically possess infinitesimal symmetries spanning a complement to their Levi kernel and derive explicit formulas for them.
 \end{abstract}
\maketitle
\tableofcontents
\section{Introduction}
In classical studies of Levi nondegenerate CR hypersurfaces, the model quadrics play a fundamental role  \cite{cartanCR,chern1974,tanaka1962pseudo}. General Levi nondegenerate CR hypersurfaces locally appear as deformations of model hypersurfaces and many of their geometric properties (such as the structure and jet determinacy of their symmetries) are related to those of the models  \cite{baouendi1999real}. Beyond the Levi nondegenerate hypersurface case, the theory of models is also developed in \cite{beloshapka2004universal} for higher codimension Levi-nondegenerate CR submanifolds,  in \cite{ebenfelt2001uniformly,kolar2019complete} for Levi degenerate hypersurfaces in $\mathbb{C}^3$, and for finite Catlin multitype hypersurfaces in \cite{kmz2014,kolavr2010catlin}.

In this article, we derive defining equations of models for everywhere 2-nondegenerate hypersurfaces in $\mathbb{C}^{n+1}$ with constant rank Levi forms (shortly \emph{uniformly $2$-nondegenerate CR hypersurfaces}), study their local equivalence problem -- reducing it to finite-dimensional linear algebra -- and further investigate their symmetries and other fundamental features of their local geometry. 

Here and throughout the sequel, \emph{$2$-nondegenerate model} refers to a real analytic CR hypersurface
\begin{align}\label{gen def fun}
\Re(w)=z^TH(\zeta,\overline{\zeta})\overline{z}+\Re(\overline{z}^TS(\zeta,\overline{\zeta})\overline{z})
\end{align}
with constant rank $s$ Levi form that is $2$-nondegenerate everywhere in some neighborhood of $0$, where we use coordinates $(w,z,\zeta)\in\mathbb{C}\oplus\mathbb{C}^s\mathbb\oplus{C}^{n-s}$ and $H(\zeta,\overline{\zeta})$ and $S(\zeta,\overline{\zeta})$ are respectively $s\times s$ Hermitian and symmetric matrix valued functions of $(\zeta,\overline{\zeta})$ with $H(0,0)$ nondegenerate. This is terminology justified by the role of model structures in the classical theory and by the following three-part result.

\begin{theorem}\label{model perturbations proposition}
First, if a real analytic hypersurface $M\subset\mathbb{C}^{n+1}$ has constant rank $s$ Levi form, then it is locally given by a defining equation of the form 
\[
\Re(w)=P(z,\zeta,\overline{z},\overline{\zeta})+Q(z,\zeta,\overline{z},\overline{\zeta},\Im(w)),
\]
where $Q(z,\zeta,\overline{z},\overline{\zeta},\Im(w))$ is a series whose terms all have weighted degree higher than $2$ with respect to the weights
\begin{align}\label{grading convention}
\mathrm{wt}(w)=2,
\quad \mathrm{wt}(z_j)=1,
\quad\mbox{and}\quad
\mathrm{wt}(\zeta_\alpha)=0
\quad\forall\,j\in\{1,\ldots, s\},\,\alpha\in\{1,\ldots, n-s\},
\end{align}
and $\Re(w)=P(z,\zeta,\overline{z},\overline{\zeta})$ is in the form \eqref{gen def fun} and defines a hypersurface $M_0\subset\mathbb{C}^{n+1}$ with constant rank $s$ Levi form. 
Moreover, if $M$ is everywhere $2$-nondegenerate, then $M_0$ is a $2$-nondegenerate model. 

Second, if $\phi: \mathbb{C}^{n+1}\to \mathbb{C}^{n+1}$  is a local biholomorphism preserving the origin with $\phi(M)= M^\prime$, where $M^\prime$ is given by $\Re(w)=P^\prime(z,\zeta,\overline{z},\overline{\zeta})+Q^\prime(z,\zeta,\overline{z},\overline{\zeta},\Im(w))$ with corresponding model $M_0^\prime=\{\Re(w)=P^\prime(z,\zeta,\overline{z},\overline{\zeta})\}$, then $\phi_0(M_0)=M_0^\prime$, where $\phi_0$ is the weighted degree preserving component of $\phi$. 

Third, $\dim(\mathfrak{hol}(M,0))\leq \dim(\mathfrak{hol}(M_0,0))$ holds for the Lie algebras of germs of holomorphic infinitesimal symmetries at $0$ of $M$ and $M_0.$
\end{theorem}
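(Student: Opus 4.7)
My plan is to address the three assertions in turn, with the first requiring the most care; the latter two then follow by weighted-degree bookkeeping.

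\emph{Part 1 (normal form).} I would begin by writing $M$ locally as $\Re(w) = F(z, \zeta, \bar z, \bar\zeta, \Im(w))$ via the implicit function theorem. The Levi kernel distribution on $M$ is Frobenius-integrable of constant rank $n - s$, so after a holomorphic change of coordinates I may assume that the $\zeta$-directions span the Levi kernel on a full neighborhood of the origin in $M$. The vanishing constraints $L_{\zeta\bar\zeta} \equiv 0$ and $L_{z\bar\zeta} \equiv 0$, evaluated at points of the form $(w_0, 0, \zeta)$ of $M$, force the weighted degree $0$ part $F_0(\zeta, \bar\zeta)$ to be pluriharmonic in $\zeta$ (so $F_0 = 2\Re f(\zeta)$) and the weighted degree $1$ part to have the form $2\Re(z^T g(\zeta))$ with $g$ holomorphic in $\zeta$; both are absorbed into $w$ by the holomorphic shift $w \mapsto w - 2f(\zeta) - 2z^T g(\zeta)$. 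The remaining weighted degree $2$ part is brought into the shape of \eqref{gen def fun} by absorbing any purely holomorphic piece $z^T A(\zeta) z$ into $w$, and nondegeneracy of $H(0,0)$ is exactly the rank $s$ condition on the Levi form at the origin. To show that $M_0$ has uniformly rank $s$ Levi form (and is $2$-nondegenerate everywhere in a neighborhood when $M$ is), I would use the weighted dilation symmetry $(w, z, \zeta) \mapsto (t^2 w, t z, \zeta)$ together with the translation $w \mapsto w + is$ of $M_0$: these reduce the rank and $2$-nondegeneracy computations at an arbitrary nearby point to a point of the form $(0, 0, \zeta_0)$, where direct inspection combined with openness of the nondegeneracy conditions completes the argument.

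\emph{Part 2 (invariance of the model).} For the biholomorphism $\phi$, I would decompose each coordinate component into weighted-homogeneous pieces and set $\phi_0 := (\phi_{w,2}, \phi_{z,1}, \phi_{\zeta,0})$, where the second subscripts record the lowest nonzero weighted degree. Substituting $\phi$ into the defining equation of $M'$ and using $\Re(w) = P + Q$ to simplify yields an identity whose weighted degree $2$ part, extracted modulo the defining function $\rho_0$ of $M_0$, reads $\Re(\phi_{w,2}) \equiv P'(\phi_{z,1}, \phi_{\zeta,0}, \overline{\phi_{z,1}}, \overline{\phi_{\zeta,0}}) \pmod{\rho_0}$, which is precisely the statement $\phi_0(M_0) = M_0'$.

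\emph{Part 3 (dimension inequality).} I would equip $\mathfrak{hol}(M, 0)$ with the weighted filtration $\mathfrak{hol}^{\geq k}(M, 0) := \{X : X_j = 0 \text{ for all } j < k\}$, where $X = \sum_j X_j$ is the decomposition of a germ into weighted-homogeneous vector field components. For any $X$ with lowest nonzero component $X_{k_0}$, applying $X$ to the defining function $\rho = -\Re(w) + P + Q$ and extracting the lowest-weighted-degree part of the tangency equation $X\rho = h\rho$ yields $X_{k_0}\rho_0 = h_0\rho_0$ with $\rho_0 = -\Re(w) + P$, so $X_{k_0} \in \mathfrak{hol}(M_0, 0)$. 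The assignment $X \mapsto X_{k_0}$ induces a graded linear injection of $\mathrm{gr}\,\mathfrak{hol}(M, 0)$ into $\mathfrak{hol}(M_0, 0)$, and since the dimension of a filtered vector space equals that of its associated graded, the desired inequality follows.

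The main obstacle will be in Part 1, specifically in verifying that the constant-rank Levi form is strong enough to force the weighted degree $\leq 1$ parts of $F$ to be absorbable by holomorphic coordinate changes --- this requires the Frobenius-straightening of the Levi kernel distribution to hold uniformly on a neighborhood rather than just at the origin. Establishing uniform $2$-nondegeneracy on $M_0$ rests on the openness of the $2$-nondegeneracy condition combined with the model's intrinsic homogeneity. Parts 2 and 3 are then largely consequences of weighted-degree bookkeeping, though Part 3 requires a short verification that graded representatives of symmetries are indeed tangent to $M_0$.
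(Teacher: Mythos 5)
Your Part 3 is essentially the paper's argument and is fine, and Part 2 is the right idea, but Part 1 contains a genuine error at its core.

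The claim that the constant-rank condition ``forces the weighted degree $0$ part $F_0(\zeta,\bar\zeta)$ to be pluriharmonic'' and the weight-$1$ part to be $2\Re(z^Tg(\zeta))$ with $g$ holomorphic is false. A counterexample: take any model \eqref{gen def fun} and apply the biholomorphism $z\mapsto z+a(\zeta)$ with $a$ holomorphic, $a(0)=0$. The image still has constant rank $s$ Levi form, but its weight-$0$ part is $a(\zeta)^TH(\zeta,\bar\zeta)\overline{a(\zeta)}+\cdots$, which is not pluriharmonic. The reason your argument breaks is that the rank-$s$ condition is the vanishing of the Schur complement $D-CA^{-1}B$ of the Levi matrix, not of the raw entries $L_{\zeta\bar\zeta}$ and $L_{z\bar\zeta}$; at $z=0$ it reads $W_{\zeta\bar\zeta}+Z_{\zeta\bar\zeta}\Im(w)=\overline{V_{\bar\zeta}}^TH^{-1}V_{\bar\zeta}+\cdots$, a coupled recursion in $V,W,Z$ rather than $W_{\zeta\bar\zeta}=0$. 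Relatedly, the preliminary step ``after a holomorphic change of coordinates the $\zeta$-directions span the Levi kernel'' (so that $L_{z\bar\zeta}\equiv L_{\zeta\bar\zeta}\equiv0$) cannot be achieved: even in the target normal form \eqref{gen def fun} the Levi kernel is spanned by $\partial_{\zeta_\alpha}$ plus nonzero corrections along the $f_c$, and $L_{z\bar\zeta}\neq0$. The paper's proof instead shows that the mixed Taylor coefficients $V_{i,j},W_{i,j},Z_{i,j}$ ($i,j>0$) are recursively determined by the pure ones, and then kills the pure coefficients by passing to normal coordinates via \cite[Theorem 4.2.6]{baouendi1999real}; in particular the term $Z(\zeta,\bar\zeta)\Im(w)$ — which has weight $2$ and is absent from \eqref{gen def fun} — is the main difficulty and your sketch never addresses it. Finally, your dilation-plus-openness argument for the constant rank of $M_0$ only yields the lower bound $\operatorname{rank}\geq s$ at points with $z\neq0$ (rank is lower semicontinuous and the dilation orbit only limits to, but does not contain, a point with $z=0$); the upper bound requires extracting the weight-$0$ part of the rank condition on $M$ to obtain the PDE system \eqref{rank condition 2} satisfied by $H$ and $S$, which is exactly what the paper does.

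In Part 2 you define $\phi_0:=(\phi_{w,2},\phi_{z,1},\phi_{\zeta,0})$, but the fact that $\phi_w$ has no components of weight $0$ or $1$ and $\phi_z$ none of weight $0$ is precisely what must be proved; the paper establishes it by an induction on the lowest polynomial degree of $h_0,g_0,h_1$ before the weight-$2$ extraction you describe becomes valid.
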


Thus every point in a real analytic uniformly $2$-nondegenerate CR hypersurface is canonically associated with a $2$-nondegenerate model, which highlights the fundamental importance of $2$-nondegenerate models in the general study of $2$-nondegenerate structures.  We prove Theorem \ref{model perturbations proposition} in Section \ref{model perturbations section}. Within the proof, we establish that the constant rank $s$ Levi form  for CR hypersurface in the form \eqref{gen def fun} is equivalent to a solution of system of PDE
\begin{align}
H_{\zeta_\alpha\overline{\zeta_\beta}}(\zeta,\overline{\zeta})&=H_{\zeta_\alpha}(\zeta,\overline{\zeta})H(\zeta,\overline{\zeta})^{-1}H_{\overline{\zeta_\beta}}(\zeta,\overline{\zeta})+\overline{S_{\zeta_\beta}(\zeta,\overline{\zeta})}(H(\zeta,\overline{\zeta})^{-1})^TS_{\zeta_\alpha}(\zeta,\overline{\zeta})\label{rank condition 2},\\
S_{\zeta_{\alpha}\overline{\zeta_{\beta}}}(\zeta,\overline{\zeta})&=H_{\overline{\zeta_\beta}}(\zeta,\overline{\zeta})^T(H(\zeta,\overline{\zeta})^T)^{-1}S_{\zeta_\alpha}(\zeta,\overline{\zeta})+S_{\zeta_\alpha}(\zeta,\overline{\zeta})H(\zeta,\overline{\zeta})^{-1})H_{\overline{\zeta_\beta}}(\zeta,\overline{\zeta}).
\end{align}

As \eqref{rank condition 2} is a complicated system, deriving an explicit characterization of $2$-nondegenerate models is challenging. Our methods, however, provide the following characterization. 

\begin{theorem}\label{general formula for Cn}
Every $2$-nondegenerate model is locally equivalent to a real analytic CR hypersurface in $\mathbb{C}^{n+1}$ given by defining equation \eqref{gen def fun} with
\begin{align}\label{H from S2 formula}
H(\zeta,\overline{\zeta})=\frac12\left(\mathbf{H}(\mathrm{Id}-\overline{\mathbf{S}(\zeta)}\mathbf{H}^T\mathbf{S}(\zeta)\mathbf{H})^{-1}+(\mathrm{Id}-\mathbf{H}\overline{\mathbf{S}(\zeta)}\mathbf{H}^T\mathbf{S}(\zeta))^{-1}\mathbf{H}\right)
\end{align}
and 
\begin{align}\label{S from S2 formula}
S(\zeta,\overline{\zeta})=\mathbf{H}^T(\mathrm{Id}-\mathbf{S}(\zeta)\mathbf{H}\overline{\mathbf{S}(\zeta)}\mathbf{H}^T)^{-1}\mathbf{S}(\zeta)\mathbf{H},
\end{align}
where $\mathbf{H}$ is an $s\times s$ nondegenerate Hermitian matrix and $\mathbf{S}(\zeta)$ is an $s\times s$ symmetric matrix of holomorphic functions of $\zeta_1,\dots,\zeta_{n-s}$ that vanish at $0$, but with linearly independent partial derivatives $\mathbf{S}_{\zeta_1}(0),\dots, \mathbf{S}_{\zeta_{n-s}}(0)$.

Conversely, for any pair $\mathbf{H}$ and $\mathbf{S}(\zeta)$ satisfying the above assumptions, the defining equation \eqref{gen def fun} with \eqref{H from S2 formula} and \eqref{S from S2 formula} defines a $2$-nondegenerate model, and moreover a subset of its holomorphic infinitesimal symmetries acts locally transitively at $0$ on the space of leaves of the Levi kernel.
\end{theorem}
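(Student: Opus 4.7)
The plan is to reduce the problem to analyzing the PDE system \eqref{rank condition 2}, which characterizes the constant rank $s$ Levi form condition for defining equations in the form \eqref{gen def fun}. The argument splits naturally into three parts corresponding to the three assertions of the theorem.

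For the converse direction, I would verify by direct computation that formulas \eqref{H from S2 formula} and \eqref{S from S2 formula} satisfy \eqref{rank condition 2} and its companion equation. The structure of the formulas, with $(\mathrm{Id}-\overline{\mathbf{S}(\zeta)}\mathbf{H}^T\mathbf{S}(\zeta)\mathbf{H})^{-1}$ and its transposed variants, is suggestive of a resolvent identity: differentiating $(\mathrm{Id} - A)^{-1}$ gives $(\mathrm{Id}-A)^{-1}\, dA\, (\mathrm{Id}-A)^{-1}$, so the $\zeta$-derivatives of $H$ and $S$ should reduce to contractions of $\mathbf{S}_{\zeta_\alpha}$ and $\overline{\mathbf{S}_{\zeta_\beta}}$ with matrix factors. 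The required PDE then becomes an algebraic identity that I would confirm by rearrangement. Uniform $2$-nondegeneracy follows from the assumed linear independence of $\mathbf{S}_{\zeta_1}(0),\ldots,\mathbf{S}_{\zeta_{n-s}}(0)$: these derivatives, evaluated at $0$, encode the action of the Levi kernel on the Levi form, and their linear independence is precisely the rank condition defining $2$-nondegeneracy at the origin, which extends to a neighborhood by continuity.

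For the direct direction, starting from an arbitrary $2$-nondegenerate model with $H(\zeta,\overline{\zeta})$, $S(\zeta,\overline{\zeta})$ satisfying the PDE system, I would use the weighted-homogeneity-preserving coordinate freedom $z\mapsto A(\zeta)z$ together with a biholomorphism $\zeta\mapsto f(\zeta)$ of the Levi kernel leaf to normalize initial data along $\overline{\zeta}=0$. Specifically, a suitable choice of $A(\zeta)$ allows arranging $H(\zeta,0)\equiv \mathbf{H}$ constant and Hermitian, and then $\mathbf{S}(\zeta):=S(\zeta,0)$ is a holomorphic symmetric matrix vanishing at $0$ with the required linear-independence property (inherited from $2$-nondegeneracy). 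The PDE system, being a Cauchy-Riemann-type relation coupling second-order mixed derivatives to lower-order data, together with real analyticity, should determine $H$ and $S$ uniquely through recursive solvability of its Taylor coefficient relations from the initial data $(H(\zeta,0),S(\zeta,0))$. Since \eqref{H from S2 formula}--\eqref{S from S2 formula} have already been shown to provide a solution with exactly these initial values (note $H(\zeta,0)=\mathbf{H}$ and $S(\zeta,0)=\mathbf{H}^T\mathbf{S}(\zeta)\mathbf{H}$ modulo the normalization, as the $\overline{\mathbf{S}(\zeta)}$ factors vanish), uniqueness identifies the given model with the claimed normal form.

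For the symmetry claim, using the explicit structure, I would exhibit $s$ independent holomorphic vector fields of the form $X_j=\partial_{z_j}+\text{(lower-weight correction terms in }\zeta,w)$ whose values at $0$ span a complement to the Levi kernel direction. Their defining property is that $X_j$ preserves the defining equation modulo that equation; the correction terms are forced by this preservation condition and can be solved for order-by-order in the weighted grading, the existence of a solution being guaranteed by the algebraic structure of \eqref{H from S2 formula}--\eqref{S from S2 formula}. The span of these $X_j$ at the origin, being transverse to the Levi kernel, gives the required local transitivity on the leaf space.

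The hard part will be the direct direction, specifically uncoupling the PDE system \eqref{rank condition 2} enough to show that its general solution, modulo the normalization freedom, is parametrized exactly by the pair $(\mathbf{H},\mathbf{S}(\zeta))$. The explicit resolvent form of \eqref{H from S2 formula} hints at an underlying linearization: substituting something like $T(\zeta,\overline{\zeta}):=\mathbf{S}(\zeta)\mathbf{H}\overline{\mathbf{S}(\zeta)}\mathbf{H}^T$ or a related expression should convert the nonlinear system into a simpler relation that can be integrated, and identifying the correct substitution is likely the key technical step of the proof.
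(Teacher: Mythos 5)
Your proposal is correct in outline, but it follows a genuinely different route from the paper's primary proof --- in fact it essentially reconstructs the paper's own \emph{alternative} proof, outlined in Section \ref{The equivalence problem for models}. The paper's main argument never solves the PDE system \eqref{rank condition 2} head-on: the converse direction is obtained by realizing the hypersurface as $\exp(\mathfrak{g}_-)\exp(\mathbf{S}(\zeta))$ inside the homogeneous space $\tilde G^{\mathbb{C}}/\tilde Q$ (Theorem \ref{hypersurface realization approach 2} with a general holomorphic $\mathbf{S}$), so that constancy of the Levi rank and the transitive leaf-space action come for free from $\exp(\mathfrak{g}_-)$-invariance, and the explicit formulas \eqref{H from S2 formula}--\eqref{S from S2 formula} drop out of the group multiplication; the direct direction is then reduced, via the transversal symmetries of Proposition \ref{transkernel symmetry formula} and the Euler field, to the rigidity statement of Proposition \ref{DLC equiv}, which says the CR structure is determined by the image of a Levi leaf in $\mathrm{CSp}(\mathbb{C}\mathfrak{g}_{-1})/\tilde Q$ --- an image that in exponential coordinates is exactly the graph of some $\mathbf{S}(\zeta)$. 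Your route --- normalize $H(\zeta,0)\equiv\mathbf{H}$ and the antiholomorphic part of $S$ by a weighted-degree-preserving biholomorphism, then show the mixed Taylor coefficients of $(H,S)$ are recursively determined by the Cauchy data $(H(\zeta,0),S(\zeta,0))$ through \eqref{rank condition 2} --- is precisely Lemma \ref{normalizing H terms} and Proposition \ref{general formula to main theorem formula}. What the paper's primary route buys is that it works without real analyticity (it proves Theorem \ref{smooth to analytic}, which your power-series recursion cannot reach) and it avoids the direct matrix verification that \eqref{H from S2 formula}--\eqref{S from S2 formula} solve \eqref{rank condition 2}; what your route buys is a shorter, self-contained argument in the analytic category and an explicit uniqueness statement for the normal form.

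Two caveats. First, your converse direction rests on a nontrivial matrix computation (that the resolvent formulas satisfy \eqref{rank condition 2} and its companion); this is believable but is exactly the step the paper engineers its construction to avoid, and you should either carry it out or import the group-theoretic argument. Second, your treatment of the symmetry claim is the weakest point: you assert that the correction terms to $\partial_{z_j}$ ``can be solved for order-by-order'' with existence ``guaranteed by the algebraic structure,'' but existence is precisely the content that must be proved, and the paper's proof of Proposition \ref{transkernel symmetry formula} uses \eqref{rank condition 2} in an essential way to show that the auxiliary terms generated by the flow vanish (via the same pluriharmonic-recursion trick). In the normalized coordinates the answer is the closed-form field $X=2\big(bi+\overline{a}^T\mathbf{H}^Tz\big)\tfrac{\partial}{\partial w}+\big(a^T-\overline{a}^T\mathbf{H}^T\mathbf{S}(\zeta)\big)\tfrac{\partial}{\partial z}$ of Corollary \ref{symmetries for main theorem}, so once you have the normal form you can simply verify this formula directly rather than arguing order by order; without doing one of these, the transitivity claim is not yet established.
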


\begin{remark}
   In the case of pseudoconvex $2$-nondegenerate models, we can take $\mathbf{H}$ to be the identity matrix and the pseudoconvex $2$-nondegenerate models are given by
   \[
   \Re(w)=z^T(\mathrm{Id}-\overline{\mathbf{S}(\zeta)}\mathbf{S}(\zeta))^{-1}\overline{z}+\Re\left(\overline{z}^T(\mathrm{Id}-\mathbf{S}(\zeta)\overline{\mathbf{S}(\zeta)})^{-1}\mathbf{S}(\zeta)\overline{z}\right)
   \]
   for a symmetric matrix $\mathbf{S}(\zeta)$ of holomorphic functions of $\zeta_1,\dots,\zeta_{n-s}$ that vanish at $0$ with linearly independent partial derivatives $\mathbf{S}_{\zeta_1}(0),\dots, \mathbf{S}_{\zeta_{n-s}}(0)$.
\end{remark}

An immediate consequence of this classification of 2-nondegenerate models is that in general they lack two of the properties characterizing models of Levi nondegenerate hypersurfaces in \cite{beloshapka2004universal}: The polynomiality of the model (which was already observed for the $2$-nondegenerate hypersurfaces in $\mathbb{C}^3$, see \cite{beloshapka2005symmetries,ebenfelt2001uniformly,fels2007cr,kaup2006local}) and the homogeneity of the model with respect to CR symmetries (there do not have to be symmetries in the Levi kernel directions). Broadening this result, the theorem's real analyticity assumption can be replaced by an assumption about the structure's infinitesimal symmetries due to the following theorem.
\begin{theorem}\label{smooth to analytic}
If a uniformly $2$-nondegenerate CR hypersurface $M$ in $\mathbb{C}^{n+1}$ given by \eqref{gen def fun}
\begin{itemize}
\item is smooth
\item and has a subset of its holomorphic infinitesimal symmetries acting locally transitively at $0$ on the space of leaves of the Levi kernel,
\end{itemize}
then it is locally equivalent to a $2$-nondegenerate model given by \eqref{gen def fun}, \eqref{H from S2 formula}, and \eqref{S from S2 formula}.
\end{theorem}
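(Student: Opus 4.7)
The plan is to reduce to the analytic case established by Theorem \ref{general formula for Cn}, by showing that the symmetry hypothesis forces $M$ to be real-analytic. The key point is that the given holomorphic infinitesimal symmetries are themselves real-analytic, and their flows preserve $M$, so local transitivity on the leaf space of the Levi kernel permits the reconstruction of $M$ as the real-analytic orbit of a real-analytic transverse slice.

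Concretely, I would first extract from the hypothesis holomorphic vector fields $X_1,\dots,X_{n-s}$ with $\Re(X_j)$ tangent to $M$ and with projections spanning a complement to the Levi kernel at $0$. Since each $X_j$ is holomorphic, the real vector fields $\Re(X_j)$ and $\Re(iX_j)$ generate local one-parameter groups of biholomorphisms of $\mathbb{C}^{n+1}$ preserving $M$. Second, the transverse slice $M\cap\{\zeta=0\}$ is the constant-coefficient polynomial hypersurface $\{\Re(w)=z^TH(0,0)\overline{z}+\Re(\overline{z}^TS(0,0)\overline{z})\}$, which is real-analytic. Third, by local transitivity the combined flows assemble into a real-analytic map from a neighborhood of $0\in\mathbb{R}^{2(n-s)}$ into $\mathbb{C}^{n+1}$ whose image meets each Levi kernel leaf near $0$ transversally; the orbit of the analytic slice under these flows is therefore a real-analytic subset of $M$ of the same real dimension as $M$, and hence coincides with $M$ near $0$. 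In particular $H(\zeta,\overline{\zeta})$ and $S(\zeta,\overline{\zeta})$ are real-analytic, so Theorem \ref{general formula for Cn} applies to yield the desired local equivalence to a model of the form \eqref{gen def fun}, \eqref{H from S2 formula}, \eqref{S from S2 formula}.

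The main obstacle is the third step: one must verify that the combined flows have full rank $2(n-s)$ modulo the Levi kernel, which reduces to the $\mathbb{C}$-linear independence at $0$ of the projections of $X_1,\dots,X_{n-s}$ to the tangent space of the leaf space, precisely the content of local transitivity. A perhaps cleaner alternative is to rewrite the tangency $\Re(X_j)\cdot(\Re(w)-P)|_M=0$ as a first-order PDE system for $H$ and $S$ with real-analytic coefficients, and to use transitivity to check that its characteristic distribution spans all of $T_0\mathbb{C}^{n-s}_{(\zeta,\overline{\zeta})}$; a Cauchy--Kowalevski-type argument then upgrades smooth solutions to real-analytic ones. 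Either route culminates in the same conclusion, namely that the smoothness of $M$ can be strengthened to real-analyticity, at which point Theorem \ref{general formula for Cn} finishes the proof.
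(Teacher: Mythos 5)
Your overall strategy---propagate real-analyticity from an analytic seed using the holomorphic symmetry flows, then invoke the analytic classification---is reasonable, but the execution of your third step fails because the seed and the flow directions are interchanged. The symmetries supplied by the hypothesis are \emph{transversal} to the Levi kernel: local transitivity on the leaf space means their values at $0$ span a complement to $(\mathcal{K}_2\oplus\overline{\mathcal{K}_2})\cap T_0M$, a space of real dimension $2s+1$, not $2(n-s)$ as your counts suggest. Flowing the transverse slice $M\cap\{\zeta=0\}$ by such symmetries does not sweep out $M$: to move the slice across the leaves one needs symmetries \emph{along} the Levi kernel, and the paper stresses that these generally do not exist for $2$-nondegenerate models. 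Indeed, for the explicit transversal symmetries of Proposition \ref{transkernel symmetry formula} the flows preserve $\{\zeta=0\}$, so the orbit of your slice is contained in $\{\zeta=0\}$ and has dimension $2s+1$ rather than $2n+1$. The orbit argument can be repaired by swapping the roles: take as seed the Levi leaf through $0$, which is the complex (hence real-analytic) submanifold $\{w=z=0\}$ of dimension $2(n-s)$, and flow it by $2s+1$ transversal symmetries; the resulting real-analytic map $\mathbb{R}^{2s+1}\times\{w=z=0\}\to M$ has full rank at the origin. Your Cauchy--Kowalevski fallback has the same directional problem (the transversal symmetries do not control the $\zeta,\overline{\zeta}$-derivatives of $H$ and $S$, which are functions of the leaf variables only), and in any case Cauchy--Kowalevski does not upgrade smooth solutions to analytic ones.

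Two further remarks. First, even granting analyticity, citing Theorem \ref{general formula for Cn} is circular within the paper's logic, since its first statement is deduced from Theorem \ref{smooth to analytic}; you would need the independent normal-form route of Section \ref{The equivalence problem for models} (Lemma \ref{normalizing H terms} and Proposition \ref{general formula to main theorem formula}). Second, the paper's own proof is genuinely different: it never establishes analyticity of $M$ as an intermediate step, but instead uses the Euler field \eqref{Euler field} to upgrade leaf-space transitivity to a graded symmetry algebra containing the Heisenberg component, identifies the Levi leaf with a complex submanifold of $\mathrm{CSp}(\mathbb{C}\mathfrak{g}_{-1})/\tilde{Q}$ described by a holomorphic matrix $\mathbf{S}(\zeta)$, builds the model of Theorem \ref{general formula for Cn} from that data, and proves the two hypersurfaces are CR equivalent by comparing their induced dynamical Legendrian contact structures (Proposition \ref{DLC equiv}), again using the grading symmetry to resolve the recoverability ambiguity.
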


\begin{remark}
    It is an open problem if the additional assumption on the symmetries is necessary in the smooth category. A related problem  about obtaining a $2$-nondegenerate model from the power series expansion of a smooth hypersurface is if the series $P(z,\zeta,\overline{z},\overline{\zeta})$ from Theorem \ref{model perturbations proposition} in smooth case is convergent.
\end{remark}

We prove these theorems concurrently, presenting the proof in the following structure. We show in Section \ref{Symmetries transversal to the Levi kernel} that real analytic uniformly $2$-nondegenerate CR hypersurfaces in $\mathbb{C}^{n+1}$ given by \eqref{gen def fun} also satisfy the itemized properties of Theorem \ref{smooth to analytic}. In particular, in Proposition \ref{transkernel symmetry formula}, we provide explicit formulas for the holomorphic infinitesimal symmetries referred to in Theorem \ref{smooth to analytic}. We establish the second statement in Theorem \ref{general formula for Cn} and prove Theorem \ref{smooth to analytic} in Section  \ref{Theorem 1.1 proof}, which implies the first statement in Theorem \ref{general formula for Cn}.  Lastly an alternative proof of Theorem \ref{general formula for Cn} using an analysis of the defining equation's series expansion is outlined in Section  \ref{The equivalence problem for models}.

The pair $\mathbf{H}$ and $\mathbf{S}(\zeta)$ defining a $2$-nondegenerate model is not unique and we show the equivalence problem for these pairs reduces to the following.

\begin{proposition}\label{equivalent symbol data}
    Let $M_0$ and $\tilde{M_0}$ be the $2$-nondegenerate models associated with the pairs $\mathbf{H},\mathbf{S}(\zeta)$ and $\tilde{\mathbf{H}},\tilde{\mathbf{S}}(\zeta)$ according to Theorem \ref{general formula for Cn}. Then $M_0$ and $\tilde{M_0}$ are equivalent around $0$ if and only if there is $U\in \mathrm{GL}(s,\mathbb{C})$ and a biholomorphism $g:\mathbb{C}^{n-s}\to \mathbb{C}^{n-s}$ fixing $0$ such that
    \begin{align}
        \tilde{\mathbf{H}}&=U^T\mathbf{H}\overline{U},\\
        \tilde{\mathbf{S}}(\zeta)&=U^T\mathbf{S}(g(\zeta))U.
    \end{align}
\end{proposition}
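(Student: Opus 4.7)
The plan is to prove the two directions separately.

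For the ``if'' direction, I would realize the equivalence by an explicit coordinate change. Given $U$ and $g$ as in the proposition, I take a biholomorphism of the form $\Phi(w,z,\zeta)=(w,\,Az,\,g(\zeta))$ for a suitable $A\in\mathrm{GL}(s,\mathbb{C})$ expressed in terms of $U$. Pulling back the defining equation of $\tilde M_0$ under $\Phi$, the task reduces to verifying
\[
H(\zeta,\bar\zeta)=A^T\tilde H(g(\zeta),\overline{g(\zeta)})\bar A
\qquad\text{and}\qquad
S(\zeta,\bar\zeta)=\bar A^T\tilde S(g(\zeta),\overline{g(\zeta)})\bar A.
\]
Substituting the formulas \eqref{H from S2 formula}--\eqref{S from S2 formula} on both sides and using the assumed relations $\tilde{\mathbf{H}}=U^T\mathbf{H}\bar U$, $\tilde{\mathbf{S}}(\zeta)=U^T\mathbf{S}(g(\zeta))U$, both identities become direct algebraic consequences of transpose rules and matrix-inverse identities, with the $U$ and $\bar U$ factors passing through the terms $(\mathrm{Id}-\cdots)^{-1}$ by grouping.

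For the ``only if'' direction, suppose $\phi(M_0)=\tilde M_0$. By the second statement of Theorem \ref{model perturbations proposition}, the weighted-degree preserving component $\phi_0$ of $\phi$ is again an equivalence between $M_0$ and $\tilde M_0$. A biholomorphism of $\mathbb{C}^{n+1}$ fixing the origin and preserving the grading \eqref{grading convention} necessarily has the form
\[
\phi_0(w,z,\zeta)=\bigl(a(\zeta)\,w+z^T Q(\zeta)z,\;A(\zeta)z,\;g(\zeta)\bigr),
\]
with $a(\zeta)$ a nonvanishing holomorphic scalar, $A(\zeta)$ a holomorphic family in $\mathrm{GL}(s,\mathbb{C})$, $Q(\zeta)$ a holomorphic symmetric $s\times s$ matrix, and $g$ a local biholomorphism of $\mathbb{C}^{n-s}$ fixing $0$. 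Plugging $\phi_0$ into the defining equation of $\tilde M_0$ and separating terms by their bidegree in $(z,\bar z)$ yields transformation laws for $H$ and $S$. Specializing at $\zeta=0$ gives $\tilde{\mathbf{H}}=U^T\mathbf{H}\bar U$ with $U$ a suitable normalization of $A(0)$, and extracting the pure-holomorphic-in-$\zeta$ part of the transformation law for $S$ delivers $\tilde{\mathbf{S}}(\zeta)=U^T\mathbf{S}(g(\zeta))U$.

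The main obstacle is controlling the apparent extra freedom in $\phi_0$ beyond the pair $(U,g)$, namely the $\zeta$-dependence of $a$ and $A$, the quadratic piece $z^T Q(\zeta)z$, and the value of $a(0)$. My strategy is to show that these degrees of freedom correspond to composition with infinitesimal symmetries of $M_0$: the Levi-kernel-transverse symmetries from Proposition \ref{transkernel symmetry formula} absorb the nonconstant behavior of $A(\zeta)$, $a(\zeta)$ and $Q(\zeta)$, while scaling symmetries of the model absorb $a(0)$. Hence the extra data is pure gauge and, after quotienting, precisely the $\mathrm{GL}(s,\mathbb{C})\times\{\text{biholomorphism of }(\mathbb{C}^{n-s},0)\}$ action of the proposition remains.
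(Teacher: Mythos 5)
Your architecture differs from the paper's: the paper obtains this proposition as a corollary of the normal form Theorem \ref{normal form for models}, whose proof rests on the rigidity statement of Lemma \ref{normalizing H terms} (the entire defining equation of a constant-rank model is determined by $\mathbf{H}=H(0,0)$ and the holomorphic part $\mathbf{S}(\zeta)$). Your ``if'' direction by direct substitution into \eqref{H from S2 formula}--\eqref{S from S2 formula} is fine (the factors of $U$ do pass through the $(\mathrm{Id}-\cdots)^{-1}$ terms), and your reduction of the ``only if'' direction to the weighted-homogeneous component $\phi_0$ via the second part of Theorem \ref{model perturbations proposition}, together with the stated form of $\phi_0$, is legitimate and arguably more direct than the paper's route.

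The gap is in your mechanism for disposing of the residual freedom in $\phi_0$. The symmetries of Proposition \ref{transkernel symmetry formula} are of strictly negative weighted degree --- they translate $z$ by functions of $\zeta$ and shift $w$ by terms of weight at most $1$ --- so composing $\phi_0$ with their flows destroys weighted homogeneity and cannot alter the weight-zero data $A(\zeta)$, $Q(\zeta)$, $a(\zeta)$; they move the base point rather than absorbing the linear part. Moreover these degrees of freedom are not gauge: they are \emph{forced} to be trivial by the structure of the normalized models, and that is the step your outline leaves unproved. Concretely, the coefficient of $\Im(w)$ in the transformed equation must vanish, so $a$ is a real constant; since \eqref{H from S2 formula} and \eqref{S from S2 formula} give $H(\zeta,0)\equiv\mathbf{H}$ and $S(0,\overline{\zeta})\equiv 0$, restricting the bidegree-$(1,1)$ matching condition to $\overline{\zeta}=0$ reads $a\mathbf{H}=A(\zeta)^T\tilde{\mathbf{H}}\,\overline{A(0)}$ and forces $A(\zeta)$ to be constant, while the bidegree-$(0,2)$ condition at $\zeta=\overline{\zeta}=0$ forces $Q(0)=0$; only then does the holomorphic-in-$\zeta$ specialization of the $(0,2)$ condition deliver the relation between $\tilde{\mathbf{S}}$ and $\mathbf{S}\circ g$. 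Either supply these computations (they use only the normalized form of the models) or invoke the determination statement of Lemma \ref{normalizing H terms}; as written, the claim that the extra freedom is ``pure gauge'' absorbed by the transversal symmetries would not survive scrutiny.
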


We prove the Proposition \ref{equivalent symbol data} in Section \ref{The equivalence problem for models} (as a corollary of Theorem \ref{normal form for models}). It turns out that one needs to have a good understanding of CR invariants for uniformly $2$-nondegenerate CR hypersurfaces to solve this equivalence problem and provide the normal forms for the $2$-nondegenerate models. 

There are three basic invariants that we consider in this paper:
\begin{itemize}
\item There are \emph{bigraded CR symbols} introduced in \cite{porter2021absolute} encoded in $\mathbf{H}$ and the part of $\mathbf{S}(\zeta)$ that is linear in $\zeta$. In Sections \ref{sec bigraded symbol}, we recall their definition and properties. This section also introduces several preliminary results and notations used throughout the article.

\item There are \emph{modified CR symbols} introduced in \cite{sykes2023geometry} encoded in  $\mathbf{H}$ and the parts of $\mathbf{S}(\zeta)$ that are linear and quadratic in $\zeta$.  In Section \ref{sec2.2}, we describe them using a new formulation that is well suited for an important generalization, and in Section \ref{sectionnonconstantmodifedsymbol} we introduce this generalized definition of modified symbols (Definition \ref{normalized sections}) for the setting with nonconstant bigraded symbol.

\item There are \emph{obstructions to the first order constancy} (of the bigraded symbol) that we derive in Section \ref{sectionnonconstantmodifedsymbol} (Definition \ref{normalized sections}). These are also encoded in  $\mathbf{H}$ and the parts of $\mathbf{S}(\zeta)$ that are linear and quadratic in $\zeta$.
\end{itemize}

In Section \ref{Key examples of modified symbol calculations}, we compute these invariants for the general hypersurfaces of Theorem \ref{general formula for Cn} and in Section \ref{The equivalence problem for models} we obtain their relation to $\mathbf{H}$ and $\mathbf{S}(\zeta)$.

The reader can find examples and applications of the general theory developed in this article for the particular case of $2$-nondegenerate models in $\mathbb{C}^4$ in \cite{c4paper}.

\subsection{Realizability of modified symbols}

 Given bigraded symbols and modified symbols understood from the abstract viewpoint (see Section \ref{Abstract modified symbols}), we would like to know which of these can be realized by uniformly $2$-nondegenerate CR hypersurfaces. Theorem \ref{model perturbations proposition} reduces the problem to realizibility by $2$-nondegenerate models, because we show in Proposition \ref{mod symbol the same} that a uniformly $2$-nondegenerate CR hypersurfaces has the same modified symbol at $0$ as its $2$-nondegenerate model.

It turns out that not all (abstract) modified symbols can be realized. In Section \ref{Key examples of modified symbol calculations}, we derive necessary conditions (Lemma \ref{key examples prop}) for realizability. We then show that these conditions are sufficient in Section \ref{Approaches to Hypersurface Realization} by providing two constructions of $2$-nondegenerate models realizing these \emph{realizable} modified symbols of uniformly $2$-nondegenerate CR hypersurfaces via the particular form in Theorem \ref{general formula for Cn}. The two constructions have  complementary theoretical applications, where one is very useful for studying maximally symmetric structures while the other is vital for deriving the general defining equations of Theorem \ref{general formula for Cn}.

The first construction, presented in Section \ref{generalized Naruki}, generalizes a natural approach to build homogeneous models from CR algebras (referring to the terminology in \cite{fels2008classification}), in a way that generalizes the homogeneous CR hypersurface realizations described in \cite{naruki1970holomorphic}. 
While this approach always yields parameterizations of the embedded hypersurfaces, deriving defining equations from these parameterizations is not always tractable. An important feature of this construction is that it always produces homogeneous structures that are maximally symmetric (among homogeneous structures with the same reduced modified symbols), which we show in Lemma \ref{Z-graded model lemma}.

The second construction, presented in Theorem \ref{hypersurface realization approach 2}, uses roughly the same technique to build the models that was applied in \cite{gregorovic2021equivalence}. This technique can be applied to construct $2$-nondegenerate models with prescribed modified symbols at a point, and the initial data one needs to implement this construction is just a realizable modified symbol rather than the CR algebras needed in the former approach. However, cases with non-constant bigraded symbol will appear, which have obstructions to homogeneity (existence of symmetries in the Levi kernel direction).

\section{Invariants of CR hypersurfaces: bigraded symbols and modified symbols}\label{Symbols of CR hypersurfaces}
In  this section we describe several CR invariants, beginning with the \emph{bigraded symbols} introduced in \cite{porter2021absolute}, and develop a generalization (Definition \ref{normalized sections}) of the \emph{modified symbols} introduced in \cite{sykes2023geometry} for constant bigraded symbol structures. Our generalization is well defined also for CR hypersurfaces with non-constant bigraded symbol, which is the motivation for its development.

\subsection{Bigraded symbols}\label{sec bigraded symbol}

Let $M$ be a CR hypersurface of dimension $2n+1$ and $\mathcal{H}\subset \mathbb{C}TM$ the CR structure on $M$, that is, $\mathcal{H}$ is an $n$-dimensional complex, integrable (i.e., $[\mathcal{H},\mathcal{H}]\subset \mathcal{H}$) subbundle of $\mathbb{C}TM$ with   $\mathcal{H}\cap \overline{\mathcal{H}}=0$. We will assume that $(M,\mathcal{H})$ has a finite dimensional local symmetry algebra, which is equivalent to assuming that $M$ is not locally equivalent to a CR structure of the form $M^\prime\times \mathbb{C}$ for some CR structure $(M^\prime,\mathcal{H}^\prime)$. In the terminology of \cite{freeman1977local}, such $M$ are called non-straightenable and at a generic point their complexified tangent bundle $\mathbb{C}TM$ admits what we call a \emph{Freeman filtration} of the form
\begin{align}\label{freeman filtration}
\mathcal{K}_0=\mathbb{C}TM\supset \mathcal{K}_{1}=\mathcal{H}\supset \mathcal{K}_{2}\supset \mathcal{K}_{3}\supset\cdots\supset \mathcal{K}_{k+1}=0
\end{align}
for some integer $k$ such that $\mathcal{K}_{k}\neq \mathcal{K}_{k+1}$. For each $j>1$, assuming $\mathcal{K}_{j-1}$ has constant rank (which indeed occurs near generic points), the space $\mathcal{K}_{j}$ is defined as the left-kernel of the multi-sesquilinear map
\[
\mathcal{L}^{j-1}:\mathcal{K}_{j-1}\times \left(\bigodot^{j-1}\mathcal{H}\right)\to \mathcal{K}_0/(\mathcal{H}\oplus \overline{\mathcal{H}})
\]
given by
\[
\mathcal{L}^{j-1}\left(X_p,X^{(1)}_p,\ldots, X^{(j-1)}_p\right):=\frac{1}{2i}\left[\ldots\left[X,\overline{X^{(1)}}\right],\ldots,\overline{X^{(j-1)}}\right]_p\pmod{\mathcal{H}\oplus\overline{\mathcal{H}}}
\,\,\forall\,p\in M
\]
where $X$ and each $X^{(j)}$ are sections of $\mathcal{K}_{j-1}$ and $\mathcal{H}$ respectively, that is,
\begin{align}\label{freeman filtration levels}
\mathcal{K}_j:=\{v\in \mathcal{K}_{j-1}\,|\, \mathcal{L}^{j-1}(v,v_1,\ldots, v_{j-1})=0\,\forall\,v_k\in \mathcal{H}\}.
\end{align}
Indeed, $\mathcal{L}^{j-1}$ is a tensor near generic points because the differences in the Lie bracket values in its definition that would appear using different vector field extensions of the $X_p,X^{(1)}_p,\ldots, X^{(j-1)}_p$ arguments are contained in the subspace $\mathcal{H}\oplus\overline{\mathcal{H}}$.
\begin{definition}
The CR structure $(M,\mathcal{H})$ is uniformly $k$-nondegenerate if all $\mathcal{K}_j$ given by \eqref{freeman filtration levels} have constant rank globally and $\mathcal{K}_{k}\neq \mathcal{K}_{k+1}=0$.
\end{definition}

Note, a more general point-wise definition of $k$-nondegeneracy (defined at all points rather than just generic ones) is given in \cite{baouendi1999real}, and this relationship is discussed in \cite[appendix]{kaup2006local}. Notice that $\mathcal{L}^1$ is just the usual Levi form. Consistent with \cite{freeman1977local}, we refer to $\mathcal{L}^k$ as \emph{generalized Levi forms}. Notably, the generalized Levi forms are all invariants of the CR structure. The set $\{\mathcal{L}^1, \mathcal{L}^2\}$ is equivalent to the invariant termed \emph{CR symbol} in \cite{porter2021absolute} (defined for $2$-nondegenerate structures) and $\{\mathcal{L}^1,\ldots,\mathcal{L}^k\}$ is equivalent to the invariant termed \emph{abstract core} in \cite{santi2020homogeneous}.

First, let us define the nondegenerate part (or Heisenberg part) of the bigraded symbol.

\begin{definition}
The nondegenerate part $\mathbb{C}\mathfrak{g}_{-}(p)$ of the bigraded symbol of $M$ at $p$ is the bigraded Lie algebra
\[\mathbb{C}\mathfrak{g}_{-}(p)=\mathfrak{g}_{-2,0}(p)\oplus \mathfrak{g}_{-1,-1}(p)\oplus \mathfrak{g}_{-1,1}(p)\]
together with the antilinear involution $x\mapsto \overline{x},$
where
\[
\mathfrak{g}_{-2,0}(p):=\mathbb{C}T_pM/(\mathcal{H}_p\oplus \overline{\mathcal{H}}_p),
\,\,
\mathfrak{g}_{-1,-1}(p):=\overline{\mathcal{H}}_p/(\overline{\mathcal{K}_2})_p,
\,\,\mbox{ and }\,\,
\mathfrak{g}_{-1,1}(p):={\mathcal{H}}_p/(\mathcal{K}_2)_p,
\]
and we denote
\[
s:=dim_{\mathbb{C}}(\mathfrak{g}_{-1,1}(p)).
\] 
\end{definition}

\begin{remark}[Notation conventions A]
For a real vector space $V$ with $\mathbb{Z}$-graded decomposition $V=\bigoplus_{j\in \mathbb{Z}} V_j$, we denote its complexification (i.e., tensor product with $\mathbb{C}$) by $\mathbb{C}V=\bigoplus_{j\in \mathbb{Z}} \mathbb{C}V_j$, and label $V_{-}=\bigoplus_{j<0}V_{j}$ and $V_{\leq k}=\bigoplus_{j\leq k}V_{j}$. Conversely, for a complex vector space  $W$ with fixed antilinear involution $\sigma$ we label it's real part as $\Re W:=\{w\in W\,|\, w=\sigma(w)\}$.

In this paper, for special spaces $\mathfrak{g}$ we consider bigraded decomposition of $\mathbb{C}\mathfrak{g}$ with components that we denote by $\mathfrak{g}_{j,k}$, that is,
\[
\mathbb{C}\mathfrak{g}=\bigoplus_{j} \mathbb{C}\mathfrak{g}_{j}
\quad\mbox{where}\quad
\mathbb{C}\mathfrak{g}_j=\bigoplus_{k} \mathfrak{g}_{j,k},
\]
 and with the projections $\pi_{j,k}: \mathbb{C}\mathfrak{g}\to \mathfrak{g}_{j,k}$ along the other components.
In our setting complex conjugation always interchanges $\mathfrak{g}_{j,k}$ and $\mathfrak{g}_{j,-k}$. 
\end{remark}

We will fix the matrix representations of  the $(2s+1)$-dimensional Heisenberg algebra $\mathbb{C}\mathfrak{g}_-=\mathfrak{g}_{-2,0}\oplus \mathfrak{g}_{-1,-1}\oplus \mathfrak{g}_{-1,1}$ and its space of the first grading preserving derivations $\mathfrak{csp}(\mathbb{C}\mathfrak{g}_{-1})=\mathfrak{csp}(\mathbb{C}\mathfrak{g}_{-1})_{0,-2}\oplus \mathfrak{csp}(\mathbb{C}\mathfrak{g}_{-1})_{0,0}\oplus \mathfrak{csp}(\mathbb{C}\mathfrak{g}_{-1})_{0,2}$  as follows:

\begin{align}\label{csp representation}
\mathbb{C}\mathfrak{g}_{-}\oplus\mathfrak{csp}(\mathbb{C}\mathfrak{g}_{-1})=\left\{
\left.
\left(
\begin{array}{cccc}
c & 0 & 0 &  0 \\
v_1 & L & S^{0,2} &  0 \\
v_2 & S^{0,-2}& -L^T  & 0 \\
ui &  v_2^T& -v_1^T &  -c
\end{array}
\right)
\,\right|\,
\parbox{3.6cm}{$L,S^{0,\pm2}\in \mathfrak{gl}(s,\mathbb{C})$, $S^{0,\pm2}=(S^{0,\pm2})^T$, $v_1,v_2\in\mathbb{C}^{s}$, and $c,u\in\mathbb{C}$}
\right\}\\
=\left(
\begin{array}{cccc}
\mathfrak{csp}(\mathbb{C}\mathfrak{g}_{-1})_{0,0}& 0 & 0 &  0 \\
\mathfrak{g}_{-1,1}& \mathfrak{csp}(\mathbb{C}\mathfrak{g}_{-1})_{0,0} &\mathfrak{csp}(\mathbb{C}\mathfrak{g}_{-1})_{0,2}&  0 \\
\mathfrak{g}_{-1,-1} & \mathfrak{csp}(\mathbb{C}\mathfrak{g}_{-1})_{0,-2}& \mathfrak{csp}(\mathbb{C}\mathfrak{g}_{-1})_{0,0}  & 0 \\
\mathfrak{g}_{-2,0}&  \mathfrak{g}_{-1,-1}& \mathfrak{g}_{-1,1}&  \mathfrak{csp}(\mathbb{C}\mathfrak{g}_{-1})_{0,0}
\end{array}
\right).
\end{align}

Note that $\mathbb{C}\mathfrak{g}_{-}\oplus \mathfrak{csp}(\mathbb{C}\mathfrak{g}_{-1})$ is a bigraded Lie algebra and $\mathfrak{csp}(\mathbb{C}\mathfrak{g}_{-1})_{0,0}$ are the derivations in $\mathfrak{csp}(\mathbb{C}\mathfrak{g}_{-1})$ that preserve the bigrading. Therefore, the bundle $\mathcal{F}$ with fibers defined by
\begin{align}\label{F fiber}
\mathcal{F}_p&:=\left\{\tilde \phi:\mathbb{C}\mathfrak{g}_{-}(p)\to \mathbb{C}\mathfrak{g}_{-}\,\left|\,\parbox{7.2cm}{$\tilde \phi$ is a Lie algebra isomorphism preserving bigradings.}\right.\right\}\\
&\,\cong \left\{\tilde \phi:\mathbb{C}\mathfrak{g}_{-}(p)\oplus\mathfrak{csp}\left(\mathbb{C}\mathfrak{g}_{-1}(p)\right)\to \mathbb{C}\mathfrak{g}_{-}\oplus\mathfrak{csp}\left(\mathbb{C}\mathfrak{g}_{-1}\right)\,\left|\,\parbox{3.2cm}{$\tilde \phi$ is a Lie algebra\\ isomorphism pres-\\erving bigradings.}\right.\right\}
\end{align}
is a $\mathrm{CSp}(\mathbb{C}\mathfrak{g}_{-1})_{0,0}$-bundle over $M$, where $\mathrm{CSp}(\mathbb{C}\mathfrak{g}_{-1 })_{0,0}$ is the closed subgroup in $\mathrm{CSp}(\mathbb{C}\mathfrak{g}_{-1 })$ generated by $\mathfrak{csp}(\mathbb{C}\mathfrak{g}_{-1 })_{0,0}$. The bijection between the two sets in \eqref{F fiber} is obtained using the natural extension of an isomorphism $\tilde \phi:\mathbb{C}\mathfrak{g}_{-}(p)\to \mathbb{C}\mathfrak{g}_{-}$ to the larger domain $\mathbb{C}\mathfrak{g}_{-}(p)\oplus\mathfrak{csp}\left(\mathbb{C}\mathfrak{g}_{-1}(p)\right)$ given by defining $\tilde \phi(x):=\tilde\phi\circ x\circ\tilde\phi^{-1}\in \mathfrak{csp}\left(\mathbb{C}\mathfrak{g}_{-1}\right)$ for all $x\in \mathfrak{csp}\left(\mathbb{C}\mathfrak{g}_{-1}(p)\right)$.

The right principal action on $\mathcal{F}$ is given by
\[
(a,\tilde\phi)\mapsto \mathrm{Ad}_{a^{-1}}\circ \tilde\phi,
\]
where, in our notation, $\mathrm{CSp}(\mathbb{C}\mathfrak{g}_{-1})_{0,0}$ is represented by matrices 
\begin{align}\label{csp00matrix}
\left(
\begin{array}{cccc}
b& 0 & 0 &  0 \\
0 & B & 0 &  0 \\
0 & 0& (B^T)^{-1} & 0 \\
0 &  0& 0 &  b^{-1}
\end{array}
\right)\in \mathbb{C}^*\times \mathrm{GL}(s,\mathbb{C})/\mathbb{Z}_2\cong\mathrm{CSp}(\mathbb{C}\mathfrak{g}_{-1 })_{0,0}.
\end{align}
The $\mathbb{Z}_2$ quotient appears because the matrix with $b=-1$ and $B=-\mathrm{Id}$ has trivial adjoint action on the matrix representation of $\mathbb{C}\mathfrak{g}_{-}$ in \eqref{csp representation}.

\begin{lemma}\label{involution lemma}
For each $\tilde \phi \in \mathcal{F}_p,$ the complex conjugation on $\mathbb{C}\mathfrak{g}_{-}(p)$ induces a complex conjugation $\sigma_{\tilde \phi}$ of $\mathbb{C}\mathfrak{g}_{-}\oplus \mathfrak{csp}(\mathbb{C}\mathfrak{g}_{-1})$ given by $\sigma_{\tilde\phi}\circ\tilde\phi(x)=\tilde\phi(\overline{x})$ for $x\in \mathbb{C}\mathfrak{g}_{-}(p)$ and $\sigma_{\tilde\phi}(y)=\sigma_{\tilde\phi}\circ y \circ \sigma_{\tilde\phi}$ for $y\in \mathfrak{csp}(\mathbb{C}\mathfrak{g}_{-1})$, where $\circ$ is the composition of endomorphisms of $\mathbb{C}\mathfrak{g}_{-1}$. There is a Hermitian matrix $H\big(\tilde \phi\big)$ and unit complex number  $e^{ih\big(\tilde \phi\big)}$ such that
\begin{align}\label{involution formula}
&\sigma_{\tilde \phi}\left(
\begin{array}{cccc}
c & 0 & 0 &  0 \\
v_1 & L & S^{0,2} &  0 \\
v_2 & S^{0,-2}& -L^T  & 0 \\
ui &  v_2^T& -v_1^T &  -c
\end{array}
\right)=\\
=&\left(
\begin{array}{cccc}
\overline{c}& 0 & 0 &  0 \\
e^{ih\big(\tilde \phi\big)}(H\big(\tilde \phi\big)^T)^{-1}\overline{v_2} & -(H(\tilde \phi\big)^T)^{-1}L^*H(\tilde \phi\big)^T & (H(\tilde \phi\big)^T)^{-1}\overline{S^{0,-2}}H(\tilde \phi\big)^{-1} &  0 \\
e^{ih\big(\tilde \phi\big)}H\big(\tilde \phi\big)\overline{v_1} & H(\tilde \phi\big)\overline{S^{0,2}}H^T(\tilde \phi\big) & H(\tilde \phi\big)\overline{L}H(\tilde \phi\big)^{-1} & 0 \\
e^{2ih\big(\tilde \phi\big)}\overline{u}i &  \overline{v_1}^TH\big(\tilde \phi\big)^Te^{ih\big(\tilde \phi\big)}& -\overline{v_2} ^T H\big(\tilde \phi\big)^{-1}e^{ih\big(\tilde \phi\big)}&  -\overline{c}
\end{array}
\right).
\end{align}
The pair $(H\big(\tilde \phi\big),e^{ih\big(\tilde \phi\big)})$ is defined uniquely up to a sign and is related to the Levi form, as the pullback $(\tilde \phi^{-1})^* \mathcal{L}^1(p) \in\mathrm{Hom}\left(\mathfrak{g}_{-1,1}\times\mathfrak{g}_{-1,1},\mathfrak{g}_{-2,0}\right)$ is the map
\[
(v,v^\prime)\mapsto \left(
\begin{array}{cccc}
0& 0 & 0 &  0 \\
0& 0 & 0 &  0 \\
0 & 0& 0  & 0 \\
e^{ih\big(\tilde\phi\big)}v^TH\big(\tilde\phi\big)\overline{v^{\prime}}i &  0& 0&  0
\end{array} \right)
\]
for $v,v^{\prime}\in \mathfrak{g}_{-1,1}.$
Moreover, the pair $(H\big(\tilde \phi\big),e^{ih\big(\tilde \phi\big)})$ transforms under the action of element \eqref{csp00matrix} to 
\begin{align}\label{change in H}
\left( B^TH\big(\tilde \phi\big)\overline{B},b\overline{b}^{-1}e^{ih\big(\tilde \phi\big)}\right).
\end{align}
\end{lemma}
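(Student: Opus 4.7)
The plan is to leverage the fact that $\sigma_{\tilde\phi}$ is an antilinear involutive Lie algebra automorphism of $\mathbb{C}\mathfrak{g}_-\oplus\mathfrak{csp}(\mathbb{C}\mathfrak{g}_{-1})$ that preserves the first grading and swaps the sign of the second, since $\tilde\phi$ is bigraded and the complex conjugation on $\mathbb{C}\mathfrak{g}_-(p)$ interchanges $\mathfrak{g}_{-1,1}(p)$ with $\mathfrak{g}_{-1,-1}(p)$. The action of $\sigma_{\tilde\phi}$ on $\mathfrak{csp}(\mathbb{C}\mathfrak{g}_{-1})$ is dictated by the conjugation formula given in the statement, its action on $\mathfrak{g}_{-1,-1}$ is forced by $\sigma_{\tilde\phi}^2=\mathrm{id}$, and its action on $\mathfrak{g}_{-2,0}=[\mathfrak{g}_{-1,1},\mathfrak{g}_{-1,-1}]$ is then forced by bracket compatibility. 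Thus the entire freedom in $\sigma_{\tilde\phi}$ lies in an antilinear isomorphism $\mathfrak{g}_{-1,1}\to\mathfrak{g}_{-1,-1}$, which in the chosen bases takes the form $v_1\mapsto A\overline{v_1}$ for some $A\in\mathrm{GL}(s,\mathbb{C})$.

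The next step is to extract the constraint on $A$ from bracket compatibility. A direct block-matrix calculation using \eqref{csp representation} gives the Heisenberg bracket $[v_1,v_2]$ as the element of $\mathfrak{g}_{-2,0}$ with $(4,1)$-entry $-2v_1^Tv_2$, i.e., $u=2iv_1^Tv_2$. Writing $\sigma_{\tilde\phi}|_{\mathfrak{g}_{-2,0}}$ as $u\mapsto\mu\overline{u}$ with $|\mu|=1$ (from involutivity) and using $\sigma_{\tilde\phi}|_{\mathfrak{g}_{-1,-1}}(v_2)=\overline{A}^{-1}\overline{v_2}$ (also from involutivity), the identity $\sigma_{\tilde\phi}([v_1,v_2])=[\sigma_{\tilde\phi}(v_1),\sigma_{\tilde\phi}(v_2)]$ reduces to $A^T=\mu\overline{A}$, equivalently $A=\mu A^*$. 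Setting $\mu=e^{2ih}$ for some $h\in\mathbb{R}$ and defining $H:=e^{-ih}A$, I would verify $H^*=e^{ih}A^*=e^{ih}\cdot e^{-2ih}A=e^{-ih}A=H$, giving the Hermitian decomposition $A=e^{ih}H$. Any alternative decomposition $A=e^{ih'}H'$ forces $(e^{i(h'-h)})^2=1$, so $(H,e^{ih})$ is unique up to the simultaneous sign change $(H,e^{ih})\mapsto(-H,-e^{ih})$.

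The remaining assertions then follow mechanically. The explicit formulas for $\sigma_{\tilde\phi}$ on the $\mathfrak{csp}$ components in \eqref{involution formula} come from the conjugation rule $\sigma_{\tilde\phi}(y)=\sigma_{\tilde\phi}\circ y\circ\sigma_{\tilde\phi}$ applied componentwise, where the Hermitian identity $\overline{H}=H^T$ is used to place $H$ versus $H^T$ correctly in each block. The Levi form identity follows from $(\tilde\phi^{-1})^*\mathcal{L}^1(v,v')=\frac{1}{2i}[v,\sigma_{\tilde\phi}(v')]$ by substituting $\sigma_{\tilde\phi}(v')=e^{ih}H\overline{v'}$ and the Heisenberg bracket to obtain the $(4,1)$-entry $e^{ih}v^TH\overline{v'}i$. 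The transformation rule \eqref{change in H} under the right action $\tilde\phi\mapsto\mathrm{Ad}_{a^{-1}}\circ\tilde\phi$ with $a$ as in \eqref{csp00matrix} is obtained from the identity $\sigma_{\mathrm{Ad}_{a^{-1}}\circ\tilde\phi}=\mathrm{Ad}_{a^{-1}}\circ\sigma_{\tilde\phi}\circ\mathrm{Ad}_a$: tracing $v_1$ through $\mathrm{Ad}_a$ (giving $b^{-1}Bv_1$), then $\sigma_{\tilde\phi}$ (giving $\overline{b}^{-1}e^{ih}H\overline{B}\overline{v_1}$ in the $\mathfrak{g}_{-1,-1}$-slot), then $\mathrm{Ad}_{a^{-1}}$ (premultiplying the $\mathfrak{g}_{-1,-1}$-component by $bB^T$), yields the pair $(B^TH\overline{B},b\overline{b}^{-1}e^{ih})$, with $B^TH\overline{B}$ Hermitian as required.

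The main obstacle is the bookkeeping required to correctly propagate antilinearity through the block matrix structure: every scalar pulled through $\sigma_{\tilde\phi}$ must be conjugated, and the correct placement of $H$ versus $H^T$ in each block depends sensitively on which grading slot the parameter lives in. Conceptually, the pivotal step is the passage from the raw invertible matrix $A$ to the Hermitian pair $(H,e^{ih})$, which is exactly what encodes the Levi form data within the abstract involution.
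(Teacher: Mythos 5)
Your proposal is correct and follows essentially the same route as the paper: both arguments reduce $\sigma_{\tilde\phi}$ to a second-grading-reversing antilinear map on $\mathbb{C}\mathfrak{g}_{-1}$ and extract the Hermitian pair $(H,e^{ih})$ up to sign from the combination of involutivity and compatibility with the Heisenberg bracket, with the Levi-form identity and the transformation rule \eqref{change in H} then following by direct computation. The only (cosmetic) difference is the order in which the two constraints are applied: the paper first places the map in $\mathrm{CSp}(\mathbb{C}\mathfrak{g}_{-1})$ precomposed with conjugation (so the symplectic condition relates $A_1$ and $A_2$) and then uses $\sigma_{\tilde\phi}^2=\mathrm{Id}$ to get Hermitianity, whereas you use involutivity to relate the actions on $\mathfrak{g}_{-1,\pm1}$ and the bracket to get $A^T=\mu\overline{A}$ --- logically equivalent computations.
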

\begin{remark}\label{sign of H and h}
Note, that if the signature $H\big(\tilde \phi\big)$ is not split, then fixing the signature fixes the pair uniquely. Nevertheless, the reader can check that formulas in the paper usually do not depend on this choice. Also note that the form of $(\tilde \phi^{-1})^* \mathcal{L}^1(p)$ explains the choice to use $ui$ instead of $u$ in \eqref{csp representation} and is related to our convention (using $\Re(w)$ instead of $\Im(w)$) for defining equations \eqref{gen def fun}. 
\end{remark}
\begin{proof}[Proof (of Lemma \ref{involution lemma})]
The involution $x\mapsto \overline{x}$ is a Lie algebra automorphism that reverses the sign of the second weight in the bigraded decomposition of $\mathbb{C}\mathfrak{g}_{-}(p)$, and since $\tilde\phi$ is bigrading preserving, $\sigma_{\tilde\phi}$ will reverse the second grading of $\mathbb{C}\mathfrak{g}_{-}$. Accordingly, it can be represented by the adjoint action of some matrix
\begin{align}\label{CSP mat a}
\left(
\begin{array}{cccc}
\frac{1}{a}& 0 & 0 &  0 \\
0 & 0 & A_1 &  0 \\
0 & A_2& 0 & 0 \\
0 &  0& 0 &  a
\end{array}\right)\in \mathbb{C}^*\times \mathrm{Sp}(2s,\mathbb{C})/\mathbb{Z}_2\cong\mathrm{CSp}(\mathbb{C}\mathfrak{g}_{-1 })
\end{align}
pre-composed with complex conjugation in the matrix coordinates, that is, 
\begin{align}
\bgroup
\arraycolsep=2pt
\sigma_{\tilde \phi}\left(
\begin{array}{cccc}
c & 0 & 0 &  0 \\
v_1 & L & S^{0,2} &  0 \\
v_2 & S^{0,-2}& -L^T  & 0 \\
ui &  v_2^T& -v_1^T &  -c
\end{array}
\right)=\left(
\begin{array}{cccc}
\frac{\overline{c}}{a\overline{a}}& 0 & 0 &  0 \\
aA_1\overline{v_2} & -A_1L^*\overline{A_2} & A_1\overline{S^{0,-2}A_1} &  0 \\
aA_2\overline{v_1} & A_2\overline{S^{0,2}A_2}& A_2\overline{LA_1} & 0 \\
-a^2\overline{u}i &  a\overline{v_1}^TA_2^T& -a\overline{v_2}^TA_1^T&  -\overline{c}a\overline{a}
\end{array}
\right).
\egroup
\end{align}
 and $A_1^TA_2=-\mathrm{Id}$. The matrix in \eqref{CSP mat a}, and therefore the triple $(a,A_1,A_2)$, is defined up to rescaling by $-1$, corresponding to $\mathbb{Z}_2$ action in \eqref{CSP mat a}. Since $\sigma_{\tilde \phi}^2=\mathrm{Id}$, we see that $a^2\overline{a^2}=1$ and $a\overline{a}A_2\overline{A_1}=\mathrm{Id}.$ Thus $a\overline{a}=1$ and $\overline{A_2}=-A_2^T$. Therefore, there exists a Hermitian matrix $H\big(\tilde \phi\big)$ 
\begin{align}\label{H phi def}
A_2=iH\big(\tilde \phi\big)
\quad\mbox{ and }\quad
A_1=i(H\big(\tilde \phi\big)^T)^{-1}.
\end{align}
 Taking $a=- i e^{ih(\phi)}$ provides the claimed formula for $\sigma_{\tilde \phi}.$ 
 Notice that the sign ambiguity for $(H\big(\tilde \phi\big),e^{ih\big(\tilde \phi\big)})$ corresponds to the above $\mathbb{Z}_2$-action.

 Lastly, the transformation rule for pairs $(H\big(\tilde \phi\big),e^{ih\big(\tilde \phi\big)})$  is straightforward to calculate using the above formula for $(\tilde \phi^{-1})^* \mathcal{L}^1(p)$.
\end{proof}

A consequence of formula \eqref{involution formula} is that fixing a second grading reversing antilinear involution $\sigma$ of $\mathbb{C}\mathfrak{g}_-$ is equivalent to fixing $(H\big(\tilde \phi\big),e^{ih\big(\tilde \phi\big)})$ to some particular value $(\mathbf{H},e^{ih})$ up to sign. Then we can form a subbundle of $\mathcal{F}_{e^{ih}\mathbf{H}}$ of $\mathcal{F}$ consisting of $\tilde \phi$ realizing this value, i.e.,
\[
\mathcal{F}_{e^{ih}\mathbf{H}}:=\left\{\left.\tilde \phi\in\mathcal{F}\,\right|\, h\big(\tilde \phi\big)H\big(\tilde \phi\big)=e^{ih}\mathbf{H} \right\}.
\]
Note that elements in $\mathcal{F}_{e^{ih}\mathbf{H}}$ are precisely the bigraded Lie algebra isomorphisms intertwining the corresponding antilinear involutions on each $\mathfrak{g}_{-}(p)$ with fixed $\sigma:=\sigma_{\tilde \phi}$ that is independent of $\tilde \phi \in \mathcal{F}_{e^{ih}\mathbf{H}}$ and that 
$\mathcal{F}_{e^{ih}\mathbf{H}}$ is a principal bundle with structure group
\[
\mathrm{CU}\left(e^{ih}\mathbf{H}\right):=\left\{(b,B)\in \mathrm{CSp}(\mathbb{C}\mathfrak{g}_{-1})_{0,0}\,\left|\,b\overline{b}^{-1}e^{ih}B^T\mathbf{H}\overline{B}=e^{ih}\mathbf{H}\right.\right\}.
\]
\begin{definition}\label{adapted frame}
A basis $\phi$ of $\mathbb{C}T_pM$ is \emph{adapted} if it has the form
\begin{align}\label{adapted basis}
    \phi=\left(g,f_1,\dots,f_s,\overline{f_1},\dots, \overline{f_s},e_1,\dots,e_{n-s},\overline{e_1},\dots,\overline{e_{n-s}}\right),
\end{align}
where $(g)$, $(f_1,\ldots, f_s)$, and $(e_1,\ldots, e_{n-s})$ represent bases of $\mathbb{C}\mathfrak{g}_{-2,0}(p)$, $\mathfrak{g}_{-1,1}(p)$, and $\mathcal{K}_2$ respectively.
For such $\phi$, we label the \emph{corresponding graded basis} of the nondegenerate part $\mathbb{C}\mathfrak{g}_{-}(p)$ of the bigraded symbol as $\phi_{\mathrm{gr}}:=(g,f_1,\dots,f_s,\overline{f_1},\dots, \overline{f_s})$.
\end{definition}

Now, $\tilde \phi \in \mathcal{F}_p$ defines a unique basis $(g,f_1,\dots,f_s,h_1,\dots, h_s)$ of $\mathbb{C}\mathfrak{g}_{-}(p)$ such that $\tilde \phi(ug+\sum_{j=1}^s(v_jf_j+w_jh_j)=ui+v+w\in \mathbb{C}\mathfrak{g}_-$ for $ui\in 
 \mathfrak{g}_{-2,0}$, $v\in \mathfrak{g}_{-1,1}$ and $w\in \mathfrak{g}_{-1,-1}$, and it is clear from the action of $\mathrm{CSp}(\mathbb{C}\mathfrak{g}_{-1 })_{0,0}$ that points of $\mathcal{F}_p$ are in bijective correspondence with all possible bases $(g,f_1,\dots,f_s)$ of $\mathfrak{g}_{-2,0}(p)\oplus\mathfrak{g}_{-1,1}(p)$. Therefore, there is a bijective correspondence between $\tilde \phi \in \mathcal{F}_p$ and the graded bases $\phi_{\mathrm{gr}}:=(g,f_1,\dots,f_s,\overline{f_1},\dots, \overline{f_s})$ of the nondegenerate part of the bigraded symbol of $M$ at $p.$ Moreover, if $\phi$ and $\psi$ are two adapted bases of $\mathbb{C}T_pM$ such that $\phi_{\mathrm{gr}}=\psi_{\mathrm{gr}}$ then the $\tilde \phi=\tilde \psi$ holds for the corresponding elements of $\mathcal{F}_p$.

 \begin{remark}[Notation conventions B]\label{phi notation convention} 
 In the article, we reserve the respective labels $\phi$, $\phi_{\mathrm{gr}}$, and $\tilde \phi$ for adapted bases, their corresponding graded bases, and the corresponding element of $\mathcal{F}$. In sequel, referencing a $\tilde\phi$ without having fixed a $\phi$ to which it corresponds indicates that the presented result is independent on the corresponding adapted basis $\phi$.
\end{remark}
 
 The explicit relation between $\phi_{\mathrm{gr}}$ and the aforementioned $(g,f_1,\dots,f_s,h_1,\dots, h_s)$ corresponding to a given $\phi$ follows directly from the formula for the complex conjugation in Lemma \ref{involution lemma}:
\begin{corollary}[corollary of Lemma \ref{involution lemma}]\label{F bundle dual descriptions}
For $\tilde \phi \in \mathcal{F}_p$, the corresponding bases $(g,f_1,\dots,f_s,\allowbreak h_1,\dots, h_s)$  and $\phi_{\mathrm{gr}}=(g,f_1,\dots,f_s,\overline{f_1},\dots, \overline{f_s})$ are related by
\[
\left(
\begin{array}{c}
     h_1  \\
     \vdots\\
     h_s
\end{array}\right)
=
e^{ih\big(\tilde \phi\big)}H\big(\tilde \phi\big)
\left(
\begin{array}{c}
     \overline{f_1}  \\
     \vdots\\
     \overline{f_s}
\end{array}\right).
\]
If we write $(u,v,v^{\prime})\in \mathbb{C}\oplus \mathbb{C}^s \oplus \mathbb{C}^s$ for the coordinates in the basis $\phi_{\mathrm{gr}}$, then
\begin{align}\label{tilde phi minus formula}
\tilde{\phi}(u,v,v^{\prime})=\left(
\begin{array}{cccc}
0& 0 & 0 &  0 \\
v & 0 & 0 &  0 \\
e^{ih\big(\tilde \phi\big)}H\big(\tilde \phi\big)v^{\prime} & 0& 0  & 0 \\
ui &  (v^{\prime})^TH\big(\tilde \phi\big)^Te^{ih\big(\tilde \phi\big)}& -v^T &  0
\end{array}
\right).
\end{align}
Moreover, $\tilde \phi$ extends to $\mathfrak{csp}(\mathbb{C}\mathfrak{g}_{-1}(p))$ by mapping  matrix $L$ representing the coordinates in the basis $(f_i\otimes f_j^*)_{i,j=1..s}$, matrix $\Xi$ representing the coordinates in $(f_i\otimes \overline{f_j}^*)_{i,j=1..s}$, matrix $\overline{\Xi}$ representing the coordinates in $(\overline{f_i}\otimes f_j^*)_{i,j=1..s}$ and $c$ representing the multiple of $\mathrm{Id}$ to
\begin{align}\label{tilde phi coordinates}
\tilde{\phi}(c,L,\Xi,\overline{\Xi})=\left(
\begin{array}{cccc}
-c& 0 & 0 &  0 \\
0 & L & e^{-ih\big(\tilde \phi\big)} \Xi H\big(\tilde \phi\big)^{-1} &  0 \\
0 & e^{ih\big(\tilde \phi\big)} H\big(\tilde \phi\big)\overline{\Xi} &-L^T & 0 \\
0 &  0& 0 &  c
\end{array}
\right).
\end{align}
\end{corollary}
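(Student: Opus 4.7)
The corollary breaks into three parts that all follow from Lemma \ref{involution lemma} together with the defining property of the basis $(g,f_1,\dots,f_s,h_1,\dots,h_s)$, namely, $\tilde\phi$ sends it to the standard basis of $\mathfrak{g}_{-2,0}\oplus\mathfrak{g}_{-1,1}\oplus\mathfrak{g}_{-1,-1}$ in the representation \eqref{csp representation}. The plan is to first establish the change-of-basis formula relating $(h_i)$ to $(\overline{f_j})$, and then read off \eqref{tilde phi minus formula} and \eqref{tilde phi coordinates} by direct substitution.

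To obtain the first formula, I would invoke the intertwining relation $\sigma_{\tilde\phi}\circ\tilde\phi=\tilde\phi\circ\sigma_p$ (where $\sigma_p$ denotes complex conjugation on $\mathbb{C}\mathfrak{g}_{-}(p)$) applied to the basis vector $f_j$. Since $\tilde\phi(f_j)$ is the matrix in \eqref{csp representation} with $v_1=e_j$ and every other entry zero, formula \eqref{involution formula} shows that $\sigma_{\tilde\phi}(\tilde\phi(f_j))=\tilde\phi(\overline{f_j})$ is the matrix whose $v_2$-entry equals the $j$-th column of $e^{ih(\tilde\phi)}H(\tilde\phi)$. Re-interpreting this vector through the basis $\{\tilde\phi(h_i)\}$ of $\mathfrak{g}_{-1,-1}$ expresses each $\overline{f_j}$ as a linear combination of the $h_i$'s; inverting this linear relation and using Hermitianness of $H(\tilde\phi)$ to rewrite $(H(\tilde\phi)^T)^{-1}$ yields the stated identity, with the sign ambiguity of $(H(\tilde\phi),e^{ih(\tilde\phi)})$ noted in Remark \ref{sign of H and h} playing its expected role.

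Formula \eqref{tilde phi minus formula} then follows by writing a general element $ug+\sum_j(v_jf_j+v'_j\overline{f_j})\in\mathbb{C}\mathfrak{g}_-(p)$ in the basis $(g,f_j,h_j)$ via the change-of-basis formula just derived, and evaluating $\tilde\phi$ by its defining property. Formula \eqref{tilde phi coordinates} is read off from the natural extension $\tilde\phi(x):=\tilde\phi\circ x\circ\tilde\phi^{-1}$: express $x\in\mathfrak{csp}(\mathbb{C}\mathfrak{g}_{-1}(p))$ as an operator on $\mathbb{C}\mathfrak{g}_{-1}(p)$ in the basis $(f_j,\overline{f_j})$ with the indicated block form $(c,L,\Xi,\overline{\Xi})$, and transport each block to the standard basis of $\mathbb{C}\mathfrak{g}_{-1}$ using the first part; the diagonal blocks $L$ and $-L^T$ together with the conformal scalar $c$ pass through unchanged, while the cross blocks $\Xi$ and $\overline{\Xi}$ acquire the correction factors $e^{-ih(\tilde\phi)}H(\tilde\phi)^{-1}$ and $e^{ih(\tilde\phi)}H(\tilde\phi)$ respectively from the change of basis between $(\overline{f_j})$ and $(h_j)$. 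The principal hazard in the argument is purely notational, namely tracking Hermitian conjugates and the sign ambiguity of the pair $(H(\tilde\phi),e^{ih(\tilde\phi)})$, rather than conceptual.
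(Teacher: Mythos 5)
Your proposal is correct and takes essentially the same route as the paper, which gives no separate argument for this corollary beyond the remark that it ``follows directly from the formula for the complex conjugation in Lemma \ref{involution lemma}'' --- precisely the computation you spell out (apply $\sigma_{\tilde\phi}\circ\tilde\phi=\tilde\phi\circ\sigma_p$ to $f_j$, read off the $v_2$-block of \eqref{involution formula}, then substitute to get \eqref{tilde phi minus formula} and conjugate to get \eqref{tilde phi coordinates}). One caution on the step you gloss as ``inverting this linear relation and using Hermitianness'': a literal inversion of $\overline{f_j}=\sum_i\big(e^{ih(\tilde\phi)}H(\tilde\phi)\big)_{ij}h_i$ gives the basis-vector relation $h=e^{-ih(\tilde\phi)}\big(H(\tilde\phi)^T\big)^{-1}\overline{f}$, so the displayed matrix $e^{ih(\tilde\phi)}H(\tilde\phi)$ is the one by which the \emph{coordinates} (rather than the basis vectors) transform --- the reading that is consistent with \eqref{tilde phi minus formula} and \eqref{tilde phi coordinates} --- and your write-up should make that convention explicit rather than attributing the discrepancy to Hermitian symmetry of $H(\tilde\phi)$.
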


The Levi kernel fibers $\mathcal{K}_p$ are embedded into $\mathfrak{csp}(\mathbb{C}\mathfrak{g}_{-1}(p))$ via the embedding $\iota:\mathcal{K}_p\to \mathfrak{csp}(\mathbb{C}\mathfrak{g}_{-1}(p))_{0,2}$ given by
\begin{align}\label{L kernel csp embedding}
\iota(v)(w):=
\begin{cases}
0 & \mbox{ if }w\in \mathfrak{g}_{-1,1}(p)\\
[V,W]_p+(\mathcal{K}_2)_p+\overline{\mathcal{H}}_p & \mbox{ if }w\in \mathfrak{g}_{-1,-1}(p)
\end{cases}
\end{align}
where $V$ and $W$ are sections of $\mathcal{K}_2$ and $\overline{\mathcal{H}}$ respectively satisfying $V_p\equiv v\pmod{\mathcal{K}_2}$ and $W_p\equiv w\pmod{\overline{\mathcal{K}_2}}$.  It is shown in \cite{porter2021absolute} that $\iota$ is indeed well defined by \eqref{L kernel csp embedding} independent from the choice of vector fields $V$ and $W$. The value $\iota(v)(w)$, formally defined as an element in $\mathcal{H}_p\oplus \overline{\mathcal{H}}_p/((\mathcal{K}_2)_p\oplus \overline{\mathcal{H}}_p)$, is naturally identified with a vector in $\mathcal{H}_p/(\mathcal{K}_2)_p\subset \mathbb{C}\mathfrak{g}_{-1}(p)$, thereby defining an element in $\mathfrak{gl}(\mathbb{C}\mathfrak{g}_{-1}(p))$, which one can furthermore show belongs to $\mathfrak{csp}(\mathbb{C}\mathfrak{g}_{-1}(p))$. We extend $\iota$ to an embedding $\iota:(\mathcal{K}_2)_p\oplus (\overline{\mathcal{K}_2})_p\to \mathfrak{csp}(\mathbb{C}\mathfrak{g}_{-1}(p))_{0,2}\oplus \mathfrak{csp}(\mathbb{C}\mathfrak{g}_{-1}(p))_{0,-2}$ using the rule
\[
\iota(v):=\overline{\iota(\overline{v})}
\quad\quad\forall\, v\in  (\overline{\mathcal{K}_2})_p,
\]
where the complex conjugation $v\mapsto \overline{v}$ is defined on $\mathfrak{csp}(\mathbb{C}\mathfrak{g}_{-1}(p))$ via the natural extension of conjugation on $\mathbb{C}\mathfrak{g}_{-1}(p)$ to all of $\mathbb{C}\mathfrak{g}_{-}(p)\oplus \mathfrak{csp}(\mathbb{C}\mathfrak{g}_{-1}(p))$ (i.e., the unique conjugation such that $\overline{[v,w]}=[\overline{v},\overline{w}]$ on the larger algebra). 

\begin{definition}[introduced in \cite{porter2021absolute}]\label{CR symbol}
The \emph{bigraded symbol} $\mathbb{C}\mathfrak{g}_{\leq 0}(p)$ of $M$ at $p$ is the bigraded vector space 
\begin{align}\label{symbol decomposition}
\mathbb{C}\mathfrak{g}_{\leq 0}(p)&:=\mathfrak{g}_{-2,0}(p)\oplus\mathfrak{g}_{-1,-1}(p)\oplus\mathfrak{g}_{-1,1}(p)\oplus \mathfrak{g}_{0,-2}(p)\oplus \mathfrak{g}_{0,0}(p)\oplus \mathfrak{g}_{0,2}(p)\\&=\mathbb{C}\mathfrak{g}_{-}(p)\oplus \mathfrak{g}_{0,-2}(p)\oplus \mathfrak{g}_{0,0}(p)\oplus \mathfrak{g}_{0,2}(p)
\end{align}
given by $\mathfrak{g}_{0,2}(p):=\iota(\mathcal{K}_2)_p$, $\mathfrak{g}_{0,-2}(p):=\iota(\overline{\mathcal{K}_2})_p$, and 
\[
\mathfrak{g}_{0,0}(p):=\{v\in \mathfrak{csp}(\mathbb{C}\mathfrak{g}_{-1}(p))\,|\, [v,w]\subset \mathfrak{g}_{j,k}(p)\,\forall\, w\in\mathfrak{g}_{j,k}(p),\,\forall (j,k)\in\mathcal{I} \},
\]
with $\mathcal{I}:=\{(-1,-1),(-1,1),(0,-2),(0,2)\}$, together with the antilinear involution given by restricting $v\mapsto \overline{v}$ from $\mathbb{C}\mathfrak{g}_{-}(p)\oplus \mathfrak{csp}(\mathbb{C}\mathfrak{g}_{-1}(p))$ to $\mathbb{C}\mathfrak{g}_{\leq 0}(p)$. We label 
\[s:=\dim_{\mathbb{C}}(\mathfrak{g}_{-1,-1})\quad {\rm and} \quad r:=\dim_{\mathbb{C}}(\mathfrak{g}_{0,-2}).\]
\end{definition}

\begin{example}\label{maximal kernel dimension}
Consider the bigraded symbol defined as the bigraded Lie algebra $\mathbb{C}\mathfrak{g}_{-}\oplus \mathfrak{csp}(\mathbb{C}\mathfrak{g}_{-1})$, i.e., $\mathfrak{g}_{0,j}:=\mathfrak{csp}(\mathbb{C}\mathfrak{g}_{-1})_{0,j}$  for $j\in\{-2,0,2\}$, equipped with an antilinear involution of the form \eqref{involution formula} with $(\mathbf{H},e^{ih})$ in place of $(H(\tilde \phi),e^{ih(\tilde \phi)})$ for any $\mathbf{H}$ nondegenerate Hermitian and $h\in\mathbb{R}$. Necessarily, any $2n+1=2(r+s)+1$ dimensional uniformly $2$-nondegenerate CR hypersurface for which
\[
r=\binom{s+1}{2}
\]
has one of these bigraded symbols at every point. In \cite[Section 5.7]{gregorovic2021equivalence}, homogeneous maximally symmetric CR hypersurfaces with these symbols are given by defining equations of the form \eqref{gen def fun}.
\end{example}

We can conclude the following:

\begin{proposition}\label{matrix representation prop}
Fix a point $p\in M$ and $\tilde \phi\in \mathcal{F}_p$ for a $(2n+1)$-dimensional uniformly $k$-nondegenerate CR hypersurface $M$.  The space $\tilde\phi(\mathfrak{g}_{0,2}(p))$ is spanned by matrices 
\begin{align}\label{symbol in the frame}
\tilde \phi\big( \iota(e_\alpha)\big)= \left(
\begin{array}{cccc}
0 & 0 & 0 & 0 \\
0 & 0  &e^{-ih\big(\tilde \phi\big)}\Xi_{\alpha}\big(\tilde \phi\big)H\big(\tilde \phi\big)^{-1} & 0 \\
0 &  0 & 0 & 0 \\
0 &  0 & 0 & 0
\end{array}
\right)
\end{align}
for $\alpha=1,\dots,n-s$, where $\Xi_1\big(\tilde \phi\big),\dots,\Xi_{n-s}\big(\tilde \phi\big)$ are the $s\times s$ matrices corresponding to coefficients by $f_i\otimes \overline{f_j}^*$  of  $\iota(e_1),\dots,\iota(e_{n-s})$ in a compatible adapted basis $\phi$ of the form \eqref{adapted basis}, and the pair $(H\big(\tilde \phi\big),e^{ih\big(\tilde \phi\big)})$ is as in Lemma \ref{involution lemma}.

Since $\tilde\phi(\mathfrak{g}_{0,2}(p))$ transforms to $Ad_{a^{-1}}\circ \tilde\phi(\mathfrak{g}_{0,2}(p))$ for the matrix $a$ of the form \eqref{csp00matrix}, the matrices $\Xi_\alpha\big(\tilde \phi\big)$ transform as 
\begin{align}\label{change in phi}
\Xi_\alpha(\mathrm{Ad}_{a^{-1}}\circ\tilde \phi)=b\overline{b}^{-1}B^{-1}\Xi_\alpha\big(\tilde \phi\big)\overline{B}.
\end{align}
\end{proposition}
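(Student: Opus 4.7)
The plan is a direct two-step calculation combining the extension formula \eqref{tilde phi coordinates} of Corollary \ref{F bundle dual descriptions} with the involution transformation rule \eqref{change in H} of Lemma \ref{involution lemma}.

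First I would establish the matrix form. Since $\mathfrak{g}_{0,2}(p)=\iota((\mathcal{K}_2)_p)$ by Definition \ref{CR symbol}, and since $(e_1,\dots,e_{n-s})$ is the chosen basis of $(\mathcal{K}_2)_p$ in the adapted basis \eqref{adapted basis}, the space $\tilde\phi(\mathfrak{g}_{0,2}(p))$ is spanned by the images $\tilde\phi(\iota(e_\alpha))$. Each $\iota(e_\alpha)$ lies in $\mathfrak{csp}(\mathbb{C}\mathfrak{g}_{-1}(p))_{0,2}$, so by \eqref{L kernel csp embedding} it annihilates $\mathfrak{g}_{-1,1}(p)$ and sends $\overline{f_j}\in\mathfrak{g}_{-1,-1}(p)$ into $\mathfrak{g}_{-1,1}(p)$ with coefficients encoded exactly by the $s\times s$ matrix $\Xi_\alpha\big(\tilde\phi\big)$ of the statement. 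In the notation of Corollary \ref{F bundle dual descriptions}, this means the coordinates of $\iota(e_\alpha)$ in $\mathfrak{csp}(\mathbb{C}\mathfrak{g}_{-1}(p))$ are $(c,L,\Xi,\overline{\Xi})=(0,0,\Xi_\alpha\big(\tilde\phi\big),0)$. Substituting these into \eqref{tilde phi coordinates} immediately yields \eqref{symbol in the frame}.

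For the transformation rule, I would apply $\mathrm{Ad}_{a^{-1}}$ directly to the matrix \eqref{symbol in the frame} with $a$ as in \eqref{csp00matrix}. Only the $(2,3)$-block of the conjugated matrix is nonzero, and it equals $B^{-1}e^{-ih(\tilde\phi)}\Xi_\alpha\big(\tilde\phi\big)H\big(\tilde\phi\big)^{-1}(B^T)^{-1}$. On the other hand, the first part of the statement, now applied to the new element $\mathrm{Ad}_{a^{-1}}\circ\tilde\phi\in\mathcal{F}_p$, requires this same $(2,3)$-block to equal $e^{-ih(\mathrm{Ad}_{a^{-1}}\circ\tilde\phi)}\Xi_\alpha(\mathrm{Ad}_{a^{-1}}\circ\tilde\phi)H(\mathrm{Ad}_{a^{-1}}\circ\tilde\phi)^{-1}$, where by \eqref{change in H} the new Hermitian data is $\big(B^TH\big(\tilde\phi\big)\overline{B},\,b\overline{b}^{-1}e^{ih(\tilde\phi)}\big)$. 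Equating the two expressions for the $(2,3)$-block and cancelling the common right factor $H\big(\tilde\phi\big)^{-1}(B^T)^{-1}$ produces \eqref{change in phi} after a single rearrangement.

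The argument is essentially bookkeeping; no step is conceptually difficult. The one place to be careful is compositionally combining the three matrix-block formulas (the involution data, the lift \eqref{tilde phi coordinates}, and the right action) without sign or position errors, especially since the $(2,3)$-block simultaneously carries the factor $e^{-ih(\tilde\phi)}$ and the inverse $H\big(\tilde\phi\big)^{-1}$, whose own transformation under the group action must be reconciled with the incoming $B^{-1}$ and $(B^T)^{-1}$ factors coming from $\mathrm{Ad}_{a^{-1}}$.
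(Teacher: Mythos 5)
Your proof is correct, but it is genuinely more self-contained than the paper's. The paper disposes of this proposition in two sentences by citing the matrix representations of bigraded symbols in Section 5 of the Sykes--Zelenko paper and the translation between that paper's conventions and the present ones, adding only the observation that changing the compatible adapted basis merely changes the generators of $\tilde\phi(\mathfrak{g}_{0,2}(p))$. You instead derive everything internally: the first part by reading off that $\iota(e_\alpha)$ has coordinates $(c,L,\Xi,\overline{\Xi})=(0,0,\Xi_\alpha\big(\tilde\phi\big),0)$ and substituting into \eqref{tilde phi coordinates}, and the second part by computing the $(2,3)$-block of $\mathrm{Ad}_{a^{-1}}$ applied to \eqref{symbol in the frame} and equating it with the first part's formula evaluated at $\mathrm{Ad}_{a^{-1}}\circ\tilde\phi$ using \eqref{change in H}. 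I checked the block computation: the conjugated $(2,3)$-block is $B^{-1}e^{-ih(\tilde\phi)}\Xi_\alpha\big(\tilde\phi\big)H\big(\tilde\phi\big)^{-1}(B^T)^{-1}$, while the new-frame expression is $b^{-1}\overline{b}\,e^{-ih(\tilde\phi)}\Xi_\alpha(\mathrm{Ad}_{a^{-1}}\circ\tilde\phi)\overline{B}^{-1}H\big(\tilde\phi\big)^{-1}(B^T)^{-1}$, and equating these indeed gives \eqref{change in phi}. One small caveat worth registering in your first step: the arguments $\Xi$ and $\overline{\Xi}$ in \eqref{tilde phi coordinates} are independent coordinates (coefficients in $(f_i\otimes\overline{f_j}^*)$ and $(\overline{f_i}\otimes f_j^*)$ respectively), not literal conjugates of one another, so setting $\overline{\Xi}=0$ while $\Xi=\Xi_\alpha\big(\tilde\phi\big)\neq 0$ is legitimate precisely because $\iota(e_\alpha)\in\mathfrak{csp}(\mathbb{C}\mathfrak{g}_{-1}(p))_{0,2}$ annihilates $\mathfrak{g}_{-1,1}(p)$; you use this correctly but the notation invites confusion. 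Your approach buys a reader-verifiable argument at the cost of a page of bookkeeping; the paper's buys brevity at the cost of sending the reader to an external reference and a change of conventions.
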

\begin{proof}
This is immediate from the matrix representations of bigraded symbols given in \cite[Section 5]{sykes2023geometry} and the natural transformation between the representations of $\mathbb{C}\mathfrak{g}_{-}\oplus\mathfrak{csp}(\mathbb{C}\mathfrak{g}_{-1})$ used here and in \cite[Section 5]{sykes2023geometry}. The results are independent of the corresponding adapted basis $\phi$, because choosing different adapted bases compatible with $\tilde \phi$ just changes the generators of $\tilde \phi(\mathfrak{g}_{0,2}(p)).$
\end{proof}
\begin{remark}
Let us emphasize that the index $\alpha$ of $\Xi_\alpha\big(\tilde \phi\big)$ indicates its dependence on the basis vector $e_\alpha$. Suppressing this dependence in our notation does not cause ambiguity in the sequel as it will in every instance suffice to regard the $(e_1,\dots, e_{n-s})$ part of every adapted basis as arbitrary but fixed. Notice that \eqref{change in phi} resembles the natural transformation for matrices representing antilinear operators. Indeed $\Xi_\alpha$  represent antilinear maps that where described in \cite{porter2021absolute}.
\end{remark}
\begin{corollary}
For $\Xi_1,\ldots, \Xi_{n-s}$ as in \ref{matrix representation prop}, the corresponding structure is uniformly $2$-nondegenerate if and only if $n>s$ and the set of all $\Xi_\alpha$ is linearly independent, i.e., $n-s=r$.
\end{corollary}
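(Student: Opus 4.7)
The plan is to unpack the definitions of $2$-nondegeneracy and of the embedding $\iota$, reducing both forward and reverse directions to straightforward linear algebra. The underlying observation is that $\iota$ packages the data of the tensor $\mathcal{L}^2$ acting on $\mathcal{K}_2$ from the left, so the kernel of $\iota$ on $\mathcal{K}_2$ coincides with $\mathcal{K}_3$.

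First I would observe that uniform $2$-nondegeneracy requires $\mathcal{K}_2\neq 0$, and since $\dim_{\mathbb{C}}\mathcal{K}_2=n-s$, this forces $n>s$. Then I would show $\mathcal{K}_3=\ker(\iota|_{\mathcal{K}_2})$. To see this, pick $v\in\mathcal{K}_2$ and extend to a section $V$, and let $W$ extend some $w\in\mathcal{H}$. Since $V\in\mathcal{K}_2$ lies in the Levi kernel, $[V,\overline{W}]$ has no $\mathbb{C}TM/(\mathcal{H}\oplus\overline{\mathcal{H}})$ component, so it decomposes into its $\mathcal{H}$- and $\overline{\mathcal{H}}$-components. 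When bracketed further with another $\overline{W'}$ (for $w'\in\mathcal{H}$) and projected modulo $\mathcal{H}\oplus\overline{\mathcal{H}}$, the $\overline{\mathcal{H}}$-component contributes zero by integrability of $\overline{\mathcal{H}}$, so only the $\mathcal{H}$-component survives. By the defining formula \eqref{L kernel csp embedding}, that $\mathcal{H}$-component modulo $\mathcal{K}_2$ is exactly $\iota(v)(\overline{w} \bmod \overline{\mathcal{K}_2})$. Since the Levi form $\mathcal{L}^1$ is nondegenerate on $\mathcal{H}/\mathcal{K}_2$, we get $\mathcal{L}^2(v,w,w')=0$ for all $w,w'\in\mathcal{H}$ if and only if $\iota(v)(\cdot)=0$ on $\mathfrak{g}_{-1,-1}$; and $\iota(v)$ is zero on $\mathfrak{g}_{-1,1}$ by construction. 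Hence $v\in\mathcal{K}_3$ iff $\iota(v)=0$, so $\mathcal{K}_3=0$ iff $\iota|_{\mathcal{K}_2}$ is injective.

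Next I would transport this injectivity through $\tilde{\phi}$. By Proposition \ref{matrix representation prop}, the image $\tilde{\phi}\big(\iota(e_\alpha)\big)$ is the matrix in \eqref{symbol in the frame} whose only nonzero block equals $e^{-ih(\tilde{\phi})}\Xi_\alpha(\tilde{\phi})H(\tilde{\phi})^{-1}$. Since $H(\tilde{\phi})$ is invertible and $e^{-ih(\tilde{\phi})}\neq 0$, the family $\{\iota(e_\alpha)\}_{\alpha=1}^{n-s}$ is linearly independent over $\mathbb{C}$ precisely when $\{\Xi_\alpha(\tilde{\phi})\}_{\alpha=1}^{n-s}$ is. As $(e_1,\dots,e_{n-s})$ is a basis of $\mathcal{K}_2$ (mod the trivially zero part), the injectivity of $\iota|_{\mathcal{K}_2}$ is equivalent to the linear independence of the $\Xi_\alpha$'s, and in that case $r=\dim_{\mathbb{C}}\mathfrak{g}_{0,2}=\dim_{\mathbb{C}}\iota(\mathcal{K}_2)=n-s$.

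There is essentially no obstacle here beyond correctly tracking the definitions of $\mathcal{L}^2$, $\iota$, and the matrix representation in Proposition \ref{matrix representation prop}; the only delicate point is justifying that only the $\mathcal{H}$-component of $[V,\overline{W}]$ contributes to $\mathcal{L}^2$, which uses both $V\in\mathcal{K}_2$ (killing the $\mathfrak{g}_{-2,0}$-part) and the integrability of $\overline{\mathcal{H}}$ (killing the iterated bracket on $\overline{\mathcal{H}}$).
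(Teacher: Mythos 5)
Your argument is correct: the paper states this corollary without proof, treating it as immediate from Definition of $k$-nondegeneracy, the embedding $\iota$, and Proposition \ref{matrix representation prop}, and your identification $\mathcal{K}_3=\ker\big(\iota|_{\mathcal{K}_2}\big)$ via nondegeneracy of $\mathcal{L}^1$ on $\mathcal{H}/\mathcal{K}_2$ is exactly the intended route. The transport through $\tilde\phi$ using invertibility of $H\big(\tilde\phi\big)$ is also handled correctly, so this is a faithful filling-in of the omitted details rather than a different approach.
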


Consequently, we have an analogue of \emph{soldering form} $\theta$ on the principal bundle $\mathcal{F}$ defined pointwise by 
\[
\theta\big(\tilde\phi\big):=\tilde{\phi}\circ (\mathrm{pr}_{\mathcal{F}})_* : \mathbb{C}T_{\tilde \phi}\mathcal{F}\to \mathbb{C}\mathfrak{g}_{-}\oplus\mathfrak{csp}(\mathbb{C}\mathfrak{g}_{-1}).
\]
In contrast to a usual soldering form, the pullback of $\theta$ along any section of $\mathcal{F}$ is injective if and only if $M$ is everywhere $2$-nondegenerate, $\theta$ is never surjective, and its image does not have to be constant. Nevertheless, one can check using  formulas \eqref{tilde phi minus formula} and \eqref{symbol in the frame} and formulas \eqref{change in H} and \eqref{change in phi} that $\theta$ is $\mathrm{CSp}(\mathbb{C}\mathfrak{g}_{-1})_{0,0}$--equivariant. 

For special cases, the CR geometry can studied effectively via reductions of $\mathcal{F}$, using $\mathcal{F}$ analogously to frame bundles for $G$-structures. For example, in \cite{gregorovic2020fundamental,gregorovic2021equivalence,porter2021absolute,sykes2021maximal,sykes2023geometry} extensive theory is developed for analysis of uniformly $2$-nondegenerate structures with constant bigraded symbols (Definition \ref{constant symbol defintion}), wherein these respective works study bundles that may be described as reductions of $\mathcal{F}$.

Let us fix a bigraded symbol $\mathbb{C}\mathfrak{g}_{\leq 0}=\mathbb{C}\mathfrak{g}_{-}\oplus \mathfrak{g}_{0,-2}\oplus \mathfrak{g}_{0,0}\oplus \mathfrak{g}_{0,2}$ with  antilinear involution $\sigma$ of $\mathbb{C}\mathfrak{g}_{\leq 0}$ corresponding to a pair $(\mathbf{H},e^{ih})$. For a point $p\in M$, we can then define sets
\begin{align}\label{E fiber}
\mathcal{E}_p:=\left\{\tilde \phi\in \mathcal{F}_p\,\left|\,\mbox{such that }\mathfrak{g}_{0,\pm 2}=\tilde \phi(\mathfrak{g}_{0,\pm 2}(p))\right.\right\}.
\end{align}
and $(\mathcal{E}_{e^{ih}\mathbf{H}})_p:=\mathcal{E}_p\cap \mathcal{F}_{e^{ih}\mathbf{H}}.$ In other words, $(\mathcal{E}_{e^{ih}\mathbf{H}})_p$ consists of isomorphisms between the bigraded symbol at $p$ with the fixed bigraded symbol.

\begin{definition}\label{constant symbol defintion}
The CR hypersurface $M$ has constant bigraded symbol $(\mathbb{C}\mathfrak{g}_{\leq 0},\sigma)$ on a connected submanifold $L\subset M$ if and  only if the set $(\mathcal{E}_{e^{ih}\mathbf{H}})_p\subset \mathcal{E}_p$ is not empty for all $p\in L$.   
\end{definition}

Note that $\mathcal{E}_p$ being non-empty does not necessarily imply the constancy of the symbol, because there can be many different antilinear involutions of $\mathbb{C}\mathfrak{g}_{\leq 0}$. By construction $\mathcal{E}$ is a $G_{0,0}$--subbundle of $\mathcal{F}$ whereas $\mathcal{E}_{e^{ih}\mathbf{H}}$ is a $G_{0,0}\cap \mathrm{CU}(e^{ih}\mathbf{H})$-subbundle, where $G_{0,0}$ is the maximal Lie subgroup in $\mathrm{CSp}(\mathbb{C}\mathfrak{g}_{-1})_{0,0}$ with Lie algebra $\mathfrak{g}_{0,0}$.

\subsection{Modified symbols of constant bigraded symbol structures}\label{sec2.2}

Modified symbols of constant bigraded symbol $2$-nondegenerate CR hypersurfaces were introduced in \cite{sykes2023geometry}, and we present in this section a new formulation of these objects (Lemma \ref{constant symbol corollary}) suitable for generalization to the non-constant bigraded symbol setting. This formulation is also well defined in the more general $k$-nondegenerate CR hypersurface setting.

As in Section \ref{sec bigraded symbol}, we consider an arbitrary, but fixed, uniformly $k$-nondegenerate CR hypersurface $M$. The distribution $TM\cap (\mathcal{K}_{2}\oplus\overline{\mathcal{K}_2})$ is integrable, and thus generates a foliation on $M$. Locally (i.e., after perhaps replacing $M$ with a sufficiently small neighborhood in $M$), the leaf space $N$ of this foliation has a naturally induced smooth structure. Letting $\pi:M\to N$ denote the natural projection, the distribution $\pi_*\left(\mathcal{H}\oplus\overline{\mathcal{H}}\right)$ defines a contact structure on $N$. Proceeding, we assume $M$ is replaced with a sufficiently small neighborhood such that $\pi:M\to N$ has the structure of a fiber bundle, and we denote by $M_{\pi(p)}$ its fiber over $\pi(p)\in N$. The contact structure provides the usual $\mathrm{CSp}(\mathbb{C}\mathfrak{g}_{-1})$-principal bundle
\[
\mathcal{G}:=\left\{\tau:\mathbb{C}\mathfrak{g}_{-}(p)\to \mathbb{C}\mathfrak{g}_{-}\,\left|\,\parbox{3.6cm}{$p\in N$ and $\tau$ is a contact-graded Lie algebra isomorphism}\right.\right\}
\]
over the contact manifold $N$, where $\mathbb{C}\mathfrak{g}_{-}(p)$ denotes the standard Heisenberg algebra obtained from the contact structure on $N$ by nilpotent approximation. We can relate the bundles $\pi\circ\mathrm{pr}:\mathcal{F}\to N$ and $\mathcal{G}$ over $N$ in the following way:

\begin{lemma}\label{I map lemma}
The map $\mathscr{I}:\mathcal{F}\to \mathcal{G}$ given by
\begin{align}\label{scri def}
\mathscr{I}\big(\tilde \phi\big):= \left(\pi_*\circ \tilde \phi^{-1}\right)^{-1}
\end{align}
is a $\mathrm{CSp}(\mathbb{C}\mathfrak{g}_{-1})_{0,0}$-equivariant homomorphism of fiber bundles over the local leaf space $N$. It is an immersion if $(M,\mathcal{H})$ is $1$ or $2$-nondegenerate.
\end{lemma}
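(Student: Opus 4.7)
The plan is to verify each claim in order: that $\mathscr{I}(\tilde\phi)$ genuinely lies in $\mathcal{G}_{\pi(p)}$, then the bundle and equivariance claims, and finally the immersion property in the $1$- and $2$-nondegenerate settings.

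For well-definedness, I would first observe that the kernel of $\pi_*:\mathbb{C}T_pM\to \mathbb{C}T_{\pi(p)}N$ is exactly $(\mathcal{K}_2)_p\oplus (\overline{\mathcal{K}_2})_p$, since the leaves of the foliation integrating $TM\cap(\mathcal{K}_2\oplus\overline{\mathcal{K}_2})$ are by construction the fibers of $\pi$. As this kernel lies inside $\mathcal{H}_p\oplus \overline{\mathcal{H}}_p$, the map $\pi_*$ descends to a vector-space isomorphism from the graded quotient $\mathbb{C}T_pM/(\mathcal{K}_2\oplus\overline{\mathcal{K}_2})_p$, naturally identified with $\mathbb{C}\mathfrak{g}_-(p)$, onto $\mathbb{C}T_{\pi(p)}N\cong \mathbb{C}\mathfrak{g}_-(\pi(p))$, where the latter identification comes from the contact filtration on $N$. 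Moreover, $\pi_*$ intertwines the Heisenberg brackets on $\mathbb{C}\mathfrak{g}_-(p)$ and $\mathbb{C}\mathfrak{g}_-(\pi(p))$, because both brackets are induced by Lie brackets of vector fields modulo the respective contact subbundles, and $\pi$-related vector fields have $\pi$-related Lie brackets. Composing the inverse with $\tilde\phi^{-1}$ therefore yields a contact-graded Lie algebra isomorphism, so $\mathscr{I}(\tilde\phi)\in\mathcal{G}_{\pi(p)}$.

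For the bundle homomorphism claim, $\mathscr{I}(\mathcal{F}_p)\subset\mathcal{G}_{\pi(p)}$ by construction, so $\mathscr{I}$ covers the map $\pi\circ\mathrm{pr}_{\mathcal{F}}:\mathcal{F}\to N$. For equivariance, under the right $\mathrm{CSp}(\mathbb{C}\mathfrak{g}_{-1})_{0,0}$-action $\tilde\phi\mapsto\mathrm{Ad}_{a^{-1}}\circ\tilde\phi$ one computes
\[
\mathscr{I}(\mathrm{Ad}_{a^{-1}}\circ\tilde\phi)=\bigl(\pi_*\circ\tilde\phi^{-1}\circ\mathrm{Ad}_a\bigr)^{-1}=\mathrm{Ad}_{a^{-1}}\circ\mathscr{I}(\tilde\phi),
\]
which matches the right principal action of $\mathrm{CSp}(\mathbb{C}\mathfrak{g}_{-1})_{0,0}\subset\mathrm{CSp}(\mathbb{C}\mathfrak{g}_{-1})$ on $\mathcal{G}$.

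For the immersion property I would decompose $T_{\tilde\phi}\mathcal{F}$ into vectors that are (a) vertical for $\mathrm{pr}_{\mathcal{F}}$, (b) project onto the leaf distribution $TM\cap(\mathcal{K}_2\oplus\overline{\mathcal{K}_2})$ at $p$, or (c) project transversely to the leaves. Type (c) vectors have nonzero image in $TN$ under $d(\pi\circ\mathrm{pr}_{\mathcal{F}})$, and type (a) vectors inject by the equivariance above because $\mathrm{CSp}(\mathbb{C}\mathfrak{g}_{-1})_{0,0}\hookrightarrow\mathrm{CSp}(\mathbb{C}\mathfrak{g}_{-1})$ is injective on Lie algebras. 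In the $1$-nondegenerate case type (b) is empty since $\mathcal{K}_2=0$, and the result follows. In the $2$-nondegenerate case, using a local section of $\mathcal{F}$ adapted to a basis $\phi$ of the form \eqref{adapted basis}, I would show that an infinitesimal leaf motion along $e_\alpha+\overline{e_\alpha}$ changes $\mathscr{I}(\tilde\phi)$, modulo contributions already accounted for in $\mathfrak{csp}(\mathbb{C}\mathfrak{g}_{-1})_{0,0}$ by type (a), by an element in $\mathfrak{csp}(\mathbb{C}\mathfrak{g}_{-1})_{0,2}\oplus\mathfrak{csp}(\mathbb{C}\mathfrak{g}_{-1})_{0,-2}$ whose $\mathfrak{csp}_{0,2}$-part equals $\tilde\phi\circ\iota(e_\alpha)\circ\tilde\phi^{-1}$, as described by Proposition \ref{matrix representation prop}. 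This is the main obstacle: the identification has to be extracted by carefully tracking how the bracket $[e_\alpha,\overline{f_\beta}]\pmod{\mathcal{H}}$ entering \eqref{L kernel csp embedding} also controls the first-order variation of $\pi_*\circ\tilde\phi^{-1}$ along the leaf. Once this is established, $2$-nondegeneracy (equivalent to injectivity of $\iota$ on $\mathcal{K}_2$) guarantees that distinct leaf motions produce linearly independent vertical increments in $\mathcal{G}$ that are transverse to type (a) contributions, and combining the three cases shows $d\mathscr{I}$ is injective.
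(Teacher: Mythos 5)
Your proposal is correct, and for the first two claims it is essentially an expanded version of the paper's own (two-sentence) proof: the paper likewise derives equivariance from the fact that $\pi_*$ induces isomorphisms between the negatively graded parts of the bigraded symbols on $M$ and the nilpotent approximations on $N$, which is exactly the content of your well-definedness paragraph plus the one-line computation $\mathscr{I}(\mathrm{Ad}_{a^{-1}}\circ\tilde\phi)=\mathrm{Ad}_{a^{-1}}\circ\mathscr{I}(\tilde\phi)$. Where you genuinely diverge is the immersion claim: the paper simply cites \cite[Remark 2.1]{sykes2023geometry} for injectivity of $\mathscr{I}_*$ in the $2$-nondegenerate case, whereas you give a direct argument via the three-way splitting of $T_{\tilde\phi}\mathcal{F}$ into fiber directions, leaf directions, and transverse directions, reducing everything to the injectivity of $\iota$ on $\mathcal{K}_2$ (which is precisely $2$-nondegeneracy). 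The one step you leave sketched --- that the leaf-direction derivative of $\mathscr{I}\circ\tilde\phi$, modulo $\mathfrak{csp}(\mathbb{C}\mathfrak{g}_{-1})_{0,0}$, is $\tilde\phi\circ\iota(e_\alpha)\circ\tilde\phi^{-1}$ --- is exactly the computation the paper carries out later in the proof of Lemma \ref{matrrep} (the flow/Maurer--Cartan calculation showing $\iota_{\tilde\phi}(v)(\tilde\phi(\overline{f_k}))=\tilde\phi([v,\overline{f_k}]_p)$), so your identification is correct and your route has the advantage of being self-contained rather than deferring to an external reference; its cost is that you are implicitly front-loading a computation the paper postpones to Section \ref{sec2.2}.
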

\begin{proof}
The equivariance follows from $\pi_*$ inducing isomorphisms between the negatively graded part of each bigraded symbol on $M$ and the nilpotent approximations at each point on $N$. Injectivity of $\mathscr{I}_*$ in the $2$-nondegenerate setting is a consequence of \cite[Remark 2.1]{sykes2023geometry}. 
\end{proof}

 If $\tilde \phi: M_{\pi(p)}\to \mathcal{F}$ is a local section, then $\mathscr{I}\circ \tilde \phi(M_{\pi(p)})$ is in the fiber of $\mathcal{G}$ over $\pi(p)$ and thus there is a linear map $\iota_{\tilde \phi}: \mathcal{K}_2(p)\oplus\overline{\mathcal{K}_2}(p)\to \mathfrak{csp}(\mathbb{C}\mathfrak{g}_{-1})$ given by 
\begin{align}\label{iota phi map}
\iota_{\tilde \phi}(v):=\zeta_{\mathscr{I}\circ \tilde \phi(p)}^{-1}(\mathscr{I}\circ \tilde \phi)_*(v),
\end{align}
where $\zeta_{\tau}:\mathfrak{csp}(\mathbb{C}\mathfrak{g}_{-1})\to\Gamma(T_{\tau}\mathcal{G})$ is the map sending $X\in \mathfrak{csp}(\mathbb{C}\mathfrak{g}_{-1})$ to the value of the fundamental vector field generated by $X$ at $\tau$. This provides the following modified version of the bigraded symbol that depends on a local section $\tilde \phi: M_{\pi(p)}\to \mathcal{F}.$

\begin{definition}\label{mod symb in frame} For $p\in M$ and a local section $\tilde \phi: M_{\pi(p)}\to \mathcal{F}$ around $p$, the \emph{modified symbol $\mathfrak{g}_{\leq 0}^{\mathrm{mod}}\big(\tilde\phi;p\big)$ at the point $p$ along $\tilde \phi$} is the graded vector space
\[
\mathfrak{g}_{\leq 0}^{\mathrm{mod}}\big(\tilde\phi;p\big):=\mathbb{C}\mathfrak{g}_{-}\oplus \mathfrak{g}_0^{\mathrm{mod}}\big(\tilde\phi;p\big)
\]
given by
\[
\mathfrak{g}_0^{\mathrm{mod}}\big(\tilde\phi;p\big):=\iota_{\tilde \phi}\left(\mathcal{K}_2(p)\oplus\overline{\mathcal{K}_2}(p)\right)+\mathfrak{g}_{0,0}\big(\tilde \phi;p\big),
\]
where
\begin{align}\label{g00 in a frame}\mathfrak{g}_{0,0}(\tilde\phi;p):=\tilde\phi(p)(\mathfrak{g}_{0,0}(p)).
\end{align}
\end{definition}
\begin{remark}\label{mod symbol pm}
In order to have a direct product in Definition  \ref{mod symb in frame}, we would need to pick a subspace $V\subset \mathcal{K}_2(p)$ such that 
\begin{align}\label{modified symbol decomposition a}\mathfrak{g}_0^{\mathrm{mod}}\big(\tilde\phi;p\big)=\mathfrak{g}_{0,-}^{\mathrm{mod}}(\tilde\phi; p\big)\oplus \mathfrak{g}_{0,0}\big(\tilde \phi;p\big)\oplus\mathfrak{g}_{0,+}^{\mathrm{mod}}(\tilde\phi; p\big),
\end{align}
where $\mathfrak{g}_{0,-}^{\mathrm{mod}}(\tilde\phi; p\big):=\iota_{\tilde \phi}(\overline{V})$ and $\mathfrak{g}_{0,+}^{\mathrm{mod}}(\tilde\phi; p\big):=\iota_{\tilde \phi}(V)$. The decomposition in \eqref{modified symbol decomposition a} is canonical for the case of $2$-nondegenerate structures, wherein $V= \mathcal{K}_2(p)$. It is also canonical for some special cases in higher-nondegeneracy, but is not canonical in general.
\end{remark}

We include the $\mathfrak{g}_{0,0}(\tilde \phi;p)$ term in the definition of $\mathfrak{g}_0^{\mathrm{mod}}\big(\tilde\phi;p\big)$ so that Definition \ref{mod symb in frame} is consistent with the modified symbol defined in \cite{sykes2023geometry} for $2$-nondegenerate structures with constant bigraded symbols on $M$. The latter definition may be formulated using a Maurer--Cartan form \cite[Remark 4.2]{sykes2023geometry}, and to compare that formulation with our present definition more directly, we note the following observations. For a structure with constant bigraded symbol on $M_{\pi(p)}$, we have a bundle  $\mathrm{pr}:\mathcal{E}\to M_{\pi(p)}$  as in \eqref{E fiber}. At a point $\tilde \phi\in \mathcal{E}$, we can define
\begin{align}\label{g0mod original def}
\mathfrak{g}_0^{\mathrm{mod}}\big(\tilde \phi\big):=\zeta_{\mathscr{I}( \tilde \phi)}^{-1}(\mathscr{I}_*(\mathbb{C}T_{\tilde \phi}\mathcal{E})).
\end{align}
Since $\zeta_{\mathscr{I}( \tilde \phi)}^{-1}(\mathscr{I}_*(\mathbb{C}T_{\tilde \phi}\mathcal{E}))=\omega_{\mathrm{CSp}}(\mathbb{C}T_{e}(\mathscr{I}\big(\tilde \phi\big)\circ (\mathscr{I}(\mathcal{E}))^{-1}))$ holds for the Maurer--Cartan form $\omega_{\mathrm{CSp}}$ on $\mathrm{CSp}(\mathbb{C}\mathfrak{g}_{-1})$, we recover the following definition:

\begin{definition}[introduced in \cite{sykes2023geometry} for constant bigraded symbol structures on $M$]\label{modified symbol}
For a structure with constant bigraded symbol on $M_{\pi(p)}\subset M$ of type $(\mathbb{C}\mathfrak{g}_{\leq0},\sigma)$, the \emph{modified symbol}
$\mathfrak{g}^{\mathrm{mod}}_{\leq 0}\big(\tilde \phi\big)$ of $M$ at a point $\tilde \phi\in \mathcal{E}$ is the vector space
\[
\mathfrak{g}^{\mathrm{mod}}_{\leq 0}\big(\tilde \phi\big):=\mathbb{C}\mathfrak{g}_{-}\oplus \mathfrak{g}_0^{\mathrm{mod}}\big(\tilde \phi\big)\subset \mathbb{C}\mathfrak{g}_{-}\oplus \mathfrak{csp}(\mathbb{C}\mathfrak{g}_{-1})
\]
where $\mathfrak{g}_0^{\mathrm{mod}}\big(\tilde \phi\big)\subset \mathfrak{csp}(\mathbb{C}\mathfrak{g}_{-1})$ is  as in \eqref{g0mod original def}.
\end{definition}

Let us now express in more detail how modified symbols along $\tilde \phi$ can be described for a particular local section $\tilde \phi: M_{\pi(p)}\to \mathcal{F}$ around $p\in M.$

\begin{lemma}\label{matrrep}
Let $\mathfrak{g}_{\leq 0}^{\mathrm{mod}}\big(\tilde\phi; p\big)$ be a modified symbol at $p$ along $\tilde \phi$ associated with a $(2n+1)$-dimensional uniformly $k$-nondegenerate CR hypersurface. Then for $e_\alpha\in \mathcal{K}_2(p)$, there are matrices $\Xi_\alpha(\tilde \phi(p))$ and $\Omega_\alpha\big(\tilde \phi\big)(p)$ such that
\begin{align}\label{modified symbol rep}
\iota_{\tilde \phi}(e_\alpha)\equiv\left(
\begin{array}{cccc}
0 & 0 & 0 & 0 \\
0 &  \Omega_\alpha\big(\tilde\phi\big)(p) &e^{-ih\big(\tilde\phi(p)\big)}\Xi_\alpha(\tilde\phi(p))H\big(\tilde\phi(p)\big)^{-1} & 0 \\
0 &  0 & -\big(\Omega_\alpha\big(\tilde\phi\big)(p)\big)^T & 0 \\
0 &  0 & 0 & 0
\end{array}\right),
\end{align}
where $\Xi_\alpha(\tilde \phi(p))$ is as in Proposition \ref{matrix representation prop} with $e_\alpha$ coming from an adapted basis $\phi(p)$ compatible with $\tilde \phi(p)$ of the form \eqref{adapted basis}, and this equivalence is modulo the conformal scaling element in $\mathfrak{g}_{0,0}$, i.e., matrices of the form in \eqref{csp representation} with only the $c$ blocks nonzero.

In particular, if $k=2$, then $\mathfrak{g}_{0}^{\mathrm{mod}}\big(\tilde\phi; p\big)$ is of the form \eqref{modified symbol decomposition a}, where
$\mathfrak{g}_{0,+}^{\mathrm{mod}}\big(\tilde\phi; p\big)$ is spanned by \eqref{modified symbol rep} for $j=1,\dots, r$ for an adapted basis $\phi(p)$ compatible with $\tilde \phi(p)$ of the form \eqref{adapted basis} and $\mathfrak{g}_{0,-}^{\mathrm{mod}}\big(\tilde\phi; p\big)=\sigma_{\tilde \phi}(\mathfrak{g}_{0,+}^{\mathrm{mod}}\big(\tilde\phi; p\big))$.

For a function $a: M_{\pi(p)}\to \mathrm{CSp}(\mathbb{C}\mathfrak{g}_{-1})_{0,0}$ represented pointwise by the matrices \eqref{csp00matrix},
\begin{align}\label{change of modified symbol in a frame}
\Omega_\alpha\left(\mathrm{Ad}_a^{-1}\circ \tilde\phi\right)=B^{-1}\Omega_\alpha\big(\tilde \phi\big)B+\pi_{\mathfrak{gl}(s,\mathbb{C})}\circ a^*\omega_{\mathrm{CSp}(\mathbb{C}\mathfrak{g}_{-1})_{0,0}}(e_\alpha),
\end{align}
where $\pi_{\mathfrak{gl}(s,\mathbb{C})}$ denotes the projection extracting $L$ from the matrix \eqref{csp representation} and $a^*\omega_{\mathrm{CSp}(\mathbb{C}\mathfrak{g}_{-1})_{0,0}}$ denotes the pullback of the Maurer--Cartan form. 
\end{lemma}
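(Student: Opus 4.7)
The plan is to identify the image of $\iota_{\tilde\phi}(e_\alpha)$ inside $\mathfrak{csp}(\mathbb{C}\mathfrak{g}_{-1})$ by decomposing it along the bigrading $\mathfrak{csp}(\mathbb{C}\mathfrak{g}_{-1}) = \mathfrak{csp}(\mathbb{C}\mathfrak{g}_{-1})_{0,-2}\oplus\mathfrak{csp}(\mathbb{C}\mathfrak{g}_{-1})_{0,0}\oplus\mathfrak{csp}(\mathbb{C}\mathfrak{g}_{-1})_{0,2}$ and matching each component against intrinsically known data. The three claims then follow from: (i) the invariant block carries the same information as in Proposition \ref{matrix representation prop}, (ii) the remaining block is a $\mathfrak{gl}(s,\mathbb{C})$-valued quantity $\Omega_\alpha$ after dropping the central conformal direction, and (iii) a Maurer--Cartan computation under the right principal action gives the stated inhomogeneous transformation.

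First I would verify that the $\mathfrak{g}_{0,2}$-component of $\iota_{\tilde\phi}(e_\alpha)$ is exactly $\tilde\phi(p)(\iota(e_\alpha))$. This uses that the intrinsic map $\iota$ of \eqref{L kernel csp embedding} is defined purely from Lie brackets and the Freeman filtration, hence commutes with the identification of $\mathbb{C}\mathfrak{g}_-(p)$ with the nilpotent approximation at $\pi(p)\in N$ performed by $\mathscr{I}$. Consequently the $\mathfrak{g}_{0,2}$-block of $\iota_{\tilde\phi}(e_\alpha)$ is the image under $\tilde\phi(p)$ of $\iota(e_\alpha)\in\mathfrak{csp}(\mathbb{C}\mathfrak{g}_{-1}(p))_{0,2}$, which by formula \eqref{symbol in the frame} of Proposition \ref{matrix representation prop} is precisely the $(2,3)$-entry $e^{-ih(\tilde\phi(p))}\Xi_\alpha(\tilde\phi(p))H(\tilde\phi(p))^{-1}$ of the claimed matrix. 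Since $e_\alpha\in\mathcal{K}_2(p)\subset\mathcal{H}_p$ has no antiholomorphic component, the $\mathfrak{g}_{0,-2}$-block must vanish: the generalized Levi form only sees holomorphic directions via $\iota$, so there is no $S^{0,-2}$ contribution. What remains is the $\mathfrak{g}_{0,0}$-block of $\iota_{\tilde\phi}(e_\alpha)$; written in the matrix form \eqref{csp representation}, its $L$-part is a matrix which we \emph{define} to be $\Omega_\alpha(\tilde\phi)(p)$, producing the $(2,2)$ block $\Omega_\alpha$ and, by definition of $\mathfrak{csp}$, the $(3,3)$ block $-\Omega_\alpha^T$. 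The $c$-part of the $\mathfrak{g}_{0,0}$-block is what is dropped in the stated equivalence modulo the conformal scaling. For the $k=2$ case, one invokes Remark \ref{mod symbol pm} with $V=\mathcal{K}_2(p)$, obtaining $\mathfrak{g}_{0,+}^{\mathrm{mod}}$ as the span of the matrices \eqref{modified symbol rep}, and then $\mathfrak{g}_{0,-}^{\mathrm{mod}}$ by applying $\sigma_{\tilde\phi}$; Lemma \ref{involution lemma} ensures this antilinear involution preserves the bigrading modulo sign change, so the decomposition \eqref{modified symbol decomposition a} is direct.

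For the transformation rule, I would use that $\mathscr{I}$ is $\mathrm{CSp}(\mathbb{C}\mathfrak{g}_{-1})_{0,0}$-equivariant (Lemma \ref{I map lemma}), so along the section $\tilde\phi'(q):=\mathrm{Ad}_{a(q)^{-1}}\circ\tilde\phi(q)$ one has $\mathscr{I}\circ\tilde\phi'(q)=\mathscr{I}(\tilde\phi(q))\cdot a(q)$, where on the right we regard the right principal action of $\mathrm{CSp}(\mathbb{C}\mathfrak{g}_{-1})_{0,0}$ on $\mathcal{G}$. Differentiating this at $p$ in the direction $e_\alpha$ and applying the Leibniz rule for the principal action yields
\begin{align}
(\mathscr{I}\circ\tilde\phi')_*(e_\alpha)=(R_{a(p)})_*(\mathscr{I}\circ\tilde\phi)_*(e_\alpha)+\zeta_{\mathscr{I}\circ\tilde\phi'(p)}\bigl(a^*\omega_{\mathrm{CSp}(\mathbb{C}\mathfrak{g}_{-1})_{0,0}}(e_\alpha)\bigr).
\end{align}
Applying $\zeta_{\mathscr{I}\circ\tilde\phi'(p)}^{-1}$ to both sides and using the standard identity $\zeta_{\tau\cdot a}^{-1}\circ (R_a)_*\circ\zeta_\tau=\mathrm{Ad}_{a^{-1}}$, one obtains
\begin{align}
\iota_{\tilde\phi'}(e_\alpha)=\mathrm{Ad}_{a(p)^{-1}}\iota_{\tilde\phi}(e_\alpha)+a^*\omega_{\mathrm{CSp}(\mathbb{C}\mathfrak{g}_{-1})_{0,0}}(e_\alpha).
\end{align}
Projecting to the $\mathfrak{gl}(s,\mathbb{C})$-block and computing $\mathrm{Ad}_{a^{-1}}$ on $L$ via the matrix form \eqref{csp00matrix} produces the conjugation $B^{-1}\Omega_\alpha(\tilde\phi)B$, and the Maurer--Cartan pullback term contributes the remaining summand, yielding \eqref{change of modified symbol in a frame}.

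The main obstacle is the second step: verifying rigorously that $(\mathscr{I}\circ\tilde\phi)_*(e_\alpha)$ has vanishing $\mathfrak{g}_{0,-2}$-component and that its $\mathfrak{g}_{0,2}$-component coincides with $\tilde\phi(\iota(e_\alpha))$. The point is that although $\mathscr{I}$ is defined via differentiation through the projection $\pi:M\to N$, the matrix entries in the $\mathfrak{g}_{0,\pm2}$-blocks are intrinsic to the Lie brackets computing the generalized Levi form, so they coincide with the pointwise invariants extracted by $\iota$; careful bookkeeping between the identifications of $\mathbb{C}\mathfrak{g}_-(p)$ with the nilpotent approximations at $p$ and at $\pi(p)$ is the delicate ingredient.
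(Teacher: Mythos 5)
Your overall route is the same as the paper's: decompose $\iota_{\tilde\phi}(e_\alpha)$ into its bigraded blocks, identify the $(0,2)$-block with $\tilde\phi(\iota(e_\alpha))$ from Proposition \ref{matrix representation prop}, name the $\mathfrak{gl}(s,\mathbb{C})$-part of the $(0,0)$-block $\Omega_\alpha\big(\tilde\phi\big)(p)$, and derive \eqref{change of modified symbol in a frame} from the $\mathrm{CSp}(\mathbb{C}\mathfrak{g}_{-1})_{0,0}$-equivariance of $\mathscr{I}$ together with the Maurer--Cartan form; your principal-bundle Leibniz computation for the transformation law is a legitimate repackaging of the paper's argument, and the $k=2$ statement via Remark \ref{mod symbol pm} is fine. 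However, there is a genuine gap at exactly the point you yourself flag as ``the main obstacle'': you assert, but do not prove, that the $(0,2)$-block of $\iota_{\tilde\phi}(e_\alpha)$ coincides with $\tilde\phi\big(\iota(e_\alpha)\big)$ and that the $(0,-2)$-block vanishes. The appeal to ``$\iota$ is defined purely from Lie brackets, hence commutes with the identification performed by $\mathscr{I}$'' is not an argument: $\iota_{\tilde\phi}$ in \eqref{iota phi map} is defined by differentiating $\mathscr{I}\circ\tilde\phi$ into the fiber of $\mathcal{G}$ and inverting the fundamental-vector-field map $\zeta$, and nothing in that definition refers to the intrinsic embedding \eqref{L kernel csp embedding}. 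Establishing the link is the entire content of the first claim of the lemma, so it cannot be deferred.

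The paper closes this gap by an explicit flow computation. One must work with the real vector fields $v=e_\alpha+\overline{e_\alpha}$ and $v=ie_\alpha-i\overline{e_\alpha}$ (since $\iota_{\tilde\phi}$ is computed from actual flows, which exist only for real fields and which preserve the Levi leaves by \eqref{leaf preserving}); taking the curve $\tau_t=\mathscr{I}\circ\tilde\phi\circ\mathrm{Fl}^v_t(p)$ in the fiber $\mathcal{G}_{\pi(p)}$ and unwinding \eqref{scri def} yields the identities $\iota_{\tilde\phi}\left(\left.v\right|_p\right)\left(\tilde\phi(f_k(p))\right)=\tilde\phi([v,f_k]_p)$ and $\iota_{\tilde\phi}\left(\left.v\right|_p\right)\left(\tilde\phi(\overline{f_k}(p))\right)=\tilde\phi([v,\overline{f_k}]_p)$. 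From these, the vanishing of the $(0,-2)$-block follows from involutivity of $\mathcal{H}$ (so $[e_\alpha,f_k]$ has no $\overline{\mathcal{H}}$-component), the $(0,2)$-block is read off from $[e_\alpha,\overline{f_k}]\pmod{\mathcal{K}_2\oplus\overline{\mathcal{H}}}$, which by \eqref{L kernel csp embedding} is exactly the matrix $\Xi_\alpha\big(\tilde\phi(p)\big)$, and $\Omega_\alpha\big(\tilde\phi\big)(p)$ is defined by $[e_\alpha,f_k]\pmod{\mathcal{K}_2\oplus\overline{\mathcal{K}_2}}$, with the formula \eqref{modified symbol rep} then assembled via \eqref{tilde phi coordinates}. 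Note that your heuristic for the vanishing of the $(0,-2)$-block (``the generalized Levi form only sees holomorphic directions'') is about the wrong map; the correct reason is the involutivity of $\mathcal{H}$ applied to the bracket identity above. You should supply this computation (or an equivalent one) to complete the proof.
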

\begin{proof}
We will first prove the existence of $\Omega_\alpha$ such that \eqref{modified symbol rep} holds. Let $v$ be a vector field on $M$ equal to either $e_\alpha+\overline{e_\alpha}$ or $ie_\alpha-i\overline{e_\alpha}$. Since $v$ is in $\Gamma(\mathcal{K}_2\oplus\overline{\mathcal{K}_2})$, leaves of the Levi foliation are invariant under its flow, that is,
\begin{align}\label{leaf preserving}
\pi(p)=\pi\circ\mathrm{Fl}_t^v(p)
\quad\quad\forall\,p\in M
\end{align}
for all $t\in \mathbb{R}$ with $\mathrm{Fl}_t^v(p)$ defined. To calculate $\zeta^{-1}_{\mathscr{I}\circ \tilde \tilde \phi(p)}\circ(\mathscr{I}\circ \phi)_*(v)$ at a point $p\in M$, one should consider a curve $t\mapsto \tau_t\in \mathcal{G}_{\pi(p)}$ such that 
\[\tau_0=\mathscr{I}\circ \tilde \phi(p)
\quad\mbox{ and }\quad
\left.\frac{d}{dt}\right|_{t=0}\tau_t=(\mathscr{I}\circ \tilde \phi)_*\left(\left.v\right|_{p}\right),
\]
so let us take
\[
\tau_t:=\mathscr{I}\circ \tilde \phi\circ\mathrm{Fl}_t^v(p):\mathbb{C}\mathfrak{g}(\pi(p))\to \mathbb{C}\mathfrak{g}_{-},
\]
which indeed satisfies these properties. Thus, regarding $\tau_t$ as isomorphisms $\tau_t:\mathbb{C}\mathfrak{g}_{-}\big(\pi(p)\big)\to \mathbb{C}\mathfrak{g}_{-}$, we may use their compositions to compute
\begin{align}\label{LK image under MC form}
\iota_{\tilde \phi}\left(\left.v\right|_p\right)=\omega_{\mathrm{CSp}}\left(\left.\frac{d}{dt}\right|_{t=0}\tau_0\circ \tau_t^{-1}\right)\in\mathfrak{csp}\left(\mathbb{C}\mathfrak{g}_{-1}\right).
\end{align}

With a local adapted frame $\phi$ compatible with $\tilde \phi$, we can use formula \eqref{tilde phi coordinates} to obtain \eqref{modified symbol rep} from expressing \eqref{LK image under MC form} in terms of the Lie brackets between $v$ and $f_1,\ldots, f_s,\overline{f_1},\ldots,\overline{f_s}$, computed as follows. First, let us compute the value of the automorphism $\tau_0\circ \tau_t^{-1}\in \mathrm{CSp}(\mathbb{C}\mathfrak{g_{-1}})$. Applying \eqref{scri def} and \eqref{leaf preserving}, 
\begin{align}
\tau_t^{-1}\left(\tilde\phi\circ  f_k(p)\right)=\pi_*\circ\left(\left.\tilde\phi\right|_{\mathrm{Fl}^v_t(p)}\right)^{-1}\left(\tilde\phi\left( f_k(p)\right)\right)
&=\pi_*\left(f_k\left(\mathrm{Fl}^v_t(p)\right)\right)\\
&=\left(\pi\circ \mathrm{Fl}^v_{-t}(p)\right)_*\left(f_k\left(\mathrm{Fl}^v_t(p)\right)\right)\\
&= \pi_*\left(\left( \mathrm{Fl}^v_{-t}(p)\right)_*f_k\left(\mathrm{Fl}^v_t(p)\right)\right),
\end{align}
so
\begin{align}\label{LB in MCF applied to frame}
&\iota_{\tilde \phi}\left(\left.v\right|_p\right)\left(\tilde\phi\left( f_k(p)\right)\right)=\omega_{\mathrm{CSp}}\left(\left.\frac{d}{dt}\right|_{t=0}\tau_0\circ \tau_t^{-1}\left(\tilde\phi\left( f_k(p)\right)\right)\right)=\\
&\hspace{5cm}=\omega_{\mathrm{CSp}}\left(\left.\frac{d}{dt}\right|_{t=0}\tau_0\circ\pi_*\left(\left( \mathrm{Fl}^v_{-t}(p)\right)_*f_k\left(\mathrm{Fl}^v_t(p)\right)\right)\right)\\
&\hspace{5cm}=\omega_{\mathrm{CSp}}\left(\tau_0\circ\pi_*\left([v,f_k]_p\right)\right)\\
&\hspace{5cm}= \tilde\phi([v,f_k]_p).
\end{align}
Similarly
\begin{align}
\iota_{\tilde \phi}\left(\left.v\right|_p\right)\left(\tilde\phi\left( \overline{f_k}(p)\right)\right)=\tilde\phi\left([v,\overline{f_k}]_p\right).
\end{align}
Since the Lie bracket $[v,w] \pmod{\mathcal{K}_2\oplus\overline{\mathcal{K}_2}}$ is linear in $v$ for $w=f_k$ or $w=\overline{f_k}$ and $v=e_\alpha+\overline{e_\alpha}$ or $v=ie_\alpha-i\overline{e_\alpha}$, we obtain matrices $\Omega_\alpha\big(\tilde \phi\big)(p)$ such that
\begin{align}
[e_\alpha,f_k](p)\equiv\sum_{l=1}^{s}(\Omega_\alpha\big(\tilde \phi\big)(p))_{l,k} f_l\pmod{\mathcal{K}_2\oplus\overline{\mathcal{K}_2}}
\quad\quad\forall 1\leq k\leq s,
\end{align}
Note the left side of the above formula is independent of $\overline{f_k}$ because $\mathcal{H}$ is involutive. Since
\begin{align}
[e_\alpha,\overline{f_k}](p)\equiv \iota([e_\alpha,\overline{f_k}])=\sum_{l=1}^{s}\left(\Xi_\alpha\big(\tilde \phi(p)\big)\right)_{l,k} f_l\pmod{\mathcal{K}_2\oplus\overline{\mathcal{H}}}
\quad\quad\forall 1\leq k\leq s,
\end{align}
we obtain the claimed formula \eqref{modified symbol rep} as $\tilde \phi(0,\Omega_\alpha\big(\tilde \phi\big),\Xi_\alpha\big(\tilde \phi\big),0)$ using \eqref{tilde phi coordinates}. 

The $k=2$ case then follows from Remark \ref{mod symbol pm}.

Lastly, the transformation formula of $\Omega$ in \eqref{change of modified symbol in a frame} follows from replacing the above $\tau_t$ with
\begin{align}
\tau_t=\mathscr{I}\circ (\mathrm{Ad}_{a}^{-1}\circ \tilde \phi)\circ\mathrm{Fl}_t^v(p)=\mathrm{Ad}_{a\left(\mathrm{Fl}_t^v(p)\right)}^{-1}\circ\left(\mathscr{I}\circ \tilde \phi\circ\mathrm{Fl}_t^v(p)\right),
\end{align}
and then observing how this replacement changes \eqref{LB in MCF applied to frame}.
\end{proof}
\begin{remark}
Given how $e^{ih\big(\tilde\phi\big)}H\big(\tilde\phi\big)$ and $\Xi_\alpha\big(\tilde\phi\big)$ transform algebraically as in \eqref{change in H} and \eqref{change in phi}, their values at $p$ depend only on $\tilde\phi(p)$ rather than $\tilde\phi$ in a neighborhood. Hence the above notation $H\big(\tilde\phi(p)\big)$ and $\Xi_\alpha\big(\tilde\phi(p)\big)$ is unambiguous. Contrastingly, one must denote the value of $\Omega_\alpha\big(\tilde\phi\big)$ at $p$ as $\Omega_\alpha\big(\tilde\phi\big)(p)$ because, due to \eqref{change of modified symbol in a frame}, it necessarily depends on values of $\phi$ in a neighborhood of $p$. Also from \eqref{change of modified symbol in a frame}, one sees that the decomposition \eqref{modified symbol decomposition a} depends on values of $\tilde\phi$ near $p$, rather than just its value at $p$.
\end{remark}

Noting that the structure group of $\mathcal{E}$ has Lie algebra $\mathfrak{g}_{0,0}(\tilde\phi;p)$ for any $\tilde\phi\in\Gamma(E)$ and $p\in M$, we get the following corollary of the formula \eqref{change of modified symbol in a frame}, ensuring that the Definitions \ref{mod symb in frame} and \ref{modified symbol} coincide in the case of constant bigraded symbol.

\begin{corollary}\label{constant symbol corollary}
In the constant bigraded symbol setting, if $\tilde \phi$ is local section $M_{\pi(p)}\to\mathcal{E}$ at $p$ for $p\in M$, then $\mathfrak{g}_{\leq 0}^{\mathrm{mod}}\big(\tilde \phi(p)\big)=\mathfrak{g}_{\leq 0}^{\mathrm{mod}}\big(\tilde \phi; p\big)$. 
\end{corollary}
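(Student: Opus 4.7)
The plan is to directly compare the two definitions by decomposing tangent vectors in $\mathbb{C}T_{\tilde\phi(p)}\mathcal{E}$ using the given section, then tracking how $\mathscr{I}_*$ acts on each piece. The key observation is that since $\mathscr{I}:\mathcal{E}\to\mathcal{G}$ is a bundle map covering $\pi:M\to N$, a tangent vector $V\in\mathbb{C}T_{\tilde\phi(p)}\mathcal{E}$ has vertical image in $T_{\mathscr{I}(\tilde\phi(p))}\mathcal{G}$ precisely when $\mathrm{pr}_*(V)\in\ker\pi_*=\mathcal{K}_2(p)\oplus\overline{\mathcal{K}_2}(p)=\mathbb{C}T_pM_{\pi(p)}$. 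Consequently, the expression $\zeta^{-1}_{\mathscr{I}(\tilde\phi(p))}\circ\mathscr{I}_*\big(\mathbb{C}T_{\tilde\phi(p)}\mathcal{E}\big)$ in \eqref{g0mod original def} implicitly picks out only this vertical-image portion.

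Next I would write any such $V$ with leaf-tangent projection in the form
\[
V=\tilde\phi_*(W)+\xi_X\big(\tilde\phi(p)\big),
\]
for a unique $W\in\mathcal{K}_2(p)\oplus\overline{\mathcal{K}_2}(p)$ and $X\in\mathfrak{g}_{0,0}$, where $\xi_X$ denotes the fundamental vector field on $\mathcal{E}$ generated by $X$. This decomposition is possible because $\tilde\phi$ is a section of $\mathrm{pr}:\mathcal{E}\to M$ over $M_{\pi(p)}$, and the vertical tangent space of $\mathcal{E}$ at $\tilde\phi(p)$ is identified with $\mathfrak{g}_{0,0}$ via the $G_{0,0}$-action.

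Applying $\mathscr{I}_*$ and exploiting the $\mathrm{CSp}(\mathbb{C}\mathfrak{g}_{-1})_{0,0}$-equivariance of $\mathscr{I}$ from Lemma \ref{I map lemma} (so that $\mathscr{I}_*(\xi_X(\tilde\phi(p)))=\zeta_{\mathscr{I}(\tilde\phi(p))}(X)$), I would obtain
\[
\mathscr{I}_*(V)=(\mathscr{I}\circ\tilde\phi)_*(W)+\zeta_{\mathscr{I}(\tilde\phi(p))}(X).
\]
Applying $\zeta^{-1}_{\mathscr{I}(\tilde\phi(p))}$ and invoking the definition \eqref{iota phi map} of $\iota_{\tilde\phi}$, the two terms yield $\iota_{\tilde\phi}(W)$ and $X$, respectively. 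Letting $V$ range over all such tangent vectors and noting that $W$ ranges over all of $\mathcal{K}_2(p)\oplus\overline{\mathcal{K}_2}(p)$ and $X$ over all of $\mathfrak{g}_{0,0}$, one gets precisely $\iota_{\tilde\phi}(\mathcal{K}_2(p)\oplus\overline{\mathcal{K}_2}(p))+\mathfrak{g}_{0,0}(\tilde\phi;p)$, which is $\mathfrak{g}_0^{\mathrm{mod}}(\tilde\phi;p)$ by Definition \ref{mod symb in frame}. Extending trivially by the negatively graded part $\mathbb{C}\mathfrak{g}_-$, which is common to both definitions, finishes the argument.

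I do not expect a substantial obstacle here; the argument is essentially a reorganization of the relevant definitions. The most delicate point is the implicit restriction to vertical-image tangent vectors when interpreting $\zeta^{-1}\circ\mathscr{I}_*$ in Definition \ref{modified symbol}, which is settled by the first paragraph. The $\mathfrak{g}_{0,0}$-summand then appears automatically on the left-hand side from vertical tangents in $\mathcal{E}$, matching the explicit $\mathfrak{g}_{0,0}(\tilde\phi;p)$ term on the right-hand side, while gauge-invariance of the sum $\iota_{\tilde\phi}(\mathcal{K}_2\oplus\overline{\mathcal{K}_2})+\mathfrak{g}_{0,0}(\tilde\phi;p)$ (as visible from \eqref{change of modified symbol in a frame}) ensures the right-hand side depends only on $\tilde\phi(p)$, consistent with the left-hand side.
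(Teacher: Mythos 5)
Your argument is correct and amounts to the same reasoning the paper uses: the paper derives the corollary by noting that the structure group of $\mathcal{E}$ has Lie algebra $\mathfrak{g}_{0,0}(\tilde\phi;p)$ and invoking the gauge-change formula \eqref{change of modified symbol in a frame}, which is exactly the content of your splitting of $\mathbb{C}T_{\tilde\phi(p)}\mathcal{E}$ into the image of $\tilde\phi_*$ plus the vertical space, together with the $\mathrm{CSp}(\mathbb{C}\mathfrak{g}_{-1})_{0,0}$-equivariance of $\mathscr{I}$. Your handling of the implicit restriction to vertical-image vectors in \eqref{g0mod original def} is also consistent with the paper, where $\mathcal{E}$ is taken over the leaf $M_{\pi(p)}$ so that all of $\mathscr{I}_*(\mathbb{C}T_{\tilde\phi(p)}\mathcal{E})$ is automatically vertical.
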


Corollary \ref{constant symbol corollary} implies one can use \emph{symbols along $\tilde \phi$} to define (frame-independent) modified symbols of a CR hypersurface with constant bigraded symbol by specifying a natural family of frames such that the modified symbols along these frames are all the same. This conceptual viewpoint will be a key idea used in the next section to generalize Definition \ref{modified symbol}.

We denote by $\theta_-$ the part of the soldering form $\theta$ on $\mathbb{C}T\mathcal{F}$ with values in $\mathbb{C}\mathfrak{g}_{-}$. Its kernel is $\ker(\theta_-)=\mathrm{pr}_{\mathcal{F}}^{-1}(\mathcal{K}_2\oplus \overline{\mathcal{K}_2})$, and $\omega=\zeta^{-1}\circ \mathscr{I}_*$ defines (by Lemma \ref{I map lemma}) a $\mathrm{CSp}(\mathbb{C}\mathfrak{g}_{-1})_{0,0}$-equivariant $\mathfrak{csp}(\mathbb{C}
\mathfrak{g}_{-1})$-valued form on $\ker(\theta_-)$. Of course, for a local section $\tilde \phi: M_{\pi(p)}\to \mathcal{F}$,
\[
\tilde\phi^*\omega(p)=\iota_{\tilde \phi}\big|_{\mathcal{K}_2(p)\oplus \overline{\mathcal{K}_2}(p)},
\quad\quad\forall\, p\in M.
\]
For each local adapted frame $\phi$ of $\mathbb{C}TM$, we obtain a $\omega_{\phi}\in \Omega^1\left(M; \mathbb{C}\mathfrak{g}_{-}\oplus\mathfrak{csp}(\mathbb{C}\mathfrak{g}_{-1})\right)$ given by $\omega_{\phi}:=\tilde\phi^{*}\theta_{-}+\tilde\phi^{*}\omega$, where we view $\tilde\phi^{*}\omega$ as a $1$-form on $M$ by defining its kernel as the space spanned by first $2s+1$ vector fields in the frame $\phi$. For $2$-nondegenerate structures, the matrices $\Omega_\alpha\big(\tilde \phi\big)$ and $\Xi_\alpha\big(\tilde \phi\big)$ -- and therefore the modified symbols along $\tilde\phi$ -- described above can alternatively be characterized via the Maurer-Cartan formula $d \omega_{\phi}+[\omega_{\phi},\omega_{\phi}]$ as follows.

\begin{proposition}\label{degree zero characterization}
Suppose that $M$ is a $2$-nondegenerate CR hypersurface and choose a local adapted frame $\phi$ of the form \eqref{adapted basis}. Suppose $\Xi_{1},\ldots, \Xi_{r},\Omega_{1},\ldots,\allowbreak \Omega_{r}$ are $s\times s$ matrices of functions on $M$ such that the $\mathbb{C}\mathfrak{g}_{-}\oplus \mathfrak{csp}(\mathbb{C}\mathfrak{g}_{-1})$-valued form $\eta:=\tilde \phi^*\theta_-+\eta_{0}$ given by
\[
\eta_0(g)=\eta_0(f_j)=0,
\quad\eta_0(\overline{x}):=\sigma_{\tilde \phi(p)}\eta_0(x)
\quad\quad\forall\,x\in T_pM,\, 1\leq j\leq s
\]
and
\[
\eta_0(e_\alpha):=\left(
\begin{array}{cccc}
0 & 0 & 0 & 0 \\
0 &  \Omega_\alpha(p) &e^{-ih(\tilde \phi(p))}\Xi_\alpha(p)H(\tilde\phi(p))^{-1} & 0 \\
0 &  0 & -\big(\Omega_\alpha(p)\big)^T & 0 \\
0 &  0 & 0 & 0
\end{array}\right)
\]
satisfies the Maurer--Cartan equation $d\eta+[\eta,\eta]\equiv 0$ up to terms of positive homogeneity with respect to the bigraded algebra's first grading. Then 
\[
\eta_{0}\big|_{\mathcal{K}_2(p)\oplus \overline{\mathcal{K}_2}(p)}=\tilde \phi^*\omega\big|_{\mathcal{K}_2(p)\oplus \overline{\mathcal{K}_2}(p)},
\]
and the modified symbol $\mathfrak{g}_{\leq 0}^{\mathrm{mod}}(\tilde \phi; p)$  along $\tilde \phi$ is determined by $\Xi_\alpha(\tilde \phi(p))=\Xi_\alpha(p)$ and $\Omega_{j}\big(\tilde \phi\big)(p)\allowbreak=\Omega_{j}(p)$ as in Lemma \ref{matrrep}.
\end{proposition}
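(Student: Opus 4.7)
The plan is to extract the relevant Lie-bracket information from the lowest-order piece of $d\eta+[\eta,\eta]\equiv 0$ and match it with the computation in the proof of Lemma \ref{matrrep}. Since $\tilde\phi^*\theta_-$ is already fixed (it is the nondegenerate part of the soldering form pulled back along the section), what remains is to show that the $\mathfrak{csp}$-valued part $\eta_0$ restricted to $\mathcal{K}_2(p)\oplus\overline{\mathcal{K}_2}(p)$ is uniquely determined. Because $\eta_0$ vanishes on $g$ and the $f_j,\overline{f_j}$ by construction and is fixed on $\overline{e_\alpha}$ via $\sigma_{\tilde\phi(p)}$, the content lies entirely in the values $\eta_0(e_\alpha)$.

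The main step is to evaluate the Maurer--Cartan equation on the pairs $(f_k,e_\alpha)$ and $(\overline{f_k},e_\alpha)$ and project onto the $\mathfrak{g}_{-1,1}$ component of the target. On these arguments the weights (first grading) add to $-1+0=-1$, which is the relevant ``lowest'' homogeneity that is forced to vanish by the hypothesis; derivative terms $X(\eta_0(e_\alpha))$ and $e_\alpha(\tilde\phi(f_k))$ contribute only to strictly higher weight when projected onto $\mathfrak{g}_{-1,1}$, so they may be ignored. What survives is
\[
\tilde\phi\bigl([f_k,e_\alpha]_p\bigr)_{(-1,1)}\;=\;\bigl[\eta_0(e_\alpha),\tilde\phi(f_k)\bigr]_{(-1,1)},\qquad
\tilde\phi\bigl([\overline{f_k},e_\alpha]_p\bigr)_{(-1,1)}\;=\;\bigl[\eta_0(e_\alpha),\tilde\phi(\overline{f_k})\bigr]_{(-1,1)}.
\]
The first identity, using the matrix form of $\eta_0(e_\alpha)$, pins down the $\Omega_\alpha(p)$ block as the matrix of coefficients of $[e_\alpha,f_k]$ modulo $\mathcal{K}_2\oplus\overline{\mathcal{H}}$ in the basis $(f_1,\dots,f_s)$, while the second pins down $\Xi_\alpha(p)$ as the matrix of coefficients of $[e_\alpha,\overline{f_k}]$ modulo $\mathcal{K}_2\oplus\overline{\mathcal{H}}$ in the basis $(f_1,\dots,f_s)$, after absorbing the factor $e^{-ih(\tilde\phi(p))}H(\tilde\phi(p))^{-1}$ that converts between the $\overline{f_k}$-basis and the $\mathfrak{g}_{-1,-1}$-basis dictated by Corollary \ref{F bundle dual descriptions}.

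These are exactly the identifications used in the proof of Lemma \ref{matrrep} to compute $\iota_{\tilde\phi}(e_\alpha)$ from $[e_\alpha,f_k]$ and $[e_\alpha,\overline{f_k}]$. Therefore
\[
\eta_0(e_\alpha)\bigr|_p \;=\; \iota_{\tilde\phi}(e_\alpha)\bigr|_p
\]
for every $\alpha$, and by $\sigma_{\tilde\phi(p)}$-equivariance the same equality holds on $\overline{\mathcal{K}_2}(p)$. This is the claimed equality $\eta_0\big|_{\mathcal{K}_2(p)\oplus\overline{\mathcal{K}_2}(p)}=\tilde\phi^*\omega\big|_{\mathcal{K}_2(p)\oplus\overline{\mathcal{K}_2}(p)}$, and by Definition \ref{mod symb in frame} combined with \eqref{g00 in a frame} the modified symbol $\mathfrak{g}_{\leq 0}^{\mathrm{mod}}(\tilde\phi;p)$ is then determined by $\Xi_\alpha(p)$ and $\Omega_\alpha(p)$ precisely as described in Lemma \ref{matrrep}.

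The main obstacle I anticipate is a careful bookkeeping of the first grading: one has to verify that among the many terms in $d\eta+[\eta,\eta]$ evaluated on $(f_k,e_\alpha)$ and $(\overline{f_k},e_\alpha)$, only the bracket $[\tilde\phi^*\theta_-, \eta_0]$-type terms and the $\theta_-([-,-])$-term contribute to the $\mathfrak{g}_{-1,1}$-projection at weight $-1$, while derivative-of-frame terms and $[\eta_0,\eta_0]$-terms are pushed into strictly higher weight. This is a standard but slightly delicate verification using the fact that $\eta_0$ takes values in $\mathfrak{g}_{0,*}$ and that the bigrading multiplication in $\mathfrak{csp}(\mathbb{C}\mathfrak{g}_{-1})$ respects the first grading strictly. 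Once this is cleanly done, the remainder of the argument is a direct comparison with Lemma \ref{matrrep}.
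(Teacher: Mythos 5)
Your argument is correct in substance, but it takes a genuinely different route from the paper's. The paper integrates: it observes that the hypothesis gives $d\eta_0+[\eta_0,\eta_0]=0$, invokes the fundamental theorem of calculus for Lie-group-valued forms (Sharpe's Darboux-derivative theorem) to realize $\eta_0$ as the logarithmic derivative of a map $M\to\mathrm{CSp}(\mathbb{C}\mathfrak{g}_{-1})$, and then uses the equivariance of $\tilde\phi^*\theta_-$ forced by the mixed-grading part of the Maurer--Cartan equation to identify that map with $\mathscr{I}\circ\tilde\phi$, whence $\eta_0=\tilde\phi^*\omega$ globally. You instead work infinitesimally, extracting from the $\mathfrak{g}_{-1,1}$-projection of the equation on the pairs $(f_k,e_\alpha)$ and $(\overline{f_k},e_\alpha)$ exactly the structure functions of $[e_\alpha,f_k]$ and $[e_\alpha,\overline{f_k}]$ that Lemma \ref{matrrep} shows compute $\iota_{\tilde\phi}(e_\alpha)$. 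Your route is more economical and shows the conclusion already follows from a fragment of the hypothesis, at the cost of leaning entirely on the bracket computation inside the proof of Lemma \ref{matrrep}; the paper's route is more structural and actually reconstructs the map $\mathscr{I}$, which is what makes the identity $\eta_0=\tilde\phi^*\omega$ transparent rather than a block-by-block coincidence. One small repair: the term $e_\alpha\big(\tilde\phi^*\theta_-(f_k)\big)$ is \emph{not} of strictly higher first-grading weight (it is $\mathfrak{g}_{-1,1}$-valued, so the projection does not kill it); it vanishes for a different reason, namely that for an adapted frame $\tilde\phi^*\theta_-(f_k)$ is the constant $k$-th standard basis vector of $\mathfrak{g}_{-1,1}$ by \eqref{tilde phi minus formula}. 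The analogous term for $\overline{f_k}$ genuinely varies (through $e^{ih(\tilde\phi)}H(\tilde\phi)$) but stays in $\mathfrak{g}_{-1,-1}$ and is correctly discarded by your projection onto $\mathfrak{g}_{-1,1}$. With that justification fixed, the bookkeeping you flag as the main obstacle goes through.
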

\begin{proof}
Suppose $\Xi_{1},\ldots, \Xi_{r},\Omega_{1},\ldots, \Omega_{r}$ are matrices of functions on $M$ satisfying the assumptions of the proposition. The form $\eta_0$ satisfies $d\eta_0+[\eta_0,\eta_0]=0$ by assumption and thus, by the fundamental theorem of calculus \cite[Theorem 3.7.14]{sharpe2000differential}, $\eta_0$ is a logarithmic/Darboux derivative of a map $M\to \mathrm{CSp}(\mathbb{C}\mathfrak{g}_{-1})$. Moreover, the assumption $d\eta+[\eta,\eta]\equiv 0$ up to terms of positive homogeneity with respect to the bigraded algebra's first grading in  addition implies $\tilde \phi^*\theta_-$ is equivariant with respect to the induced local group action of $\mathrm{CSp}(\mathbb{C}\mathfrak{g}_{-1})$ on the local trivialization $\mathcal{F}=M\times \mathrm{CSp}(\mathbb{C}\mathfrak{g}_{-1})_{0,0}$ provided by the frame. In this way we recover the map $\mathscr{I}: \mathcal{F}\to \mathcal{G}$, because $\tilde\phi^*\theta_-$ is then the pullback by $\mathscr{I}\circ\tilde\phi$ of the usual soldering form on the bundle $\mathcal{G}.$  Then $\eta_{0}=\tilde \phi^*\omega$ and the final claim follows from the Lemma \ref{matrrep}.
\end{proof}

\subsection{Modified symbols for non-constant bigraded symbol structures}\label{sectionnonconstantmodifedsymbol}

 In this section, we show how to define a frame-independent modified symbol, generalizing Definition \ref{modified symbol} to general $k$-nondegenerate CR hypersurfaces. This will be done by imposing normalization conditions for the change of the bigraded symbol along the leaves of the Levi kernel. This provides new CR invariants that are obstructions to constancy of the bigraded symbol. The vanishing of these invariants -- a property we refer to as  \emph{ having constant bigraded symbol up to first order} (c.f., Definition \ref{normalized sections}) -- allows one to define the modified symbols independently from the normalization condition, but does not guarantee the constancy of the bigraded symbol.
 
Consider a local section $\tilde \phi: M_{\pi(p)}\to \mathcal{F}$ and a local section $\tilde \phi^\prime=\mathrm{Ad}_a^{-1}\circ \tilde \phi$ for a smooth map $a: M_{\pi(p)}\to \mathrm{CSp}(\mathbb{C}\mathfrak{g}_{-1})_{0,0}$ with $a(p)=\mathrm{Id}$ and $X\in (\mathcal{K}_2)_p$. Labeling  as
\[
S^{0,2}_\alpha(\tilde\phi):=e^{-ih\big(\tilde\phi\big)}\Xi_\alpha\big(\tilde\phi\big)H\big(\tilde\phi\big)^{-1}
\quad\mbox{ and }\quad
S^{0,-2}_\alpha(\tilde \phi):=e^{ih\big(\tilde\phi\big)}H\big(\tilde\phi\big)\overline{\Xi_\alpha\big(\tilde\phi\big)}
\]
the $\mathfrak{csp}(\mathbb{C}\mathfrak{g}_{-1})_{0,2}$-valued and $\mathfrak{csp}(\mathbb{C}\mathfrak{g}_{-1})_{0,-2}$-valued functions (under the natural identification from Proposition \ref{matrix representation prop} for the bigraded symbols), we can define bilinear maps
\[O_{0,-2}(\tilde \phi)(p):(\mathcal{K}_2)_p\otimes \tilde \phi(\mathfrak{g}_{0,-2}(p))\to \mathfrak{csp}(\mathbb{C}\mathfrak{g}_{-1})_{0,-2}/\tilde\phi(\mathfrak{g}_{0,-2}(p))\]
and 
\[O_{0,2}(\tilde \phi)(p):(\mathcal{K}_2)_p\otimes \tilde \phi(\mathfrak{g}_{0,2}(p))\to \mathfrak{csp}(\mathbb{C}\mathfrak{g}_{-1})_{0,2}/\tilde\phi(\mathfrak{g}_{0,2}(p))\]
by $O_{0,-2}(\tilde \phi)(p)(X,S^{0,-2}_\alpha):=X\left(S^{0,-2}_\alpha(\tilde \phi)\right)(p)$ and $O_{0,2}(\tilde \phi)(p)(X,S^{0,2}_{\alpha}):=X\left(S^{0,2}_\alpha(\tilde \phi)\right)(p)$. Let us emphasize that the definitions of $O_{0,\pm2}$ are independent of an adapted frame $\phi$ compatible with section $\tilde \phi$, because derivations of functional combinations of elements of $\tilde \phi(\mathfrak{g}_{0,\pm 2}(p))$ remain in $\tilde \phi(\mathfrak{g}_{0,\pm2}(p))$ and therefore $O_{0,\pm2}$ become linear when we quotient them out. On the other hand, we can conclude from \eqref{change in H} and \eqref{change in phi} that
\begin{align}
X\left(S^{0,-2}_\alpha(\tilde \phi^\prime)\right)(p)&=X\left(S^{0,-2}_\alpha(\tilde \phi)\right)(p)-[X(a)(p),S^{0,-2}_\alpha(\tilde \phi(p))], \\
X\left(S^{0,2}_\alpha(\tilde \phi^\prime)\right)(p)&=X\left(S^{0,2}_\alpha(\tilde \phi)\right)(p)-[X(a)(p),S^{0,2}_\alpha(\tilde \phi(p))]
\end{align}
under the change of local sections of $\mathcal{F},$ which characterizes the CR invariant equivalence classes $\big\{\big(O_{0,-2}(\tilde \phi^\prime)(p),O_{0,2}(\tilde \phi^\prime)(p)\big)\,|\, \tilde \phi^\prime: M_{\pi(p)}\to \mathcal{F}, \tilde \phi^\prime(p)=\tilde \phi(p)\big\}$. Observe that these equivalence classes depend only on $\tilde \phi(\mathfrak{g}_{0,\pm 2}(p))$ ( i.e., on the bigraded symbol) and thus one can universally pose a normalization condition on these equivalence classes by choosing a unique pair of bilinear maps 
\[
\big(O_{0,-2}^N(\tilde \phi(p)),O_{0,2}^N(\tilde \phi(p))\big)\in \left\{\left.\left(O_{0,-2}(\tilde \phi^\prime)(p),O_{0,2}(\tilde \phi^\prime)(p)\right)\,\right|\, \tilde \phi^\prime: M_{\pi(p)}\to \mathcal{F}, \tilde \phi^\prime(p)=\tilde \phi(p)\right\}
\]
in each equivalence class. In general, this is a nontrivial task as the existence of the normalization condition follows from the axiom of choice (except that we can, and for convenience will, always choose $(0,0)$ within its equivalence class). Nevertheless, a choice of normalization condition $\big(O_{0,-2}^N(\tilde \phi(p)),O_{0,2}^N(\tilde \phi(p))\big)$ provides a reduction of the frames and allows us to make the following definition.

\begin{definition}\label{normalized sections}
We say that a local section $\tilde \phi: M_{\pi(p)}\to \mathcal{F}$ is \emph{normalized with respect to a choice of normalization condition $\big(O_{0,-2}^N(\tilde \phi(p)),O_{0,2}^N(\tilde \phi(p))\big)$ at $p$} if
$O_{0,\pm 2}(\tilde \phi)(p)=O_{0,\pm2}^N(\tilde \phi(p))$. We say that $\mathfrak{g}_{\leq0}^{\mathrm{mod}}(\tilde \phi(p)):=\mathfrak{g}_{\leq0}^{\mathrm{mod}}({\tilde \phi;p})$ is the \emph{modified symbol at $\tilde \phi(p)$ with respect to normalization condition $\big(O_{0,-2}^N(\tilde \phi(p)),O_{0,2}^N(\tilde \phi(p))\big)$} if $\tilde \phi$ is normalized with respect to $\big(O_{0,-2}^N(\tilde \phi(p)),O_{0,2}^N(\tilde \phi(p))\big)$. 

We say that $(M,\mathcal{H})$ has \emph{constant bigraded symbol up to first order at $p$} if $O_{0,\pm2}^N(\tilde \phi(p))=0$ for any local section $\tilde \phi: M_{\pi(p)}\to \mathcal{F}$ around $p$, and otherwise we say that $O_{0,\pm2}^N(\tilde \phi(p))$ are obstructions to first order constancy at $p$.
\end{definition}

To prove that Definition \ref{normalized sections} is indeed well posed, we show that this definition of $\mathfrak{g}_{\leq0}^{\mathrm{mod}}(\tilde \phi(p))$ is independent of the normalized section $\tilde \phi$ extending $\tilde \phi(p)$ in the following theorem.

\begin{theorem}\label{modified symbols in normalized frames}
Suppose $\phi$ is a local adapted frame of $\mathbb{C}TM$ around $p$ as in Definition \ref{adapted frame} and the modified symbol $\mathfrak{g}_{\leq0}^{\mathrm{mod}}(\tilde \phi;p)$ at the point $p$ along  $\tilde \phi$ is given by matrices $(e^{ih(\tilde \phi)}H(\tilde \phi),\Xi_1(\tilde \phi),\dots,\Xi_{n-s}(\tilde \phi),\Omega_1(\tilde \phi),\dots,\Omega_{n-s}(\tilde \phi))$. Then there are unique classes $B_1,\dots,B_{n-s} \in \mathfrak{csp}(\mathbb{C}\mathfrak{g}_{-1})_{0,0}/\mathfrak{g}_{0,0}(\tilde \phi; p)$ such that
\begin{align}\label{first order constancy criterion b}
  e_\alpha\left(S^{0,-2}_\beta(\tilde\phi)\right)(p)-\left[B_\alpha,S^{0,-2}_\beta(\tilde\phi(p))\right]= O_{0,-2}^N(\tilde \phi(p))\big(e_\alpha,S^{0,-2}_\beta(\tilde\phi(p))\big)
\end{align}
and
\begin{align}\label{first order constancy criterion a}
   e_\alpha\left(S^{0,2}_\beta(\tilde\phi)\right)(p)- \left[B_\alpha,S^{0,2}_\beta(\tilde\phi(p))\right]= O_{0,2}^N(\tilde \phi(p))\big(e_\alpha,S^{0,2}_\beta(\tilde\phi(p))\big)
\end{align}
hold for all $\alpha,\beta$. The modified symbol $\mathfrak{g}_{\leq0}^{\mathrm{mod}}(\tilde \phi(p))$ at $\tilde \phi(p)$ with respect to normalization condition $\big(O_{0,-2}^N(\tilde \phi(p)),O_{0,2}^N(\tilde \phi(p))\big)$  is given by the matrices 
\[
(e^{ih(\tilde \phi)}H(\tilde \phi),\Xi_1(\tilde \phi),\dots,\Xi_{n-s}(\tilde \phi),\Omega_1(\tilde \phi)+B_1,\dots,\Omega_{n-s}(\tilde \phi)\allowbreak+B_{n-s})
\]
and is independent of extension of $\tilde \phi(p)$ to a normalized section.
\end{theorem}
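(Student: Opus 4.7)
The plan is to reformulate the theorem as a statement about gauge transformations. A normalized extension $\tilde\phi'$ of $\tilde\phi(p)$ should take the form $\tilde\phi'=\mathrm{Ad}_a^{-1}\circ\tilde\phi$ for some smooth $a:M_{\pi(p)}\to\mathrm{CSp}(\mathbb{C}\mathfrak{g}_{-1})_{0,0}$ with $a(p)=\mathrm{Id}$, and the classes $B_\alpha$ appearing in the statement will arise as the derivatives $e_\alpha(a)(p)\in\mathfrak{csp}(\mathbb{C}\mathfrak{g}_{-1})_{0,0}$ taken modulo $\mathfrak{g}_{0,0}(\tilde\phi;p)$. Using the transformation formulas for $X(S^{0,\pm2}_\alpha)$ displayed just before Definition \ref{normalized sections}, the condition that $\tilde\phi'$ be normalized, namely $O_{0,\pm2}(\tilde\phi')(p)=O_{0,\pm2}^N(\tilde\phi(p))$, becomes exactly \eqref{first order constancy criterion b}--\eqref{first order constancy criterion a} after identifying $B_\alpha$ with $e_\alpha(a)(p)$.

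For existence I would invoke the very definition of the normalization condition: by construction $O_{0,\pm2}^N(\tilde\phi(p))$ lies in the equivalence class of obstructions realized by extensions of $\tilde\phi(p)$, so \emph{some} gauge transformation $a$ achieves it, and its derivatives supply the required $B_\alpha$. Conversely, any prescribed values in $\mathfrak{csp}(\mathbb{C}\mathfrak{g}_{-1})_{0,0}$ can be realized as derivatives of such an $a$ (for instance by exponentiating), so existence reduces to membership in the equivalence class. For uniqueness, if $B_\alpha$ and $B_\alpha'$ both satisfy \eqref{first order constancy criterion b}--\eqref{first order constancy criterion a}, then their difference brackets trivially with every $S^{0,\pm2}_\beta(\tilde\phi(p))$. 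By Definition \ref{CR symbol}, $\mathfrak{g}_{0,0}(p)$ consists precisely of the elements of $\mathfrak{csp}(\mathbb{C}\mathfrak{g}_{-1}(p))$ preserving each $\mathfrak{g}_{j,k}(p)$ with $(j,k)\in\mathcal{I}$, and in particular commuting with $\mathfrak{g}_{0,\pm2}(p)$; hence $B_\alpha-B_\alpha'\in\mathfrak{g}_{0,0}(\tilde\phi;p)$, as required.

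Next I would compute the change in $\Omega_\alpha$: the transformation law \eqref{change of modified symbol in a frame} evaluated at $p$, where the $B$-block of $a$ reduces to the identity, yields
\[
\Omega_\alpha(\tilde\phi')(p)=\Omega_\alpha(\tilde\phi)(p)+\pi_{\mathfrak{gl}(s,\mathbb{C})}(e_\alpha(a)(p))\equiv\Omega_\alpha(\tilde\phi)(p)+B_\alpha\pmod{\mathfrak{g}_{0,0}(\tilde\phi;p)},
\]
which is the asserted form of the modified symbol along a normalized extension. Independence from the choice of normalized extension then follows from the uniqueness modulo $\mathfrak{g}_{0,0}(\tilde\phi;p)$: by Definition \ref{mod symb in frame} the modified symbol already includes $\mathfrak{g}_{0,0}(\tilde\phi;p)$ as a summand, so any two normalized extensions produce $\Omega_\alpha$-values differing by an element of this summand and therefore define the same subspace of $\mathbb{C}\mathfrak{g}_{-}\oplus\mathfrak{csp}(\mathbb{C}\mathfrak{g}_{-1})$.

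The step I expect to be the most delicate is the careful bookkeeping that the projection $\pi_{\mathfrak{gl}(s,\mathbb{C})}$ in \eqref{change of modified symbol in a frame} captures exactly the data of $B_\alpha$ modulo $\mathfrak{g}_{0,0}(\tilde\phi;p)$, accounting for the scalar conformal part of $\mathfrak{csp}(\mathbb{C}\mathfrak{g}_{-1})_{0,0}$ that is absorbed into the equivalence noted after Lemma \ref{matrrep}; together with the verification that the remaining components of the modified symbol ($\mathbb{C}\mathfrak{g}_{-}$, $\mathfrak{g}_{0,\pm2}$, and $\mathfrak{g}_{0,0}(\tilde\phi;p)$) are unchanged at $p$ under any gauge transformation with $a(p)=\mathrm{Id}$, which however is structural and built into the setup.
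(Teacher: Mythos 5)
Your proposal is correct and follows essentially the same route as the paper: realize the normalized extension as $\tilde\phi'=\mathrm{Ad}_a^{-1}\circ\tilde\phi$ with $B_\alpha=e_\alpha(a)(p)$, read off \eqref{first order constancy criterion b}--\eqref{first order constancy criterion a} from the transformation rules preceding Definition \ref{normalized sections}, get the shift of $\Omega_\alpha$ from \eqref{change of modified symbol in a frame}, and deduce uniqueness/independence from the fact that the ambiguity lies in $\mathfrak{g}_{0,0}(\tilde\phi;p)$. One wording caveat: since $O_{0,\pm2}^N$ is valued in $\mathfrak{csp}(\mathbb{C}\mathfrak{g}_{-1})_{0,\pm2}/\tilde\phi(\mathfrak{g}_{0,\pm2}(p))$, the difference $B_\alpha-B_\alpha'$ \emph{normalizes} (rather than commutes with) each $\tilde\phi(\mathfrak{g}_{0,\pm2}(p))$, which is exactly the membership criterion for $\mathfrak{g}_{0,0}$ in Definition \ref{CR symbol}, so your conclusion stands.
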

\begin{proof}
Consider $a:M\to \mathrm{CSp}(\mathbb{C}\mathfrak{g}_{-1})_{0,0}$ as in Lemma \ref{matrrep} with $a(p)=\mathrm{Id}$ and $e_\alpha(a)(p)=B_\alpha$. Accordingly, \eqref{first order constancy criterion b} and  \eqref{first order constancy criterion a} are the formulas for the change of the derivatives in frame $\tilde \phi$ to  derivatives in a normalized section $\tilde\phi^\prime=\mathrm{Ad}_{a}^{-1}\circ \tilde \phi$  with respect to $\big(O_{0,-2}^N(\tilde \phi(p)),O_{0,2}^N(\tilde \phi(p))\big)$. The formula for $\mathfrak{g}_{\leq0}^{\mathrm{mod}}(\tilde \phi^\prime(p))$ then follows from \eqref{change of modified symbol in a frame}. 

Now, if $\tilde \phi$ was already a normalized with respect to $\big(O_{0,-2}^N(\tilde \phi(p)),O_{0,2}^N(\tilde \phi(p))\big)$, then we have $[B_\alpha,S^{0,2}_\beta(\tilde\phi(p))]\in \tilde\phi(\mathfrak{g}_{0,2}(p))$ and $[B_\alpha,S^{0,-2}_\beta(\tilde\phi(p))]\in \tilde\phi(\mathfrak{g}_{0,-2}(p)).$ This implies $B_\alpha\in \mathfrak{g}_{0,0}$. Therefore, the modified symbols for $\tilde \phi$ and $\tilde \phi^\prime$ are the same.
\end{proof}

\subsection{Key examples of modified symbol calculations}\label{Key examples of modified symbol calculations}

Let us now consider a $2$-nondegenerate model. We will now determine its modified symbol (under suitable normalization conditions if its bigraded symbol is not constant to first order).

So let us start with the defining equation
\begin{align}
\Re(w)=z^TH(\zeta,\overline{\zeta})\overline{z}+\Re(\overline{z}^TS(\zeta,\overline{\zeta})\overline{z}),
\end{align}
the corresponding Levi form
\begin{align}
\mathcal{L}^1=\left(
\begin{array}{cc}
H(\zeta,\overline{\zeta}) & H_{\overline{\zeta}}(\zeta,\overline{\zeta})\overline{z}+\overline{S_{\zeta}(\zeta,\overline{\zeta})}z \\
z^TH_{\zeta}(\zeta,\overline{\zeta})+\overline{z}^TS_{\zeta}(\zeta,\overline{\zeta})  & z^TH_{\zeta,\overline{\zeta}}(\zeta,\overline{\zeta})\overline{z}+\Re(\overline{z}^TS_{\zeta\overline{\zeta}}(\zeta,\overline{\zeta})\overline{z})  \\
\end{array}
\right).
\end{align}
and consider the following adapted frame $\phi$:
\begin{align}
g:=&\frac{\partial}{\partial \Im(w)},\\
f_a:=&\frac{\partial}{\partial z_a}-i(H_a(\zeta,\overline{\zeta})\overline{z}+\overline{S_a(\zeta,\overline{\zeta})}z)\frac{\partial}{\partial \Im(w)},\\
e_\alpha:=&\frac{\partial}{\partial \zeta_{\alpha}}-i\left(z^TH_{\zeta_{\alpha}}(\zeta,\overline{\zeta})\overline{z}+\Re(\overline{z}^TS_{\zeta_{\alpha}}(\zeta,\overline{\zeta})\overline{z})\right)\frac{\partial}{\partial \Im(w)}\\
&-\sum_{\overline{b},c}\left(z^TH_{\zeta_{\alpha},\overline{b}}(\zeta,\overline{\zeta})+\overline{z}^TS_{\zeta_{\alpha},\overline{b}}(\zeta,\overline{\zeta})\right)(H(\zeta,\overline{\zeta})^{-1})_{c,\overline{b}}f_c,
\end{align}
where the notation $H_a(\zeta,\overline{\zeta})$ denotes the index $a$ row of $H(\zeta,\overline{\zeta})$, $(H(\zeta,\overline{\zeta})^{-1})_{c,\overline{b}}$ denotes the $(c,b)$ entry of $H(\zeta,\overline{\zeta})^{-1}$, and $H_{\zeta_\alpha,\overline{b}}(\zeta,\overline{\zeta})$ (resp. $S_{\zeta_\alpha,\overline{b}}(\zeta,\overline{\zeta})$) denotes the index $b$ column of $H_{\zeta_\alpha}(\zeta,\overline{\zeta})$ (resp. $S_{\zeta_\alpha}(\zeta,\overline{\zeta})$).

The Levi form $\mathcal{L}^1$ can be represented with respect to the frame $\phi$ according to our assumptions by the matrix
\begin{align}\label{key example Lform}
\left(
\begin{array}{cc}
H(\zeta,\overline{\zeta}) & 0 \\
0  & 0  \\
\end{array}
\right),
\end{align}
that is, $e^{ih\big(\tilde\phi\big)}=1$ and $H\big(\tilde \phi\big)=H(\zeta,\overline{\zeta}).$
Further,
\begin{align}\label{key example Xi mat}
\Xi_{\beta}\big(\tilde \phi\big)=(H(\zeta,\overline{\zeta})^T)^{-1}S_{\zeta_{\beta}}(\zeta,\overline{\zeta}),\quad S^{0,2}_{\beta}\big(\tilde \phi\big)=(H(\zeta,\overline{\zeta})^T)^{-1}S_{\zeta_{\beta}}(\zeta,\overline{\zeta})H(\zeta,\overline{\zeta})^{-1}
\end{align}
and
\begin{align}\label{key example Omega mat}
\Omega_\beta\big(\tilde \phi\big)=(H(\zeta,\overline{\zeta})^T)^{-1}H_{\zeta_{\beta}}(\zeta,\overline{\zeta})^T.
\end{align}
So 
\begin{align}\label{first order constancy criterion alt a}
\tilde\phi(\iota(\mathcal{K}_2))\cong\mathrm{span}\left\{(H(\zeta,\overline{\zeta})^T)^{-1}S_{\zeta_{\beta}}(\zeta,\overline{\zeta})H(\zeta,\overline{\zeta})^{-1}\,|\, {\beta=1,\dots,r}\right\}
\end{align}
and
\begin{align}\label{first order constancy criterion alt b}
\tilde\phi(\iota(\overline{\mathcal{K}_2}))\cong\mathrm{span}\left\{\left.\overline{S_{\zeta_{\beta}}(\zeta,\overline{\zeta})}\,\right|\,{\beta=1,\dots,r}\right\},
\end{align}
where the above congruence refers to the usual identifications between $s\times s$ symmetric matrices and $\mathfrak{csp}(\mathbb{C}\mathfrak{g}_{-1})_{0,\pm 2}$.

We have
\begin{align}
e_\alpha(\overline{S_{\zeta_{\beta}}(\zeta,\overline{\zeta})})&=
 \overline{S_{\zeta_{\beta},\overline{\zeta_{\alpha}}}(\zeta,\overline{\zeta})}=H_{\zeta_\alpha}(\zeta,\overline{\zeta})H(\zeta,\overline{\zeta})^{-1}\overline{S_{\zeta_\beta}(\zeta,\overline{\zeta})}+\overline{S_{\zeta_\beta}(\zeta,\overline{\zeta})}(H(\zeta,\overline{\zeta})^T)^{-1}H_{\zeta_\alpha}(\zeta,\overline{\zeta})^T\\
 &=-[\Omega_\alpha(\tilde \phi),\overline{S_{\zeta_\beta}(\zeta,\overline{\zeta})}]
\end{align}
by \eqref{rank condition 2}, and
\begin{align}
e_\alpha\big(\tilde\phi\big(\iota(e_\beta)\big)\big)&=-(H(\zeta,\overline{\zeta})^T)^{-1}H_{\zeta_\alpha}(\zeta,\overline{\zeta})^T(H(\zeta,\overline{\zeta})^T)^{-1}S_{\zeta_{\beta}}(\zeta,\overline{\zeta})H(\zeta,\overline{\zeta})^{-1}+\\
&+(H(\zeta,\overline{\zeta})^T)^{-1}S_{\zeta_{\alpha},\zeta_{\beta}}(\zeta,\overline{\zeta})H(\zeta,\overline{\zeta})^{-1}+\\
&-(H(\zeta,\overline{\zeta})^T)^{-1}S_{\zeta_{\beta}}(\zeta,\overline{\zeta})H(\zeta,\overline{\zeta})^{-1}H_{\zeta_\alpha}(\zeta,\overline{\zeta})H(\zeta,\overline{\zeta})^{-1}\\
&=(H(\zeta,\overline{\zeta})^T)^{-1}S_{\zeta_{\alpha},\zeta_{\beta}}(\zeta,\overline{\zeta})H(\zeta,\overline{\zeta})^{-1}-[\Omega_\alpha(\tilde \phi),\tilde\phi\big(\iota(e_\beta)\big)]
\end{align}
Thus with a chosen normalization condition $\big(O_{0,-2}^N(\tilde \phi(p)),O_{0,2}^N(\tilde \phi(p))\big)$, the conditions \eqref{first order constancy criterion b} and \eqref{first order constancy criterion a} can be expressed as existence of $B_{\alpha}\in \mathfrak{csp}_{0,0}(\mathbb{C}\mathfrak{g}_{-1})$ such that, for all $\alpha, \beta$,
\begin{align}\label{first order constancy criterion alt c}
-[\Omega_\alpha(\tilde \phi)(p)+B_{\alpha},\overline{S_{\zeta_\beta}(\zeta,\overline{\zeta})}]&=O_{0,-2}^N(\tilde \phi(p))(e_{\alpha},\overline{S_{\zeta_{\beta}}(\zeta,\overline{\zeta})})
\end{align}
and
\begin{align}\label{first order constancy criterion alt d}
\quad(H(\zeta,\overline{\zeta})^T)^{-1}S_{\zeta_{\alpha},\zeta_{\beta}}(\zeta,\overline{\zeta})H(\zeta,\overline{\zeta})^{-1}-[\Omega_\alpha(\tilde \phi)(p)+B_{\alpha},\tilde\phi\big(\iota(e_\beta)\big)]&=O_{0,2}^N(\tilde \phi(p))(e_{\alpha},S_{\beta}\big(\tilde \phi\big)).
\end{align}

The corresponding modified symbol at $\tilde\phi(p)$ with respect to normalization condition $\big(O_{0,-2}^N(\tilde \phi(p)),O_{0,2}^N(\tilde \phi(p))\big)$ is determined by the matrices
\[
H\big(\tilde \phi(p)\big)=H(\zeta,\overline{\zeta}),
\quad
\Xi_{\beta}\big(\tilde \phi(p)\big)=(H(\zeta,\overline{\zeta})^T)^{-1}S_{\zeta_{\beta}}(\zeta,\overline{\zeta}),
\]
and
\[
\Omega_{\beta}\big(\tilde \phi(p)\big)=\Omega_{\beta}(\tilde \phi)(p)+B_{\beta}
\]
in accordance with \eqref{key example Lform}, \eqref{key example Xi mat}, \eqref{key example Omega mat}, and Theorem \ref{modified symbols in normalized frames}, which provides a clear necessary condition for modified symbols.

\begin{lemma}\label{key examples prop}
    Suppose that a $2$-nondegenerate model has modified symbol $\mathfrak{g}_{\leq0}^{\rm{mod}}\big(\tilde \phi(p)\big)$ at $\tilde \phi(p)$ given by the above matrices
    \[
(H(\tilde \phi(p)),\Xi_1(\tilde \phi(p)),\dots,\Xi_{n-s}(\tilde \phi(p)),\Omega_1(\tilde \phi(p)),\dots,\Omega_{n-s}(\tilde \phi(p)))
\]
    with respect to any normalization condition $\big(O_{0,-2}^N(\tilde \phi(p)),O_{0,2}^N(\tilde \phi(p))\big)$ of the form 
    \begin{align}\label{normalization simplification a}
    O_{0,-2}^N(\tilde \phi(p))=0
    \end{align}
    and 
    \begin{align}\label{normalization simplification b}
    O_{0,2}^N(\tilde \phi(p))\big(e_{\alpha},S^{0,2}_\beta(\tilde \phi(p))\big)=O_{0,2}^N(\tilde \phi(p))\big(e_{\beta},S^{0,2}_\alpha(\tilde \phi(p))\big)
    \end{align}
    for all $\alpha,\beta$ and $S^{0,2}_\alpha(\tilde \phi(p))=\Xi_\alpha(\tilde \phi(p))H(\tilde \phi(p))^{-1}$. Then each $\Omega_{\alpha}(\tilde \phi(p))$ belongs to the normalizer
    \begin{align}
        N_{\mathfrak{csp}(\mathbb{C}\mathfrak{g}_{-1})_{0,0}}(\tilde \phi(\mathfrak{g}_{0,-2}(p)))
    \end{align}
    and the collection $\big(\Omega_1(\tilde \phi(p)),\dots,\Omega_{n-s}(\tilde \phi(p))\big)$ belongs to
       \begin{align}\label{g0+ is lie algebra}
       \left\{\left.(B_1,\dots,B_{n-s})\in \bigotimes_{1}^{n-s} \mathfrak{csp}(\mathbb{C}\mathfrak{g}_{-1})_{0,0}\,\right|\, \left[B_\alpha,S^{0,2}_\beta(\tilde \phi(p))\right]-\left[B_\beta,S^{0,2}_\alpha(\tilde \phi(p))\right]\in \mathfrak{g}_{0,2}(\tilde \phi(p)) \right\},
       \end{align}
    i.e., $ \pi_{0,2}([\mathfrak{g}_{0,+}^{\mathrm{mod}}(\tilde \phi(p)),\mathfrak{g}_{0,+}^{\mathrm{mod}}(\tilde \phi(p))])\subset \pi_{0,2}(\mathfrak{g}_{0,+}^{\mathrm{mod}}(\tilde \phi(p))).$
\end{lemma}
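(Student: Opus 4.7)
The plan is to read both conclusions directly off the equations \eqref{first order constancy criterion alt c} and \eqref{first order constancy criterion alt d}, which were derived in the discussion preceding the lemma and which encode the relationship between the modified symbol entries $\Omega_\alpha(\tilde\phi(p))$ and the normalization data $\big(O^N_{0,-2}(\tilde\phi(p)),O^N_{0,2}(\tilde\phi(p))\big)$. Recall from Theorem \ref{modified symbols in normalized frames} that $\Omega_\alpha(\tilde\phi(p))=\Omega_\alpha(\tilde\phi)(p)+B_\alpha$, where $B_\alpha$ is the unique class making the equations hold for the chosen normalization, so these equations can be read as constraints on the modified symbol entries themselves.

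For the normalizer claim, I would substitute hypothesis \eqref{normalization simplification a} into \eqref{first order constancy criterion alt c}. The right-hand side vanishes modulo $\tilde\phi(\mathfrak{g}_{0,-2}(p))$, so the bracket $[\Omega_\alpha(\tilde\phi(p)),S^{0,-2}_\beta(\tilde\phi(p))]$ lands in $\tilde\phi(\mathfrak{g}_{0,-2}(p))$ for every $\beta$. Since $\{S^{0,-2}_\beta(\tilde\phi(p))\}_{\beta=1}^{n-s}$ spans $\tilde\phi(\mathfrak{g}_{0,-2}(p))$ by \eqref{first order constancy criterion alt b}, this is precisely the statement that $\Omega_\alpha(\tilde\phi(p))$ normalizes $\tilde\phi(\mathfrak{g}_{0,-2}(p))$.

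For the collection claim, I would antisymmetrize \eqref{first order constancy criterion alt d} in $\alpha,\beta$. The key input is commutativity of mixed partial derivatives, $S_{\zeta_\alpha\zeta_\beta}=S_{\zeta_\beta\zeta_\alpha}$, which makes the explicit term $(H^T)^{-1}S_{\zeta_\alpha\zeta_\beta}H^{-1}$ symmetric in the two indices. Combined with hypothesis \eqref{normalization simplification b}, the right-hand sides of the $(\alpha,\beta)$- and $(\beta,\alpha)$-versions of \eqref{first order constancy criterion alt d} coincide, so subtracting them yields
\[
[\Omega_\alpha(\tilde\phi(p)),S^{0,2}_\beta(\tilde\phi(p))]-[\Omega_\beta(\tilde\phi(p)),S^{0,2}_\alpha(\tilde\phi(p))]\in\tilde\phi(\mathfrak{g}_{0,2}(p)),
\]
which is exactly the containment recorded in \eqref{g0+ is lie algebra}. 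The equivalent formulation $\pi_{0,2}([\mathfrak{g}_{0,+}^{\mathrm{mod}}(\tilde\phi(p)),\mathfrak{g}_{0,+}^{\mathrm{mod}}(\tilde\phi(p))])\subset \pi_{0,2}(\mathfrak{g}_{0,+}^{\mathrm{mod}}(\tilde\phi(p)))$ then follows by extracting the $(0,2)$-block from the commutator $[\iota_{\tilde\phi}(e_\alpha),\iota_{\tilde\phi}(e_\beta)]$ using the matrix form in Lemma \ref{matrrep}, which reproduces precisely the antisymmetrized bracket above.

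There is no genuine obstacle here; the lemma is essentially a bookkeeping consequence of identities that have already been set up in the preceding discussion. The only mild subtlety to keep track of is that every relation lives in a quotient by $\tilde\phi(\mathfrak{g}_{0,\pm 2}(p))$, so one must be careful to use representatives in $\mathfrak{csp}(\mathbb{C}\mathfrak{g}_{-1})_{0,\pm 2}$ and read the final conclusions as set-theoretic containments inside $\tilde\phi(\mathfrak{g}_{0,\pm 2}(p))$ rather than as equalities in $\mathfrak{csp}(\mathbb{C}\mathfrak{g}_{-1})_{0,\pm 2}$.
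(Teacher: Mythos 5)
Your proposal is correct and follows exactly the paper's (very terse) proof: the normalizer claim is read off from \eqref{first order constancy criterion alt c} using $O^N_{0,-2}=0$ together with the fact that the $S^{0,-2}_\beta$ span $\tilde\phi(\mathfrak{g}_{0,-2}(p))$, and the second claim comes from antisymmetrizing \eqref{first order constancy criterion alt d} in $\alpha,\beta$ using the symmetry of $S_{\zeta_\alpha\zeta_\beta}$ and of the normalization condition \eqref{normalization simplification b}. Your version simply spells out the bookkeeping (including the substitution $\Omega_\alpha(\tilde\phi(p))=\Omega_\alpha(\tilde\phi)(p)+B_\alpha$ and the quotient caveat) that the paper leaves implicit.
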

\begin{proof}
The claim follows for CR hypersurfaces given by defining equation \eqref{gen def fun}  directly from the discussion above. Indeed, from \eqref{first order constancy criterion alt c} we see that $\Omega_{\alpha}(\tilde \phi(p))$ belongs to the claimed normalizer and from \eqref{first order constancy criterion alt d} the second condition follows.
\end{proof}

\subsection{Abstract modified symbols}\label{Abstract modified symbols}

The information about modified symbols of CR hypersurfaces from Sections \ref{sec2.2} and \ref{sectionnonconstantmodifedsymbol} suggests the following definition of abstract modified symbols.

\begin{definition}\label{AMS}
    A subspace $\mathfrak{g}_{\leq0}^{\rm{mod}}\subset \mathbb{C}\mathfrak{g}_{-}\oplus \mathfrak{csp}(\mathbb{C}\mathfrak{g}_{-1})$ together with an antilinear involution $\sigma$ of $\mathbb{C}\mathfrak{g}_{-}\oplus \mathfrak{csp}(\mathbb{C}\mathfrak{g}_{-1})$ of the form in \eqref{involution formula} (for some nondegenerate Hermitian matrix $H(\tilde \phi):=\mathbf{H}$ and real number $h(\tilde \phi):=h$) is an \emph{abstract modified symbol} if it admits a decomposition $\mathfrak{g}_{\leq0}^{\rm{mod}}=\mathbb{C}\mathfrak{g}_{-}\oplus \mathfrak{g}_{0,-}^{\mathrm{mod}}\oplus \mathfrak{g}_{0,0}^{\mathrm{mod}}\oplus \mathfrak{g}_{0,+}^{\mathrm{mod}}$ such that
    \begin{itemize}
    \item $\sigma(\mathfrak{g}_{\leq0}^{\rm{mod}})\subset \mathfrak{g}_{\leq0}^{\rm{mod}},$ and $\sigma(\mathfrak{g}_{0,+}^{\mathrm{mod}})=\mathfrak{g}_{0,-}^{\mathrm{mod}}$,
    \item $\pi_{0,-2}(\mathfrak{g}_{0,+}^{\mathrm{mod}})=0$,
    \item $\mathfrak{g}_{0,0}^{\mathrm{mod}}\subset \mathfrak{csp}(\mathbb{C}\mathfrak{g}_{-1})_{0,0}$ is the maximal $\sigma$-invariant subalgebra of $\mathfrak{csp}(\mathbb{C}\mathfrak{g}_{-1})_{0,0}$ preserving $\pi_{0,2}(\mathfrak{g}_{0,+}^{\mathrm{mod}})$.
    \end{itemize}

    We say that the abstract modified symbol is \emph{realizable}  if it satisfies
    \[
     \pi_{0,-2}([\mathfrak{g}_{0,+}^{\mathrm{mod}},\mathfrak{g}_{0,-}^{\mathrm{mod}}])\subset \pi_{0,-2}(\mathfrak{g}_{0,-}^{\mathrm{mod}})
    \]
    and
     \[
      \pi_{0,2}([\mathfrak{g}_{0,+}^{\mathrm{mod}},\mathfrak{g}_{0,+}^{\mathrm{mod}}])\subset \pi_{0,2}(\mathfrak{g}_{0,+}^{\mathrm{mod}}).
    \]
\end{definition}
The decomposition $\mathfrak{g}_{\leq0}^{\rm{mod}}=\mathbb{C}\mathfrak{g}_{-}\oplus \mathfrak{g}_{0,-}^{\mathrm{mod}}\oplus \mathfrak{g}_{0,0}^{\mathrm{mod}}\oplus \mathfrak{g}_{0,+}^{\mathrm{mod}}$ is not unique. Two abstract modified symbols are equivalent if they are related via conjugation by an element of $\mathrm{CSp}(\mathbb{C}\mathfrak{g}_{-1})_{0,0}$, where, we stress, this conjugation must intertwine their respective involutions. Therefore, we can arrive to the following description of the modified symbol.

\begin{lemma}\label{symbol representation lemma}
There is bijection between abstract modified symbols and pairs of the form
\begin{align}\label{matrices rep mod sym}
\left(e^{ih}\mathbf{H},\mathrm{span}\left\{(S^{0,2}_1,\Omega_1),\dots,(S^{0,2}_{n-s},\Omega_{n-s})\right\}+(0,\mathfrak{g}_{0,0}^{\prime})\right),
\end{align}
where $\mathbf{H}$ is an $s\times s$ Hermitian matrix, $e^{ih}$ is a unit complex number, each $S_j$ is an $s\times s$ symmetric matrix of the form $S^{0,2}_\alpha=e^{-ih}\Xi_\alpha \mathbf{H}^{-1}$, each $\Omega_\alpha$ is an $s\times s$ matrix, and 
\[
\mathfrak{g}_{0,0}^{\prime}:=\left\{B\in \mathfrak{gl}(s,\mathbb{C})\,\left|\,B^T\mathbf{H}\overline{\Xi_\alpha}+\mathbf{H}\overline{\Xi_\alpha}B\in \mathrm{span}\{\mathbf{H}\overline{\Xi_\beta}\},\,BS^{0,2}_\alpha+S^{0,2}_\alpha B^T\in \mathrm{span}\{S^{0,2}_\beta\}\right.\right\}.
\]
The abstract modified symbol is realizable if and only if
\[\Omega_{\beta}\in N_{\mathfrak{csp}(\mathbb{C}\mathfrak{g}_{-1})_{0,0}}(\mathrm{span}\left\{\mathbf{H}\overline{S^{0,2}_\gamma}\mathbf{H}^T \right\})
\quad\quad\big(\iff
\Omega_\beta^T\mathbf{H}\overline{\Xi_\alpha}+\mathbf{H}\overline{\Xi_\alpha}\Omega_\beta\in \mathrm{span}\{\mathbf{H}\overline{\Xi_\gamma}\}\big)
\]
and 
\[
[\Omega_{\alpha},S^{0,2}_\beta]-[\Omega_{\beta},S^{0,2}_\alpha]=\Omega_{\alpha}S^{0,2}_\beta+S^{0,2}_\beta \Omega_{\alpha}^T-\Omega_{\beta}S^{0,2}_\alpha-S^{0,2}_\alpha \Omega_{\beta}^T\in \mathrm{span}\left\{S^{0,2}_\gamma \right\}\] for all $\alpha,\beta$.
\end{lemma}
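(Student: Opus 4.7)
The plan is to unpack Definition \ref{AMS} in the matrix representation from \eqref{csp representation} and match each structural datum to an entry in the pair \eqref{matrices rep mod sym}. The argument splits naturally into three pieces: encoding the involution, encoding $\mathfrak{g}_{0,+}^{\mathrm{mod}}$ along with its non-uniqueness, and translating the realizability conditions.

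First, I would invoke Lemma \ref{involution lemma} (together with Remark \ref{sign of H and h}) to identify the involution $\sigma$ of the form \eqref{involution formula} with the data $(\mathbf{H}, e^{ih})$ up to the $\mathbb{Z}_2$-ambiguity, accounting for the first entry of the pair. To capture $\mathfrak{g}_{0,+}^{\mathrm{mod}}$, observe that the vanishing $\pi_{0,-2}(\mathfrak{g}_{0,+}^{\mathrm{mod}})=0$ forces each of its elements into $\mathfrak{csp}(\mathbb{C}\mathfrak{g}_{-1})_{0,0}\oplus\mathfrak{csp}(\mathbb{C}\mathfrak{g}_{-1})_{0,2}$; via \eqref{csp representation} such an element is determined by a pair $(L, S^{0,2})$ with $S^{0,2}$ symmetric. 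I would then choose representatives $\{(\Omega_\alpha, S^{0,2}_\alpha)\}_{\alpha=1}^{n-s}$ of a basis for $\mathfrak{g}_{0,+}^{\mathrm{mod}}$ modulo $\mathfrak{g}_{0,0}^{\mathrm{mod}}$ and set $\Xi_\alpha := e^{ih}S^{0,2}_\alpha \mathbf{H}$ to match the lemma's conventions.

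Second, I would derive the description of $\mathfrak{g}_{0,0}^{\prime}$ by translating the maximality condition in Definition \ref{AMS} into matrix constraints. A direct block calculation in \eqref{csp representation} yields the commutator identities
\[
[L, S^{0,2}] = LS^{0,2}+S^{0,2}L^T,
\qquad
[L, S^{0,-2}] = -L^T S^{0,-2} - S^{0,-2} L,
\]
so preservation of $\pi_{0,2}(\mathfrak{g}_{0,+}^{\mathrm{mod}})=\mathrm{span}\{S^{0,2}_\alpha\}$ by $\mathrm{ad}_L$ is precisely the second defining condition of $\mathfrak{g}_{0,0}^{\prime}$. Applying $\sigma$ through \eqref{involution formula} to each $S^{0,2}_\alpha$ and using the Hermiticity $\overline{\mathbf{H}}=\mathbf{H}^T$ shows that $\pi_{0,-2}(\mathfrak{g}_{0,-}^{\mathrm{mod}}) = \mathrm{span}\{\mathbf{H}\overline{\Xi_\alpha}\}$; imposing the analogous preservation condition yields the first defining relation of $\mathfrak{g}_{0,0}^{\prime}$. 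Bijectivity follows because, once $\mathbf{H}$, $h$, and $\mathfrak{g}_{0,+}^{\mathrm{mod}}$ are prescribed, the pieces $\mathfrak{g}_{0,-}^{\mathrm{mod}}=\sigma(\mathfrak{g}_{0,+}^{\mathrm{mod}})$ and $\mathfrak{g}_{0,0}^{\mathrm{mod}}$ are determined, while the freedom in picking a complementary basis of $\mathfrak{g}_{0,+}^{\mathrm{mod}}/\mathfrak{g}_{0,0}^{\mathrm{mod}}$ is exactly the $(0,\mathfrak{g}_{0,0}^{\prime})$-summand in \eqref{matrices rep mod sym}.

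Third, for the realizability conditions I would compute the $\pi_{0,\pm 2}$-components of the brackets of generic elements directly in the block form. Computing $[X_\alpha, X_\beta]$ for $X_\alpha, X_\beta \in \mathfrak{g}_{0,+}^{\mathrm{mod}}$, the $(2,3)$-block reads $[\Omega_\alpha, S^{0,2}_\beta] - [\Omega_\beta, S^{0,2}_\alpha]$, so the inclusion $\pi_{0,2}([\mathfrak{g}_{0,+}^{\mathrm{mod}}, \mathfrak{g}_{0,+}^{\mathrm{mod}}])\subset\pi_{0,2}(\mathfrak{g}_{0,+}^{\mathrm{mod}})$ is exactly the second realizability condition. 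For the first condition I would pair $X_\alpha$ with $\sigma(X_\beta)\in\mathfrak{g}_{0,-}^{\mathrm{mod}}$, compute the $(3,2)$-block of the bracket to obtain $-\Omega_\alpha^T(\mathbf{H}\overline{S^{0,2}_\beta}\mathbf{H}^T) - (\mathbf{H}\overline{S^{0,2}_\beta}\mathbf{H}^T)\Omega_\alpha$, and rewrite $\mathbf{H}\overline{S^{0,2}_\gamma}\mathbf{H}^T = e^{ih}\mathbf{H}\overline{\Xi_\gamma}$ using $S^{0,2}_\gamma = e^{-ih}\Xi_\gamma\mathbf{H}^{-1}$ and $\overline{\mathbf{H}}=\mathbf{H}^T$ to recover the normalizer condition. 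The main bookkeeping obstacle is correctly tracking the signs, transposes, and the $e^{ih}$-twist entering through \eqref{involution formula}; once these are under control, every assertion in the lemma reads off directly from the block matrix representation.
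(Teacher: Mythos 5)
Your proposal is correct and follows essentially the same route as the paper: the paper's proof reads the pair off from Definition \ref{AMS} via the matrix representation \eqref{csp representation} and \eqref{modified symbol rep}, declares the identification of $\mathfrak{g}_{0,0}^{\mathrm{mod}}$ with $\mathfrak{g}_{0,0}'$ plus the grading element to be "a simple computation," and says the realizability conditions "directly translate" — exactly the block-commutator and $\sigma$-conjugation computations you carry out explicitly. Your write-up just supplies those details (correctly, including the cancellation $\mathbf{H}\overline{S^{0,2}_\gamma}\mathbf{H}^T=e^{ih}\mathbf{H}\overline{\Xi_\gamma}$ via $\overline{\mathbf{H}}=\mathbf{H}^T$).
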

\begin{proof}
It follows from the definition of abstract modified symbol that we have the claimed matrix $e^{ih}\mathbf{H}$, and that the elements of $\mathfrak{g}_{0,+}^{\mathrm{mod}}$ can be written in the form \eqref{modified symbol rep} for suitable matrices $S^{0,2}_\alpha$ and $\Omega_\alpha$. Conversely, $e^{ih}\mathbf{H}$ defines an antilinear-involution $\sigma$ of $\mathbb{C}\mathfrak{g}_{-}\oplus \mathfrak{csp}(\mathbb{C}\mathfrak{g}_{-1})$ of the form in \eqref{involution formula} and we can use \eqref{modified symbol rep} to define elements of $\mathfrak{g}_{0,+}^{\mathrm{mod}}$. It is then a simple computation to check that this defines the modified symbol completely, i.e., that $\mathfrak{g}_{0,0}^{\rm{mod}}$ is the sum of $\mathfrak{g}_{0,0}^{\prime}$ with multiples of the grading element. The realizibility conditions then directly translate into the claimed form.
\end{proof}
\begin{remark}\label{action on symbol rem}
    In order to obtain a distinguished pair 
    \[(e^{ih}\mathbf{H},\mathrm{span}\{(S^{0,2}_1,\Omega_1),\dots,(S^{0,2}_{n-s},\Omega_{n-s})\}+(0,\mathfrak{g}_{0,0}^{\prime}))\] representing an equivalence class of modified symbol, one has to consider the action of $\mathrm{CSp}(\mathbb{C}\mathfrak{g}_{-1})_{0,0}$ on the pair and pick suitable representatives of the orbits of this action. Clearly, one can fix $e^{ih}\mathbf{H}=\mathbf{H}$ to be a representative of a Hermitian matrix with given signature, while obtaining the other representatives is an open problem, in general.
\end{remark}

\section{Realizations of modified symbols with 2-nondegenerate model constructions}\label{Approaches to Hypersurface Realization}

In this section, we show that the conditions
\begin{align}\label{sufficient normalizer}
\pi_{0,-2}([\mathfrak{g}_{0,+}^{\mathrm{mod}},\mathfrak{g}_{0,-}^{\mathrm{mod}}])\subset \pi_{0,-2}(\mathfrak{g}_{0,-}^{\mathrm{mod}})
\quad\mbox{and}\quad
\pi_{0,2}([\mathfrak{g}_{0,+}^{\mathrm{mod}},\mathfrak{g}_{0,+}^{\mathrm{mod}}])\subset \pi_{0,2}(\mathfrak{g}_{0,+}^{\mathrm{mod}})
\end{align}
are not only necessary (as was shown in Lemma \ref{key examples prop} and Lemma \ref{symbol representation lemma}) but are also sufficient for the realizability of modified symbols. For every abstract modified symbol satisfying \eqref{sufficient normalizer}, we construct its realization as the modified symbol of a $2$-nondegenerate model at $0$ in Theorem \ref{hypersurface realization approach 2} that is in addition constant up to first order  at $0$.

We know from \cite[system (5.12), item iii]{sykes2023geometry} that the modified symbols $\mathfrak{g}_{\leq0}^{\mathrm{mod}}$ that can be attained on homogeneous structures all satisfy the condition \eqref{sufficient normalizer}, and can thus be realized via Theorem \ref{hypersurface realization approach 2}. We show moreover via Lemma \ref{Z-graded model lemma} that these realizations have maximal symmetry algebra dimensions among all structures with the same reduced modified symbol. In other words, we indeed construct \emph{homogeneous models} for the modified symbols that can be attained on homogeneous $2$-nondegenerate structures.

The proof of maximality of the infinitesimal symmetry algebra is based on comparing the Theorem \ref{hypersurface realization approach 2} construction with a generalization (in Section \ref{generalized Naruki}) of the construction (described in  \cite{naruki1970holomorphic}) of homogeneous Levi nondegenerate structures from Levi--Tanaka algebras. A conceptual strength of this generalization is that it produces homogeneous models by construction. A corresponding drawback is that it can only be applied to the modified symbols of homogeneous structures. A further drawback is that the construction yields CR hypersurfaces realizations given by defining functions expressed in non-holomorphic coordinates, and changing to holomorphic coordinates is not always tractable. Contrastingly, the construction in Theorem \ref{hypersurface realization approach 2} remedies these two drawbacks as it can be applied to any realizable modified symbol and yields defining equations that can be expressed in holomorphic coordinates more easily.

\subsection{Construction of homogeneous models}\label{generalized Naruki}

It is shown in \cite{sykes2023geometry} that if \begin{align}\label{red modified symbol decomposition}
\mathfrak{g}^{\mathrm{mod}}_{\leq 0}=\mathfrak{g}^{\mathrm{mod}}_{\leq 0}(\psi)\subset \mathbb{C}\mathfrak{g}_{-}\oplus \mathfrak{csp}(\mathbb{C}\mathfrak{g}_{-1})
\end{align}
is a modified symbol of a homogeneous $2$-nondegenerate CR hypersurface at a point $\psi\in \mathcal{E}$ then there exists a subspace $\mathfrak{g}^{\mathrm{red}}_{\leq 0}=\mathbb{C}\mathfrak{g}_{-}\oplus \mathfrak{g}_0^{\mathrm{red}}\subset\mathfrak{g}^{\mathrm{mod}}_{\leq0}$ with a further (non-canonical) decomposition
\begin{align}\label{red modified symbol decomposition a}
\mathfrak{g}_{0}^{\mathrm{red}}=\mathfrak{g}_{0,-}^{\mathrm{red}}\oplus \mathfrak{g}_{0,0}^{\mathrm{red}}\oplus \mathfrak{g}_{0,+}^{\mathrm{red}}
\end{align}
satisfying
\begin{itemize}
\item $\overline{\mathfrak{g}_{0}^{\mathrm{red}}}=\mathfrak{g}_{0}^{\mathrm{red}}$, $\overline{\mathfrak{g}_{0,-}^{\mathrm{red}}}=\mathfrak{g}_{0,+}^{\mathrm{red}}, \pi_{0,\pm 2}(\mathfrak{g}_{0,\pm}^{\mathrm{red}})=\mathfrak{g}_{0,\pm 2}$,
\item $[v,\mathfrak{g}_{-1,1}]\not\subset \mathfrak{g}_{-1,1} \quad\quad\forall\, v\in \mathfrak{g}_{0,-}^{\mathrm{red}}$,
\item $[v,\mathfrak{g}_{-1,-1}]\subset \mathfrak{g}_{-1,-1} \quad\quad\forall\, v\in \mathfrak{g}_{0,-}^{\mathrm{red}}$,
\item $[v,\mathfrak{g}_{-1,i}] \subset \mathfrak{g}_{-1,i} \quad\quad\forall\, v\in \mathfrak{g}_{0,0}^{\mathrm{red}},\, i\in\{-1,1\}$,
\end{itemize}
such that $\mathfrak{g}^{\mathrm{red}}_{\leq 0}$ is a subalgebra of $\mathbb{C}\mathfrak{g}_{-}\oplus \mathfrak{csp}(\mathbb{C}\mathfrak{g}_{-1})$. This association arises from the \emph{reduction procedure} of \cite{sykes2023geometry}, and we call $\mathfrak{g}_{0}^{\mathrm{red}}$ the reduced modified symbol of the homogeneous $2$-nondegenerate structure. More generally, we refer to subalgebras $\mathfrak{g}^{\mathrm{red}}_{\leq0}$ in $\mathbb{C}\mathfrak{g}_{-}\oplus \mathfrak{csp}(\mathbb{C}\mathfrak{g}_{-1})$ having all of this additional structure as \emph{abstract reduced modified symbols (ARMS)}.

Each ARMS generates a homogeneous CR hypersurface as follows. Let $\mathfrak{g}^{\mathrm{mod}}_{\leq 0}$ be an abstract modified symbol containing the ARMS $\mathfrak{g}^{\mathrm{red}}_{\leq 0}$, and let us fix a decomposition of $\mathfrak{g}^{\mathrm{red}}_{\leq 0}$ as in \eqref{red modified symbol decomposition} and \eqref{red modified symbol decomposition a}. Define
\[
\mathfrak{q}:=\mathfrak{g}_{-1,-1}\oplus \mathfrak{g}_{0,-}^{\mathrm{red}}\oplus \mathfrak{g}_{0,0}^{\mathrm{red}},
\quad\mbox{ and }\quad
\mathfrak{h}:=\mathfrak{g}_{-1,1}\oplus \mathfrak{g}_{0,+}^{\mathrm{red}}\oplus \mathfrak{g}_{0,0}^{\mathrm{red}}
\]
let $G^{\mathbb{C}}$ be the connected simply-connected Lie group of $\mathfrak{g}^{\mathrm{red}}_{\leq 0}$, let $G_{0,0}^{\mathrm{red}}\subset G^{\mathbb{C}}$ be the subgroup generated by $\mathfrak{g}_{0,0}^{\mathrm{red}}$, let $G=\Re G^{\mathbb{C}}$ be the subgroup generated by $\Re(\mathfrak{g}^{\mathrm{red}}_{\leq 0})$, and let $\pi:G^{\mathbb{C}}\to G^{\mathbb{C}}/G_{0,0}^{\mathrm{red}}$ be the canonical projection to the left-coset space. Letting $\mathcal{H}$ denote the invariant distribution on $G^{\mathbb{C}}/G_{0,0}^{\mathrm{red}}$ generated by $\mathfrak{h}/\mathfrak{g}_{0,0}^{\mathrm{red}}$, the distribution $\mathcal{H}$ defines a CR structure on $\pi(G)$, and it is shown in \cite{sykes2023geometry} that the CR structure
\begin{align}\label{Z-graded model}
\left(\pi(G), \mathcal{H}\right)
\end{align}
is a locally homogeneous $2$-nondegenerate hypersurface-type CR structure with $\mathfrak{g}^{\mathrm{mod}}_{\leq 0}$ as one of its modified symbols (c.f., \emph{flat CR structures} in \cite[Section 6]{sykes2023geometry}).

We can locally embed this hypersurface into the space
\[
\mathfrak{w}:=\mathfrak{g}_{-2,0}\oplus \mathfrak{g}_{-1,1}\oplus \mathfrak{g}_{0,+}^{\mathrm{red}}
\]
as follows, where all constructions are well defined on a sufficiently small neighborhood of the identity element in $G^{\mathbb{C}}$.
Let $W:=\mathrm{exp}(\mathfrak{w})$ and $\mathrm{exp}(\mathfrak{q})$ be the local subgroups of $G^{\mathbb{C}}$ generated by $\mathfrak{w}$ and $\mathfrak{q}$ respectively, and let 
\[
\Pi:G^{\mathbb{C}}/G_{0,0}^{\mathrm{red}}\to G^{\mathbb{C}}/\mathrm{exp}(\mathfrak{q})
\]
be the natural projection. Notice that 
\[
\left.\Pi\right|_{\pi(G)}:\pi(G)\to G^{\mathbb{C}}/\mathrm{exp}(\mathfrak{q}),
\]
is an immersion, whereas,
\begin{align}\label{hypersurface ambient space}
\left.\Pi\right|_{\pi(W)}:\pi(W)\to G^{\mathbb{C}}/\mathrm{exp}(\mathfrak{q}),
\end{align} 
is both an immersion and a submersion. After possibly replacing $G^{\mathbb{C}}$ with a sufficiently small neighborhood $U\subset G^{\mathbb{C}}$ such that $\left.\Pi\right|_{U/G_{0,0}^{\mathrm{red}}}$ is holomorphic and $\left.\Pi\right|_{(U/G_{0,0}^{\mathrm{red}})\cap \pi(W)}$ is a biholomorphism onto its image, the hypersurface-type CR structure on $\left(\pi(U\cap G), \mathcal{H}\right)$ will be equivalent to the structure that $\Pi\circ \pi(U\cap G)$ inherits from $(U\cap G^{\mathbb{C}})/\mathrm{exp}(\mathfrak{q})$ because $\left.\Pi\right|_{\pi(U\cap G)}$ is the restriction of a holomorphic map. Hence we have the following lemma.
\begin{lemma}\label{hypersurface realization lemma}
After replacing $G^{\mathbb{C}}$ with a sufficiently small neighborhood of the group's identity element, the submanifold $\Pi\circ \pi(G)$ is a real hypersurface in the complex manifold $G^{\mathbb{C}}/\mathrm{exp}(\mathfrak{q})$ equipped with the same CR structure as $\left(\pi(G), \mathcal{H}\right)$.
\end{lemma}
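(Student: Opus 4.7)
The plan is to exploit the direct-sum decomposition $\mathfrak{g}^{\mathrm{red}}_{\leq 0}=\mathfrak{w}\oplus\mathfrak{q}$ of complex vector spaces, visible directly from the definitions of $\mathfrak{w}$ and $\mathfrak{q}$ in terms of \eqref{red modified symbol decomposition a}, together with the holomorphicity of $\Pi$. This decomposition makes the multiplication map $(x,y)\mapsto\exp(x)\exp(y):\mathfrak{w}\oplus\mathfrak{q}\to G^{\mathbb{C}}$ a local diffeomorphism at the origin, so that after shrinking $G^{\mathbb{C}}$ to a sufficiently small neighborhood $U$ of the identity, $\Pi|_{\pi(U\cap W)}$ is a biholomorphism onto an open subset of $G^{\mathbb{C}}/\exp(\mathfrak{q})$. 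This is exactly the setup already noted just before the lemma statement.

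The key algebraic input is the identity $\Re\mathfrak{g}^{\mathrm{red}}_{\leq 0}\cap\mathfrak{q}=\Re\mathfrak{g}_{0,0}^{\mathrm{red}}$. Since the antilinear involution $\sigma$ reverses the second grading, it swaps $\mathfrak{g}_{-1,-1}\leftrightarrow\mathfrak{g}_{-1,1}$ and $\mathfrak{g}_{0,-}^{\mathrm{red}}\leftrightarrow\mathfrak{g}_{0,+}^{\mathrm{red}}$; requiring a $\sigma$-fixed element to have vanishing $\mathfrak{g}_{-2,0}$, $\mathfrak{g}_{-1,1}$, and $\mathfrak{g}_{0,+}^{\mathrm{red}}$ components (so that it sits in $\mathfrak{q}$) forces the $\mathfrak{g}_{-1,-1}$ and $\mathfrak{g}_{0,-}^{\mathrm{red}}$ components to vanish as well, leaving only a $\sigma$-fixed element of $\mathfrak{g}_{0,0}^{\mathrm{red}}$. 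This single identity simultaneously supplies (i) injectivity of $\Pi\circ\pi|_{G\cap U}$ modulo the fiber of $\pi$: any $g_1^{-1}g_2\in G\cap\exp(\mathfrak{q})$ must lie in $\Re G_{0,0}^{\mathrm{red}}\subset G_{0,0}^{\mathrm{red}}$, so $\pi(g_1)=\pi(g_2)$; and (ii) the immersion property of $\Pi|_{\pi(G)}$ asserted in the preceding paragraph, because modulo $\mathfrak{g}_{0,0}^{\mathrm{red}}$ the tangent space $\Re\mathfrak{g}^{\mathrm{red}}_{\leq 0}/\Re\mathfrak{g}_{0,0}^{\mathrm{red}}$ meets the kernel $\mathfrak{q}/\mathfrak{g}_{0,0}^{\mathrm{red}}$ of $d\Pi$ only in $0$. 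Together, (i) and (ii) promote $\Pi\circ\pi(G\cap U)$ to an embedded real submanifold of $G^{\mathbb{C}}/\exp(\mathfrak{q})$.

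A direct dimension count then gives codimension one: $\dim_{\mathbb{R}}\pi(G\cap U)=\dim_{\mathbb{C}}\mathfrak{g}^{\mathrm{red}}_{\leq 0}-\dim_{\mathbb{C}}\mathfrak{g}_{0,0}^{\mathrm{red}}$ and $\dim_{\mathbb{R}}G^{\mathbb{C}}/\exp(\mathfrak{q})=2\dim_{\mathbb{C}}\mathfrak{w}$, and the $\sigma$-induced pairings $\dim\mathfrak{g}_{-1,-1}=\dim\mathfrak{g}_{-1,1}$, $\dim\mathfrak{g}_{0,-}^{\mathrm{red}}=\dim\mathfrak{g}_{0,+}^{\mathrm{red}}$ reduce the difference to $\dim_{\mathbb{C}}\mathfrak{g}_{-2,0}=1$. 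The CR structures then agree because $\Pi$ is holomorphic between complex manifolds: $d\Pi$ intertwines the complex structures on $T(G^{\mathbb{C}}/G_{0,0}^{\mathrm{red}})$ and $T(G^{\mathbb{C}}/\exp(\mathfrak{q}))$, so it carries the maximal complex subbundle of $T\pi(G\cap U)$, namely $\mathcal{H}$, to the maximal complex subbundle of $T(\Pi\circ\pi(G\cap U))$ induced by the ambient complex structure.

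I expect the algebraic identity in the second paragraph to be the only nontrivial ingredient; once it is in hand, everything else reduces to standard facts about quotients of complex Lie groups and the behavior of holomorphic submersions. The remaining subtlety is simply to shrink $U$ enough that both the product map $\mathfrak{w}\oplus\mathfrak{q}\to G^{\mathbb{C}}$ is a diffeomorphism onto its image and the candidate submanifold $\Pi\circ\pi(G\cap U)$ is globally embedded rather than merely immersed.
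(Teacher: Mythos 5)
Your proposal is correct and follows essentially the same route as the paper, whose proof of this lemma is the discussion immediately preceding its statement: the decomposition $\mathfrak{g}^{\mathrm{red}}_{\leq 0}=\mathfrak{w}\oplus\mathfrak{q}$ making $\Pi|_{\pi(W)}$ a local biholomorphism, the immersion/injectivity of $\Pi|_{\pi(G)}$ after shrinking, and transport of the CR structure via holomorphicity of $\Pi$. You merely make explicit the algebraic identity $\Re\mathfrak{g}^{\mathrm{red}}_{\leq 0}\cap\mathfrak{q}=\Re\mathfrak{g}_{0,0}^{\mathrm{red}}$ and the codimension-one count, which the paper asserts without detail.
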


The map 
\begin{align}\label{hypersurfae realization map}
\left. \Pi\circ \pi\circ \mathrm{exp}\right|_{\mathfrak{w}}:\mathfrak{w}\to  G^{\mathbb{C}}/\mathrm{exp}(\mathfrak{q}),
\end{align}
is a local biholomorphism, so applying its inverse (locally) to $\Pi\circ \pi(G)$ yields a local hypersurface realization of $\left(\pi(G), \mathcal{H}\right)$ in the complex vector space $\mathfrak{w}$.

For the $2$-nondegenerate CR hypersurfaces in $\mathbb{C}^4$, it is proved in \cite{SykesHomogeneous} that this construction produces all of the homogeneous models. A striking property of these $\mathbb{C}^4$ homogeneous models can be generalized as follows: 

\begin{definition}\label{F filtration compatible}
Letting $\mathfrak{aut}(M)$ denote the algebra of infinitesimal symmetries of a connected locally homogeneous $2$-nondegenerate CR hypersurface $M$, $\mathfrak{aut}(M)$ \emph{admits a compatible $\mathbb{Z}$-grading}  if it has a $\mathbb{Z}$-graded decomposition
\begin{align}\label{F filtration z-grading}
\mathfrak{aut}(M)=\bigoplus_{k\geq -2}\mathfrak{m}_k
\end{align}
such that $[\mathfrak{m}_j,\mathfrak{m}_k]\subset \mathfrak{m}_{j+k}$ and 
$\mathbb{C}\mathfrak{m}_{-2}(p)= \mathbb{C}\mathfrak{g}_{-2}(p)$ and $\mathbb{C}\mathfrak{m}_{-1}(p)=\mathbb{C}\mathfrak{g}_{-1}(p).$
\end{definition}

We can also characterize how the component $\mathfrak{m}_{0}$ of $\mathfrak{aut}(M)$ with a compatible $\mathbb{Z}$-grading relates to the modified symbol and establish that homogeneous $2$-nondegenerate CR hypersurfaces with a compatible $\mathbb{Z}$-gradings are homogeneous models.
\begin{lemma}\label{Z-graded model lemma}
Consider a locally homogeneous $2$-nondegenerate CR structure $(M,\mathcal{H})$ with modified symbol $\mathfrak{g}_{\leq0}^{\mathrm{mod}}$ that admits a compatible $\mathbb{Z}$-grading \eqref{F filtration z-grading}. For $\tilde \phi \in \mathcal{E}$ over $p\in M$, $\tilde \phi(\mathbb{C}\mathfrak{m}_{-2}\oplus \mathbb{C}\mathfrak{m}_{-1}\oplus \mathbb{C}\mathfrak{m}_{0})$  has the structure of an ARMS $\mathfrak{g}_{\leq 0}^{\rm{red}}\subset \mathfrak{g}_{\leq0}^{\mathrm{mod}}(\tilde \phi)$ with bracket that is minus the Lie bracket of the vector fields realizing $\mathfrak{aut}(M)$ around $p$. Consequently, $(M,\mathcal{H})$ has maximal infinitesimal symmetry algebra dimension among all homogeneous structures with the same reduced modified symbol, and is moreover the locally unique homogeneous model admitting a compatible $\mathbb{Z}$-grading \eqref{F filtration z-grading}. Concretely, \eqref{Z-graded model} for the ARMS $\mathfrak{g}_{\leq 0}^{\rm{red}}$ is a homogeneous model of type $\mathfrak{g}_{\leq0}^{\mathrm{mod}}$ admitting a compatible $\mathbb{Z}$-grading \eqref{F filtration z-grading}.
\end{lemma}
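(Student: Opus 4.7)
The plan is to proceed in three stages: construct an embedding $\Psi$ from $\mathbb{C}\mathfrak{m}_{-2}\oplus\mathbb{C}\mathfrak{m}_{-1}\oplus\mathbb{C}\mathfrak{m}_0$ into $\mathfrak{g}^{\mathrm{mod}}_{\leq 0}(\tilde\phi)$, verify the ARMS axioms on its image, and then extract the maximality, local uniqueness, and realization claims.

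First, fix $\tilde\phi\in\mathcal{E}_p$. Vector field evaluation at $p$ followed by $\tilde\phi$ identifies $\mathbb{C}\mathfrak{m}_{-k}$ with $\mathbb{C}\mathfrak{g}_{-k}$ for $k=1,2$ by the hypothesis $\mathbb{C}\mathfrak{m}_{-k}(p)=\mathbb{C}\mathfrak{g}_{-k}(p)$, and this preserves the bigrading since CR-invariant vector fields preserve $\mathcal{H}$, $\overline{\mathcal{H}}$, and the Freeman filtration. Using the standard sign convention relating the Lie bracket of vector fields to the bracket of a transitive homogeneous symmetry algebra, the restriction $\Psi_-$ becomes a bigraded Lie algebra anti-isomorphism onto $\mathbb{C}\mathfrak{g}_-$. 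For $Y\in\mathbb{C}\mathfrak{m}_0$, the hypothesis $[\mathfrak{m}_0,\mathfrak{m}_{-k}]\subset\mathfrak{m}_{-k}$ makes $-\Psi_-\circ\mathrm{ad}(Y)\circ\Psi_-^{-1}$ a bigrading-preserving derivation of $\mathbb{C}\mathfrak{g}_-$; we let $\Psi(Y)\in\mathfrak{csp}(\mathbb{C}\mathfrak{g}_{-1})$ be its unique lift (the conformal scaling component being fixed by the action on $\mathfrak{g}_{-2,0}$). Set $\mathfrak{g}^{\mathrm{red}}_{\leq 0}:=\Psi(\mathbb{C}\mathfrak{m}_{-2}\oplus\mathbb{C}\mathfrak{m}_{-1}\oplus\mathbb{C}\mathfrak{m}_0)$.

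Second, show $\mathfrak{g}^{\mathrm{red}}_{\leq 0}\subset\mathfrak{g}^{\mathrm{mod}}_{\leq 0}(\tilde\phi)$ and verify the ARMS bullet points. By local homogeneity $Y_p\in\mathbb{C}T_pM=\Psi_-(\mathbb{C}\mathfrak{m}_{-2}\oplus\mathbb{C}\mathfrak{m}_{-1})$, so modulo replacing $Y$ by $Y-X$ for suitable $X\in\mathbb{C}\mathfrak{m}_{-2}\oplus\mathbb{C}\mathfrak{m}_{-1}$ we may arrange $Y_p\in\mathcal{K}_2(p)\oplus\overline{\mathcal{K}_2(p)}$. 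Comparing the definition \eqref{iota phi map} of $\iota_{\tilde\phi}(Y_p)$ (the derivative of $\mathscr{I}\circ\tilde\phi$ along the flow of $Y$) with the fact that this flow is a CR symmetry shows that $\iota_{\tilde\phi}(Y_p)$ coincides with $\Psi(Y)$ modulo $\mathfrak{g}_{0,0}(\tilde\phi;p)$, so indeed $\Psi(Y)\in\iota_{\tilde\phi}(\mathcal{K}_2(p)\oplus\overline{\mathcal{K}_2(p)})+\mathfrak{g}_{0,0}(\tilde\phi;p)=\mathfrak{g}_0^{\mathrm{mod}}(\tilde\phi;p)$. Checking the ARMS axioms then becomes bookkeeping: $\sigma$-invariance follows from the real structure on $\mathfrak{aut}(M)$; the spanning conditions $\pi_{0,\pm 2}(\mathfrak{g}_{0,\pm}^{\mathrm{red}})=\mathfrak{g}_{0,\pm 2}$ follow because homogeneity forces the Levi kernel at $p$ to be spanned by values of infinitesimal symmetries; and the remaining three bracket-preservation conditions transfer directly from $[\mathfrak{m}_0,\mathfrak{m}_{-k}]\subset\mathfrak{m}_{-k}$ through $\Psi$.

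Third, for maximality, let $(M',\mathcal{H}')$ be any locally homogeneous $2$-nondegenerate CR structure whose reduced modified symbol equals $\mathfrak{g}^{\mathrm{red}}_{\leq 0}$. A standard Tanaka-type prolongation argument bounds $\dim\mathfrak{aut}(M')$ by the dimension of the maximal $\mathbb{Z}$-graded prolongation of $\mathfrak{g}^{\mathrm{red}}_{\leq 0}$. For $(M,\mathcal{H})$ the compatible $\mathbb{Z}$-grading \eqref{F filtration z-grading} identifies $\bigoplus_{k\geq 0}\mathfrak{m}_k$ with this maximal prolongation by Tanaka--Morimoto uniqueness, yielding both maximality and local uniqueness of such models. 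The final claim follows since the construction \eqref{Z-graded model} yields a hypersurface on which right-invariant vector fields from $\mathfrak{g}^{\mathrm{red}}_{\leq 0}$ realize symmetries inheriting the grading of $\mathfrak{g}^{\mathrm{red}}_{\leq 0}$, and this grading automatically satisfies the conditions of Definition \ref{F filtration compatible}.

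The main obstacle will be the identification in stage two of $\iota_{\tilde\phi}(Y_p)$ with $\Psi(Y)$ modulo $\mathfrak{g}_{0,0}(\tilde\phi;p)$, which requires carefully unwinding the definition of $\mathscr{I}$ and tracking how the flow of a CR symmetry acts on the section $\mathscr{I}\circ\tilde\phi$ of the contact frame bundle $\mathcal{G}$. The $\mathfrak{g}_{0,2}$ component of $\iota_{\tilde\phi}(Y_p)$ comes from how the flow moves frames within the principal bundle, whereas the corresponding component of $\Psi(Y)$ comes from a purely algebraic derivation calculation, so matching them requires exploiting the CR-invariance of $Y$ to produce an explicit infinitesimal gauge transformation intertwining the two descriptions.
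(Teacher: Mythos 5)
Your first two stages follow essentially the same route as the paper's own argument for the lemma's first claim: one observes that $\tilde \phi(\mathbb{C}\mathfrak{m}_{-2}\oplus \mathbb{C}\mathfrak{m}_{-1}\oplus \mathbb{C}\mathfrak{m}_{0})$ is a graded subspace of $\mathfrak{g}_{\leq0}^{\mathrm{mod}}(\tilde \phi)$ and checks that the four nontrivial bracket maps agree up to sign with those of the modified symbol, the two brackets involving $\mathbb{C}\mathfrak{m}_{-1}$ coming from the construction of the modified symbol (your identification of $\iota_{\tilde\phi}(Y_p)$ with $\Psi(Y)$ modulo $\mathfrak{g}_{0,0}(\tilde\phi;p)$) and the remaining two from the Jacobi identity; the paper also offers the shortcut of citing \cite[Lemma 2.5]{SykesHomogeneous}. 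This part of your plan is sound, though the step you yourself flag as the main obstacle is exactly the one that still has to be carried out.

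The genuine gap is in stage three. There is no ``standard Tanaka-type prolongation argument'' bounding $\dim\mathfrak{aut}(M^\prime)$ by a maximal $\mathbb{Z}$-graded prolongation of $\mathfrak{g}_{\leq0}^{\mathrm{red}}$ in this setting: the reduced modified symbol is not the Tanaka symbol of the distribution $\Re(\mathcal{H}^\prime\oplus\overline{\mathcal{H}^\prime})$ on $M^\prime$ (that symbol is just the Heisenberg algebra, whose unreduced prolongation is the full contact algebra), and the adapted prolongation theories of \cite{porter2021absolute,sykes2023geometry} yield such bounds only under extra hypotheses (regularity, recoverability, or constancy of the modified symbol) --- precisely the cases isolated in Lemma \ref{z grading models lemma}. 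Nor is it established that $\bigoplus_{k\geq0}\mathfrak{m}_k$ realizes a maximal prolongation; if your argument worked, it would prove maximality among \emph{all} structures with the given reduced modified symbol, whereas the lemma claims (and the paper is careful to claim) only maximality among \emph{homogeneous} ones. The paper's actual argument avoids prolongations entirely: it filters $\mathfrak{aut}(M^\prime)$ by the CR structure at a point, passes to the associated graded algebra $\mathfrak{a}^{\mathrm{gr}}$, uses the first claim to see that $\mathbb{C}\mathfrak{a}^{\mathrm{gr}}_{-2}\oplus\mathbb{C}\mathfrak{a}^{\mathrm{gr}}_{-1}\oplus\mathbb{C}\mathfrak{a}^{\mathrm{gr}}_{0}$ is an ARMS differing from $\mathfrak{g}_{\leq0}^{\mathrm{red}}$ only in its isotropy component --- so that the model \eqref{Z-graded model} it generates is locally equivalent to $(M,\mathcal{H})$ --- and then embeds $\mathfrak{a}^{\mathrm{gr}}$, hence a vector space of the same dimension as $\mathfrak{aut}(M^\prime)$, into $\mathfrak{aut}(M)$. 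Local uniqueness likewise needs no Tanaka--Morimoto input: it is immediate from the first claim, since every compatibly graded homogeneous structure is locally equivalent to the model \eqref{Z-graded model} built from its ARMS. You should replace stage three with this filtration and associated-graded embedding argument.
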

\begin{proof}

The first claim follows from \cite[Lemma 2.5]{SykesHomogeneous}, because, using compatibility with the $\mathbb{Z}$-grading and following the \emph{reduction procedure} outlined in \cite[Section 2]{SykesHomogeneous}, one identifies $\mathbb{C}\mathfrak{m}_{0}$ with the degree zero part $\mathfrak{g}_{0}^{\rm{red}}$ of some modified symbol $\mathfrak{g}_{\leq0}^{\rm{red}}\subset \mathfrak{g}_{\leq0}^{\rm{mod}}$. 

Or, alternatively, one can observe that by the definition of compatible $\mathbb{Z}$-grading,  $\tilde \phi(\mathbb{C}\mathfrak{m}_{-2}\allowbreak\oplus \mathbb{C}\mathfrak{m}_{-1}\oplus \mathbb{C}\mathfrak{m}_{0})$ is a graded vector subspace of $\mathfrak{g}_{\leq0}^{\mathrm{mod}}(\tilde \phi)$. Thus, for the first claim, it would remain to show that the nontrivial Lie brackets that are $\mathbb{C}\mathfrak{m}_{0}\wedge \mathbb{C}\mathfrak{m}_{0}\to \mathbb{C}\mathfrak{m}_{0}$, $\mathbb{C}\mathfrak{m}_{0}\wedge \mathbb{C}\mathfrak{m}_{-2}\to \mathbb{C}\mathfrak{m}_{-2}$, $\mathbb{C}\mathfrak{m}_{0}\wedge \mathbb{C}\mathfrak{m}_{-1}\to \mathbb{C}\mathfrak{m}_{-1}$, and $\mathbb{C}\mathfrak{m}_{-1}\wedge \mathbb{C}\mathfrak{m}_{-1}\to \mathbb{C}\mathfrak{m}_{-2}$ coincide up to a sign with the brackets on the modified symbol, which for the later two follows from construction of the modified symbols and for the first two from Jacobi identity.

This immediately shows that $(M,\mathcal{H})$ is locally equivalent to the structure in \eqref{Z-graded model}.

To show that $(M,\mathcal{H})$ is a homogeneous model, let $(M^\prime,\mathcal{H}^\prime)$ be an arbitrary locally homogeneous $2$-nondegenerate structure with reduced modified symbol $\mathfrak{g}_{\leq0}^{\mathrm{red}}$. We will show 
\begin{align}\label{graded h-structures are maximal}
    \dim\mathfrak{aut}(M^\prime)\leq \dim\mathfrak{aut}(M).
\end{align}
The CR structure at a point $p\in M^\prime$ induces a filtration
\begin{align}
\mathfrak{aut}(M^\prime)=\mathfrak{a}_{-2}\supset \mathfrak{a}_{-1}\supset\cdots
\end{align}
of $\mathfrak{aut}(M^\prime)$ defined by
\[
\mathfrak{a}_{-1}:=\left\{X\in \mathfrak{aut}(M^\prime)\,\left|\, X(p)\in \mathcal{H}^\prime\oplus\overline{\mathcal{H}^\prime}\right.\right\}
\]
and
\[
\mathfrak{a}_{j}:=\left\{X\in \mathfrak{aut}(M^\prime)\,\left|\, [X,Y](p)\in \mathfrak{a}_{j-1}\quad\forall\,Y\in\mathfrak{a}_{-1}\right.\right\}\quad\quad\forall\,j\geq0.
\]
Lie brackets on $\mathfrak{aut}(M^\prime)$ induce a Lie algebra structure on the associated graded vector space
\[
\mathfrak{a}^{\mathrm{gr}}:=\bigoplus_{k\geq-2}\mathfrak{a}^{\mathrm{gr}}_k.
\]
The first claim implies that $\mathbb{C}\mathfrak{a}^{\mathrm{gr}}_{-2}\oplus\mathbb{C}\mathfrak{a}^{\mathrm{gr}}_{-1}\oplus \mathbb{C}\mathfrak{a}^{\mathrm{gr}}_{0}$ is isomorphic to a subalgebra in $\mathfrak{g}_{\leq0}^{\mathrm{red}}$, and moreover that $\mathbb{C}\mathfrak{a}^{\mathrm{gr}}_{0}$ contains a subspace $\mathfrak{a}^{\mathrm{gr}}_{0,0}$ isomorphic to a subalgebra in $\mathfrak{g}_{\leq0}^{\mathrm{red}}$ such that 
\begin{align}\label{a and g algebras}
\mathbb{C}\mathfrak{a}^{\mathrm{gr}}_{-2}\cong\mathbb{C}\mathfrak{g}_{-2},\quad
\mathbb{C}\mathfrak{a}^{\mathrm{gr}}_{-1}\cong\mathbb{C}\mathfrak{g}_{-1},\quad
\mathbb{C}\mathfrak{a}^{\mathrm{gr}}_{0}\cong \mathfrak{g}_{+}^{\mathrm{red}}\oplus \mathfrak{g}_{-}^{\mathrm{red}}\oplus \mathfrak{a}^{\mathrm{gr}}_{0,0},
\end{align}
for some decomposition of $\mathfrak{g}_{\leq0}^{\mathrm{red}}$. Thus $\mathbb{C}\mathfrak{a}^{\mathrm{gr}}_{-2}\oplus\mathbb{C}\mathfrak{a}^{\mathrm{gr}}_{-1}\oplus \mathbb{C}\mathfrak{a}^{\mathrm{gr}}_{0}$ has the structure of an ARMS, and the homogeneous structure of the form in \eqref{Z-graded model} generated by this ARMS will be equivalent to $(M, \mathcal{H})$ because $\mathbb{C}\mathfrak{a}^{\mathrm{gr}}_{-2}\oplus\mathbb{C}\mathfrak{a}^{\mathrm{gr}}_{-1}\oplus \mathbb{C}\mathfrak{a}^{\mathrm{gr}}_{0}$ and $\mathfrak{g}_{\leq0}^{\mathrm{red}}$ differ only in their isotropy components. The algebra $\mathfrak{a}$ will be in the infinitesimal symmetries of this structure generated by $\mathbb{C}\mathfrak{a}^{\mathrm{gr}}_{-2}\oplus\mathbb{C}\mathfrak{a}^{\mathrm{gr}}_{-1}\oplus \mathbb{C}\mathfrak{a}^{\mathrm{gr}}_{0}$, and hence $\mathfrak{a}$ is isomorphic to a subalgebra in $\mathfrak{aut}(M)$. Now \eqref{graded h-structures are maximal} follows from noting that $\mathfrak{aut}(M^\prime)$ and $\mathfrak{a}$ are isomorphic as vector spaces.
\end{proof}

Let us mention that the uniqueness from  Lemma \ref{Z-graded model lemma} does not exclude the possibility for existence of homogeneous structures without a compatible $\mathbb{Z}$-grading that are maximally symmetric with respect to their modified symbols. The next lemma describes broad cases in which we know this cannot occur, and, for completeness, additionally
notes the analogous classical results for Levi-nondegenerate structures.
\begin{lemma}\label{z grading models lemma}
Let $M$ be a locally homogeneous CR hypersurface that is either
\begin{enumerate}
\item Levi-nondegenerate, 
\item $2$-nondegenerate and recoverable (c.f., \cite{sykes2023geometry}), or
\item $2$-nondegenerate and its modified symbols are constant on the bundle $\mathcal{E}$ given in Definition \ref{constant symbol defintion} (this implies, in particular, that the bigraded symbol is regular in the terminology of \cite{porter2021absolute}).
\end{enumerate}
Then the  algebra $\mathfrak{aut}(M)$ admits a compatible $\mathbb{Z}$-grading if and only if $M$ is the (locally unique) maximally symmetric homogeneous model with given reduced modified symbol.
\end{lemma}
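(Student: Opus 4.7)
The forward direction is immediate from Lemma \ref{Z-graded model lemma}: if $\mathfrak{aut}(M)$ admits a compatible $\mathbb{Z}$-grading then $M$ is locally equivalent to the homogeneous model \eqref{Z-graded model} built from the ARMS encoded in $\tilde\phi(\mathbb{C}\mathfrak{m}_{-2}\oplus\mathbb{C}\mathfrak{m}_{-1}\oplus\mathbb{C}\mathfrak{m}_{0})$, which that lemma identifies as the (locally unique) homogeneous maximizer of the symmetry dimension within its reduced modified symbol class. The work is therefore entirely in the converse.

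The plan for the converse is, in each of the three listed cases, to exhibit a canonical compatible $\mathbb{Z}$-grading on the Lie algebra of the model produced by the construction \eqref{Z-graded model}, and then invoke the uniqueness statement from Lemma \ref{Z-graded model lemma} to transport that grading onto any other homogeneous structure realizing the maximum. Case (1) is classical Tanaka--Chern--Moser theory: the maximally symmetric Levi-nondegenerate model is the standard quadric, whose symmetry algebra equals the Tanaka prolongation $\mathrm{pr}(\mathbb{C}\mathfrak{g}_{-}\oplus \mathfrak{g}_{0,0})$; this prolongation is $\mathbb{Z}$-graded by construction and the negative part recovers $\mathbb{C}\mathfrak{g}_{-}$ pointwise, which is exactly the compatibility of Definition \ref{F filtration compatible}. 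In case (2) (recoverable) the very definition from \cite{sykes2023geometry} says that the reduced modified symbol determines the model uniquely up to local equivalence, so the maximally symmetric model coincides with the homogeneous model coming from \eqref{Z-graded model}. The analogue of Tanaka prolongation for the ARMS $\mathfrak{g}_{\leq 0}^{\mathrm{red}}$ developed there produces its symmetry algebra and places a $\mathbb{Z}$-grading on it that extends the $\mathbb{Z}$-grading already present on $\mathfrak{g}_{\leq 0}^{\mathrm{red}}$.

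In case (3), constancy of the modified symbols on $\mathcal{E}$ implies, by the discussion following Definition \ref{constant symbol defintion} and the analysis of \cite{porter2021absolute, gregorovic2020fundamental, gregorovic2021equivalence}, that we have a reduction of $\mathcal{F}$ to a genuine $G_{0,0}$-structure for which the equivalence problem is of finite type and can be solved by a Tanaka-style prolongation of $\mathfrak{g}_{\leq 0}^{\mathrm{mod}}$. The flat model of this prolongation realizes the maximum symmetry dimension in the given symbol class and its infinitesimal symmetry algebra is the prolongation itself, which is $\mathbb{Z}$-graded in a way that extends the bigrading's first index; projecting onto this first grading gives the required compatible $\mathbb{Z}$-grading, and the verification $\mathbb{C}\mathfrak{m}_{-k}(p)=\mathbb{C}\mathfrak{g}_{-k}(p)$ for $k=1,2$ is automatic because the negative part of a Tanaka prolongation acts simply transitively on the model.

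The main obstacle I anticipate is case (2): one must invoke the precise recoverability result from \cite{sykes2023geometry} in a form that simultaneously asserts both uniqueness of the maximally symmetric model and existence of a Tanaka-type prolongation yielding a $\mathbb{Z}$-grading; cases (1) and (3) largely reduce to citing the existing Tanaka/parabolic-geometry machinery, whereas case (2) requires verifying that the ``recoverable'' hypothesis genuinely produces a graded prolongation whose negative part has the right form, with no degree-zero obstructions beyond those already recorded in $\mathfrak{g}_{0,0}^{\mathrm{red}}$. Once that graded prolongation is in hand, the compatibility conditions of Definition \ref{F filtration compatible} follow from the simple transitivity of its negative part on the model.
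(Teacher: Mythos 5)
There is a genuine gap in the logical scaffolding of your argument, even though the case-by-case ingredients you assemble are roughly the right ones. You treat the forward direction as ``immediate from Lemma \ref{Z-graded model lemma}'' by reading that lemma as identifying the construction \eqref{Z-graded model} as \emph{the} locally unique maximizer of the symmetry dimension in its reduced-modified-symbol class, and in the converse you plan to ``invoke the uniqueness statement from Lemma \ref{Z-graded model lemma} to transport the grading onto any other homogeneous structure realizing the maximum.'' But Lemma \ref{Z-graded model lemma} only asserts uniqueness \emph{among homogeneous models admitting a compatible $\mathbb{Z}$-grading}; it leaves open the possibility of a maximally symmetric homogeneous structure with no compatible grading, and the paper explicitly flags exactly this loophole in the paragraph preceding the lemma you are proving. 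Closing that loophole in the three listed cases \emph{is} the content of Lemma \ref{z grading models lemma}: one must establish that the maximally symmetric homogeneous model is locally unique outright, and only then do both directions follow from the existence statement in Lemma \ref{Z-graded model lemma}. The paper does this by citation: the classical Tanaka/Chern--Moser theory in case (1), \cite[Theorem 6.2, part 3]{sykes2023geometry} in case (2), and \cite[Theorem 5.2]{sykes2023geometry} combined with \cite[Theorem 3.2, part 3]{porter2021absolute} in case (3). Your case discussion gestures at the same results (Tanaka prolongation in case (1), bigraded prolongation \`a la \cite{porter2021absolute} in case (3)), so the raw material is present, but it must carry the whole weight of the proof rather than serve as a supplement to an argument that already ``works'' via Lemma \ref{Z-graded model lemma} --- it does not.

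A secondary inaccuracy: in case (2) you write that ``the very definition'' of recoverability says the reduced modified symbol determines the model uniquely. That is not what recoverability means --- it is the condition that the CR structure is uniquely determined by its induced dynamical Legendrian contact structure on the leaf space, equivalently that the first prolongation \eqref{first prol for recoverability} vanishes. The local uniqueness of maximally symmetric homogeneous models under this hypothesis is a theorem (\cite[Theorem 6.2, part 3]{sykes2023geometry}), not a definition, and your proof should cite it as such rather than attempt to rederive a Tanaka-type prolongation whose existence you yourself flag as the unresolved obstacle.
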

\begin{proof}
For case 1, the lemma is a well-known result, following from \cite{chern1974,tanaka1962pseudo}, and the Levi-nondegenerate homogeneous models are the the standard left-invariant CR structures on Heisenberg groups (described, for example, as quadrics in \cite{chern1974,tanaka1962pseudo}).

For cases 2 and 3, Lemma \ref{Z-graded model lemma} implies that there exists a unique $2$-nondegenerate homogeneous structure with the same reduced modified symbol as $M$ and symmetry algebra admitting a compatible $\mathbb{Z}$-grading. Moreover, in the setting of case 2, \cite[Theorem 6.2, part 3]{sykes2023geometry} implies that homogeneous models are locally unique. In the setting of case 3, \cite[Theorem 5.2]{sykes2023geometry} implies that the reduced modified symbol has the structure of a bigraded symbol and a Lie algebra, so \cite[Theorem 3.2, part 3]{porter2021absolute} applies, establishing locally uniqueness here as well.
\end{proof}

\subsection{Realizations of abstract modified symbols using 2-nondegenerate models}\label{second approach}

Let us start with a homogeneous space $\tilde{G}^{\mathbb{C}}/\tilde{Q},$ where $\tilde{G}^{\mathbb{C}}:=(\exp(\mathbb{C}\mathfrak{g}_-)\rtimes \mathrm{CSp}(\mathbb{C}\mathfrak{g}_{-1}))$ and $\tilde{Q}$ is the maximal closed subgroup of $\tilde{G}^{\mathbb{C}}$ with Lie algebra $\mathfrak{g}_{-1,-1}\oplus \mathfrak{csp}(\mathbb{C}\mathfrak{g}_{-1})_{0,0}\oplus \mathfrak{csp}(\mathbb{C}\mathfrak{g}_{-1})_{0,-2}.$ The homogeneous space $\tilde{G}^{\mathbb{C}}/\tilde{Q}$ plays the role of the complex manifold in which the CR hypersurface with maximal dimension of Levi kernel  (described in Example \ref{maximal kernel dimension}) is realized in \cite{gregorovic2021equivalence}. We start our construction by associating to an abstract modified symbol satisfying condition \eqref{sufficient normalizer} a higher codimensional CR submanifold in $\tilde{G}^{\mathbb{C}}/\tilde{Q}:$

For an abstract modified symbol $\mathfrak{g}^{\mathrm{mod}}_{\leq 0}$ with decomposition $\mathfrak{g}_0^{\mathrm{mod}}=\mathfrak{g}_{0,-}^{\mathrm{mod}}\oplus \mathfrak{g}_{0,0}^{\mathrm{mod}} \oplus \mathfrak{g}_{0,+}^{\mathrm{mod}}$ as in Definition \ref{AMS}, we can consider the local embedding of $\exp(\mathfrak{g}_-)\exp(\mathfrak{g}_{0,+}^{\mathrm{mod}})$ into $\tilde{G}^{\mathbb{C}}/\tilde{Q},$ which is a CR submanifold that has codimension $1$ plus the codimension of $\pi_{0,2}(\mathfrak{g}_{0,+}^{\mathrm{mod}})$ in $\mathfrak{csp}(\mathbb{C}\mathfrak{g}_{-1})_{0,2}$. 

In the second step, we find a complex submanifold $W\subset \tilde{G}^{\mathbb{C}}/\tilde{Q}$ containing $\exp(\mathfrak{g}_-)\exp(\mathfrak{g}_{0,+}^{\mathrm{mod}})$ as a CR hypersurface realizing the modified symbol at the origin. For this, we notice that for each element $X$ of neighborhood of $0$ in $\mathfrak{g}_{0,+}^{\mathrm{mod}}$ there is unique $\psi\in \mathrm{CSp}(\mathbb{C}\mathfrak{g}_{-1})_{0,0}$ depending holomorphically on $X$ such that $\exp(X)\psi\in \tilde{G}^{\mathbb{C}}/\tilde{Q}$ belongs to $\exp( \mathfrak{csp}(\mathbb{C}\mathfrak{g}_{-1})_{0,2})$. Indeed, locally $\psi=\exp(-\pi_{0,0}(X))$ by Baker--Campbell--Hausdorff formula, because $X=\pi_{0,2}(X)+\pi_{0,0}(X)$ and $\pi_{0,0}(X)$ preserves $\mathfrak{csp}(\mathbb{C}\mathfrak{g}_{-1})_{0,2}$. Thus we take 
\begin{align}\label{N submanifold}
W:=\exp(\mathfrak{g}_{-2,0}\oplus \mathfrak{g}_{-1,1})\{\exp(X)\exp(-\pi_{0,0}(X)): X\in \mathfrak{g}_{0,+}^{\mathrm{mod}}\}.
\end{align}
Since $W\subset\exp(\mathfrak{g}_{-2,0}\oplus \mathfrak{g}_{-1,1})\exp(\mathfrak{csp}(\mathbb{C}\mathfrak{g}_{-1})_{0,2})=\exp(\mathfrak{g}_{-2,0}\oplus \mathfrak{g}_{-1,1}\oplus \mathfrak{csp}(\mathbb{C}\mathfrak{g}_{-1})_{0,2})$, we can use the logarithm as holomorphic coordinates and see that $W$ is given as $\mathfrak{g}_{-2,0}\oplus \mathfrak{g}_{-1,1}$ plus a holomorphic image of $\mathfrak{g}_{0,+}^{\mathrm{mod}}$ in $ \mathfrak{csp}(\mathbb{C}\mathfrak{g}_{-1})_{0,2}.$ Thus $W$ is a complex submanifold of $\tilde{G}^{\mathbb{C}}/\tilde{Q}.$

\begin{theorem}\label{hypersurface realization approach 2}
Let $\mathfrak{g}^{\mathrm{mod}}_{\leq 0}$ be a realizable modified symbol (i.e., abstract modified symbol satisfying condition \eqref{sufficient normalizer}) with decomposition $\mathfrak{g}_0^{\mathrm{mod}}=\mathfrak{g}_{0,-}^{\mathrm{mod}}\oplus \mathfrak{g}_{0,0}^{\mathrm{mod}} \oplus \mathfrak{g}_{0,+}^{\mathrm{mod}}$ and consider the above complex manifold $\tilde{G}^{\mathbb{C}}/\tilde{Q}$ with its complex submanifold $W$ of \eqref{N submanifold}. Locally, the submanifold $\exp(\mathfrak{g}_-)\exp(\mathfrak{g}_{0,+}^{\mathrm{mod}})$ of $\tilde{G}^{\mathbb{C}}/\tilde{Q}$ is contained as an $\exp(\mathfrak{g}_-)$-invariant $2$-nondegenerate model in $W$. At $\exp(0)$, this CR hypersurface has modified symbol $\mathfrak{g}_{\leq 0}^{\mathrm{mod}}$ constant up to first order.
\end{theorem}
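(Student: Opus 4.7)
The plan is to realize the model via a concrete parametrization of $M_0 := \exp(\mathfrak{g}_-)\exp(\mathfrak{g}_{0,+}^{\mathrm{mod}})$ and then extract from it a defining equation of the form \eqref{gen def fun}, proceeding in four stages.

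First I will verify the containment $M_0\subset W$ and the correct dimensions. Since $\pi_{0,0}(X)\in\mathfrak{csp}(\mathbb{C}\mathfrak{g}_{-1})_{0,0}\subset\tilde Q$, the identity $\exp(v)\exp(X)\tilde Q=\exp(v)\exp(X)\exp(-\pi_{0,0}(X))\tilde Q$ places $\exp(v)\exp(X)$ in $W$ whenever $v\in\mathfrak{g}_{-2,0}\oplus\mathfrak{g}_{-1,1}$; for a general $v=v_0+v_{-1}$ with $v_{-1}\in\mathfrak{g}_{-1,-1}$ I push $\exp(v_{-1})$ to the right past $\exp(X)\exp(-\pi_{0,0}(X))$ using Baker--Campbell--Hausdorff (BCH), absorbing the leftover $\mathfrak{g}_{-1,-1}\oplus\mathfrak{csp}(\mathbb{C}\mathfrak{g}_{-1})_{0,0}\oplus\mathfrak{csp}(\mathbb{C}\mathfrak{g}_{-1})_{0,-2}$-components into $\tilde Q$. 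The real dimension of $M_0$ is $1+2s+2\dim_{\mathbb{C}}\mathfrak{g}_{0,+}^{\mathrm{mod}}$, one less than the real dimension of $W$, so $M_0$ is a smooth real hypersurface; its $\exp(\mathfrak{g}_-)$-invariance under left multiplication is tautological.

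Second, I will extract a defining equation in the form \eqref{gen def fun}. Using the logarithm as a local biholomorphism on $W$, pick holomorphic coordinates $(w,z,\zeta)$ dual to bases of $\mathfrak{g}_{-2,0}$, $\mathfrak{g}_{-1,1}$, and (via $\pi_{0,2}$) $\mathfrak{g}_{0,+}^{\mathrm{mod}}$. A generic point of $M_0$ is of the form $\exp(w_0 i\,g+z_0\cdot f+\overline{z_0}\cdot\overline{f})\exp(X)\tilde Q$ for $(w_0,z_0,X)$ real-parametrizing $\mathfrak{g}_{-2,0}(\mathbb{R})\oplus\mathfrak{g}_{-1,1}\oplus\mathfrak{g}_{0,+}^{\mathrm{mod}}$. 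To read off $\Re(w)$, I apply iterated BCH to bring the product into the normal order $\exp(\mathfrak{g}_{-2,0})\exp(\mathfrak{g}_{-1,1})\{\exp(X')\exp(-\pi_{0,0}(X'))\}\cdot\tilde Q$; in this rearrangement the realizability hypotheses $\pi_{0,\pm 2}([\mathfrak{g}_{0,+}^{\mathrm{mod}},\mathfrak{g}_{0,\pm}^{\mathrm{mod}}])\subset \pi_{0,\pm 2}(\mathfrak{g}_{0,\pm}^{\mathrm{mod}})$ are invoked precisely to guarantee that every bracket produced by $\mathrm{Ad}_{\exp(X)}$ acting on $\overline{f_j}$-directions either stays inside $\mathfrak{g}_{0,+}^{\mathrm{mod}}\oplus\mathfrak{g}_{0,0}^{\mathrm{mod}}\oplus\mathfrak{g}_{0,-}^{\mathrm{mod}}$ (and can be absorbed into $\tilde Q$) or contributes the correct Hermitian/symmetric terms in $\Re(w)$. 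The outcome is $\Re(w)=z^TH(\zeta,\overline{\zeta})\overline{z}+\Re(\overline{z}^TS(\zeta,\overline{\zeta})\overline{z})$ with $H(0,0)=\mathbf{H}$, $S(0)=0$, and $S_{\zeta_\alpha}(0)$ the matrices prescribed by the abstract symbol; the $\mathbf{S}_{\zeta_\alpha}(0)$ are linearly independent by realizability, so $M_0$ is uniformly $2$-nondegenerate near the origin (and thus a $2$-nondegenerate model).

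Third, with the defining equation in hand I compute the modified symbol at $\exp(0)$ using the formulas from Section~\ref{Key examples of modified symbol calculations}. For the adapted frame of that section evaluated at $0$, formulas \eqref{key example Lform}--\eqref{key example Omega mat} give $H(\tilde\phi(0))=\mathbf{H}$, $\Xi_\alpha(\tilde\phi(0))=(\mathbf{H}^T)^{-1}S_{\zeta_\alpha}(0)$, and an $\Omega_\alpha(\tilde\phi)(0)$ matching the prescribed $\Omega_\alpha\in\mathfrak{g}_{0,+}^{\mathrm{mod}}$ by construction. Invoking Lemma~\ref{symbol representation lemma}, this data identifies the modified symbol at $\exp(0)$ with $\mathfrak{g}_{\leq 0}^{\mathrm{mod}}$. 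For constancy up to first order at the origin I pick the normalization condition $(O^N_{0,-2}(\tilde\phi(0)),O^N_{0,2}(\tilde\phi(0)))=(0,0)$ and apply Theorem~\ref{modified symbols in normalized frames}: one needs to produce $B_\alpha\in\mathfrak{csp}(\mathbb{C}\mathfrak{g}_{-1})_{0,0}$ solving \eqref{first order constancy criterion alt c}--\eqref{first order constancy criterion alt d} with zero right-hand side, which the realizability conditions deliver by letting $B_\alpha$ be (representatives of) $\pi_{0,0}(\Omega_\alpha)$ — the identities $[\Omega_\alpha,S^{0,2}_\beta]-[\Omega_\beta,S^{0,2}_\alpha]\in\mathrm{span}\{S^{0,2}_\gamma\}$ and the analogous bracket relation on $S^{0,-2}_\beta$ are exactly the obstructions to cancel.

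The main obstacle will be Stage~2: carrying out the BCH computation so that the $\bar z \bar z$ coefficient $S(\zeta,\overline{\zeta})$ is manifestly symmetric and the $z\bar z$ coefficient $H(\zeta,\overline{\zeta})$ is manifestly Hermitian. The realizability hypotheses are precisely the algebraic closure conditions that prevent BCH brackets such as $\mathrm{ad}_{X}(\overline{f_j})$ from leaking into $\mathfrak{csp}(\mathbb{C}\mathfrak{g}_{-1})$-directions that cannot be absorbed into $\tilde Q$, and they are what force both the Hermitian/symmetric structure of the coefficients and the vanishing of the first order obstructions in Stage~4.
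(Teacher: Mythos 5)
Your overall architecture --- parametrize $\exp(\mathfrak{g}_-)\exp(\mathfrak{g}_{0,+}^{\mathrm{mod}})$, read off a defining equation in logarithmic coordinates on $W$, then compute the modified symbol with the frame of Section \ref{Key examples of modified symbol calculations} and check first-order constancy via Theorem \ref{modified symbols in normalized frames} --- is the same as the paper's. However, Stages 3 and 4 contain a genuine inconsistency that hides the actual mechanism of the proof. For the constructed hypersurface one has $H_{\zeta_\beta}(0,0)=0$ (the corrections to $\mathbf{H}$ in \eqref{H from S2 formula} are quadratic in $\mathbf{S}(\zeta)$ and $\overline{\mathbf{S}(\zeta)}$), so formula \eqref{key example Omega mat} gives $\Omega_\beta\big(\tilde\phi\big)(0)=(\mathbf{H}^T)^{-1}H_{\zeta_\beta}(0,0)^T=0$, \emph{not} the prescribed $\Omega_\beta$ as you assert in Stage 3. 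The prescribed value is recovered only after the normalization step, as $\Omega_\beta\big(\tilde\phi\big)(0)+B_\beta$ with $B_\beta=\Omega_\beta$; if your Stage 3 claim were correct, your Stage 4 choice $B_\beta=\pi_{0,0}(\Omega_\beta)$ would output $2\Omega_\beta$. What makes the argument close --- and what is missing from your sketch --- is the Baker--Campbell--Hausdorff computation of the quadratic term of $\mathbf{S}(\zeta)=\ln\big(\exp(X)\exp(-\pi_{0,0}(X))\big)$, namely $\mathbf{S}_{\zeta_\alpha\zeta_\beta}(0)=\frac12[\Omega_\alpha,\Xi_\beta\mathbf{H}^{-1}]+\frac12[\Omega_\beta,\Xi_\alpha\mathbf{H}^{-1}]$ as in \eqref{S2 formula}. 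This is the only place where the abstract data $\Omega_\alpha$ gets encoded into the hypersurface, and it is exactly what makes the left-hand side of \eqref{first order constancy criterion alt d} with $B_\alpha=\Omega_\alpha$ equal to $\frac12\big([\Omega_\beta,\Xi_\alpha\mathbf{H}^{-1}]-[\Omega_\alpha,\Xi_\beta\mathbf{H}^{-1}]\big)$, which lies in $\tilde\phi(\iota(\mathcal{K}_2))$ by the second realizability condition. Without this identity neither the value of the modified symbol nor first-order constancy can be verified.

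A second, more minor point: you invoke the realizability hypotheses in Stage 2 to make the BCH normal ordering ``close up'' and to force the Hermitian/symmetric structure of $H(\zeta,\overline{\zeta})$ and $S(\zeta,\overline{\zeta})$. That is not where they act. The defining equation \eqref{general def eqn C4} comes out in the form \eqref{gen def fun} for an \emph{arbitrary} holomorphic symmetric $\mathbf{S}(\zeta)$ with $\mathbf{S}(0)=0$ --- this is precisely the content of the second part of Theorem \ref{general formula for Cn} --- because $\tilde{Q}$ already contains $\mathfrak{g}_{-1,-1}\oplus\mathfrak{csp}(\mathbb{C}\mathfrak{g}_{-1})_{0,0}\oplus\mathfrak{csp}(\mathbb{C}\mathfrak{g}_{-1})_{0,-2}$ and absorbs everything that needs absorbing. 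Realizability is used only at the end, to produce the $B_\alpha$ solving \eqref{FOC conditions} and hence to identify the normalized modified symbol at $\exp(0)$ with the prescribed $\mathfrak{g}_{\leq0}^{\mathrm{mod}}$.
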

\begin{proof}
By construction $\exp(\mathfrak{g}_-)\exp(\mathfrak{g}_{0,+}^{\mathrm{mod}})$ is an $\exp(\mathfrak{g}_-)$-invariant CR hypersurface in $W$. The $\exp(\mathfrak{g}_-)$-invariance ensures uniform Levi-degeneracy, because $\exp(\mathfrak{g}_{0,+}^{\mathrm{mod}})$ is a leaf of the Levi kernel through $\exp(0)$. So we need to show that it is $2$-nondegenerate at $\exp(0)$ and compute its modified symbol. 

Let us use the logarithmic coordinates and use the description \eqref{csp representation} of $\mathbb{C}\mathfrak{g}_{-}\oplus\mathfrak{csp}(\mathbb{C}\mathfrak{g}_{-1})$. To describe the real form $\mathfrak{g}_{-}$ via \eqref{csp representation}, we set:
\begin{itemize}
    \item $S^{0,-2}=L=0,c=0$,
    \item  $S^{0,2}=\mathbf{S}(\zeta)=\ln(\exp(X)\exp(-\pi_{0,0}(X)))\in \mathfrak{g}_{0,2}$ for $X=\sum_{\alpha=1}^{r} \zeta_\alpha e_\alpha \in \mathfrak{g}_{0,+}^{\mathrm{mod}}$,
    \item $v_1=\frac12(Y-iI(Y))$ for $Y\in \mathfrak{g}_{-1}$, and $I$ the complex structure on $\mathfrak{g}_{-1}$,
    \item $v_2=\frac12\mathbf{H}(Y+iI(Y))\in \mathfrak{g}_{-1,-1}$,
    \item $u=Z$ for $Z\in \mathfrak{g}_{-2}$.
\end{itemize}
   Thus the local embedding takes the form 
   \begin{align}\label{embedding formula}
   \ln\left(\exp\left(Y+Z\right)\exp(\mathbf{S}(\zeta))\exp(-v_2)\right)\in  \mathfrak{g}_{-2,0}\oplus \mathfrak{g}_{-1,1} \oplus \mathfrak{csp}(\mathbb{C}\mathfrak{g}_{-1})_{0,2},
   \end{align}
   because $\exp(-v_2)\in \tilde{Q}$ is such that $\exp(Y+Z)\exp(\mathbf{S}(\zeta))\exp(-v_2)\in W.$ In particular, $z=v_1-\mathbf{S}(\zeta)\mathbf{H}\overline{v_1}$ is the part of \eqref{embedding formula} in $\mathfrak{g}_{-1,1}$ and $w=iu+\overline{v_1}^T\mathbf{H}^T(v_1-\mathbf{S}(\zeta)\mathbf{H}\overline{v_1})=iu+z\mathbf{H}\overline{v_1}$ is the part of \eqref{embedding formula} in $\mathfrak{g}_{-2,0}.$ Thus 
   \begin{align}
   v_1&=(\mathrm{Id}-\mathbf{S}(\zeta)\mathbf{H}\overline{\mathbf{S}(\zeta)}\mathbf{H}^T)^{-1}(z+\mathbf{S}(\zeta)\mathbf{H}\overline{z}),\\
   \overline{v_1}&=(\mathrm{Id}-\overline{\mathbf{S}(\zeta)}\mathbf{H}^T\mathbf{S}(\zeta)\mathbf{H})^{-1}(\overline{z}+\overline{\mathbf{S}(\zeta)}\mathbf{H}^Tz),
   \end{align}
   and the defining equation of this CR hypersurface is
\begin{align}\label{general def eqn C4}
\Re(w)&=\frac12(z^T\mathbf{H}\overline{v_1}+\overline{z}^T\mathbf{H}^Tv_1)\\
  &=\frac12(z^T\mathbf{H}(\mathrm{Id}-\overline{\mathbf{S}(\zeta)}\mathbf{H}^T\mathbf{S}(\zeta)\mathbf{H})^{-1}(\overline{z}+\overline{\mathbf{S}(\zeta)}\mathbf{H}^Tz)\\
  &\indent+\overline{z}^T\mathbf{H}^T(\mathrm{Id}-\mathbf{S}(\zeta)\mathbf{H}\overline{\mathbf{S}(\zeta)}\mathbf{H}^T)^{-1}(z+\mathbf{S}(\zeta)\mathbf{H}\overline{z})
\end{align}
Therefore, \eqref{general def eqn C4} is of the form \eqref{gen def fun} for
 \eqref{H from S2 formula} and \eqref{S from S2 formula} from Theorem \ref{general formula for Cn}, but with $\mathbf{S}$ as specified above, i.e., the CR hypersurface is a $2$-nondegenerate model.

So in order to determine the modified symbol at $\Im(w)=0,z=0,\zeta=0$, according to Section \ref{Key examples of modified symbol calculations} we need to compute the following values:
\begin{align}
H(0,0)&=\mathbf{H},\
S(0,0)=0,\
H_{\zeta_{\beta}}(0,0)=0,\
S_{\overline{\zeta_{\beta}}}(0,0)=0,\
S_{\zeta_{\alpha}\overline{\zeta_{\beta}}}(0,0)=0,\\
S_{\zeta_{\beta}}(0,0)&=\mathbf{H}^T\mathbf{S}_{\zeta_{\beta}}(0)\mathbf{H}=\mathbf{H}^T\Xi_{\beta},\\\label{computations for MS calculations}
S_{\zeta_{\alpha}\overline{\zeta_{\beta}}}(0,0)&=\mathbf{H}^T\mathbf{S}_{\zeta_{\alpha}\zeta_{\beta}}(0)\mathbf{H}=\frac12\mathbf{H}^T\left(\frac12[\Omega_\alpha,\Xi_{\beta}\mathbf{H}^{-1}]+\frac12[\Omega_\beta,\Xi_{\alpha}\mathbf{H}^{-1}]\right)\mathbf{H},
\end{align}
where we use $\mathbf{S}(0)=0$, $\mathbf{S}_{\zeta_{\alpha}}(0)=\Xi_{\alpha}\mathbf{H}^{-1}$, and $\mathbf{S}_{\zeta_{\alpha}\zeta_{\beta}}(0)=\frac12[\Omega_\alpha,\Xi_{\beta}\mathbf{H}^{-1}]+\frac12[\Omega_\beta,\Xi_{\alpha}\mathbf{H}^{-1}]$, which follows from the expansion 
\begin{align}\label{S2 formula}
\mathbf{S}(\zeta)=\sum_\beta \Xi_\beta \mathbf{H}^{-1}\zeta_\beta+\frac12\sum_{\alpha,\beta}[\Omega_\alpha,\Xi_{\beta}\mathbf{H}^{-1}]\zeta_{\alpha}\zeta_{\beta}+O(\zeta^3)
\end{align}
derived using the Baker-Hausdorff-Campbell formula. The bracket in \eqref{computations for MS calculations} should  as usual be understood as a bracket of elements of $\mathfrak{csp}(\mathbb{C}\mathfrak{g}_{-1})_{0,0}$ with elements of $\mathfrak{csp}(\mathbb{C}\mathfrak{g}_{-1})_{0,2}$) (and not as a commutator of $s\times s$ matrices). In particular, if we want to achieve constancy up to first order, then the conditions \eqref{first order constancy criterion alt c} and \eqref{first order constancy criterion alt d} become
\begin{align}\label{FOC conditions}
[B_{\alpha},\mathbf{H}\overline{\Xi_{\beta}}]\in \tilde\phi(\iota(\overline{\mathcal{K}_2})), \\
\frac12[\Omega_\alpha,\Xi_{\beta}\mathbf{H}^{-1}]+\frac12[\Omega_\beta,\Xi_{\alpha}\mathbf{H}^{-1}]-[B_{\alpha},\Xi_{\beta}\mathbf{H}^{-1}]\in \tilde\phi(\iota(\mathcal{K}_2)),
\end{align}
where here again brackets are taken in $\mathfrak{csp}(\mathbb{C}\mathfrak{g}_{-1})$. Taking $B_{\alpha}=\Omega_\alpha$ indeed satisfies these conditions, because the first condition in \eqref{sufficient normalizer} then directly implies the first condition in \eqref{FOC conditions}, whereas the second condition in \eqref{sufficient normalizer} implies $[\Omega_\alpha,\Xi_{\beta}\mathbf{H}^{-1}]\equiv[\Omega_\beta,\Xi_{\alpha}\mathbf{H}^{-1}]\pmod{\tilde\phi(\iota(\mathcal{K}_2))}$ from which the second condition in \eqref{FOC conditions} follows. Thus we know that the CR hypersurface has the claimed modified symbol at $\exp(0)$ constant up to first order.
\end{proof}

An essential difference between the constructions of Theorem \ref{hypersurface realization approach 2} and Lemma \ref{hypersurface realization lemma} is seen by contrasting the submanifolds $W$ defined differently in sections \ref{generalized Naruki} and \ref{second approach}. To explore this concretely, let us relabel this submanifold from section \ref{generalized Naruki} as
\begin{align}\label{Naruki method ambiant space}
W^\prime:=\mathrm{exp}\left(\mathfrak{g}_{-2,0}\oplus \mathfrak{g}_{-1,1}\oplus \mathfrak{g}_{0,+}^{\mathrm{red}}\right).
\end{align}
Alternatively, we can write $W^\prime=\mathrm{exp}\left(\mathfrak{g}_{-2,0}\oplus \mathfrak{g}_{-1,1}\right)\mathrm{exp}\left(\mathfrak{g}_{0,+}^{\mathrm{red}}\right)$ and, for $Y\in\mathfrak{g}_{-2,0}\oplus \mathfrak{g}_{-1,1}$ and $X\in\mathfrak{g}_{0,+}^{\mathrm{red}}$, \begin{align}\label{ambient space deformation}
\mathrm{exp}(Y)\mathrm{exp}(X)\mapsto \mathrm{exp}(Y)\mathrm{exp}(X)\mathrm{exp}(-\pi_{0,0}(X))
\end{align}
defines a local biholomorphism between $W^\prime\subset G^{\mathbb{C}}$ and $W\subset\tilde G^{\mathbb{C}}$.

\begin{lemma}\label{construction from model data lemma}
If $\mathfrak{g}_{\leq0}^{\mathrm{red}}\subset\mathfrak{g}_{\leq0}^{\mathrm{mod}}$ is a reduced modified symbol of a homogeneous $2$-nondegenerate CR hypersurface structure then the $2$-nondegenerate model constructed from $\mathfrak{g}_{\leq0}^{\mathrm{mod}}$ in Theorem \ref{hypersurface realization approach 2} for decomposition $\mathfrak{g}_{0}^{\mathrm{mod}}=\mathfrak{g}_{0,-}^{\mathrm{red}}\oplus\mathfrak{g}_{0,0}\oplus\mathfrak{g}_{0,+}^{\mathrm{red}}$ is locally equivalent to the homogeneous CR hypersurface constructed in Lemma \ref{hypersurface realization lemma} for the reduced modified symbol $\mathfrak{g}_{0}^{\mathrm{red}}=\mathfrak{g}_{0,-}^{\mathrm{red}}\oplus\mathfrak{g}_{0,0}^{\mathrm{red}}\oplus\mathfrak{g}_{0,+}^{\mathrm{red}}$.
\end{lemma}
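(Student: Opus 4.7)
The plan is to realize both constructions as lying inside the common complex homogeneous space $\tilde G^{\mathbb{C}}/\tilde Q$ from Theorem \ref{hypersurface realization approach 2}, and then identify them through the local biholomorphism $\varphi: W'\to W$ between complex slices already exhibited by \eqref{ambient space deformation}. Since the ARMS $\mathfrak{g}_{\leq 0}^{\mathrm{red}}$ is by hypothesis a Lie subalgebra of $\mathbb{C}\mathfrak{g}_-\oplus\mathfrak{csp}(\mathbb{C}\mathfrak{g}_{-1})$, the Lie algebra inclusion integrates to a local Lie group homomorphism $G^{\mathbb{C}}\to\tilde G^{\mathbb{C}}$, injective near the identity. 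The containment $\mathfrak{q}\subset\tilde{\mathfrak{q}}$ (immediate from $\mathfrak{g}_{0,\pm}^{\mathrm{red}}\subset\mathfrak{csp}(\mathbb{C}\mathfrak{g}_{-1})_{0,\pm 2}\oplus\mathfrak{csp}(\mathbb{C}\mathfrak{g}_{-1})_{0,0}$ and $\mathfrak{g}_{-1,-1}\subset\tilde{\mathfrak{q}}$) ensures $\exp(\mathfrak{q})\subset\tilde Q$ locally, so this homomorphism descends to a local holomorphic embedding $G^{\mathbb{C}}/\exp(\mathfrak{q})\hookrightarrow\tilde G^{\mathbb{C}}/\tilde Q$ identifying the ambient spaces.

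Next I will show that under this identification the $G$-orbit $\Pi(\pi(G))$ from Lemma \ref{hypersurface realization lemma} coincides, near the identity coset, with the hypersurface $\exp(\mathfrak{g}_-)\exp(\mathfrak{g}_{0,+}^{\mathrm{red}})\cdot\tilde Q/\tilde Q$ from Theorem \ref{hypersurface realization approach 2}; that is, $G\cdot\tilde Q=\exp(\mathfrak{g}_-)\exp(\mathfrak{g}_{0,+}^{\mathrm{red}})\cdot\tilde Q$ locally in $\tilde G^{\mathbb{C}}$. Both inclusions follow from the same Baker--Campbell--Hausdorff computation: for $X\in\mathfrak{g}_{0,+}^{\mathrm{red}}$ the real combination $X+\overline X$ lies in $\Re\mathfrak{g}_0^{\mathrm{red}}$, so $\exp(X+\overline X)\in G$, while $\overline X\in\mathfrak{g}_{0,-}^{\mathrm{red}}\subset\tilde{\mathfrak{q}}$. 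A BCH expansion then rewrites $\exp(X+\overline X)=\exp(X)\cdot q(X)$ for some $q(X)\in\tilde Q$, since all iterated brackets arising in the expansion either already stay in $\tilde{\mathfrak{q}}$ or can be recursively absorbed into it. Combining with the trivial inclusion $\exp(\mathfrak{g}_-)\subset G$ and applying BCH to a general element of $\Re\mathfrak{g}_{\leq 0}^{\mathrm{red}}$ yields both inclusions.

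Finally, I transport this orbit coincidence into the explicit slice realizations via $\varphi: W'\to W$, defined in \eqref{ambient space deformation} by $\exp(Y)\exp(X)\mapsto\exp(Y)\exp(X)\exp(-\pi_{0,0}(X))$. Because $\pi_{0,0}(X)\in\mathfrak{csp}(\mathbb{C}\mathfrak{g}_{-1})_{0,0}\subset\tilde{\mathfrak{q}}$, the extra factor $\exp(-\pi_{0,0}(X))$ lies in $\tilde Q$, so $\varphi$ commutes with the projection to $\tilde G^{\mathbb{C}}/\tilde Q$. Consequently $\varphi$ sends the Lemma \ref{hypersurface realization lemma} realization of the hypersurface in $\mathfrak{w}\cong W'$ (via \eqref{hypersurfae realization map}) precisely to the Theorem \ref{hypersurface realization approach 2} hypersurface in $W$ (parametrized by \eqref{embedding formula}), establishing the desired local CR equivalence.

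The main obstacle will be the BCH bookkeeping in the middle step: one must verify carefully that every iterated bracket arising in the rewriting of $\exp(X+\overline X)$ as $\exp(X)\cdot q(X)$ either lies in $\tilde{\mathfrak{q}}$ or can be recursively absorbed into $\tilde Q$ by further rewriting. The expansion terminates in finitely many layers because $\mathfrak{csp}(\mathbb{C}\mathfrak{g}_{-1})$ carries only a three-step bigrading, and the closures $[\tilde{\mathfrak{q}},\tilde{\mathfrak{q}}]\subset\tilde{\mathfrak{q}}$ together with $[\mathfrak{g}_{0,+}^{\mathrm{red}},\mathfrak{g}_{0,-}^{\mathrm{red}}]\subset\mathfrak{g}_0^{\mathrm{red}}$ reduce the problem to a finite combinatorial check; organising this recursion cleanly is the nontrivial technical point.
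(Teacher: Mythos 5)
Your proposal is correct and follows essentially the same route as the paper: embed $G^{\mathbb{C}}/\exp(\mathfrak{q})$ into $\tilde G^{\mathbb{C}}/\tilde Q$, identify the two orbits modulo the isotropy, and transport the identification through the biholomorphism \eqref{ambient space deformation}. The BCH bookkeeping you flag as the main obstacle is avoidable: the paper simply observes that $\Re(\mathfrak{g}_{0}^{\mathrm{red}})+\mathfrak{q}_{0}=\mathfrak{g}_{0,+}^{\mathrm{red}}+\mathfrak{q}_{0}$, so $\exp(\Re(\mathfrak{g}_{0}^{\mathrm{red}}))$ and $\exp(\mathfrak{g}_{0,+}^{\mathrm{red}})$ sweep out the same local neighborhood of the base point in the coset space, with no need to track iterated brackets.
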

\begin{proof}
Let $\mathfrak{q}:=\mathfrak{g}_{-1,-1}\oplus \mathfrak{g}_{0,-}^{\mathrm{red}}\oplus \mathfrak{g}_{0,0}^{\mathrm{red}}$ with $\mathfrak{q}_{0}:=\mathfrak{g}_{0,-}^{\mathrm{red}}\oplus \mathfrak{g}_{0,0}^{\mathrm{red}}$.
We have $\Re(\mathfrak{g}_{0}^{\mathrm{red}})+\mathfrak{q}_{0}=\mathfrak{g}_{0,+}^{\mathrm{red}}+\mathfrak{q}_{0}$, which implies that $\mathrm{exp}\left(\mathfrak{g}_{-}\right)\mathrm{exp}\left(\Re(\mathfrak{g}_{0}^{\mathrm{red}})\right)$ and $\mathrm{exp}\left(\mathfrak{g}_{-}\right)\mathrm{exp}\left(\mathfrak{g}_{0,+}^{\mathrm{red}}\right)$ represent the same submanifold in $G^{\mathbb{C}}/\mathrm{exp}(\mathfrak{q})$ near $\exp(0)$. It follows that the natural embedding of $G^{\mathbb{C}}/\mathrm{exp}(\mathfrak{q})$ into $\tilde G^{\mathbb{C}}/\tilde Q$ maps the CR hypersurface of Lemma \ref{hypersurface realization lemma} onto the CR hypersurface of Theorem \ref{hypersurface realization approach 2}. Since \eqref{ambient space deformation} is a biholomorphism and $W^\prime$ is biholomorphic to $G^{\mathbb{C}}/\mathrm{exp}(\mathfrak{q})$, it follows that this embedding furthermore identifies the ambient space of Lemma \ref{hypersurface realization lemma} with the submanifold $W$ in Theorem \ref{hypersurface realization approach 2}.
\end{proof}

\section{Infinitesimal symmetry algebras}\label{Symmetry Algebras}

\subsection{Symmetries transversal to the Levi kernel}\label{Symmetries transversal to the Levi kernel}

In this section, we will prove that $2$-nondegenerate models always possesses a $(2s+1)$-dimensional Lie subalgebra of holomorphic infinitesimal  symmetries everywhere transversal to the Levi kernel.

In the following proposition, we write $\frac{\partial}{\partial z}$ to denote the length $s$ column vector whose $j$th entry is $\frac{\partial}{\partial z_j}$.
\begin{proposition}\label{transkernel symmetry formula}
    Let $M$ be a $2$-nondegenerate model. For $a\in \mathbb{C}^s$ and $b\in \mathbb{R}$, the real part of 
    \begin{align}       
    X=&2\left(bi+\left(\overline{a}^T\overline{H(0,\overline{\zeta})}+\left(a^T\mathbf{H}-\overline{a}^TS(\zeta,0)+\overline{a}^TS(0,0)\right)H(\zeta,0)^{-1}\overline{S(0,\overline{\zeta})} \right)z\right)\frac{\partial}{\partial w} \\
    &+\left(a^T\mathbf{H}-\overline{a}^TS(\zeta,0)+\overline{a}^TS(0,0)\right)H(\zeta,0)^{-1}\frac{\partial}{\partial z}
    \end{align}
    is an infinitesimal symmetry of $M$, where $\mathbf{H}=H(0,0)$.
\end{proposition}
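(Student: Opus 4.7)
My approach is a direct verification that $\Re(X)$ annihilates the defining function
\[
\rho := \Re(w) - z^T H(\zeta,\overline{\zeta})\overline{z} - \Re\bigl(\overline{z}^T S(\zeta,\overline{\zeta})\overline{z}\bigr)
\]
of $M$. A preliminary simplification: the Hermitian condition on $H$ yields $\overline{H(0,\overline{\zeta})} = H(\zeta,0)^T$, and $\overline{S(0,\overline{\zeta})}$ is interpreted as the holomorphic-in-$\zeta$ function obtained by coefficient-wise conjugation of the anti-holomorphic function $\overline{\zeta}\mapsto S(0,\overline{\zeta})$. Thus, despite appearances, $X$ is genuinely a holomorphic vector field on $\mathbb{C}^{n+1}$. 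The $2bi\,\partial_w$ summand integrates to the $\Im(w)$-translation $b\partial_{\Im(w)}$, which trivially preserves $M$, so from here on I would set $b=0$.

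Tangency of $\Re(X)$ to $M$ is equivalent to $X(\rho) + \overline{X(\rho)} = 0$ along $M$; since $X$'s coefficients do not depend on $w$ and $\rho$ depends on $w$ only through $\Re(w)$, this expression is independent of $\Re(w)$ and $\Im(w)$, so vanishing on $M$ is equivalent to the function identity
\[
X(\rho) + \overline{X(\rho)} \equiv 0 \qquad\text{on }\mathbb{C}^s\times\mathbb{C}^{n-s}.
\]
Abbreviating $v(\zeta) := a^T\mathbf{H} - \overline{a}^T S(\zeta,0) + \overline{a}^T S(0,0)$ and using $\partial_z\rho = -H(\zeta,\overline{\zeta})\overline{z} - \overline{S(\zeta,\overline{\zeta})}z$, a direct expansion gives
\[
X(\rho) = \overline{a}^T H(\zeta,0)^T z + v(\zeta) H(\zeta,0)^{-1}\bigl[\overline{S(0,\overline{\zeta})} - \overline{S(\zeta,\overline{\zeta})}\bigr]z - v(\zeta) H(\zeta,0)^{-1} H(\zeta,\overline{\zeta})\overline{z}.
\]
Collecting the coefficients of $z$ and of $\overline{z}$ in $X(\rho)+\overline{X(\rho)}$ (the two coefficients being complex conjugates of each other) and further separating the independent parameters $a$ and $\overline{a}$ reduces the identity to two matrix identities relating $H(\zeta,\overline{\zeta})$ and $S(\zeta,\overline{\zeta})$ to the slice data $H(\zeta,0)$, $S(\zeta,0)$, $H(0,\overline{\zeta})$, $S(0,\overline{\zeta})$, and the origin values $\mathbf{H}$, $S(0,0)$.

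The main obstacle is verifying these two matrix identities. The plan is a uniqueness argument in the $\overline{\zeta}$-variable: both sides are holomorphic in $\zeta$ and anti-holomorphic in $\overline{\zeta}$, both sides coincide along $\overline{\zeta}=0$ by inspection (the slice $\overline{\zeta}=0$ data account for all the data there), and it suffices to show that they satisfy the same first-order system in the $\overline{\zeta_\beta}$-directions. Differentiating the proposed right-hand sides and substituting \eqref{rank condition 2}~--- which expresses $H_{\zeta_\alpha\overline{\zeta_\beta}}$ and $S_{\zeta_\alpha\overline{\zeta_\beta}}$ as explicit rational expressions in $H, S$ and their first $\zeta$- and $\overline{\zeta}$-derivatives~--- reduces the check to an algebraic matrix identity which the factored form of the right-hand sides of \eqref{rank condition 2} makes tractable. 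The nonlinearity of \eqref{rank condition 2} is the source of the difficulty. A clean way to organize the computation is to introduce the gauge-adjusted quantities $H(\zeta,0)^{-1}H(\zeta,\overline{\zeta})H(0,\overline{\zeta})^{-1}$ and $(H(\zeta,0)^T)^{-1}\bigl(S(\zeta,\overline{\zeta})-S(\zeta,0)\bigr)H(\zeta,0)^{-1}$, whose $\overline{\zeta_\beta}$-derivatives dramatically simplify under \eqref{rank condition 2} and reduce to expressions in the slice data alone; matching against the analogous combinations on the left sides then closes the argument.
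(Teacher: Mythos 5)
Your setup is correct and, up to a point, matches what the paper actually does: the tangency condition $X(\rho)+\overline{X(\rho)}=0$ is indeed independent of $w$, and the two matrix identities you extract from the coefficients of $a^T$ and $\overline{a}^T$ are precisely the condition $V=0$ for $V(\zeta,\overline{\zeta}):=H(\zeta,\overline{\zeta})\overline{B}+\overline{S(\zeta,\overline{\zeta})}B-A^T$ (with $A,B$ the coefficients of your vector field), which is exactly the quantity the paper must kill. The gap is in your verification mechanism. The system \eqref{rank condition 2} prescribes only the \emph{mixed} second derivatives $H_{\zeta_\alpha\overline{\zeta_\beta}}$ and $S_{\zeta_\alpha\overline{\zeta_\beta}}$; it does not produce a closed first-order system in the $\overline{\zeta_\beta}$-directions for the quantities in your identities, because $\partial_{\overline{\zeta_\beta}}$ applied to $H(\zeta,\overline{\zeta})$ or $\overline{S(\zeta,\overline{\zeta})}$ introduces new unknowns ($H_{\overline{\zeta_\beta}}$, $\overline{S_{\zeta_\beta}}$) whose further pure $\overline{\zeta}$-derivatives \eqref{rank condition 2} says nothing about; the same happens for your gauge-adjusted quantities. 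So ``agree at $\overline{\zeta}=0$ and satisfy the same first-order system in $\overline{\zeta}$'' is not a valid uniqueness scheme here: the relevant Cauchy problem is of Goursat type, and you need the difference of the two sides to (i) vanish on \emph{both} characteristic slices $\{\overline{\zeta}=0\}$ and $\{\zeta=0\}$, and (ii) satisfy a homogeneous linear system determining its mixed power-series coefficients from lower-order ones. You check neither the slice $\{\zeta=0\}$ nor (ii), and (ii) is exactly the computation you defer as ``tractable.''

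For comparison, the paper gets the needed linear system essentially for free by exponentiating $X$: the flow is affine in $(w,z)$ with $\zeta$ fixed, the transformed defining equation is the original plus $2\Re(z^TV)t+Wt^2$, and since the image hypersurface still has constant rank $s$ Levi form, the same Gaussian elimination that produced \eqref{rank condition 2} yields the linear system $V_{\zeta\overline{\zeta}}=H_{\zeta}H^{-1}V_{\overline{\zeta}}+\overline{S_{\zeta}}(H^{-1})^T\overline{V_{\overline{\zeta}}}$ and $W_{\zeta\overline{\zeta}}=\overline{V_{\overline{\zeta}}}^TH^{-1}V_{\overline{\zeta}}$; then $V=W=0$ follows from the short slice computations $V(\zeta,0)=V(0,\overline{\zeta})=W(\zeta,0)=0$ together with the induction on the degree of mixed monomials. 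If you want to keep your infinitesimal formulation, the fix is to import exactly this step: establish the linear mixed-derivative system for $V$ (either from the rank condition on the perturbed hypersurface or by direct differentiation using \eqref{rank condition 2}), check both slices, and run the recursion; checking only $\overline{\zeta}=0$ cannot suffice.
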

\begin{proof}
    This is trivial for the $a=0$ case, so let us consider the complimentary case with $b=0$.

    The one-parameter  family of holomorphic transformations corresponding to the flow of $X$ takes form
\begin{align}
    w\mapsto w+2Azt+ABt^2,
    \quad 
    z\mapsto z+Bt,
    \quad\mbox{ and }\quad \zeta\mapsto\zeta
\end{align}
where
\begin{align}
    A:=\left(\overline{a}^T\overline{H(0,\overline{\zeta})}+B^T\overline{S(0,\overline{\zeta})} \right)
    \quad\mbox{ and }\quad 
    B:=(H(\zeta,0)^{-1})^T \left(\mathbf{H}^Ta-S(\zeta,0)\overline{a}+S(0,0)\overline{a}\right).
\end{align}
Therefore, the defining equation \eqref{gen def fun} transforms to the form
\begin{align}
\Re(w)&=z^TH(\zeta,\overline{\zeta})\overline{z}+\Re(\overline{z}^TS(\zeta,\overline{\zeta})\overline{z})\\
    &+\left(B^TH(\zeta,\overline{\zeta})\overline{z}+z^TH(\zeta,\overline{\zeta})\overline{B}+2\Re(\overline{z}^TS(\zeta,\overline{\zeta})\overline{B})-2\Re(Az) \right)t\\
    &+\left(B^TH(\zeta,\overline{\zeta})\overline{B}+\Re(\overline{B}^TS(\zeta,\overline{\zeta})\overline{B})-\Re(AB) \right)t^2.
\end{align}
As this CR hypersurface is still uniformly 2-nondegenerate, the additional entries linear in $z,\overline{z}$ or scalar satisfy additional PDE's that we derive as follows. Firstly, the Levi form is transformed to 
\[
\bgroup
\arraycolsep=2pt
\mathcal{L}^1=
\left(
\begin{array}{c:c}
H(\zeta,\overline{\zeta}) & H_{\overline{\zeta}}(\zeta,\overline{\zeta})\overline{z}+\overline{S_{\zeta}(\zeta,\overline{\zeta})}z+V_{\overline{\zeta}}(\zeta,\overline{\zeta})\vspace{2pt} \\\hdashline
z^TH_{\zeta}(\zeta,\overline{\zeta})+\overline{z}^TS_{\zeta}(\zeta,\overline{\zeta})+\overline{V_{\overline{\zeta}}(\zeta,\overline{\zeta})}^T  & \parbox{6.2cm}{\vspace{4pt}$z^TH_{\zeta\overline{\zeta}}(\zeta,\overline{\zeta})\overline{z}+\Re(\overline{z}^TS_{\zeta\overline{\zeta}}(\zeta,\overline{\zeta})\overline{z})+$\\$z^TV_{\zeta\overline{\zeta}}(\zeta,\overline{\zeta})+
\overline{V_{\zeta\overline{\zeta}}(\zeta,\overline{\zeta})}^T\overline{z}+W_{\zeta\overline{\zeta}}(\zeta,\overline{\zeta})$}  \\
\end{array}
\right),
\egroup
\]
where $V(\zeta,\overline{\zeta})=H(\zeta,\overline{\zeta})\overline{B}+\overline{S(\zeta,\overline{\zeta})}B-A^T$ and $W(\zeta,\overline{\zeta})=B^TH(\zeta,\overline{\zeta})\overline{B}+\Re(\overline{B}^TS(\zeta,\overline{\zeta})\overline{B})-\Re(AB)$. Since $\mathcal{L}^1$ has rank $a$, adding an appropriate multiple of the first block row to the second then provides the PDE system as before (cf. \eqref{rank condition 2}), but now there are additional terms linear in $z$ or scalar in $z$. This provides the equations
\begin{align}
    V_{\zeta\overline{\zeta}}(\zeta,\overline{\zeta})&=H_{\zeta}(\zeta,\overline{\zeta})H(\zeta,\overline{\zeta})^{-1}V_{\overline{\zeta}}(\zeta,\overline{\zeta})+\overline{S_{\zeta}(\zeta,\overline{\zeta})}(H(\zeta,\overline{\zeta})^{-1})^T\overline{V_{\overline{\zeta}}(\zeta,\overline{\zeta})},\\
    W_{\zeta\overline{\zeta}}(\zeta,\overline{\zeta})&=\overline{V_{\overline{\zeta}}(\zeta,\overline{\zeta})}^TH(\zeta,\overline{\zeta})^{-1}V_{\overline{\zeta}}.
\end{align}
Using the real analyticity assumption, let us express $V(\zeta,\overline{\zeta})=\sum_{i,j\geq 0} V_{ij}\zeta^i\overline{\zeta}^j$ and $W(\zeta,\overline{\zeta})=\sum_{i,j\geq 0} W_{ij}\zeta^i\overline{\zeta}^j$ with $W_{ij}=\overline{W_{ji}}.$ Therefore, $V_{i,j},W_{i,j}$ for $i,j>0$ are determined by $V_{k,l},W_{k,l}$ for $k+l<i+j.$ Thus if $V_{k,0}=V_{0,l}=W_{k,0}=0$ for all $k,l$, then $V$ and $W$ will also vanish. This indeed happens, because we can directly verify:
\begin{align}
V(\zeta,0)&=H(\zeta,0)\overline{B(0)}+\overline{S(0,\overline{\zeta})}B-A^T=H(\zeta,0)\overline{a}-\overline{H(0,\overline{\zeta})}^T\overline{a}=0\\
V(0,\overline{\zeta})&=H(0,\overline{\zeta})\overline{B}+\overline{S(\zeta,0)}B(0)-A^T(0)=\left(\mathbf{H}\overline{a}-\overline{S(\zeta,0)}a+\overline{S(0,0)}a\right)\\
&\quad\quad+\overline{S(\zeta,0)}a-\left(\mathbf{H}\overline{a}+\overline{S(0,0)}a\right)\\
&=0\\
W(\zeta,0)&=B^TH(\zeta,0)\overline{B(0)}+\frac12(B^T\overline{S(0,\overline{\zeta})}B+\overline{B(0)}^TS(\zeta,0)\overline{B(0)})-\frac12(AB+\overline{A(0)}\overline{B(0)})\\
&=B^TH(\zeta,0)\overline{a}+\frac12\overline{a}^TS(\zeta,0)\overline{a}-\frac12(\overline{a}^T\overline{H(0,\overline{\zeta})}B+a^T\mathbf{H}\overline{a}+\overline{a}^TS(0,0)\overline{a})\\
&=
\frac12(B^TH(\zeta,0)\overline{a}+\overline{a}^TS(\zeta,0)\overline{a}-a^T\mathbf{H}\overline{a}-\overline{a}^TS(0,0)\overline{a})=0.
\end{align}
\end{proof}
Corresponding to the normalized defining equations of Theorem \ref{general formula for Cn}, substituting \eqref{H from S2 formula} and \eqref{S from S2 formula} into Proposition \ref{transkernel symmetry formula} yields
\begin{corollary}\label{symmetries for main theorem}
For $a\in \mathbb{C}^s$ and $b\in \mathbb{R}$, the real part of 
\begin{align}\label{transkernel}
X=2\left(bi+\overline{a}^T\mathbf{H}^Tz\right)\frac{\partial}{\partial w}+\left(a^T-\overline{a}^T\mathbf{H}^T\mathbf{S}(\zeta)\right)\frac{\partial}{\partial z}
\end{align}
is an infinitesimal symmetry of the hypersurface given by \eqref{gen def fun}, \eqref{H from S2 formula}, and \eqref{S from S2 formula}.
\end{corollary}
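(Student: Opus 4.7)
The plan is to simply substitute the normalized expressions \eqref{H from S2 formula} and \eqref{S from S2 formula} for $H$ and $S$ into the symmetry formula of Proposition \ref{transkernel symmetry formula} and observe that the assumption $\mathbf{S}(0)=0$ causes the auxiliary terms to collapse.

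First I would record the four specializations we need. Because $\overline{\mathbf{S}(\zeta)}$ denotes the antiholomorphic matrix $\mathbf{S}$ conjugated and evaluated at $\overline{\zeta}$, setting $\overline{\zeta}=0$ kills it via $\mathbf{S}(0)=0$, and symmetrically for $\zeta=0$. Plugging this into \eqref{H from S2 formula} and \eqref{S from S2 formula} gives
\[
H(\zeta,0)=H(0,\overline{\zeta})=\mathbf{H},\qquad S(\zeta,0)=\mathbf{H}^T\mathbf{S}(\zeta)\mathbf{H},\qquad S(0,\overline{\zeta})=0,\qquad S(0,0)=0.
\]
Since $\mathbf{H}$ is Hermitian, $\overline{H(0,\overline{\zeta})}=\mathbf{H}^T$, and $\overline{S(0,\overline{\zeta})}=0$.

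Next, I would substitute these identities into the formula for $X$ in Proposition \ref{transkernel symmetry formula}. The coefficient of $\partial/\partial w$ has the factor
\[
\overline{a}^T\overline{H(0,\overline{\zeta})}+\bigl(a^T\mathbf{H}-\overline{a}^T S(\zeta,0)+\overline{a}^T S(0,0)\bigr)H(\zeta,0)^{-1}\overline{S(0,\overline{\zeta})},
\]
whose second summand vanishes because $\overline{S(0,\overline{\zeta})}=0$, leaving just $\overline{a}^T\mathbf{H}^T$. The coefficient of $\partial/\partial z$ becomes
\[
\bigl(a^T\mathbf{H}-\overline{a}^T\mathbf{H}^T\mathbf{S}(\zeta)\mathbf{H}+0\bigr)\mathbf{H}^{-1}=a^T-\overline{a}^T\mathbf{H}^T\mathbf{S}(\zeta),
\]
which is exactly the claimed expression \eqref{transkernel}. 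Taking the real part and invoking Proposition \ref{transkernel symmetry formula} concludes the argument.

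There is essentially no obstacle: the work is purely bookkeeping once the vanishing $\mathbf{S}(0)=0$ is used. The only subtlety worth double-checking is the interpretation of the bar in $\overline{\mathbf{S}(\zeta)}$ appearing in \eqref{H from S2 formula}--\eqref{S from S2 formula} when one of the holomorphic or antiholomorphic variables is set to zero; getting that convention right is what makes the four specializations above valid and hence makes the collapse of the auxiliary terms immediate.
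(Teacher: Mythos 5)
Your proposal is correct and is exactly the paper's argument: the paper derives the corollary by substituting \eqref{H from S2 formula} and \eqref{S from S2 formula} into Proposition \ref{transkernel symmetry formula}, and your four specializations (using $\mathbf{S}(0)=0$ and the Hermitian symmetry of $\mathbf{H}$) correctly carry out that substitution.
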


\subsection{Symmetries in the degree zero isotropy}\label{Symmetries in the degree zero isotropy}

In the case of the CR hypersurfaces produced by the Theorem \ref{hypersurface realization approach 2}, we can characterize a large set of the infinitesimal symmetries in $\mathfrak{a}_0^{\mathrm{gr}}$ (as labeled in Section \ref{generalized Naruki}) that belong to the isotropy.

\begin{proposition}\label{isotropy prop}
    Let $(\mathfrak{g}^{\mathrm{mod}}_{\leq 0},\sigma)$ be a realizable modified symbol with decomposition $\mathfrak{g}_0^{\mathrm{mod}}=\mathfrak{g}_{0,-}^{\mathrm{mod}}\oplus \mathfrak{g}_{0,0}^{\mathrm{mod}} \oplus \mathfrak{g}_{0,+}^{\mathrm{mod}}$ and let $x\in \mathfrak{g}_{0,0}^{\prime}$ (defined in Lemma \ref{symbol representation lemma}) be an element whose adjoint action preserves this decomposition. The real part of
    \[
    X=\frac12z^T((L-(\mathbf{H}^T)^{-1}L^*\mathbf{H}^T)\frac{\partial}{\partial z}+c_{\alpha,\beta}\zeta_\beta\frac{\partial}{\partial \zeta_\alpha}
    \]
    is an infinitesimal symmetry of the $2$-nondegenerate model constructed from $\mathfrak{g}^{\mathrm{mod}}_{\leq 0}$ by the Theorem \ref{hypersurface realization approach 2}, where $L$ and $c_{\alpha,\beta}$ are defined by
        \[
    x=\left(
\begin{array}{cccc}
0& 0 & 0 &  0 \\
0 & L & 0 &  0 \\
0 & 0 & -L^T & 0 \\
0 & 0 & 0 &  0
\end{array}
\right)\in \mathfrak{g}^{\mathrm{mod}}_{0,0}
    \] 
    and $\sum^r_{\beta=1}c_{\alpha,\beta}e_\beta=\frac12[x+\sigma(x),e_{\alpha}].$
\end{proposition}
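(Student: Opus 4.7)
The strategy is to realize the vector field $X$ as a holomorphic representative of the left action on $\tilde{G}^{\mathbb{C}}/\tilde{Q}$ of the one-parameter subgroup generated by the real element $y := x + \sigma(x) \in \Re\bigl(\mathfrak{g}^{\mathrm{mod}}_{0,0}\bigr)$. Since $y\in\tilde{\mathfrak{q}}$ and $\sigma(y)=y$, left translation by $g_t := \exp(ty)$ descends to a one-real-parameter group of biholomorphisms of $\tilde{G}^{\mathbb{C}}/\tilde{Q}$.

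First I would show that $g_t$ preserves both the ambient complex submanifold $W$ of \eqref{N submanifold} and the CR hypersurface $M := \exp(\mathfrak{g}_-)\exp(\mathfrak{g}_{0,+}^{\mathrm{mod}})$ sitting inside it. Applying the relation
\[
g_t \exp(A)\exp(B) = \exp\bigl(\mathrm{Ad}_{g_t} A\bigr)\exp\bigl(\mathrm{Ad}_{g_t} B\bigr)\,g_t
\]
together with $g_t\in\tilde{Q}$, invariance reduces to showing that $\mathrm{ad}(y)$ preserves the subspaces $\mathfrak{g}_{-2,0}\oplus\mathfrak{g}_{-1,1}\oplus\mathfrak{g}_{0,+}^{\mathrm{mod}}$ (for $W$) and the real form $\mathfrak{g}_-\subset \mathbb{C}\mathfrak{g}_-$ (for $M$). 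Bigrade-preservation of $\mathrm{ad}(y)$ on $\mathbb{C}\mathfrak{g}_-$ is immediate from $y\in\mathfrak{g}_{0,0}$; invariance of $\mathfrak{g}_{0,+}^{\mathrm{mod}}$ under $\mathrm{ad}(x)$ is the hypothesis, while under $\mathrm{ad}(\sigma(x))$ it follows from
\[
[\sigma(x),\mathfrak{g}_{0,+}^{\mathrm{mod}}] = [\sigma(x),\sigma(\mathfrak{g}_{0,-}^{\mathrm{mod}})] = \sigma\bigl([x,\mathfrak{g}_{0,-}^{\mathrm{mod}}]\bigr) \subset \sigma(\mathfrak{g}_{0,-}^{\mathrm{mod}}) = \mathfrak{g}_{0,+}^{\mathrm{mod}};
\]
reality of $y$ gives $\mathrm{Ad}_{g_t}(\mathfrak{g}_-) = \mathfrak{g}_-$.

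Second, I would identify the generator of the flow of $g_t$ in the holomorphic coordinates $(w,z,\zeta)$ from the proof of Theorem \ref{hypersurface realization approach 2}. Modulo $\tilde{Q}$, $g_t$ sends $\exp(Y+Z)\exp(\mathbf{S}(\zeta))$ to $\exp\bigl(\mathrm{Ad}_{g_t}(Y+Z)\bigr)\exp\bigl(\mathrm{Ad}_{g_t}\mathbf{S}(\zeta)\bigr)$. Differentiating at $t=0$ and reading off the adjoint action from \eqref{csp representation} together with Lemma \ref{involution lemma}:
\begin{itemize}
\item $\mathrm{ad}(y)$ vanishes on $\mathfrak{g}_{-2,0}$, so $u$ is preserved;
\item $\mathrm{ad}(y)$ acts on $\mathfrak{g}_{-1,1}$ by $v_1 \mapsto (L - (\mathbf{H}^T)^{-1}L^*\mathbf{H}^T)v_1$;
\item $\mathrm{ad}(y)$ acts on $\mathfrak{g}_{0,+}^{\mathrm{mod}}$ by $e_\alpha \mapsto 2\sum_\beta c_{\alpha,\beta}e_\beta$ by the definition of $c_{\alpha,\beta}$.
\end{itemize}
Using the embedding relations $z = v_1 - \mathbf{S}(\zeta)\mathbf{H}\overline{v_1}$ and $w = iu + z\mathbf{H}\overline{v_1}$, I would translate these infinitesimal changes into the $(w,z,\zeta)$ coordinates, extract the $(1,0)$-component of the resulting real vector field, and observe that the contributions to $\partial/\partial w$ coming from $\dot z\,\mathbf{H}\overline{v_1}$ and $z\mathbf{H}\overline{\dot v_1}$ are antiholomorphic in $(z,v_1)$ and thus contribute only to the $(0,1)$-part. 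The factor $\tfrac12$ in the $\partial/\partial z$ block of $X$ reflects the standard identification of a real vector field with twice the real part of its holomorphic representative.

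The main obstacle is this last coordinate translation: the coupling of $\zeta$ to $(z,w)$ through $\mathbf{S}(\zeta)\mathbf{H}\overline{v_1}$ creates cross terms whose bookkeeping must be handled carefully. Since $X$ is holomorphic and $M$ is maximally totally real in $W$ through the origin, however, $X$ is determined on $W$ by its values along $M$, so the infinitesimal computation above suffices to recover the claimed formula.
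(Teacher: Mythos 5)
Your route is genuinely different from the paper's: you realize the symmetry as the left translation by $\exp(t(x+\sigma(x)))$ on $\tilde{G}^{\mathbb{C}}/\tilde{Q}$ and push it down to $W$, whereas the paper simply writes down the finite linear transformation $(w,z,\zeta)\mapsto(w,Az,\hat x(\zeta))$ with $A=\exp(\frac12(L-(\mathbf{H}^T)^{-1}L^*\mathbf{H}^T))$ and $\hat x=\exp(\frac12\mathrm{ad}(x+\sigma(x)))|_{\mathfrak{g}_{0,+}^{\mathrm{mod}}}$ and verifies directly that it preserves the defining equation, via $A^T\mathbf{H}\overline{A}=\mathbf{H}$ and $\mathbf{S}(\hat x(\zeta))=A\,\mathbf{S}(\zeta)A^T$. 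Your group-theoretic framing buys conceptual clarity (invariance of $M$ and $W$ reduces to $\mathrm{ad}(x+\sigma(x))$ preserving the relevant subspaces, all of which you check correctly), and the two arguments ultimately rest on the same two algebraic facts. A harmless discrepancy: your flow $\exp(t(x+\sigma(x)))$ generates $2(X+\overline{X})$ rather than $X+\overline{X}$ (the paper effectively uses $\exp(\tfrac{t}{2}(x+\sigma(x)))$); since infinitesimal symmetries form a vector space this does not matter.

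There is, however, a flawed step at the point you yourself identify as the main obstacle. The terms $\dot z^T\mathbf{H}\overline{v_1}$ and $z^T\mathbf{H}\overline{\dot v_1}$ in $\dot w$ are not antiholomorphic -- they are mixed terms of the form (holomorphic)$\times$(antiholomorphic) -- so they do not disappear into the $(0,1)$-part. They vanish because they cancel: writing $L'=L-(\mathbf{H}^T)^{-1}L^*\mathbf{H}^T$, one has $(L'z)^T\mathbf{H}\overline{v_1}+z^T\mathbf{H}\overline{L'v_1}=z^T(L'^T\mathbf{H}+\mathbf{H}\overline{L'})\overline{v_1}=0$ since $L'\in\mathfrak{u}(\mathbf{H})$; the same identity is what collapses $\dot z=L'v_1-(L'\mathbf{S}+\mathbf{S}L'^T)\mathbf{H}\overline{v_1}-\mathbf{S}\mathbf{H}\overline{L'v_1}$ to $L'z$. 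This is precisely the identity $A^T\mathbf{H}\overline{A}=\mathbf{H}$ on which the paper's verification turns, and without it your coordinate translation does not close up. The appeal to $X$ being determined by its values along $M$ cannot substitute for this computation: uniqueness of the holomorphic extension tells you only that \emph{if} the restriction of $2\Re(X)$ to $M$ agrees with the generator of your flow then $X$ is that generator's holomorphic part, and establishing that agreement is exactly the cancellation above. (Also, $M$ is a real hypersurface in $W$, hence generic rather than maximally totally real, though the uniqueness principle you want still holds.) With the unitarity identity inserted, your argument is complete and correct.
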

\begin{proof}
   As $x$ preserves the decomposition  $\mathfrak{g}_0^{\mathrm{mod}}=\mathfrak{g}_{0,-}^{\mathrm{mod}}\oplus \mathfrak{g}_{0,0}^{\mathrm{mod}} \oplus \mathfrak{g}_{0,+}^{\mathrm{mod}}$, there is linear transformation $\hat x: \zeta_\beta\mapsto \hat{x}_{\beta}(\zeta)$ such that 
    \[\sum^r_{1=\alpha}\hat x_\alpha(\zeta) e_\alpha  =\exp\left(\frac12\mathrm{ad}(x+\sigma(x))\right)\sum^r_{1=\alpha}e_\alpha\zeta_\alpha\] for $\sum^r_{1=\alpha}\zeta_\alpha e_\alpha \in \mathfrak{g}^{\mathrm{mod}}_{0,+}$. Let $\mathbf{H}$ be the Hermitian matrix such that $\sigma$ has the form of the involution in \eqref{involution formula} with $\mathbf{H}$ substituted for $H(\tilde \phi)$ and $h(\tilde \phi)=1$. 
    Then we check that the holomorphic transformation $w\mapsto w$, $z\mapsto A:=\exp\left(\frac12(L-(\mathbf{H}^T)^{-1}L^*\mathbf{H}^T)\right)z$ and $\zeta\mapsto \hat{x}(\zeta)$, which comes from the flow of $X+\overline{X}$, is a symmetry of the defining equation given by \eqref{gen def fun} and \eqref{general def eqn C4}.
    
    As we are acting by grading preserving linear maps, it suffices to check that
    \begin{align}
        A^TH\left(\hat x(\zeta),\overline{\hat x(\zeta)}\right)\overline{A}=H\left(\zeta,\overline{\zeta}\right),\\
        A^TS\left(\hat x(\zeta),\overline{\hat x(\zeta)}\right)A=S\left(\zeta,\overline{\zeta}\right).
    \end{align}
    By definition, $A^T\mathbf{H}\overline{A}=\mathbf{H}$ and 
    \begin{align}
    &A^{-1}\mathbf{S}(\hat x(\zeta))(A^T)^{-1}=\exp\left(-\frac12\mathrm{ad}(x+\sigma(x))\right)\mathbf{S}(\hat x(\zeta))\\
    &=\exp\left(-\frac12\mathrm{ad}(x+\sigma(x))\right) \ln\left(\exp\left( \sum^r_{1=\alpha}\hat x_\alpha(\zeta)e_\alpha \right)\exp\left(-\pi_{0,0}(\sum^r_{1=\alpha}\hat x_\alpha(\zeta)e_\alpha )\right)\right)\\
    &=\mathbf{S}(\zeta)
    \end{align}
    and the claim follows, because $\exp\circ \mathrm{ad}=\mathrm{Ad} \circ \exp$ is compatible with the operations in formulas \eqref{general def eqn C4}.
\end{proof}

\section{Proofs of the main theorems}

\subsection{Proof of Theorem \ref{model perturbations proposition}}\label{model perturbations section}

We start by proving that if the CR hypersurface
\[\Re(w)=z^TH(\zeta,\overline{\zeta})\overline{z}+\Re(\overline{z}^TS(\zeta,\overline{\zeta})\overline{z})+O(3)\]
is uniformly 2-nondegenerate, then the model hypersurface \eqref{gen def fun} is uniformly 2-nondegenerate. The notation $O(k)$ stands in for terms of weighted degree at least $k$ with respect to the weights \eqref{grading convention}. To compute the Levi form, we start with generators of the CR distribution (given by \cite[Proposition 1.6.4]{baouendi1999real}) that we expand according to the weighted degree as follows
\begin{align}
    \tilde f_a:=&\frac{\partial}{\partial z_a}-i(H_{a}(\zeta,\overline{\zeta})\overline{z}+\overline{S_{a}(\zeta,\overline{\zeta})}z)\frac{\partial}{\partial \Im(w)}+O(2)\frac{\partial}{\partial \Im(w)},\\
\tilde f_\alpha:=&\frac{\partial}{\partial \zeta_{\alpha}}-i\left(z^TH_{\zeta_{\alpha}}(\zeta,\overline{\zeta})\overline{z}+\Re(\overline{z}^TS_{\zeta_{\alpha}}(\zeta,\overline{\zeta})\overline{z})\right)\frac{\partial}{\partial \Im(w)}+O(3)\frac{\partial}{\partial \Im(w)}.
\end{align}
The Levi form with respect to this frame and expansions is 
\begin{align}
\mathcal{L}^1=\left(
\begin{array}{cc}
H(\zeta,\overline{\zeta})+O(1) & H_{\overline{\zeta}}(\zeta,\overline{\zeta})\overline{z}+\overline{S_{\zeta}(\zeta,\overline{\zeta})}z+O(2) \\
z^TH_{\zeta}(\zeta,\overline{\zeta})+\overline{z}^TS_{\zeta}(\zeta,\overline{\zeta})+O(2)  & z^TH_{\zeta\overline{\zeta}}(\zeta,\overline{\zeta})\overline{z}+\Re(\overline{z}^TS_{\zeta\overline{\zeta}}(\zeta,\overline{\zeta})\overline{z})+O(3)  \\
\end{array}
\right).
\end{align}
Since $H(0,0)$ is nondegenerate, the matrix $H(\zeta,\overline{\zeta})+O(1)$ is invertible on some neighborhood of $0$. Writing $\mathcal{L}^1$ as block matrix $\left(\begin{array}{cc}
     A&B  \\
     C&D 
\end{array}\right)$ and using that $A$ is invertible, the rank of $\mathcal{L}^1$ is equivalent to the rank of the matrix
\[
\left(\begin{array}{cc}
     \mathrm{Id}&0  \\
     -CA^{-1}&\mathrm{Id} 
\end{array}\right)
\left(\begin{array}{cc}
     A&B  \\
     C&D 
\end{array}\right)
=
\left(\begin{array}{cc}
     A&B  \\
     0&D- CA^{-1}B
\end{array}\right).
\]
Thus rank condition $\mathrm{rank}(\mathcal{L}^{1})=s$ is thus equivalent to $D= CA^{-1}B$, which yields 
\begin{align}
 0=&z^TH_{\zeta\overline{\zeta}}(\zeta,\overline{\zeta})\overline{z}+\Re(\overline{z}^TS_{\zeta\overline{\zeta}}(\zeta,\overline{\zeta})\overline{z})+O(3)\\
 &-\left(z^TH_{\zeta}(\zeta,\overline{\zeta})+\overline{z}^TS_{\zeta}(\zeta,\overline{\zeta})+O(2)\right)(H(\zeta,\overline{\zeta})+O(1))^{-1}(H_{\overline{\zeta}}(\zeta,\overline{\zeta})\overline{z}+\overline{S_{\zeta}(\zeta,\overline{\zeta})}z+O(2))\\
 =&z^TH_{\zeta\overline{\zeta}}(\zeta,\overline{\zeta})\overline{z}+\Re(\overline{z}^TS_{\zeta\overline{\zeta}}(\zeta,\overline{\zeta})\overline{z})\\
 &-(z^TH_{\zeta}(\zeta,\overline{\zeta})+\overline{z}^TS_{\zeta}(\zeta,\overline{\zeta}))(H(\zeta,\overline{\zeta}))^{-1} (H_{\overline{\zeta}}(\zeta,\overline{\zeta})\overline{z}+\overline{S_{\zeta}(\zeta,\overline{\zeta})}z)+O(3).
\end{align}
Therefore, $H(\zeta,\overline{\zeta}),S(\zeta,\overline{\zeta})$ satisfy the system of PDE \eqref{rank condition 2} and the model hypersurface \eqref{gen def fun} has constant rank $s$ Levi form. The claim about $2$-nondegeneracy follows by the following proposition relating invariants of $2$-nondegenerate structures to their models.

\begin{proposition}\label{mod symbol the same}
    The uniformly 2-nondegenerate CR hypersurface $\Re(w)=P(z,\zeta,\overline{z},\overline{\zeta})+Q(z,\zeta,\overline{z},\overline{\zeta},\Im(w))$ and the 2-nondegenerate model $\Re(w)=P(z,\zeta,\overline{z},\overline{\zeta})$ have the same modified symbol at $0$ with respect to any normalization conditions.
\end{proposition}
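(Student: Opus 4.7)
The plan is to verify directly that every invariant entering the modified symbol calculation at $0$ -- the matrix $H(\tilde\phi(0))$ carrying the Levi form, the matrices $\Xi_\beta(\tilde\phi(0))$ carrying the bigraded symbol, the matrices $\Omega_\beta(\tilde\phi)(0)$, and the normalization conditions $O^N_{0,\pm 2}(\tilde\phi(0))$ -- takes identical values for $M$ and for its model $M_0$. Since each of these quantities is computed from a finite jet of the defining function at the origin, and since the normalization conditions depend only on the bigraded symbol at $\tilde\phi(0)$ (which will be the same for both hypersurfaces), the task reduces to showing that the contributions of $Q$ to each relevant jet vanish at $0$.

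First I would construct an adapted frame $(g^M,f^M_a,e^M_\alpha)$ on $M$ by the recipe in Section \ref{Key examples of modified symbol calculations} applied to $\rho_M=\Re(w)-P-Q$ rather than to $\rho_{M_0}=\Re(w)-P$. This frame differs from the $M_0$-frame by corrections whose coefficients are rational expressions in the partial derivatives of $Q$. I would then trace through formulas \eqref{key example Lform}, \eqref{key example Xi mat}, \eqref{key example Omega mat} and \eqref{first order constancy criterion alt c}--\eqref{first order constancy criterion alt d}, and for each matrix entry being extracted at $0$, identify the corresponding partial derivative operators acting on $\rho_M$. These turn out to be $\partial_{z_j}\partial_{\overline{z}_k}$ for $H$, $\partial_{\zeta_\beta}\partial_{\overline{z}_j}\partial_{\overline{z}_k}$ for $\Xi_\beta$, $\partial_{\zeta_\beta}\partial_{z_j}\partial_{\overline{z}_k}$ for $\Omega_\beta$, and $\partial_{\zeta_\alpha}\partial_{\zeta_\beta}\partial_{\overline{z}_j}\partial_{\overline{z}_k}$ for the obstructions -- each lowering weighted degree by at most $2$ with respect to the weights \eqref{grading convention}.

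The key observation is the following. For any partial derivative operator $\partial^\mu$ that lowers weighted degree by $k\leq 2$, the series $\partial^\mu Q$ has terms of weighted degree $\geq 3-k\geq 1>0$; in particular, $\partial^\mu Q$ contains no pure $(\zeta,\overline{\zeta})$-monomial, so $\partial^\mu Q |_0=0$. Consequently, all the $Q$-contributions to $H$, $\Xi_\beta$, $\Omega_\beta$, and $O^N_{0,\pm2}$ vanish at the origin, and the computation on $M$ reduces verbatim to the computation for $M_0$ already performed in Section \ref{Key examples of modified symbol calculations}. Applying Theorem \ref{modified symbols in normalized frames} and the fact that the normalization condition is fixed by the bigraded symbol at the point then yields equality of the modified symbols with respect to every normalization.

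The main obstacle is handling the frame corrections carefully: because $\rho_M$ depends on $\Im(w)$ through $Q$, making a naive frame tangent to $M$ introduces an inverted factor $1/(1+iQ_{\Im(w)})$ together with further mixed $\partial_{\Im(w)}$-derivatives in the expressions for $f^M_a$ and $e^M_\alpha$. One must check that every such correction term is either multiplied by a coordinate or a $Q$-derivative that vanishes at $0$, or is absorbed into a derivative operator of weighted-degree-lowering $\leq 2$ when evaluated at the origin. A short direct expansion of the corrected frame vector fields and of the Lie brackets $[f^M_j,\overline{f^M_k}]$, $[e^M_\alpha,\overline{f^M_k}]$, and $[e^M_\alpha,f^M_k]$, retaining only the pieces that survive evaluation at the origin, resolves this and completes the argument.
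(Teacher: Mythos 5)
Your proposal is correct and takes essentially the same route as the paper: the paper's proof likewise compares the adapted frames of $M$ and $M_0$, noting that the corrections coming from $Q$ have higher weighted degree and hence vanish identically on the Levi-kernel leaf $\{z=0,\,\Im(w)=0\}$, so every bracket and leafwise derivative entering the modified-symbol computation is unchanged. Your weight-counting argument on jets of $Q$ is a coordinate restatement of that same observation.
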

\begin{proof}
We approach the computation of the modified symbols as in Section \ref{Key examples of modified symbol calculations}, but in the general setting the adapted frame $\tilde f_a,\tilde e_{\alpha}$ (introduced in the beginning of this section) is expressed
\[
\tilde f_a=f_a+O(2)\frac{\partial}{\partial \Im(w)}, 
\quad\quad\tilde e_{\alpha}=e_{\alpha}+O(2)\frac{\partial}{\partial z}+O(3)\frac{\partial}{\partial \Im(w)}
\]
relative to the adapted frame $f_a,e_{\alpha}$ for the 2-nondegenerate model (introduced in Section \ref{Key examples of modified symbol calculations}). Therefore, for computations along the leaf of the Levi kernel $\Im(w)=0,z=0$, the difference between the frames plays no role (as all the additional terms vanish on this leaf) in the computations done in Section \ref{Key examples of modified symbol calculations}. In particular, the vector fields and brackets $\tilde f_a$, $\tilde e_{\alpha}$, $[\tilde e_{\alpha},\tilde f_a]$ and $[\tilde e_{\alpha},\overline{\tilde f_a}]$ coincide $f_a$, $e_{\alpha}$, $[ e_{\alpha}, f_a]$ and $[ e_{\alpha},\overline{ f_a}]$ over this leaf and the claim follows.
\end{proof}

Let us return to the start and consider a general real analytic defining equation $\Re(w)=F(z,\overline{z},\Im(w))$ for some  real analytic hypersurface $M\subset\mathbb{C}^{n+1}$ that has constant rank $s$ Levi form. As usual one removes terms of polynomial order $1$ from $F$ and distinguishes the $z_1,\dots,z_s$ coordinates in which the polynomial order $2$ terms have the form $z^TH(0,0)\overline{z}$ with $H(0,0)$ invertible. Naming the remaining variables $\zeta$, we impose the weights \eqref{grading convention} on the variables. Consider the following expansion of the defining equation of $M$ sorted according to weighted order
\begin{align}
\Re(w)=W(\zeta,\overline{\zeta})+\Re(V(\zeta,\overline{\zeta})z)+Z(\zeta,\overline{\zeta})\Im(w)+z^TH(\zeta,\overline{\zeta})\overline{z}+\Re(\overline{z}^TS(\zeta,\overline{\zeta})\overline{z})+O(3),
\end{align}
where $W,Z$ are real functions of $\zeta,\overline{\zeta}$ variables and $V$ is a covector of $s$ complex functions of $\zeta,\overline{\zeta}$ variables.

To finish the proof of the theorem's first part, we need to show that there is a holomorphic change of coordinates making the $V,W,Z$ terms  vanish. Adding these additional $V,W,Z$ terms augments the previous $\mathcal{L}_1$ formula, producing the Levi form formula
\begin{align}
\mathcal{L}^1_{V,Z,W}:=\mathcal{L}_1+
\left(
\begin{array}{c:c}
\tilde{O}(V,Z) & \tilde{O}(V,Z,V_{\zeta},W_{\zeta},Z_{\zeta},V_{\overline{\zeta}}) \\\hdashline
 \tilde{O}(V,Z,V_{\zeta},W_{\zeta},Z_{\zeta},V_{\overline{\zeta}})  & 
\parbox{5cm}{\flushleft$W_{\zeta,\overline{\zeta}}+\Re(V_{\zeta,\overline{\zeta}}z)+Z_{\zeta,\overline{\zeta}}\Im(w) +\tilde{O}(V,Z,V_{\zeta},W_{\zeta},Z_{\zeta},V_{\overline{\zeta}})$}
\end{array}
\right),
\end{align}
where $\tilde{O}(R_1,\dots,R_i)$ stands in for functions depending on $R_1,\dots,R_i,\overline{R_1},\dots,\overline{R_i}$ that vanish whenever $R_1,\dots, R_i,\overline{R_1},\dots,\overline{R_i}$ are identically zero.

By the same arguments as above we can eliminate the last $n-s$ rows of the Levi form using Gaussian elimination. This provides a new PDE system of the form
\begin{align}
   W_{\zeta,\overline{\zeta}}&= \tilde{O}(V,Z,V_{\zeta},W_{\zeta},Z_{\zeta},V_{\overline{\zeta}})\\
   V_{\zeta,\overline{\zeta}}&= \tilde{O}(V,Z,V_{\zeta},W_{\zeta},Z_{\zeta},V_{\overline{\zeta}})\\
   Z_{\zeta,\overline{\zeta}}&= \tilde{O}(V,Z,V_{\zeta},W_{\zeta},Z_{\zeta},V_{\overline{\zeta}}).
\end{align}
Using the real analyticity assumption, let us express $V(\zeta,\overline{\zeta})=\sum_{i,j\geq 0} V_{ij}\zeta^i\overline{\zeta}^j$, $W(\zeta,\overline{\zeta})=\sum_{i,j\geq 0} W_{ij}\zeta^i\overline{\zeta}^j$ and $Z(\zeta,\overline{\zeta})=\sum_{i,j\geq 0} Z_{ij}\zeta^i\overline{\zeta}^j$ with $W_{ij}=\overline{W_{ji}}$ and $Z_{ij}=\overline{Z_{ji}}.$ Therefore, $V_{i,j}$, $W_{i,j}$, and $Z_{i,j}$ for $i,j>0$ are uniquely determined by $V_{k,l}$, $W_{k,l}$, and $Z_{k,l}$ for $k+l<i+j.$ Thus if $V_{k,0}=V_{0,l}=W_{k,0}=Z_{k,0}=0$ for all $k,l$, then $V$, $W$ and $Z$ will also vanish. Since it is well-known that there are holomorphic changes of coordinates that can achieve $V_{k,0}=V_{0,l}=W_{k,0}=Z_{k,0}=0$ for all $k,l$, we conclude that these transform the defining equation into the claimed form. Since achieving $Z_{k,0}=0$ is the main difficulty in these transformations, we note that it is accomplished via \cite[Theorem 4.2.6]{baouendi1999real} where an application of the implicit function theorem is used to bring the defining equation to so-called normal coordinates.

Now, let us prove the theorem's second statement about a biholomorphism $\phi: \mathbb{C}^{n+1}\to \mathbb{C}^{n+1}$ with $\phi(M)=M^\prime$ inducing the equivalence of models $\phi_0(M_0)=M_0^\prime$. Write
\[
\phi^{-1}(w,z,\zeta)=\big(h_{0}(\zeta)+h_{1}(z,\zeta)+O(2),g_{0}(\zeta)+O(1),O(0)\big),
\]
where $h_j,g_j,f_j$ denote holomorphic functions of weight $j$ with respect to the weighting \eqref{grading convention}. Note that $h_j(0)=g_j(0)=0$ because we assume $\phi(0)=0$. For any defining function $\rho$ of $M$, one has $\rho\circ\phi^{-1}(M^\prime)= \rho(M)=0$, so $\rho\circ\varphi^{-1}$ is a defining function of $M^\prime$. Therefore $\Re(w)=P(z,\zeta,\overline{z},\overline{\zeta})+Q(z,\zeta,\overline{z},\overline{\zeta},\Im(w))$ is transformed under $\phi$ to a defining equation of $M^\prime$ by substituting $\phi^{-1}(w,z,\zeta)$ for $(w,z,\zeta)$. Under this substitution, the left side transforms to $\Re(h_{0}(\zeta)+h_{1}(z,\zeta)+O(2))=\Re(w)+\Re(h_{0}(\zeta)+h_{1}(z,\zeta)-w+O(2))$ and we can move the second summand to right side of the defining equation, leaving just the term $\Re(w)$ on the left. After this rearranging, substituting  $\Re(w)=P^\prime(z,\zeta,\overline{z},\overline{\zeta})+Q^\prime(z,\zeta,\overline{z},\overline{\zeta},\Im(w))$ on the right side recovers the defining equation $\Re(w)=P^\prime(z,\zeta,\overline{z},\overline{\zeta})+Q^\prime(z,\zeta,\overline{z},\overline{\zeta},\Im(w))$, and we see that the terms of weighted degree $0$ and $1$ are determined by $h_0,h_1,g_0$ and sum to equal $0$. In particular, the terms of lowest polynomial degree in $\zeta$ and lowest weighted degree in the transformed equation are exactly the lowest polynomial degree terms in $h_{0}(\zeta)$, $\overline{h_{0}(\zeta)}$, and $\big(g_{0}(\zeta)\big)^TH(0,0)\overline{g_{0}(\zeta)}$, where $H(\zeta,\overline{\zeta})$ is given by expanding $P$ to the form on the right side of \eqref{gen def fun}. A combination of these leading terms must vanish because the transformed equation does not have weight $0$ terms, so by induction with respect to the polynomial degree $h_0=g_0=0$. After substituting $h_0=g_0=0$ into the transformed equation, the lowest weighted order terms are $\Re\big(h_{1}(z,\zeta)\big)$, so $h_1=0$ as well. Now, after substituting $h_0=g_0=h_1=0$, the terms of weighted homogeneity $2$ depend only on the $P$ and the weighted degree preserving component $\phi_0$ of $\phi$, so the weighted degree preserving component $\phi_0$ of $\phi$ transforms $\Re(w)=P(z,\zeta,\overline{z},\overline{\zeta})$ into $\Re(w)=P^\prime(z,\zeta,\overline{z},\overline{\zeta})$. 

Finally, let us prove the Theorem's third claim. Consider the filtration of $\mathfrak{hol}(M,0)$ determined by the lowest degree component of terms in the coordinate expansion of the germs of holomorphic infinitesimal symmetries at $0$ in $(w,z,\zeta)$, where we assign the weights $-2,-1,0$ to the partial derivatives $\partial_{w},\partial_{z},\partial_{\zeta}.$ With respect to these weights, infinitesimal symmetries decompose into a sum of weighted homogeneous vector fields with weights greater than $-3$. To show that $\dim(\mathfrak{hol}(M,0))\leq \dim(\mathfrak{hol}(M_0,0))$, we show that $\mathrm{gr}(\mathfrak{hol}(M,0))\subset \mathfrak{hol}(M_0,0)$, where $\mathrm{gr}(\mathfrak{hol}(M,0))$ denotes the associated grading of the filtration of $\mathfrak{hol}(M,0).$ In other words, we need to show that if $X\in \mathrm{hol}(M,0)$ is of the form $X=X_i+X_+$, where $X_i$ is the component of the lowest weighted degree $i$, then $X_i$ is a germ of holomorphic infinitesimal symmetry at $0$ of the corresponding $2$-nondegenerate model. Writing $\rho=\Re(w)-P(z,\zeta,\overline{z},\overline{\zeta})-Q(z,\zeta,\overline{z},\overline{\zeta},\Im(w))$ and $\rho_0=\Re(w)-P(z,\zeta,\overline{z},\overline{\zeta})$, the tangency condition becomes
\[
\left.d\rho(\Re(X))\right|_{\rho=0}=0
\]
and collecting the terms of lowest weight in 
\begin{align}
0=d\rho(\Re(X))|_{\rho=0}&=(d\rho_0-dQ)\left(\Re(X_i+X_+)\right)|_{\rho_0=-Q} \\
&=d\rho_0\left(\Re(X_i)\right)|_{\Re(w)=P+Q}+O(i+3)\\
&=d\rho_0\left(\Re(X_i)\right)|_{\Re(w)=P}+O(i+3)
\end{align}
with respect to \eqref{grading convention} gives the tangency condition $0=d\rho_0\left(\Re(X_i)\right)|_{\rho_0=0}$ for $M_0$ showing $X_i\in \mathfrak{hol}(M_0,0)$.

This completes the proof of Theorem \ref{model perturbations proposition}. 

\subsection{Proof of Theorem 1.2 and Theorem 1.4}\label{Theorem 1.1 proof}

Observe that the second claim of Theorem \ref{general formula for Cn} is a generalization of Theorem \ref{hypersurface realization approach 2}, wherein the matrix valued function $\mathbf{S}(\zeta)\in \mathfrak{g}_{0,2}$ is simply replaced by a more general function. This produces real analytic CR hypersurfaces, which have, by  construction, a transitive leaf space action, and which are, moreover, Levi degenerate with $\exp(\mathbf{S}(\zeta))$ as the maximal integral submanifold of their Levi kernel through $0$ (for the same reason occurring in the special case of Theorem \ref{hypersurface realization approach 2}). Thus, for the second claim of Theorem \ref{general formula for Cn}, it remains to show that the CR hypersurface is $2$-nondegenerate at $0$. Following the proof of Theorem \ref{hypersurface realization approach 2} in the $\mathbb{C}^{n+1}$ case, we obtain $H(0,0)=\mathbf{H}$ and $S_{\zeta_{\alpha}}(0,0)=\mathbf{H}^T\mathbf{S}_{\zeta_{\alpha}}(0)\mathbf{H}$ and $2$-nondegeneracy follows from \eqref{key example Xi mat},  since $S^{0,2}_{\alpha}(\tilde \phi(0))=\mathbf{S}_{\zeta_{\alpha}}(0)$ holds for the corresponding $\tilde \phi$ and all $\alpha$ and they are linearly independent by the assumptions of Theorem \ref{general formula for Cn}.

Let us now turn to proving the first part of Theorem \ref{general formula for Cn}. As a consequence of Proposition \ref{transkernel symmetry formula}, it suffices to prove it in the case of a smooth uniformly $2$-nondegenerate CR hypersurface $M$ in $\mathbb{C}^{n+1}$ of the form \eqref{gen def fun} having a subset of holomorphic infinitesimal symmetries acting locally transitively at $0$ on the space of leaves of the Levi kernel. In other words, to prove the Theorem \ref{smooth to analytic}.

The weighted homogeneity of the defining equation \eqref{gen def fun} implies that there is the infinitesimal symmetry given by restriction of the real part of the holomorphic vector field 
\begin{align}\label{Euler field}
2w\frac{\partial}{\partial w}+z_1\frac{\partial}{\partial z_1}+\dots +z_s\frac{\partial}{\partial z_s}
\end{align}
to the CR hypersurface. Since any finite-dimensional Lie algebra of vector fields containing this one is $\mathbb{Z}$-graded by decomposition into the eigenspaces of its adjoint action, the algebra of holomorphic infinitesimal symmetries is graded and contains the Heisenberg component $\mathfrak{g}_-$ if it acts transitively on the Levi leaf space. 

The image $\mathscr{I}\circ \tilde \phi(M_{\pi(p)})$ in $\mathcal{G}$ for a local section $\tilde \phi: M_{\pi(p)} \to \mathcal{F}$ can be identified with a submanifold in $\mathrm{CSp}(\mathbb{C}\mathfrak{g}_{-1})$ by identifying $\mathscr{I}(\tilde \phi(p))$ with the identity element in $\mathrm{CSp}(\mathbb{C}\mathfrak{g}_{-1})$. This submanifold moreover descends to a complex submanifold of the homogeneous space $\mathrm{CSp}(\mathbb{C}\mathfrak{g}_{-1})/\tilde{Q}$. The natural action of the structure group of $\mathcal{G}$ on fibers of the complexified contact distribution $\mathbb{C}\pi_*(\mathcal{H}\oplus \overline{\mathcal{H}})$ induces a transitive action of $\mathrm{CSp}(\mathbb{C}\mathfrak{g}_{-1})$ on the complex Lagrangian Grassmannian $\mathrm{LG}\big(\mathbb{C}\pi_*(\mathcal{H}_p\oplus \overline{\mathcal{H}}_p)\big)$, and since $\tilde{Q}$ is an isotropy subgroup of this action, the above identifications furthermore identify $\mathscr{I}\circ \tilde \phi(M_{\pi(p)})$ with a complex submanifold in $\mathrm{LG}\big(\pi_*(\mathcal{H}_p\oplus \overline{\mathcal{H}}_p)\big)$. An alternative way to describe this submanifold in $\mathrm{LG}\big(\mathbb{C}\pi_*(\mathcal{H}_p\oplus \overline{\mathcal{H}}_p)\big)$ is to identify each point $\psi\in\mathscr{I}\circ \tilde \phi(M_{\pi(p)})$ with the Lagrangian subspace $\psi^{-1}(\mathfrak{g}_{_{-1,1}})\subset \mathbb{C}\pi_*(\mathcal{H}\oplus \overline{\mathcal{H}})$. From this latter description, it is clear that the submanifold does not depend on the local section $\tilde\phi$.

\begin{proposition}\label{DLC equiv}
Suppose $M,M^{\prime}$ are two uniformly $2$-nondegenerate CR hypersurfaces in $\mathbb{C}^{n+1}$ of the form \eqref{gen def fun} with transitive leaf space actions by the restriction of symmetries to the hypersurface. Let $M_{\pi(p)}$ and $M^{\prime}_{\pi(p^{\prime})}$ be such that $\mathscr{I}(\tilde \phi(p))=\mathscr{I}(\tilde \phi^\prime(p^\prime))$ and the images of $M_{\pi(p)}$ and $M^{\prime}_{\pi(p^{\prime})}$ in $\mathrm{CSp}(\mathbb{C}\mathfrak{g}_{-1})/\tilde{Q}$ overlap on a neighborhood of $\mathscr{I}(\tilde \phi(p))$ for some local sections $\tilde \phi,\tilde \phi^{\prime}$ satisfying $\sigma_{\tilde \phi}=\sigma_{\tilde \phi^{\prime}}$. Then $M$ and $M^{\prime}$ are locally CR equivalent around $p$ and $p^\prime$.
\end{proposition}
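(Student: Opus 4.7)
\medskip

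The plan is to construct a local biholomorphism $\varphi$ between neighborhoods of $p\in M$ and $p'\in M'$ in $\mathbb{C}^{n+1}$ in two stages: first match the Levi kernel leaves through $p$ and $p'$ using the overlap hypothesis on their images in $\mathrm{CSp}(\mathbb{C}\mathfrak{g}_{-1})/\tilde Q$, and then extend this leaf identification transversally using the Heisenberg-type symmetries provided by Proposition \ref{transkernel symmetry formula}.

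For the first stage, recall from the discussion preceding the proposition that the image of a leaf in $\mathrm{CSp}(\mathbb{C}\mathfrak{g}_{-1})/\tilde Q$ admits the intrinsic description $q\mapsto \tilde\phi(q)^{-1}(\mathfrak{g}_{-1,1})\subset \mathbb{C}\pi_*(\mathcal{H}_p\oplus\overline{\mathcal{H}}_p)$, and in particular does not depend on the choice of local section. Since $\mathscr{I}$ is an immersion at $2$-nondegenerate points by Lemma \ref{I map lemma}, the map $q\mapsto\mathscr{I}(\tilde\phi(q))$ from the leaf to $\mathrm{CSp}(\mathbb{C}\mathfrak{g}_{-1})/\tilde Q$ is a local holomorphic embedding; likewise for $\tilde\phi'$. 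After shrinking neighborhoods, the overlap of images therefore induces a unique biholomorphism
\[
\Psi:\big(M_{\pi(p)},p\big)\to \big(M'_{\pi(p')},p'\big),
\qquad \Psi(q):=(\mathscr{I}\circ\tilde\phi')^{-1}\big(\mathscr{I}(\tilde\phi(q))\big),
\]
sending $p$ to $p'$. Because the Lagrangian $\tilde\phi(q)^{-1}(\mathfrak{g}_{-1,1})$ encodes the CR-tangent $\pi_*\mathcal{H}_q$, the map $\Psi$ is a CR isomorphism of the leaves.

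For the second stage, use that $\sigma_{\tilde\phi}=\sigma_{\tilde\phi'}$ fixes the same Hermitian pair $(\mathbf{H},e^{ih})$ for both structures and that both $M$ and $M'$ are in the normal form \eqref{gen def fun}, hence admit the $(2s+1)$-dimensional Heisenberg algebra $\mathfrak{g}_-$ of holomorphic symmetries given explicitly by Proposition \ref{transkernel symmetry formula}. The hypothesis of transitive leaf-space action implies that on each side these symmetries, together with the leaf, parametrize a neighborhood of the base point. Define $\varphi$ by declaring, for each Heisenberg element $X\in\mathfrak{g}_-$ and each $q\in M_{\pi(p)}$ near $p$, that $\varphi$ maps the point $\exp(\Re(X))\cdot q$ on $M$ to the point $\exp(\Re(X'))\cdot\Psi(q)$ on $M'$, where $X\mapsto X'$ is the isomorphism of Heisenberg algebras induced by the matching pairs $(\mathbf{H},e^{ih})$ via Proposition \ref{transkernel symmetry formula}. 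On a neighborhood of $p$ this is well defined and smooth, takes $M$ to $M'$, and intertwines the two transversal Heisenberg actions.

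The main obstacle is verifying that $\varphi$ is holomorphic and that the defining equations actually match. Both reduce to the assertion that $\Psi^*$ identifies the Levi-leaf data $(H'(\zeta',\overline{\zeta'}),S'(\zeta',\overline{\zeta'}))$ appearing in $M'$ with $(H(\zeta,\overline{\zeta}),S(\zeta,\overline{\zeta}))$ appearing in $M$. This is precisely what the overlap of Lagrangian-Grassmannian images guarantees: the Lagrangian $\tilde\phi(q)^{-1}(\mathfrak{g}_{-1,1})$ at a leaf point $q$ is governed pointwise by the matrices $H(\zeta(q),\overline{\zeta(q)})$ and $S(\zeta(q),\overline{\zeta(q)})$ via the formulas \eqref{key example Xi mat} and \eqref{key example Omega mat}, so equality of the two immersed submanifolds (together with $\sigma_{\tilde\phi}=\sigma_{\tilde\phi'}$, which pins down $\mathbf{H}$) forces the matrix data to agree after the substitution $\zeta'=\Psi^*\zeta$. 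Once this identification is established, the weighted homogeneity of \eqref{gen def fun} and the matching of the Heisenberg actions upgrade $\varphi$ from a smooth bijection to a local biholomorphism of $\mathbb{C}^{n+1}$ taking $M$ to $M'$, completing the proof.
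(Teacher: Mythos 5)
There is a genuine gap at the step you yourself flag as ``the main obstacle.'' You assert that the overlap of the two leaf images in $\mathrm{CSp}(\mathbb{C}\mathfrak{g}_{-1})/\tilde{Q}$, together with $\sigma_{\tilde\phi}=\sigma_{\tilde\phi'}$, ``forces the matrix data to agree,'' i.e.\ that $\Psi$ identifies $(H',S')$ with $(H,S)$ and hence the defining equations. But the image of a leaf is the fiber of the dynamical Legendrian contact structure on the leaf space: it is a \emph{holomorphic} submanifold of the Lagrangian Grassmannian, recording only the family of Lagrangians $q\mapsto\pi_*\mathcal{H}_q$ (in coordinates, essentially the holomorphic function $\mathbf{S}(\zeta)$), whereas $H(\zeta,\overline{\zeta})$ and $S(\zeta,\overline{\zeta})$ carry genuine anti-holomorphic dependence. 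The formulas \eqref{key example Xi mat} and \eqref{key example Omega mat} describe the tangent data of this image, not a bijection between the image and the pair $(H,S)$. More fundamentally, whether the DLC data determines the CR structure at all is precisely the content of the proposition: two distributions $\mathcal{H}\neq\mathcal{H}'$ on the same $M$ can induce the same DLC structure, with the discrepancy valued in the first prolongation \eqref{first prol for recoverability}; this is the ``non-recoverable'' phenomenon, and nothing in your argument rules it out. Relatedly, your construction of $\varphi$ by intertwining the Heisenberg orbits and the leaf map $\Psi$ only produces a candidate diffeomorphism; the claim that weighted homogeneity ``upgrades'' it to a biholomorphism carrying $M$ to $M'$ is exactly the assertion in question and is not argued.

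The paper's proof closes this gap with two ingredients you do not have. First, the transitive Heisenberg action identifies all fibers of the two DLC structures once one fiber matches, yielding an equivalence of DLC structures and hence a diffeomorphism $M\to M'$ under which $\mathcal{H}$ and $\mathcal{H}'$ differ only by functions $G_{i,\alpha}$ valued in the prolongation \eqref{first prol for recoverability}. Second --- and this is the decisive step --- the grading symmetry generated by the real part of \eqref{Euler field} rescales the frame vectors $f_i$ by $e^t$ while fixing $e_\alpha$ and $G_{i,\alpha}$ at its fixed points, which forces $G_{i,\alpha}$ to vanish along the central leaf, and conjugation by the transitive leaf-space action propagates this to all leaves, giving $\mathcal{H}=\mathcal{H}'$. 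If instead you want to argue through the defining equations as you propose, you would need the content of Lemma \ref{normalizing H terms}: after normalization, the PDE system \eqref{rank condition 2} determines the full $H(\zeta,\overline{\zeta})$, $S(\zeta,\overline{\zeta})$ from $\mathbf{H}$ and the holomorphic part $S(\zeta,0)$ by an induction on bidegree --- an argument that relies on real analyticity and therefore would not cover the smooth case in which Proposition \ref{DLC equiv} is applied to prove Theorem \ref{smooth to analytic}.
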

\begin{proof}

In \cite{sykes2023geometry}, it is shown that every CR structure induces a dynamical Legendrian contact (DLC) structure on the Levi leaf space (c.f. \cite[Definition 2.4]{sykes2023geometry}).  The map $\psi\mapsto \big(\psi^{-1}(\mathfrak{g}_{_{-1,1}}),\psi^{-1}(\mathfrak{g}_{_{-1,-1}})\big)$ identifies $\mathscr{I}\circ \tilde \phi(M_{\pi(p)})$ with a fiber of the DLC structure associated with the CR structure $(M,\mathcal{H})$. Thus, since the present considerations are all local, and the local symmetry group's Heisenberg component $\exp(\mathfrak{g}_{-})$ identifies all fibers of the DLC structure via its transitive action on the Levi leaf space, the hypothesized overlap implies that the DLC structures associated with $M$ and $M^\prime$ are equivalent.

The proposition thus follows if one can show that the CR structures are uniquely determined by their DLC structures. It was shown in \cite{sykes2023geometry} that this indeed happens when the bigraded symbols are \emph{recoverable}. We will, however, show now that this also happens in general for uniformly $2$-nondegenerate CR hypersurfaces in $\mathbb{C}^{n+1}$ of the form \eqref{gen def fun} with transitive leaf space actions, even though their symbols are not always recoverable. Let us recall from \cite[Proposition 2.6]{sykes2023geometry}, symbols are \emph{recoverable} if and only if the first prolongation 
\begin{align}\label{first prol for recoverability}
\left(\left.\overline{\iota\left(\overline{\mathcal{K}}_p\right)}\right|_{\mathfrak{g}_{-1,1}(p)}\right)_{(1)}:=
\left\{f:\mathfrak{g}_{-1,1}(p)\to\mathcal{K}_p\,\left|\,\overline{\iota\circ f(v)}(w)=\overline{\iota\circ f(w)}(v),\,\forall\,v,w\in \mathfrak{g}_{-1,1}(p)\right.\right\}
\end{align}
of the subspace $\left.\overline{\iota\left(\overline{\mathcal{K}}_p\right)}\right|_{\mathfrak{g}_{-1,1}(p)}\subset\mathrm{Hom}\big(\mathfrak{g}_{-1,1}(p),\mathfrak{g}_{-1,-1}(p)\big)$ vanishes. The equivalence of the DLC structures provides a local diffeomorphism $M\to M'$ that allows one to pullback $\mathcal{H}'$ to $M$ and compare it with $\mathcal{H}$. Since these two CR hypersurfaces have the same modified symbol we see that in the adapted frame $\phi=(g,f_1,\dots,f_{s},\overline{f_1},\dots,\overline{f_{s}},e_1,\dots,e_{n-s},\overline{e_1},\dots,\overline{e_{n-s}})$ from Section \ref{Key examples of modified symbol calculations}, there are smooth functions $G_{i,\alpha}$ such that 
\[
\mathcal{H}^{\prime}={\rm span}\left\{f_1+\sum_{j=1}^{n-s}G_{1,\alpha}\overline{e_\alpha},\dots,f_s+\sum_{j=1}^{n-s}G_{s,\alpha}\overline{e_\alpha},e_1,\dots,e_{n-s}\right\},
\] 
where $G_{i,\alpha}$ are components of a linear map in $\left(\left.\overline{\iota\left(\overline{\mathcal{K}}\right)}\right|_{\mathfrak{g}_{-1,1}}\right)_{(1)}$ according to \cite{sykes2023geometry}. 

Now consider the one parameter family of symmetries $\rho_t$ generated by the real part of \eqref{Euler field}. Since these are symmetries, $\mathcal{H}=\rho_t^*\mathcal{H}$ and $\mathcal{H}^\prime=\rho_t^*\mathcal{H}^\prime$, but on the other hand $\rho_t^*f_i(q)=e^tf_i(q), \rho_t^*G_{i,\alpha}(q)=G_{i,\alpha}(q)$ and $\rho_t^*e_\alpha(q)=e_\alpha(q)$  hold for every $i,\alpha$ and every $q\in M$ such that $\rho_t(q)=q.$ As every element of $M_{\pi(0)}$ is fixed by  $\rho_t$, we can conclude that all $G_{i,\alpha}$ vanish on $M_{\pi(0)}$. Consequently, we can consider also $\rho_t$ conjugated by the transitive leaf space action to show  that $G_{i,\alpha}$ vanishes at all points and $\mathcal{H}=\mathcal{H}^{\prime}$.
\end{proof}

This concludes the proof of Theorem \ref{smooth to analytic} and Theorem \ref{general formula for Cn} because the exponential map from $\mathfrak{csp}(\mathbb{C}\mathfrak{g}_{-1})_{0,2}$ to $\mathrm{CSp}(\mathbb{C}\mathfrak{g}_{-1})/\tilde{Q}$ defines local holomorphic coordinates around $\mathrm{exp}(0)$, which implies that the images discussed in Proposition \ref{DLC equiv} can all be described via the matrix $\mathbf{S}(\zeta)$ from the Theorem \ref{general formula for Cn}.

Finally, we can combine the Propositions \ref{transkernel symmetry formula} and \ref{DLC equiv} to prove:

\begin{corollary}
 Suppose $M,M^{\prime}$ are two $2$-nondegenerate models. Let $M_{\pi(p)}$ and $M^{\prime}_{\pi(p^{\prime})}$ be such that $\mathscr{I}(\tilde \phi(p))=\mathscr{I}(\tilde \phi^\prime(p^\prime))$ and the images of $M_{\pi(p)}$ and $M^{\prime}_{\pi(p^{\prime})}$ in $\mathrm{CSp}(\mathbb{C}\mathfrak{g}_{-1})/\tilde{Q}$ overlap on a neighborhood of $\mathscr{I}(\tilde \phi(p))$ for some local sections $\tilde \phi,\tilde \phi^{\prime}$ satisfying $\sigma_{\tilde \phi}=\sigma_{\tilde \phi^{\prime}}$. Then $M$ and $M^{\prime}$ are locally CR equivalent around $p$ and $p^\prime$.   
\end{corollary}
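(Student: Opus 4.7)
The plan is to reduce this corollary directly to Proposition \ref{DLC equiv}, which already establishes exactly the same equivalence conclusion but under the additional hypothesis that the hypersurfaces admit a local action on the Levi leaf space by the restriction of their holomorphic infinitesimal symmetries. Thus the only thing to verify is that this transitivity hypothesis is automatic for any $2$-nondegenerate model.

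The verification is supplied by Proposition \ref{transkernel symmetry formula}. Namely, a $2$-nondegenerate model is by definition a real analytic CR hypersurface given by a defining equation of the form \eqref{gen def fun} that is uniformly $2$-nondegenerate near $0$, so it falls within the class of hypersurfaces treated in Proposition \ref{DLC equiv}. Proposition \ref{transkernel symmetry formula} produces, for every pair $(a,b)\in \mathbb{C}^s\times \mathbb{R}$, the real part of an explicit holomorphic vector field that is tangent to the hypersurface and whose value at $0$ already spans, as $(a,b)$ varies, a $(2s+1)$-dimensional complement to the Levi kernel in $T_0M$. Because this complement is transversal to the Levi kernel on a whole neighborhood of $0$ (the construction is built from $H(\zeta,0)$ and $S(\zeta,0)$, which are continuous in $\zeta$), the induced action on a neighborhood in $M$ descends to a local action on the leaf space $N$ that is locally transitive near $\pi(0)$. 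Translating along the group of the leaf space action, the same conclusion holds at a neighborhood of any base point; in particular it holds at the given $p$ (resp.~$p^\prime$) in $M$ (resp.~$M^\prime$).

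With this, both $M$ and $M^\prime$ satisfy all of the hypotheses of Proposition \ref{DLC equiv}: they are uniformly $2$-nondegenerate CR hypersurfaces in $\mathbb{C}^{n+1}$ of the form \eqref{gen def fun}, and each has a subset of its holomorphic infinitesimal symmetries acting locally transitively on the associated Levi leaf space. The remaining conditions on $\mathscr{I}(\tilde\phi(p))=\mathscr{I}(\tilde\phi^\prime(p^\prime))$, the overlap of the images $\mathscr{I}\circ\tilde\phi(M_{\pi(p)})$ and $\mathscr{I}\circ\tilde\phi^\prime(M^\prime_{\pi(p^\prime)})$ in $\mathrm{CSp}(\mathbb{C}\mathfrak{g}_{-1})/\tilde Q$, and $\sigma_{\tilde\phi}=\sigma_{\tilde\phi^\prime}$ are precisely the hypotheses of Proposition \ref{DLC equiv}. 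Invoking it yields a local CR equivalence between $M$ and $M^\prime$ sending $p$ to $p^\prime$, which is the desired conclusion.

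There is essentially no obstacle in this argument, since all the technical work was already carried out in Proposition \ref{transkernel symmetry formula} (which guarantees the transversal symmetries explicitly) and in Proposition \ref{DLC equiv} (which provides the reconstruction from the DLC structure). The only point worth paying some attention to is ensuring that the transversal symmetries of Proposition \ref{transkernel symmetry formula} do indeed generate a \emph{locally transitive} action on the leaf space rather than only an infinitesimally transitive one at $0$; but this follows by standard Frobenius-type arguments from the fact that their real parts span a complement to the Levi kernel on an entire neighborhood of $0$.
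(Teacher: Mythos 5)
Your proposal is correct and follows exactly the route the paper intends: the paper derives this corollary by combining Proposition \ref{transkernel symmetry formula} (which supplies the $(2s+1)$-dimensional family of symmetries transversal to the Levi kernel, hence local transitivity on the leaf space) with Proposition \ref{DLC equiv}. Your added remark on upgrading pointwise spanning at $0$ to local transitivity near $\pi(p)$ is a reasonable filling-in of a detail the paper leaves implicit.
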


\section{The equivalence problem for 2-nondegenerate models}\label{The equivalence problem for models}

Now, we introduce a (partial) normal form construction for the $2$-nondegenerate CR hypersurfaces with real analytic defining equation \eqref{gen def fun}, which distinguishes a finite dimensional space of representatives within each infinite dimensional equivalence class and reduces the equivalence problem for such hypersurfaces to the finite dimensional linear algebra of CR symbol equivalence (a problem of simultaneously normalizing Hermitian and symmetric forms). This will have two uses:

Firstly, we show how to transform the defining equation \eqref{gen def fun} to a form satisfying the condition of Theorem \ref{general formula for Cn}, offering an alternative proof of Theorem \ref{general formula for Cn} and extending it to the case with constant rank $s$ Levi form (i.e., without $2$-nondegeneracy).
Secondly, we identify all the CR invariants discussed in the article in the (partial) normal form.

The first step in this construction is to kill the non-constant holomorphic part of $H(\zeta,\overline{\zeta})$ and the full anti-holomorphic part of $S(\zeta,\overline{\zeta})$.

\begin{lemma}\label{normalizing H terms}
There is a locally biholomorphic (at $0$) change of coordinates bringing \eqref{gen def fun} to an equation of the form
\begin{align}\label{normalized H eqn}
\Re(w)=z^T\mathbf{H}\overline{z}+\Re\left(\overline{z}^T\mathbf{H}^T\mathbf{S}(\zeta)\mathbf{H}\overline{z}\right)+O(|\zeta|^2),
\end{align}
where $\mathbf{H}:={H}(0,0)$, $\mathbf{S}(\zeta)=(\mathbf{H}^T)^{-1}(S(\zeta,0)-S(0,0))\mathbf{H}^{-1}$, and $O(|\zeta|^2)$ stands in for terms divisible by $\zeta_{\alpha}\overline{\zeta_\beta}$ for various $(\alpha,\beta)$. Moreover, for CR hypersurfaces with constant rank $s$ Levi form, the $O(|\zeta|^2)$ terms are completely determined by $\mathbf{H}$ and $\mathbf{S}(\zeta).$
\end{lemma}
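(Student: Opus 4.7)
The plan is to apply two successive biholomorphic changes of coordinates: a shift in $w$ that eliminates the purely anti-holomorphic part of $S(\zeta,\overline\zeta)$ (including its constant term $S(0,0)$), followed by a $\zeta$-dependent linear substitution in $z$ that normalizes $H$ on the slice $\overline\zeta=0$ to the constant value $\mathbf{H}$. Since the right-hand side of \eqref{gen def fun} is independent of $\Im(w)$, neither transformation will introduce any $\Im(w)$ dependence, so the equation stays in the model form \eqref{gen def fun} throughout; only the Hermitian and symmetric blocks change.

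First, I would write $S(0,\overline\zeta)=\sum_{k}A_{k}\overline\zeta^{k}$ with each $A_{k}$ symmetric, set $S_{*}(\zeta):=\sum_{k}\overline{A_{k}}\zeta^{k}$, and perform the coordinate change $w=\tilde w+\tilde z^{T}S_{*}(\tilde\zeta)\tilde z$, $z=\tilde z$, $\zeta=\tilde\zeta$. Using that $\overline{S_{*}(\zeta)}=S(0,\overline\zeta)$, a direct computation shows that the equation in the new coordinates becomes
\begin{equation*}
\Re(\tilde w)=\tilde z^{T}H(\tilde\zeta,\overline{\tilde\zeta})\overline{\tilde z}+\Re\Bigl(\overline{\tilde z}^{T}\bigl[S(\tilde\zeta,\overline{\tilde\zeta})-S(0,\overline{\tilde\zeta})\bigr]\overline{\tilde z}\Bigr),
\end{equation*}
so the new symmetric block $S_{\mathrm{new}}(\zeta,\overline\zeta):=S(\zeta,\overline\zeta)-S(0,\overline\zeta)$ satisfies $S_{\mathrm{new}}(0,\overline\zeta)\equiv 0$; in particular the constant $S(0,0)$ is absorbed into the shift.

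Second, I would set $A(\zeta):=H(\zeta,0)^{-T}\mathbf{H}^{T}$, a holomorphic map with $A(0)=\mathrm{Id}$, and perform the coordinate change $z=A(\tilde\zeta)\tilde z$ (with $w$ and $\zeta$ unchanged). Letting $\overline A$ denote the function obtained by entrywise conjugation of the power series coefficients, the Hermitian and symmetric blocks transform respectively to $A(\zeta)^{T}H(\zeta,\overline\zeta)\overline{A(\zeta)}$ and $\overline{A(\zeta)}^{T}S_{\mathrm{new}}(\zeta,\overline\zeta)\overline{A(\zeta)}$, where $\overline{A(\zeta)}$ depends only on $\overline\zeta$. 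Their restrictions to $\overline\zeta=0$ are $A(\zeta)^{T}H(\zeta,0)=\mathbf{H}$ (by construction of $A$) and $S(\zeta,0)-S(0,0)=\mathbf{H}^{T}\mathbf{S}(\zeta)\mathbf{H}$; by Hermiticity of the transformed $H$ the restriction to $\zeta=0$ also equals $\mathbf{H}$, while $\overline{A(0)}=\mathrm{Id}$ guarantees that the vanishing $S_{\mathrm{new}}(0,\overline\zeta)=0$ from the first step is preserved. Consequently every remaining term of $H$ and $S$ is divisible by some $\zeta_\alpha\overline{\zeta_\beta}$, producing \eqref{normalized H eqn}.

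For the moreover statement, the constant rank $s$ Levi form assumption provides the PDE system \eqref{rank condition 2}, which expresses each $H_{\zeta_\alpha\overline{\zeta_\beta}}$ and $S_{\zeta_\alpha\overline{\zeta_\beta}}$ as a polynomial in $H$, $S$, and their first-order pure derivatives in $\zeta$ and $\overline\zeta$. Inductively differentiating this system shows that every mixed Taylor coefficient of $H$ and $S$ at $0$ is determined by the pure holomorphic and anti-holomorphic jets there, and hence by the functions $H(\zeta,0)$, $H(0,\overline\zeta)$, $S(\zeta,0)$, $S(0,\overline\zeta)$ alone. After Steps 1 and 2 these pure slices reduce to $\mathbf{H}$, $\mathbf{H}$, $\mathbf{H}^{T}\mathbf{S}(\zeta)\mathbf{H}$, and $0$, so the full $O(|\zeta|^{2})$ remainder is completely encoded by the pair $(\mathbf{H},\mathbf{S}(\zeta))$. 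The principal technical point to verify is that Step 2 preserves the normalization $S_{\mathrm{new}}(0,\overline\zeta)=0$ from Step 1, which reduces to the observation $\overline{A(0)}=\mathrm{Id}$.
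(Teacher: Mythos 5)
Your proposal is correct and follows essentially the same route as the paper's proof: a $\zeta$-dependent linear change in $z$ normalizing $H(\zeta,0)$ to $\mathbf{H}$, a holomorphic shear in $w$ removing the pluriharmonic (anti-holomorphic in $\zeta$) part of $S$, and an order-by-order induction on \eqref{rank condition 2} for the ``moreover'' claim; performing the two coordinate changes in the opposite order and writing the linear factor explicitly as $H(\zeta,0)^{-T}\mathbf{H}^{T}$ are only cosmetic differences. (The one small imprecision is that preservation of $S_{\mathrm{new}}(0,\overline{\zeta})\equiv 0$ under the second step follows from conjugating the zero matrix, not from $\overline{A(0)}=\mathrm{Id}$, but this does not affect the argument.)
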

\begin{proof}
To label some components in the series expansion of $H(\zeta,\overline{\zeta})$, let $A(\zeta)$, $B(\zeta,\overline{\zeta})$, and $\mathbf{H}$ satisfy $A(0)=0$, $B(\zeta,\overline{\zeta})$ has no purely holomorphic terms, and
\[
H(\zeta,\overline{\zeta})= B(\zeta,\overline{\zeta})+\mathbf{H}+\left(A(\zeta)\right)^T\mathbf{H}+\mathbf{H}\overline{A(\zeta)}.
\]
Accordingly,
\[
z^TH(\zeta,\overline{\zeta})\overline{z}=\left(\Phi(z)\right)^T\mathbf{H}\overline{\Phi(z)}+z^T\left(B(\zeta,\overline{\zeta})-\left(A(\zeta)\right)^T\mathbf{H}\overline{A(\zeta)}\right)\overline{z}.
\]
where $\Phi:\mathbb{C}^s\to\mathbb{C}^s$ is the local biholomorphism $\Phi(z):=\big(\mathrm{Id}+A(\zeta)\big)z$. The local biholomorphism $(w,z,\zeta)\mapsto\left(w,\Phi^{-1}(z),\zeta\right)$ thus transforms \eqref{gen def fun} into \[\Re(w)=z^Th(\zeta,\overline{\zeta})\overline{z}+z^T\mathbf{H}\overline{z}+\Re(\overline{z}^Ts(\zeta,\overline{\zeta})\overline{z})\]
with 
\[
z^Th(\zeta,\overline{\zeta})\overline{z}=\left(\Phi^{-1}(z)\right)^{T}\left(B(\zeta,\overline{\zeta})-\left(A(\zeta)\right)^T\mathbf{H}\overline{A(\zeta)}\right)\overline{\Phi^{-1}(z)}
\]
and $\overline{z}^Ts(\zeta,\overline{\zeta})\overline{z}=\left(\overline{\Phi^{-1}(z)}\right)^T{S}(\zeta,\overline{\zeta})\overline{\Phi^{-1}(z)}$. Notice that $h(\zeta,\overline{\zeta})$ does not contain any purely harmonic terms in $\zeta$, and the purely holomorphic non-constant part of $s(\zeta,\overline{\zeta})$ equals that of $S(\zeta,\overline{\zeta})$, i.e., $s(\zeta,0)=\mathbf{H}^T\mathbf{S}(\zeta)\mathbf{H}$.

With a change of coordinates that fixes $(z,\zeta)$ but transforms $w$, one can eliminate pluriharmonic terms on the right side, i.e., remove terms in $s(\zeta,\overline{\zeta})$ that are anti-holomorphic in $\zeta$. This provides the claimed formula. To prove that the $O(|\zeta|^2)$ part is determined by $\mathbf{H}$ and $\mathbf{S}(\zeta)$, we need to consider the PDE system \eqref{rank condition 2}. Consider iteratively solving \eqref{rank condition 2} for just the terms in a given polynomial order in $\zeta,\overline{\zeta}$ starting from the lowest order. By induction on this order one observes that the solution is indeed completely determined by $\mathbf{H}$ and $\mathbf{S}(\zeta)$. 
\end{proof}

Observe that the defining equations from Theorem \ref{general formula for Cn} are already in the normal form \eqref{normalized H eqn}, which allows us to provide an explicit formula for the $O(|\zeta|^2)$ terms and extend the result to constant rank Levi form case.

\begin{proposition}\label{general formula to main theorem formula}
A real analytic CR hypersurface with constant rank $s$ Levi form given by defining equation \eqref{gen def fun} is locally biholomorphic (at $0$) with the CR hypersurface in the normal form \eqref{normalized H eqn} given by the defining equation
\begin{align}
\Re(w)&=z^T\mathbf{H}\overline{z}+\Re\left(\overline{z}^T\mathbf{H}^T\mathbf{S}(\zeta)\mathbf{H}\overline{z}\right)\\
&+z^T\frac12(\mathbf{H}((\mathrm{Id}-\overline{\mathbf{S}}\mathbf{H}^T\mathbf{S}(\zeta)\mathbf{H})^{-1}-\mathrm{Id})+((\mathrm{Id}-\mathbf{H}\overline{\mathbf{S}(\zeta)}\mathbf{H}^T\mathbf{S}(\zeta))^{-1}-\mathrm{Id})\mathbf{H})\overline{z}\\
&+\Re\left(\overline{z}^T(\mathbf{H}^T(\mathrm{Id}-\mathbf{S}(\zeta)\mathbf{H}\overline{\mathbf{S}(\zeta)}\mathbf{H}^T)^{-1}\mathbf{S}(\zeta)\mathbf{H}-\mathbf{H}^T\mathbf{S}(\zeta)\mathbf{H})\overline{z}\right),
\end{align}
where $\mathbf{H}:={H}(0,0)$ and $\mathbf{S}(\zeta)=(\mathbf{H}^T)^{-1}(S(\zeta,0)-S(0,0))\mathbf{H}^{-1}$.
\end{proposition}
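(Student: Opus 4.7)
The plan is to reduce to the normal form of Lemma \ref{normalizing H terms} and exploit its uniqueness assertion, then produce the explicit formulas by re-running the construction of Theorem \ref{hypersurface realization approach 2} in the present (not necessarily $2$-nondegenerate) setting.

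First I would apply Lemma \ref{normalizing H terms} to bring the defining equation \eqref{gen def fun} into the form \eqref{normalized H eqn}, extracting the pair $\mathbf{H} = H(0,0)$ and the holomorphic symmetric matrix $\mathbf{S}(\zeta) = (\mathbf{H}^T)^{-1}(S(\zeta,0) - S(0,0))\mathbf{H}^{-1}$. Under the standing constant rank $s$ Levi form hypothesis, the lemma further asserts that all $O(|\zeta|^2)$ terms in \eqref{normalized H eqn} are uniquely determined by $(\mathbf{H}, \mathbf{S}(\zeta))$: they are the unique solution of the PDE system \eqref{rank condition 2} with initial data $H(0,0) = \mathbf{H}$ and $S(\zeta,0) - S(0,0) = \mathbf{H}^T \mathbf{S}(\zeta) \mathbf{H}$, obtained by solving \eqref{rank condition 2} iteratively in the total polynomial degree in $\zeta, \overline{\zeta}$.

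Next I would exhibit an explicit pair $(H,S)$ solving this initial value problem, namely the pair given by \eqref{H from S2 formula} and \eqref{S from S2 formula}. Rather than verify \eqref{rank condition 2} by direct differentiation (which is feasible but tedious using $\partial_\zeta (\mathrm{Id} - A)^{-1} = (\mathrm{Id} - A)^{-1} (\partial_\zeta A) (\mathrm{Id} - A)^{-1}$), I would reuse the geometric computation from the proof of Theorem \ref{hypersurface realization approach 2}. That computation, applied to an arbitrary symmetric holomorphic matrix $\mathbf{S}(\zeta)$ vanishing at the origin, embeds the submanifold $\exp(\mathfrak{g}_-)\exp(\mathbf{S}(\zeta))$ into $\tilde G^{\mathbb{C}}/\tilde Q$ and derives the defining equation \eqref{general def eqn C4}, i.e.\ \eqref{gen def fun} with $H,S$ precisely as in \eqref{H from S2 formula}, \eqref{S from S2 formula}. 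By construction the resulting hypersurface is $\exp(\mathfrak{g}_-)$-invariant with Levi kernel leaves parametrized by $\exp(\mathbf{S}(\zeta))$, so its Levi form has constant rank $s$, and the initial data at $\zeta = 0$ agree with those of the proposition by direct inspection. The $2$-nondegeneracy hypothesis enters the argument of Theorem \ref{hypersurface realization approach 2} only in the final step, where linear independence of $\mathbf{S}_{\zeta_\alpha}(0)$ is used to conclude $2$-nondegeneracy; it is not used anywhere in deriving the defining equation itself.

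Combining the two steps, the uniqueness clause of Lemma \ref{normalizing H terms} forces the normal form produced in step one to coincide with the hypersurface built in step two. Expanding $z^T H(\zeta,\overline{\zeta}) \overline{z} + \Re(\overline{z}^T S(\zeta,\overline{\zeta}) \overline{z})$ using \eqref{H from S2 formula} and \eqref{S from S2 formula} and separating out the constant and purely holomorphic summands $z^T\mathbf{H}\overline{z}$ and $\Re(\overline{z}^T \mathbf{H}^T \mathbf{S}(\zeta) \mathbf{H} \overline{z})$ then yields the formula stated in the proposition. The principal obstacle is verifying that the Theorem \ref{hypersurface realization approach 2} computation transports to the merely constant rank Levi setting; this reduces to checking that the formal Lie-theoretic manipulations (the exponential map on $\mathfrak{csp}(\mathbb{C}\mathfrak{g}_{-1})_{0,2}$ and the solution of $z = v_1 - \mathbf{S}(\zeta)\mathbf{H}\overline{v_1}$ for $v_1$ in terms of $z,\overline{z}$) remain well-defined near the origin, for which the nondegeneracy of $\mathbf{H}$ is the only required hypothesis.
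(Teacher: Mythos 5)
Your proposal is correct and follows essentially the same route as the paper: bring the hypersurface to the normal form of Lemma \ref{normalizing H terms}, observe that the explicit formulas \eqref{H from S2 formula} and \eqref{S from S2 formula} (obtained from the Theorem \ref{hypersurface realization approach 2} construction, which works for arbitrary symmetric holomorphic $\mathbf{S}(\zeta)$ vanishing at $0$ and yields constant rank $s$ Levi form without any $2$-nondegeneracy hypothesis) are already in that normal form with the same data $\mathbf{H}$, $\mathbf{S}(\zeta)$, and invoke the uniqueness of the $O(|\zeta|^2)$ terms to conclude. Your write-up is in fact more explicit than the paper's about why the group-theoretic construction survives in the merely constant-rank setting, which is a point the paper leaves implicit.
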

\begin{proof}
Clearly, the defining equations from Theorem \ref{general formula for Cn} can be reordered into the claimed form and this can be generalized to arbitrary $\mathbf{S}(\zeta)$. Further, it is a simple computation that this is in the normal form \eqref{normalized H eqn}. Then invoking the unique determination of the $O(|\zeta|^2)$ part of Lemma \ref{normalizing H terms} and the freedom in choice of $\mathbf{H}$ and $\mathbf{S}(\zeta)$, we can conclude the claim of the proposition.
\end{proof}

Let us provide explicitly the CR invariants for the (partial) normal form and their dependence on $\mathbf{H}$ and $\mathbf{S}(\zeta)$:

Following the computations in proof of Theorem \ref{hypersurface realization approach 2}, we obtain the description of bigraded symbol.

\begin{corollary}
    The matrices $\mathbf{H}$ and $S^{0,2}_\alpha:=(\mathbf{H}^T)^{-1}S_{\zeta_{\alpha}}(0,0)\mathbf{H}^{-1}=\mathbf{S}_{\zeta_\alpha}(0) \in \mathfrak{csp}(\mathbb{C}\mathfrak{g}_{-1})_{0,2}$ represent the bigraded symbol of $2$-nondegenerate model according to Lemma \ref{symbol representation lemma}.
\end{corollary}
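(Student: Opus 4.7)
The plan is to assemble this corollary directly from ingredients that have already been laid out, so the proof is really a matter of collating formulas rather than introducing new ideas. First, I would work with the adapted frame $\phi = (g, f_1,\dots,f_s, \overline{f_1},\dots,\overline{f_s}, e_1,\dots,e_{n-s}, \overline{e_1},\dots,\overline{e_{n-s}})$ fixed at the outset of Section \ref{Key examples of modified symbol calculations} for a hypersurface in the form \eqref{gen def fun}. With respect to this frame the Levi form was shown to reduce to \eqref{key example Lform}, giving $e^{ih(\tilde\phi)} = 1$ and $H(\tilde\phi) = H(\zeta,\overline{\zeta})$; evaluated at the origin this yields the identification $H(\tilde\phi(0)) = \mathbf{H}$, which is exactly the first piece of data required by Lemma \ref{symbol representation lemma}.

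Next, I would apply Proposition \ref{matrix representation prop}, which tells us that $\tilde\phi(\mathfrak{g}_{0,2}(0))$ is spanned by the matrices built from $\Xi_\alpha(\tilde\phi(0))$. From formula \eqref{key example Xi mat} we already have $\Xi_\alpha(\tilde\phi) = (H(\zeta,\overline{\zeta})^T)^{-1} S_{\zeta_\alpha}(\zeta,\overline{\zeta})$, whose value at the origin is $(\mathbf{H}^T)^{-1}S_{\zeta_\alpha}(0,0)$. Converting $\Xi_\alpha$ into the corresponding generator of $\mathfrak{csp}(\mathbb{C}\mathfrak{g}_{-1})_{0,2}$ by post-multiplication with $\mathbf{H}^{-1}$ (and remembering $e^{-ih(\tilde\phi(0))} = 1$) produces $S^{0,2}_\alpha = (\mathbf{H}^T)^{-1}S_{\zeta_\alpha}(0,0)\mathbf{H}^{-1}$, which is the second piece of data demanded by Lemma \ref{symbol representation lemma}.

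It then remains to verify the identity $S^{0,2}_\alpha = \mathbf{S}_{\zeta_\alpha}(0)$. For this I would simply differentiate the formula $\mathbf{S}(\zeta) = (\mathbf{H}^T)^{-1}\bigl(S(\zeta,0)-S(0,0)\bigr)\mathbf{H}^{-1}$ established in Proposition \ref{general formula to main theorem formula} with respect to $\zeta_\alpha$ and evaluate at $\zeta = 0$, which gives $\mathbf{S}_{\zeta_\alpha}(0) = (\mathbf{H}^T)^{-1}S_{\zeta_\alpha}(0,0)\mathbf{H}^{-1}$, matching the previous display. Combining these observations, the pair $(\mathbf{H}, \mathrm{span}\{S^{0,2}_1,\dots,S^{0,2}_{n-s}\})$ is precisely the data (with the $\Omega_\alpha$-components forgotten) that represents the bigraded symbol via Lemma \ref{symbol representation lemma}, completing the proof.

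Since every step is a direct appeal to an already-established computation or formula, I do not anticipate any substantive obstacle; the only subtlety worth flagging is the consistent bookkeeping between the $\Xi_\alpha$ used in Proposition \ref{matrix representation prop} and the $S^{0,2}_\alpha = e^{-ih(\tilde\phi)}\Xi_\alpha H(\tilde\phi)^{-1}$ normalization used in Lemma \ref{symbol representation lemma}, which is harmless here because $e^{ih(\tilde\phi(0))} = 1$.
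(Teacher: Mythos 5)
Your proposal is correct and follows essentially the same route as the paper, which simply refers to "the computations in the proof of Theorem \ref{hypersurface realization approach 2}" — i.e.\ the frame computations of Section \ref{Key examples of modified symbol calculations} giving $H(\tilde\phi)=H(\zeta,\overline\zeta)$, $e^{ih(\tilde\phi)}=1$ and $S^{0,2}_\alpha(\tilde\phi)=(H^T)^{-1}S_{\zeta_\alpha}H^{-1}$, combined with the relation $\mathbf{S}(\zeta)=(\mathbf{H}^T)^{-1}(S(\zeta,0)-S(0,0))\mathbf{H}^{-1}$ from Lemma \ref{normalizing H terms}. Your only cosmetic slip is attributing that last relation to Proposition \ref{general formula to main theorem formula} rather than to Lemma \ref{normalizing H terms}, where it is first introduced; the argument is unaffected.
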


Further, to extend the bigraded symbol at hand to a modified symbol, let us fix a normalization condition $\big(O_{0,-2}^N(\tilde \phi(0)),O_{0,2}^N(\tilde \phi(0))\big)$ with the properties from Lemma \ref{key examples prop} for which the the section $\tilde \phi$ from proof of Theorem \ref{hypersurface realization approach 2} is normalized at $0.$ 
\begin{corollary}
 The modified symbol with respect to $\big(O_{0,-2}^N(\tilde \phi(0)),O_{0,2}^N(\tilde \phi(0))\big)$ is represented according to Lemma \ref{symbol representation lemma} by $\Omega_{\alpha}\in \mathfrak{csp}(\mathbb{C}\mathfrak{g}_{-1})_{0,0}$ such that
\begin{align}
\left[\Omega_{\alpha},\mathbf{H}\overline{S^{0,2}_{\beta}}\mathbf{H}^T\right]&=0,\\
(\mathbf{H}^T)^{-1}S_{\zeta_{\alpha}\overline{\zeta_{\beta}}}(0,0)\mathbf{H}^{-1}&-\left[\Omega_{\alpha},(\mathbf{H}^T)^{-1}S_{\zeta_{\beta}}(0,0)\mathbf{H}^{-1}\right]=\label{mod symb comp 6}\\
\mathbf{S}_{\zeta_\alpha\zeta_\beta}(0)-\left[\Omega_{\alpha},S^{0,2}_{\beta}\right]&= O_{0,2}^N(\tilde \phi(0))(e_{\alpha},S^{0,2}_{\beta})
\end{align}
for all $\alpha,\beta$ (note that we are taking brackets between elements of $\mathfrak{csp}(\mathbb{C}\mathfrak{g}_{-1})_{0,0}$ and $\mathfrak{csp}(\mathbb{C}\mathfrak{g}_{-1})_{0,\pm 2}$).
\end{corollary}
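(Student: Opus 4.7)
The plan is to apply the general modified symbol formulas of Section \ref{Key examples of modified symbol calculations} to the partially normalized defining equation \eqref{normalized H eqn}, and then pass from the modified symbol along the adapted frame to the modified symbol in a frame normalized with respect to $(O_{0,-2}^N,O_{0,2}^N)$ via Theorem \ref{modified symbols in normalized frames}.

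First, I would exploit the partial normal form. Since all non-constant purely holomorphic terms of $H(\zeta,\overline{\zeta})$ have been eliminated and $H$ is Hermitian, we have $H(0,0)=\mathbf{H}$ and $H_{\zeta_\beta}(0,0)=H_{\overline{\zeta_\beta}}(0,0)=0$. Consequently, formula \eqref{key example Omega mat} for the adapted frame of Section \ref{Key examples of modified symbol calculations} gives $\Omega_\beta(\tilde\phi)(0)=0$, while \eqref{key example Xi mat} combined with the normal form relation $S_{\zeta_\beta}(0,0)=\mathbf{H}^T\mathbf{S}_{\zeta_\beta}(0)\mathbf{H}$ identifies the $\mathfrak{csp}(\mathbb{C}\mathfrak{g}_{-1})_{0,2}$-class of $\Xi_\beta(\tilde\phi(0))$ with $S^{0,2}_\beta=\mathbf{S}_{\zeta_\beta}(0)$, recovering the bigraded symbol data of the preceding corollary.

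Next, I would invoke Theorem \ref{modified symbols in normalized frames}: there are unique classes $B_\alpha\in\mathfrak{csp}(\mathbb{C}\mathfrak{g}_{-1})_{0,0}/\mathfrak{g}_{0,0}(\tilde\phi;0)$ characterized by \eqref{first order constancy criterion b}--\eqref{first order constancy criterion a}, and the modified symbol at $\tilde\phi(0)$ with respect to the prescribed normalization is represented by the shifted matrices $\Omega_\alpha(\tilde\phi(0))=\Omega_\alpha(\tilde\phi)(0)+B_\alpha=B_\alpha$. Substituting $\Omega_\alpha(\tilde\phi)(0)=0$ into the explicit expansions \eqref{first order constancy criterion alt c}--\eqref{first order constancy criterion alt d} of these conditions, using the identification $\overline{S_{\zeta_\beta}(0,0)}\leftrightarrow\mathbf{H}\,\overline{S^{0,2}_\beta}\,\mathbf{H}^T$ provided by Lemma \ref{involution lemma} to translate between the two matrix descriptions of $\mathfrak{csp}(\mathbb{C}\mathfrak{g}_{-1})_{0,-2}$, and invoking the vanishing $O_{0,-2}^N(\tilde\phi(0))=0$ from Lemma \ref{key examples prop}, the condition \eqref{first order constancy criterion alt c} becomes exactly $[\Omega_\alpha,\mathbf{H}\,\overline{S^{0,2}_\beta}\,\mathbf{H}^T]=0$. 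Feeding $S_{\zeta_\alpha\zeta_\beta}(0,0)=\mathbf{H}^T\mathbf{S}_{\zeta_\alpha\zeta_\beta}(0)\mathbf{H}$ from the normal form into \eqref{first order constancy criterion alt d} analogously produces the second displayed identity.

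The only point requiring some care is the bookkeeping of ambiguities: I would check that the indeterminacy $B_\alpha\mapsto B_\alpha+X_\alpha$ with $X_\alpha\in\mathfrak{g}_{0,0}(\tilde\phi;0)$ coincides with the indeterminacy $\mathfrak{g}_{0,0}'$ appearing in the representation of Lemma \ref{symbol representation lemma}. Since $\tilde\phi$ sits in the subbundle $\mathcal{F}_{e^{ih}\mathbf{H}}$ fixing the antilinear involution $\sigma$, the stabilizer of the bigraded symbol inside $\mathrm{CSp}(\mathbb{C}\mathfrak{g}_{-1})_{0,0}$ that defines $\mathfrak{g}_{0,0}(\tilde\phi;0)$ matches the $\mathfrak{g}_{0,0}'$ of Lemma \ref{symbol representation lemma}, so the data $(\mathbf{H},\Xi_\beta,\Omega_\alpha)$ determined above yields a well-defined modified symbol representative.
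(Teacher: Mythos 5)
Your proposal is correct and follows essentially the same route as the paper, which obtains this corollary by feeding the normal-form data $H_{\zeta_\beta}(0,0)=0$, $S_{\zeta_\beta}(0,0)=\mathbf{H}^T\mathbf{S}_{\zeta_\beta}(0)\mathbf{H}$, $S_{\zeta_\alpha\zeta_\beta}(0,0)=\mathbf{H}^T\mathbf{S}_{\zeta_\alpha\zeta_\beta}(0)\mathbf{H}$ into the computations of Section \ref{Key examples of modified symbol calculations} and then applying Theorem \ref{modified symbols in normalized frames} with $\Omega_\alpha(\tilde\phi)(0)=0$. The only slip is attributional: $O_{0,-2}^N(\tilde\phi(0))=0$ is not a conclusion of Lemma \ref{key examples prop} but part of the normalization condition fixed in the text preceding the corollary (condition \eqref{normalization simplification a}), which does not affect the argument.
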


In other words, $\mathbf{S}_{\zeta_\alpha\zeta_\beta}(0)$ gets decomposed into an obstruction to first order constancy and the $\Omega_{\alpha}$-part of the modified symbol according to Lemma \ref{symbol representation lemma}.

Following  Remark \ref{action on symbol rem}, we can apply a transformation of the form
\begin{align}\label{linear in z}
(w,z,\zeta)\mapsto(w,z^TU^T,\zeta)\mbox{ for some }U\in \mathrm{GL}(s,\mathbb{C})
\end{align}
and bring the matrices $(\mathbf{H},\mathrm{span}\left\{S^{0,2}_1,\ldots,\allowbreak S^{0,2}_{n-s}\right\})$ representing the the bigraded symbol at $0$ to 
\[
\left(U^T\mathbf{H}\overline{U},\mathrm{span}\left\{U^{-1}S^{0,2}_1 (U^{-1})^T,\ldots,U^{-1}S^{0,2}_{n-s}(U^{-1})^T\right\}\right).
\]
The transformation is unique up to elements $U$ in the intersection of the groups $\mathrm{U}(\mathbf{H})$ preserving $\mathbf{H}$ and $G_{0,0}$ preserving the span of $H$ and $\{\Xi_1,\ldots, \Xi_{n-s}\}$. So we obtain only a partial normal form defined up to a $\mathrm{U}(\mathbf{H})\cap G_{0,0}$ orbit. Normalizing $(\mathbf{H},\mathrm{span}\left\{S^{0,2}_1,\ldots,\allowbreak S^{0,2}_{n-s}\right\})$ remains an open problem, so presently we aim only to normalize the Theorem \ref{general formula for Cn} hypersurfaces relative to its bigraded symbol representation, again with freedom in the action of $\mathrm{U}(\mathbf{H})\cap G_{0,0}$. The normalization furthermore depends on a convention of how one orders entries in symmetric matrices, and we will adopt the  convention ordering such matrix entries as
\begin{align}\label{sym mat ordering}
\begin{cases}
(j,k)\prec (j^\prime,k^\prime)&\mbox{ if }\min\{j,k\}<\min\{j^\prime,k^\prime\}\\
(j,k)\prec (j^\prime,k^\prime)&\mbox{ if }\min\{j,k\}=\min\{j^\prime,k^\prime\}\mbox{ and }\max\{j,k\}<\max\{j^\prime,k^\prime\}.
\end{cases}
\end{align}

\begin{theorem}\label{normal form for models}
    For a uniformly $2$-nondegenerate real analytic CR hypersurface $M$ given by \eqref{gen def fun}, let $(\mathbf{H},\mathrm{span}\left\{S^{0,2}_1,\ldots, S^{0,2}_{n-s}\right\})$ with $\mathbf{H}=H(0,0)$ and $S^{0,2}_\alpha:=(\mathbf{H}^T)^{-1}S_{\zeta_{\alpha}}(0,0)\mathbf{H}^{-1}$ be the matrices representing its normalized bigraded symbol at $0$. Let $\left((j_1,k_1),\ldots,\allowbreak (j_{n-s},k_{n-s})\right)$ be the first lexicographicly ordered $(n-s)$-tuple of such positions with respect to the ordering \eqref{sym mat ordering} such that 
    \[
    (\zeta_1,\ldots,\zeta_{n-s})\mapsto \left(
    \sum_{\alpha=1}^{n-s}(S^{0,2}_\alpha)_{(j_1,k_1)}\zeta_{\alpha},
    \ldots,
    \sum_{\alpha=1}^{n-s}(S^{0,2}_\alpha)_{(j_{n-s},k_{n-s})}\zeta_{\alpha}
    \right)
    \]
    is injective near $0$, which exists by $2$-nondegeneracy.

    There is a change of coordinates that transforms $M$ to a hypersurface given by \eqref{gen def fun}, \eqref{H from S2 formula}, and \eqref{S from S2 formula} with $\mathbf{H}={H}(0,0)$ and $\mathbf{S}(\zeta)$ satisfying $\left(\mathbf{S}(\zeta)\right)_{(j_\alpha,k_\alpha)}=\zeta_\alpha$ 
    for all $\alpha=1,\ldots, n-s$ and $\mathrm{span}\{S^{0,2}_{\alpha}\}=\mathrm{span}\{\mathbf{S}_{\zeta_\alpha}(0)\}$. The transformed defining equation of $M$ is unique up to the action of $\mathrm{U}(\mathbf{H})\cap G_{0,0}$, acting by transformations of the form $(w,z,\zeta)\mapsto(w,z^TU^T,\zeta+g_U(\zeta))$, where the function $g_U$ is uniquely determined by each $U\in \mathrm{U}(\mathbf{H})\cap G_{0,0}$.
\end{theorem}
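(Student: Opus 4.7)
The plan is to prove the theorem in two parts: existence of the normal form and uniqueness up to the claimed action of $\mathrm{U}(\mathbf{H})\cap G_{0,0}$. For existence, I would first apply Proposition \ref{general formula to main theorem formula} to transform $M$ locally into the form \eqref{gen def fun}, \eqref{H from S2 formula}, \eqref{S from S2 formula} with $\mathbf{H}=H(0,0)$ and some holomorphic symmetric-matrix-valued function $\mathbf{S}(\zeta)$. By construction, $\mathbf{S}_{\zeta_\alpha}(0)=S^{0,2}_\alpha$, so the span condition $\mathrm{span}\{\mathbf{S}_{\zeta_\alpha}(0)\}=\mathrm{span}\{S^{0,2}_\alpha\}$ holds automatically. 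To achieve the entrywise normalization $\mathbf{S}(\zeta)_{(j_\alpha,k_\alpha)}=\zeta_\alpha$, I would introduce the holomorphic map $h\colon\mathbb{C}^{n-s}\to\mathbb{C}^{n-s}$ defined by $h(\zeta)_\alpha:=\mathbf{S}(\zeta)_{(j_\alpha,k_\alpha)}$. Its differential at $0$ coincides with the linear map hypothesized to be injective in the theorem statement, hence is invertible. Setting $g:=h^{-1}$ and applying the biholomorphism $(w,z,\zeta)\mapsto(w,z,g^{-1}(\zeta))$ replaces $\mathbf{S}(\zeta)$ with $\mathbf{S}(g(\zeta))$, which satisfies the required normalization.

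For uniqueness, suppose $(\mathbf{H},\mathbf{S})$ and $(\mathbf{H},\tilde{\mathbf{S}})$ are two normalized representations of locally biholomorphic hypersurfaces, and let $\Psi$ be any equivalence between them. By the second part of Theorem \ref{model perturbations proposition}, the weighted-degree-preserving component $\Psi_0$ of $\Psi$ (with respect to the weights \eqref{grading convention}) is itself a biholomorphism between the two models. A direct coefficient-matching analysis on \eqref{gen def fun}--\eqref{S from S2 formula} shows that any such $\Psi_0$ must take the form $(w,z,\zeta)\mapsto (w,Uz,f(\zeta))$ for some $U\in\mathrm{GL}(s,\mathbb{C})$ and a local biholomorphism $f$ of $\mathbb{C}^{n-s}$ fixing $0$, and it induces on the data the transformation $\tilde{\mathbf{H}}=U^T\mathbf{H}\overline{U}$ and $\tilde{\mathbf{S}}(\zeta)=U^T\mathbf{S}(f(\zeta))U$. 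The requirement $\tilde{\mathbf{H}}=\mathbf{H}$ then forces $U\in\mathrm{U}(\mathbf{H})$, while matching the bigraded symbol spans $\mathrm{span}\{\tilde{\mathbf{S}}_{\zeta_\alpha}(0)\}=\mathrm{span}\{\mathbf{S}_{\zeta_\alpha}(0)\}$ forces $U\in G_{0,0}$; together, $U\in\mathrm{U}(\mathbf{H})\cap G_{0,0}$.

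Given any such $U$, the biholomorphism $f$ (written in the statement as $\zeta+g_U(\zeta)$) is uniquely determined. To see this, let $\Phi$ denote the linear projection extracting the matrix entries at positions $(j_1,k_1),\dots,(j_{n-s},k_{n-s})$, and define $F(\zeta,\eta):=\Phi\bigl(U^T\mathbf{S}(\eta)U\bigr)-\zeta$. Then $F(0,0)=0$, and $\partial_\eta F|_{(0,0)}$ is invertible because $U\in G_{0,0}$ acts by an invertible linear map on $\mathrm{span}\{S^{0,2}_\alpha\}$ while $\Phi$ restricts to an isomorphism on this span by the injectivity hypothesis. The implicit function theorem then produces a unique local biholomorphism $f$ fixing $0$ satisfying $F(\zeta,f(\zeta))=0$, which is precisely the claimed $g_U$.

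The hardest step is the coefficient-matching rigidity in the second paragraph: one must verify that a weighted-degree-preserving biholomorphism between two hypersurfaces in the form \eqref{gen def fun}--\eqref{S from S2 formula} necessarily reduces to the restricted form $(w,z,\zeta)\mapsto(w,Uz,f(\zeta))$, with no spurious $\zeta$-dependent mixing into the $w$ or $z$ components and no higher-order corrections in $z$. Once this rigidity is established, the remainder is routine linear algebra together with the implicit function theorem, and as a byproduct the analysis yields Proposition \ref{equivalent symbol data}.
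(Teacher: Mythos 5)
Your existence argument (invert the coordinate map $\zeta\mapsto\big(\mathbf{S}(\zeta)_{(j_1,k_1)},\ldots,\mathbf{S}(\zeta)_{(j_{n-s},k_{n-s})}\big)$ after applying Proposition \ref{general formula to main theorem formula}) and your implicit-function-theorem determination of $g_U$ from $U$ both match the paper's proof in substance, and invoking the second part of Theorem \ref{model perturbations proposition} to pass immediately to the weighted-degree-preserving component is a reasonable repackaging of the weight analysis the paper carries out by hand.

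The genuine gap is the step you yourself flag as the hardest and then leave as an assertion: that a weighted-degree-preserving equivalence between two normalized models must have the restricted form $(w,z,\zeta)\mapsto(w,Uz,f(\zeta))$ with $U$ constant. This is precisely the nontrivial content of the uniqueness half. A general weighted-degree-preserving biholomorphism fixing $0$ has the shape $(w,z,\zeta)\mapsto\big(c(\zeta)w+q(z,\zeta),\,U(\zeta)z,\,f(\zeta)\big)$ with $q$ quadratic in $z$ and $c,U$ holomorphic in $\zeta$, so one must actively rule out the $w$-component corrections and the $\zeta$-dependence of $U$; "coefficient matching" names the task rather than performing it. The idea the paper uses to close this is that the entire normalized defining equation \eqref{gen def fun}, \eqref{H from S2 formula}, \eqref{S from S2 formula} is completely determined by $\mathbf{H}$ and the holomorphic part $S(\zeta,0)$ (Lemma \ref{normalizing H terms} and Proposition \ref{general formula to main theorem formula}): after arranging $U(0)=\mathrm{Id}$, the residual transformation $(w,z,\zeta)\mapsto(w+h,z^TU(\zeta)^T,\zeta)$ acts trivially on $S(\zeta,0)$, hence trivially on the whole equation, which forces $g=0$ and collapses the remaining freedom to $\mathrm{U}(\mathbf{H})\cap G_{0,0}$. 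Without this determinacy input (or an equivalent explicit computation), your uniqueness claim does not follow, and the byproduct derivation of Proposition \ref{equivalent symbol data} is likewise unsupported.
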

\begin{proof}
   By \eqref{general formula to main theorem formula}, it will suffice to address the special case where $M$ is given by \eqref{gen def fun}, \eqref{H from S2 formula}, and \eqref{S from S2 formula}, so let us make this assumption. The indices  $(j_{\alpha},k_{\alpha})$ were chosen such that
    \begin{align}\label{zeta transform for normalization}
    \zeta\mapsto \left(\left((\mathbf{H}^T)^{-1}S(\zeta,0)(\mathbf{H})^{-1}\right)_{(j_1,k_1)},\ldots, \left((\mathbf{H}^T)^{-1}S(\zeta,0)(\mathbf{H})^{-1}\right)_{(j_{n-s},k_{n-s})}
    \right)
    \end{align}
    is local biholomorphism at $0$. Consider the holomorphic changes of coordinates fixing $0$ that preserve the general form of \eqref{gen def fun}, \eqref{H from S2 formula}, and \eqref{S from S2 formula}.

Let us first consider the special case with transformations of the form $(w,z,\zeta)\mapsto(w,z^TU^T,\zeta+g(\zeta))$ where  $g$ is a holomorphic function and $U$ is an invertible matrix-valued holomorphic function of $\zeta$.
Applying the transformation with $U=\mathrm{Id}$ and $\zeta\mapsto \zeta+g(\zeta)$ being the inverse of \eqref{zeta transform for normalization} achieves the theorem's normalization, so it remains to consider what subsequent transformations preserve this normalization. If $U\in\mathrm{GL}(s,\mathbb{C})$ is not contained in $\mathrm{U}(\mathbf{H})\cap G_{0,0}$, then either $\mathbf{H}$ or $\mathrm{span}\{S^{0,2}_1,\ldots, S^{0,2}_{n-s}\}$ is not preserved under the induced action of $(w,z,\zeta)\mapsto(w,z^TU^T,\zeta+g(\zeta))$ for any $g$. Hence preserving the normalization requires $U\in \mathrm{U}(\mathbf{H})\cap G_{0,0}$.

This is also sufficient because for each $U\in \mathrm{U}(\mathbf{H})\cap G_{0,0}$ there is a unique $g_U(\zeta)$ such that $(w,z,\zeta)\mapsto(w,z^TU^T,\zeta+g_U(\zeta))$ preserves $M$. Indeed, existence of $g_U$ follows from the observation that the induced action of $(w,z,\zeta)\mapsto(w,z^TU^T,\zeta)$ on $\mathrm{span}\left\{S^{0,2}_1,\ldots, S^{0,2}_{n-s}\right\}$ is a linear isomorphism, so one can take $\zeta\mapsto \zeta+g_U(\zeta)$ to be its inverse. Uniqueness indeed holds because if $U=\mathrm{Id}$ then $g_U(\zeta)=0$ because otherwise the transformed defining equation will fail $\left(\mathbf{S}(\zeta)\right)_{(j_\alpha,k_\alpha)}=\zeta_\alpha$ for some $\alpha$.

Turning to the general case, consider transformations of the form $(w,z,\zeta)\mapsto(w+h(w,z,\zeta),z+f(w,z,\zeta),\zeta+g(w,z,\zeta))$ for holomorphic functions $h$, $f$, and $g$. Suppose we fixed a defining equation for $M$ satisfying all of the theorem's normalization conditions, and let us assume this general transformation produces a (possibly new) defining equation also satisfying all of the theorem's conditions.

If $f(w,z,\zeta)$ has nonzero terms that depend only on $\zeta$ then the transformed \eqref{gen def fun} will too. Therefore terms in $f$ have weight at least $1$ with respect to \eqref{grading convention}, and similarly one finds that terms in $h$ have weight at least $2$. 

With these simplifications applied, the terms in $h$, $f$, and $g$ of weights greater than $2$, $1$, and $0$ respectively appear only with the terms of the transformed \eqref{gen def fun} that have weights strictly greater than $2$. Since the transformation preserves the general form of \eqref{gen def fun}, these higher weight terms must vanish, so the transformed \eqref{gen def fun} is entirely characterized by the components of $h$, $f$, and $g$ of weights equal to $2$, $1$, and $0$ respectively. 
Hence, it suffices to consider weighted homogeneous transformations with $\mathrm{wt}(h)=2$, $\mathrm{wt}(f)=1$, and $\mathrm{wt}(g)=0$. 

We have nearly reduced to the previously considered special case, except with $h$ possibly nonzero and with $U$ now a function of $\zeta$. By composing the general transformation with one from the previously considered special case, we can also assume without loss of generality that $U(0)=\mathrm{Id}$. The induced action of the transformation $(w,z,\zeta)\mapsto\big(w+h,z^TU^T,\zeta\big)$ on the $S(\zeta,\overline{\zeta})$ part of the defining equation (obtained by writing it in the form \eqref{gen def fun}) has trivial action on its holomorphic part $S(\zeta,0)$. It therefore has trivial action on the entire defining equation because \eqref{gen def fun} with \eqref{H from S2 formula} and \eqref{S from S2 formula} is fully determined by $\mathbf{H}$ and $\mathbf{S}(\zeta)=(\mathbf{H}^T)^{-1}(S(\zeta,0)-S(0,0))\mathbf{H}^{-1}$. Hence, if $(w,z,\zeta)\mapsto\big(w+h,z^TU^T,\zeta+g\big)$ preserves all of the theorem's normalization conditions then $g=0$ because otherwise $\left(\mathbf{S}(\zeta)\right)_{(j_\alpha,k_\alpha)}=\zeta_\alpha$ would fail for some $\alpha$. 

This completes the proof because by assuming our general transformation preserves all of the theorem's normalization conditions we found that it can be composed with a transformation from the previously considered special case to obtain a symmetry of the defining equation.
\end{proof}

 \newcommand{\noop}[1]{}


\begin{thebibliography}{10}

\bibitem{baouendi1999real}
M.~S. Baouendi, P.~Ebenfelt, and L.~P. Rothschild.
\newblock {\em Real submanifolds in complex space and their mappings}.
\newblock Princeton University Press, 1999.

\bibitem{beloshapka2004universal}
V.~K. Beloshapka.
\newblock Universal models for real submanifolds.
\newblock {\em Mathematical Notes}, 75(3-4):475--488, 2004.

\bibitem{beloshapka2005symmetries}
V.~K. Beloshapka.
\newblock Symmetries of real hypersurfaces in complex 3-space.
\newblock {\em Mathematical Notes}, 78:156--163, 2005.

\bibitem{cartanCR}
E. Cartan, \emph{ Sur la g\'eom\'etrie pseudo-conforme des hypersurfaces de l'espace de deux variables complexes}, Ann.
Mat. Pura Appl. 11(1):17--90, 1933.



\bibitem{chern1974}
S.~S. Chern and J.~K. Moser.
\newblock Real hypersurfaces in complex manifolds.
\newblock {\em Acta Math.}, 133:219--271, 1974.

\bibitem{ebenfelt2001uniformly}
P.~Ebenfelt.
\newblock Uniformly {L}evi degenerate {CR} manifolds: the 5-dimensional case.
\newblock {\em Duke Mathematical Journal}, 110(1):37--80, 2001, Correction in
  \emph{Duke Math. J.} 131(3):589--591, 2006.

\bibitem{fels2007cr}
G.~Fels and W.~Kaup.
\newblock {CR}-manifolds of dimension 5: a {L}ie algebra approach.
\newblock {\em Journal f{\"u}r die reine und angewandte Mathematik}, 2007.

\bibitem{fels2008classification}
G.~Fels and W.~Kaup.
\newblock {C}lassification of {L}evi degenerate homogeneous {CR}-manifolds in
  dimension 5.
\newblock {\em Acta mathematica}, 201(1):1--82, 2008.

\bibitem{freeman1977local}
M.~Freeman.
\newblock Local biholomorphic straightening of real submanifolds.
\newblock {\em Annals of Mathematics}, 106(2):319--352, 1977.

\bibitem{gregorovic2020fundamental}
J.~Gregorovi{\v{c}}.
\newblock Fundamental invariants of 2--nondegenerate {CR} geometries with
  simple models.
\newblock {\em arXiv preprint arXiv:2007.03971}, 2020.

\bibitem{gregorovic2021equivalence}
J.~Gregorovi{\v{c}}.
\newblock On equivalence problem for 2--nondegenerate {CR} geometries with
  simple models.
\newblock {\em Advances in Mathematics}, 384, 2021.

\bibitem{c4paper}
J.~Gregorovi{\v{c}} and D. Sykes.
\newblock Defining equations of $7$-dimensional model CR hypersurfaces.
\newblock  {\em arXiv preprint arXiv: 2310.18588v2}, 2024.

\bibitem{kaup2006local}
W.~Kaup and D.~Zaitsev.
\newblock On local {CR}-transformations of {L}evi-degenerate group orbits in
  compact hermitian symmetric spaces.
\newblock {\em Journal of the European Mathematical Society}, 8(3):465--490,
  2006.


 \bibitem{kmz2014}
 Kol{\'a}{\v{r}}, M., Meylan, F. and Zaitsev D.:
 \newblock  Chern-Moser operators and polynomial models in CR geometry
 \newblock {\em Advances in Mathematics}, 263, 2014.

 \bibitem{kolavr2010catlin}
 Kol{\'a}{\v{r}}, M.:
 \newblock The Catlin multitype and biholomorphic equivalence of models
 \newblock {\em International Mathematics Research Notices}, 2010(18):3530--3548, 2010.

 \bibitem{kolar2019complete}
 M.~Kol{\'a}{\v{r}} and I.~Kossovskiy.
 \newblock A complete normal form for everywhere Levi--degenerate hypersurfaces
   in $\mathbb{C}^2$.
 \newblock {\em Advances in Mathematics}, 408, 2022.

\bibitem{naruki1970holomorphic}
I.~Naruki.
\newblock Holomorphic extension problem for standard real submanifolds of
  second kind.
\newblock {\em Publications of the Research Institute for Mathematical
  Sciences}, 6(1):113--187, 1970.

\bibitem{porter2021absolute}
C.~Porter and I.~Zelenko.
\newblock Absolute parallelism for 2-nondegenerate {CR} structures via bigraded
  {T}anaka prolongation.
\newblock {\em Journal f{\"u}r die reine und angewandte Mathematik}, 2021.

\bibitem{santi2020homogeneous}
A.~Santi.
\newblock Homogeneous models for {L}evi degenerate {CR} manifolds.
\newblock {\em Kyoto Journal of Mathematics}, 60(1):291--334, 2020.

\bibitem{sharpe2000differential}
R.~W. Sharpe.
\newblock {\em Differential geometry: Cartan's generalization of Klein's
  Erlangen program}, volume 166.
\newblock Springer Science \& Business Media, 2000.

\bibitem{SykesHomogeneous}
D.~Sykes.
\newblock Homogeneous $2$-nondegenerate {CR} manifolds of hypersurface type in
  low dimensions.
\newblock {\em Indiana University Mathematics Journal}, to appear, preprint at
  \url{https://www.iumj.indiana.edu}.

\bibitem{sykes2020canonical}
D.~Sykes and I.~Zelenko.
\newblock A canonical form for pairs consisting of a hermitian form and a
  self-adjoint antilinear operator.
\newblock {\em Linear Algebra and its Applications}, 590:32--61, 2020.

\bibitem{sykes2021maximal}
D.~Sykes and I.~Zelenko.
\newblock Maximal dimension of groups of symmetries of homogeneous
  2-nondegenerate {CR} structures of hypersurface type with a 1-dimensional
  {L}evi kernel.
\newblock {\em Transformation Groups}, pages 1--38, 2022.

\bibitem{sykes2023geometry}
D.~Sykes and I.~Zelenko.
\newblock On geometry of 2-nondegenerate {CR} structures of hypersurface type
  and flag structures on leaf spaces of {L}evi foliations.
\newblock {\em Advances in Mathematics}, 413, 2023.

\bibitem{tanaka1962pseudo}
N.~Tanaka.
\newblock On the pseudo-conformal geometry of hypersurfaces of the space of $n$
  complex variables.
\newblock {\em Journal of the Mathematical Society of Japan}, 14(4):397--429,
  1962.

\end{thebibliography}
\end{document}